\def\e{\varepsilon}
\newcommand{\s}{\ensuremath{\mathbb{S}}}
\newtheorem {theorem} {Theorem}
\newtheorem {proposition} [theorem]{Proposition}
\newtheorem {lemma}  [theorem]{Lemma}
\newtheorem {example} [theorem]{Example}
\newtheorem {remark} [theorem]{Remark}
\newcommand{\R}{\mathbb{R}}
\newcommand{\C}{\mathbb{C}}
\newcommand{\N}{\mathbb{N}}
\newcommand{\D}{\ensuremath{\mathbb{D}}}
\tikzset{node distance=3cm, auto}
\begin{document}

\title[Holomorphic  Systems]
{On Planar Holomorphic  Systems}

\author[ L. F. S. Gouveia,  G. A. Rond\'on and P. R. da Silva]
{Luiz F. S. Gouveia, Paulo R. da Silva and Gabriel A. Rond\'on}

\address{  S\~{a}o Paulo State University (Unesp), Institute of Biociences, Humanities and
	Exact Sciences. Rua C. Colombo, 2265, CEP 15054--000. S. J. Rio Preto, S\~ao Paulo,
	Brazil.}

\email{paulo.r.silva@unesp.br; fernando.gouveia@unesp.br; garv202020@gmail.com}

\subjclass[2010]{32A10, 34A34, 34C20, 37C10.}

\maketitle

\begin{abstract}
Planar holomorphic systems  $\dot{x}=u(x,y)$, $\dot{y}=v(x,y)$
are those that  $u=\operatorname{Re}(f)$ and $v=\operatorname{Im}(f)$
for some holomorphic function $f(z)$.
They have important dynamical properties, highlighting, for example, the fact that they do not have limit cycles and that center-focus problem is trivial. 
In particular, the hypothesis that a polynomial system is holomorphic reduces the number of parameters of the system. 
Although a polynomial system of degree $n$ depends on $n^2 +3n+2$ parameters,  a polynomial holomorphic depends only on $2n + 2$ parameters.
In this work, in addition to making a general overview of the theory of holomorphic systems, we classify all the possible global phase portraits, on the Poincar\'{e} disk, of  systems $\dot{z}=f(z)$ and $\dot{z}=1/f(z)$, where $f(z)$ is a 
polynomial of degree $2$, $3$ and $4$ in the variable $z\in \C$. 
We also classify all the possible global phase portraits of Moebius systems $\dot{z}=\frac{Az+B}{Cz+D}$, where $A,B,C,D\in\C, AD-BC\neq0$.
Finally, we obtain explicit expressions of first integrals of holomorphic systems and of conjugated holomorphic systems, which have important applications in the study of fluid dynamics.	
	
\end{abstract}

\section{Introduction}
The understanding of the phase portrait of planar differential systems involves some questions: 

\begin{itemize}
\item How is the local behavior around the equilibrium points?  A serious problem consists in distinguishing between a focus and a center.
\item After equilibrium  points the main subjects  are limit cycles, i.e., periodic orbits that are isolated in the set of all periodic orbits of a differential system. How many limit cycles are there in the phase portrait?
\item A first integral completely determines its phase portrait.  Given a vector field on $\R^2$, how can one determine if this vector field has a first integral? 
\end{itemize}

These problems are unsolved in general. However for a specific class of systems we get very satisfactory answers. The class we are referring to is the one formed by holomorphic systems.  

Planar holomorphic systems  $\dot{x}=u(x,y)$, $\dot{y}=v(x,y)$
are those that  $u=\operatorname{Re}(f)$ and $v=\operatorname{Im}(f)$
for some holomorphic function $f(z)$.

Holomorphic systems have surprising properties: 
\begin{itemize}
\item The equilibriums are isolated and the topological classification of the local phase portraits is fully known.
\item They do not have limit cycles.
\item The center--focus problem is totally solved.
\item There is a first integral $H(x,y)$ defined in a subset of total measure in $\R^2$.
\end{itemize}

 A  \textit{holomorphic function}  $f$
is a complex-valued function  defined in a domain $\mathcal{V}\subseteq\C$ and satisfying that
\begin{itemize}
	\item $u=\operatorname{Re}(f)$ and $v=\operatorname{Im}(f)$  are continuous;
	\item  there exist the partial derivatives $u_x,u_y,v_x,v_y$ in $\mathcal{V}$ and 
	\item  the partial derivatives satisfy the Cauchy--Riemann equations, see \cite{Morris}, \[u_x=v_y,\quad u_y=-v_x,\quad \forall z=x+iy\in\mathcal{V}.\]  
\end{itemize}
We remark that \textit{Looman--Menchoff's Theorem} states that the above conditions are sufficient 
to guarantee the analyticality of  $f$. It means that for any $z_0\in\mathcal{V}$ 
\begin{equation}
f(z)=A_0+A_1(z-z_0)+A_2(z-z_0)^2+...,\quad A_k=a_k+ib_k=\dfrac{f^{(k)}(z_0)}{k!}
\label{analF}
\end{equation}
for $z\in D(z_0,R_{z_0})\subseteq\mathcal{V}$ where $D(z_0,R_{z_0})$ is the largest possible $z_0$--centered disk contained in $\mathcal{V}.$ 
Unless a translation we can always assume that $z_0 = 0$.

If $f$ is holomorphic in a punctured disc $D(z_0,R)\setminus\{z_0\}$ and it is not derivable at $z_0$ we say that
$z_0$ is a singularity of $f$. In this case $f(z)$ is equal to Laurent's series in $D(z_0,R)\setminus\{z_0\}$

\begin{equation} 
	f(z)=\sum_{k=1}^{\infty}\dfrac{B_k}{(z-z_0)^k}+ \sum_{k=0}^{\infty}A_k(z-z_0)^k, \label{laurent}
\end{equation}
where $B_k=\dfrac{1}{2\pi i}\int_{C_{\e}}f(z)(z-z_0)^{k-1}dz,\quad A_k=\dfrac{1}{2\pi i}\int_{C_{\e}}\dfrac{f(z)}{(z-z_0)^{k+1}}dz$
with $C_{\e}$ parameterized by $z(t)=\e e^{it}, \e\sim0$. \\

If $B_k\neq0$  for an infinite set of indices $k$ we say that $z_0$ is an \textit{essential singularity} and if
there exists $n \geq1$ such that $B_n \neq0$ and $B_k = 0$ for every $k>n$ then we say that $z_0$ is a \textit{pole of order n}. 
Moreover $B_1$ is called \textit{residue} of $f$ at $z_0$ and it is denoted by $B_1 = \operatorname{res} (f, z_0)$.

Let  $f:D(0,R)\setminus\{0\}\rightarrow\C$ be a holomorphic function as \eqref{laurent} with $z_0=0, B_k=c_k+id_k$ and $A_k=a_k+ib_k$.
Consider the ordinary differential equation 
\begin{equation}\label{hde}
\dot{z}(t)=f(z(t)),\quad t\in\R.
\end{equation}
The solution of \eqref{hde} passing through $z\in D(0,R)\setminus\{0\}$  at 
	$t=0$ is denoted by $\varphi_f(t,z)=x(t)+iy(t).$

\begin{theorem}  The real and imaginary parts of $\varphi_f(t,z)$ must satisfy the following system
	\begin{equation}\left\{\begin{array}{ll}
	\dot{x}&= \displaystyle\sum_{k=1}^{\infty}\left(c_k\dfrac{p_k}{(x^2+y^2)^k}+d_k\dfrac{q_k}{(x^2+y^2)^k}\right)+a_0+
	\displaystyle\sum_{k=1}^{\infty}\left(a_kp_k-b_kq_k\right)\\
	\dot{y}&= \displaystyle\sum_{k=1}^{\infty}\left(d_k\dfrac{p_k}{(x^2+y^2)^k}-c_k\dfrac{q_k}{(x^2+y^2)^k}\right)+b_0+
	\displaystyle\sum_{k=1}^{\infty}\left(b_kp_k+a_kq_k\right)
	\end{array}
	\right.\label{hvf}
	\end{equation}
	with $p_k,q_k$ given in Table \eqref{Tpq}.
\end{theorem}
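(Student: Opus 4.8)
The plan is to substitute $z=x+iy$ directly into the Laurent expansion \eqref{laurent} with $z_0=0$ and to separate the resulting expression into its real and imaginary parts, matching $\dot z=\dot x+i\dot y$ against $f(z)=\operatorname{Re}(f)+i\operatorname{Im}(f)$. My reading is that $p_k$ and $q_k$ are exactly the real and imaginary parts of $z^k$, i.e. $z^k=p_k+iq_k$, and that these are the polynomial entries recorded in Table \eqref{Tpq}. The one algebraic identity that makes the whole computation go through is the rewriting of negative powers through the conjugate,
\begin{equation}
\frac{1}{z^k}=\frac{\o{z}^{\,k}}{|z|^{2k}}=\frac{p_k-iq_k}{(x^2+y^2)^k},
\end{equation}
which follows from $\o{z^k}=\o{z}^{\,k}=p_k-iq_k$ and $|z|^2=x^2+y^2$.

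First I would treat the analytic (nonnegative-power) part. Multiplying $A_k=a_k+ib_k$ by $z^k=p_k+iq_k$ gives $(a_kp_k-b_kq_k)+i(b_kp_k+a_kq_k)$; isolating the $k=0$ term (where $p_0=1$, $q_0=0$) produces the constants $a_0$ and $b_0$, so that $\operatorname{Re}\sum_{k\ge0}A_kz^k=a_0+\sum_{k\ge1}(a_kp_k-b_kq_k)$ and $\operatorname{Im}\sum_{k\ge0}A_kz^k=b_0+\sum_{k\ge1}(b_kp_k+a_kq_k)$, which are precisely the analytic contributions in \eqref{hvf}.

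Next I would treat the principal (negative-power) part. Using the displayed identity for $1/z^k$ and multiplying by $B_k=c_k+id_k$ yields
\begin{equation}
\frac{B_k}{z^k}=\frac{(c_k+id_k)(p_k-iq_k)}{(x^2+y^2)^k}=\frac{(c_kp_k+d_kq_k)+i(d_kp_k-c_kq_k)}{(x^2+y^2)^k}.
\end{equation}
Summing over $k\ge1$ gives exactly the two singular series appearing in the expressions for $\dot x$ and $\dot y$. Adding the analytic and principal contributions and equating real and imaginary parts with $\dot x$ and $\dot y$ then delivers the system \eqref{hvf}.

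The only point demanding genuine care is the interchange of the infinite summation with the operations of taking real and imaginary parts, together with the term-by-term manipulations above. This is legitimate because, by the analyticity of the holomorphic function $f$, the Laurent series \eqref{laurent} converges absolutely and uniformly on each compact annulus contained in $D(0,R)\setminus\{0\}$; hence its rearrangement into convergent real and imaginary series is valid at every $z=x+iy$ of the punctured disk, and no convergence issue obstructs the identification. I expect this to be the main (and essentially the only) obstacle, the remainder being the bookkeeping of the real/imaginary decomposition indicated above.
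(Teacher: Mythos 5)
Your proposal is correct and follows essentially the same route as the paper's proof: decompose $f$ into its principal and analytic parts, use $z^k=p_k+iq_k$ and $1/z^k=\overline{z}^{\,k}/|z|^{2k}=(p_k-iq_k)/(x^2+y^2)^k$, expand the products with $A_k$ and $B_k$, and equate real and imaginary parts with $\dot x$ and $\dot y$. The added remark on absolute and uniform convergence of the Laurent series justifying the term-by-term rearrangement is a point the paper leaves implicit, but it does not change the argument.
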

\begin{equation}
\begin{array}{llll}
&k	&p_k &q_k \\
\hline\\
&1	& x & y\\
\hline\\
&2	& x^2-y^2&2x y\\
\hline\\
&3	& x^3-3xy^2 & 3x^2y-y^3\\
\hline\\
&4	& x^4-6x^2y^2+y^4 & 4x^3y-4xy^3\\
\hline\\
&5	& x^5-10x^3y^2+5xy^4 & 5x^4y-10x^2y^3+y^5\\
\hline
&...&...&...

\end{array}
\label{Tpq}
\end{equation}

\begin{proof}
We have 
$f(z)=\displaystyle\sum_{k=1}^{\infty}\dfrac{c_k+id_k}{z^k}+ \sum_{k=0}^{\infty}(a_k+ib_k)z^k.$
A direct calculation using Newton's binomial formula gives us
\[z^k=(x+iy)^k=p_k+iq_k\]
with $p_k$ and $q_k$ as in the table \eqref{Tpq}.
Thus
\[(a_k+ib_k)z^k=(a_kp_k-b_kq_k)+i(b_kp_k+a_kq_k)\]
and 
\[ \dfrac{c_k+id_k}{z^k}=(c_k+id_k)\dfrac{\bar{z}^k}{|z|^{2k}}= \dfrac{(c_k+id_k)}{(x^2+y^2)^k}(p_k-iq_k)=\]
\[=\dfrac{1}{(x^2+y^2)^k} \left((c_kp_k+d_kq_k)+i(d_kp_k-c_kq_k)\right).\]

Since $\dot{x}=\operatorname{Re} (f(z))$ and $\dot{y}=\operatorname{Im}(f(z))$ we conclude the proof.
\end{proof}

We refer to system \eqref{hvf} as a \textit{holomorphic system} if $f$ is holomorphic in $D(0,R)$ and as 
\textit{meromorphic system} if $f$ is holomorphic in $D(0,R)\setminus{0}$ and $0$ is a singularity of the kind pole. 
If $f$ is holomorphic then the coefficients $c_k,d_k$  are zero due to Cauchy's Theorem.\\

\noindent\textbf{Remark.} An easy way to find the polynomials $p_k$ and $q_k$ that appear in table \eqref{Tpq} 
is to consider the triangle below. The numbers in bold refer to the coefficients of $ p_k $ and the others refer to  $ q_k$. 
The monomials of the $k$ line are $x^k,x^{k-1}y, ...,xy^{k-1},y^k$.
\[
\begin{array}{llllllllll}
\bf{1}& 1&&&&&&&&\\
\bf{1}&2&\bf{-1}&&&&&&&\\
\bf{1}&3&\bf{-3}&-1&&&&&&\\
\bf{1}&4&\bf{-6}&-4&\bf{1}&&&&&\\
\bf{1}&5&\bf{-10}&-10&\bf{5}&1&&&&\\
\bf{1}&6&\bf{-15}&-20&\bf{15}&6&\bf{-1}&&&\\
\bf{1}&7&\bf{-21}&-35&\bf{35}&21&\bf{-7}&-1&&\\
\bf{1}&8&\bf{-28}&-56&\bf{70}&56&\bf{-28}&-8&\bf{1}&\\
\bf{1}&9&\bf{-36}&-84&\bf{126}&126&\bf{-84}&-36&\bf{9}&1
\end{array}
\]

\begin{proposition}$E=(0,0)$ is an equilibrium point of 
\eqref{hvf} if and only if  $f(E)=f(0)=0$.  Moreover, if $f'(0)= a+ib$ then the 
linear part of \eqref{hvf} at $E$  has jacobian matrix given by

\[  J(E) =\left[ \begin{array}{lr}
a&-b\\ b&a
\end{array}      \right]  . \]
\end{proposition}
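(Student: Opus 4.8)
The plan is to treat the two assertions separately: first the characterization of the equilibrium, then the form of the Jacobian at that point.

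For the equilibrium, I would simply evaluate the right-hand side of \eqref{hvf} at $(x,y)=(0,0)$. Since here $f$ is holomorphic at the origin we have $c_k=d_k=0$, so only the polynomial terms survive. The key observation is that every $p_k$ and $q_k$ in Table \eqref{Tpq} is a homogeneous polynomial of degree $k\geq 1$, and hence vanishes at the origin. Consequently $\dot x(0,0)=a_0$ and $\dot y(0,0)=b_0$. Since $f(0)=A_0=a_0+ib_0$, the origin is an equilibrium precisely when $a_0=b_0=0$, that is, when $f(0)=0$. This yields the stated equivalence.

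For the Jacobian, assume $f(0)=0$, so that $f(z)=A_1z+A_2z^2+\cdots$ with $A_1=f'(0)=a+ib$. The linear part of the system at the origin is governed by the degree-one terms of $u=\operatorname{Re}(f)$ and $v=\operatorname{Im}(f)$. Because each $p_k,q_k$ is homogeneous of degree $k$, only the index $k=1$ contributes to the first-order jet, and there $p_1=x$, $q_1=y$. Reading off the corresponding coefficients in \eqref{hvf} with $(a_1,b_1)=(a,b)$ gives $\dot x=ax-by+\cdots$ and $\dot y=bx+ay+\cdots$; differentiating and evaluating at the origin then produces the claimed matrix. As a conceptual cross-check, avoiding the series altogether, one may argue directly from the Cauchy--Riemann equations: using $u_x=v_y$, $u_y=-v_x$ together with the standard identity $f'(0)=u_x(0,0)+iv_x(0,0)=a+ib$, one obtains
\[
J(E)=\begin{pmatrix} u_x & u_y \\ v_x & v_y \end{pmatrix}=\begin{pmatrix} a & -b \\ b & a \end{pmatrix}.
\]

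I do not anticipate a genuine obstacle here; the computation is essentially bookkeeping. The two points that deserve care are justifying that the higher-order terms $p_k,q_k$ with $k\geq 2$ contribute nothing to the first-order part, and the legitimacy of differentiating the power series term by term—both of which follow from the homogeneity recorded in Table \eqref{Tpq} and from the uniform convergence of the series on compact subsets of $D(0,R)$.
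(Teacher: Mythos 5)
Your argument is correct. In fact your closing ``conceptual cross-check'' is precisely the paper's entire proof: the authors simply invoke $f'(0)=u_x(0,0)+iv_x(0,0)=a+ib$ together with the Cauchy--Riemann equations $u_x=v_y$, $u_y=-v_x$ to read off $J(E)$, and leave the equilibrium characterization ($u(0,0)+iv(0,0)=f(0)$) implicit. Your primary route --- evaluating the series \eqref{hvf} directly, using that each $p_k,q_k$ is homogeneous of degree $k$ so that only $k=1$ survives in the first-order jet --- is a more computational but equivalent path; it has the small advantage of being phrased entirely in terms of the explicit real system \eqref{hvf} and of handling the ``equilibrium iff $f(0)=0$'' claim explicitly, at the cost of needing the (routine) justification of term-by-term differentiation that the Cauchy--Riemann argument avoids. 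Either version is acceptable; the Cauchy--Riemann identity is the shorter and is what the paper records.
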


\begin{proof}
	This follows directly from the fact that $f'(0)=u_{x}(0,0)+iv_{x}(0,0)=a+ib$ and from the Cauchy Riemann equations $u_x=v_y, u_y=-v_x$.
\end{proof}

\begin{proposition} If $f=u+iv$ is holomorphic in $D(0,R)\setminus\{0\}$ and it  is not identically null 
	then system \eqref{hvf} has a finite number of equilibrium points
	and all of them are isolated.
\end{proposition}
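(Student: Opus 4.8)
The plan is to reduce the statement to two classical facts about the zeros of holomorphic functions: that they are isolated, and that they cannot accumulate inside the domain of holomorphy. First I would identify the equilibria of \eqref{hvf} with the zeros of $f$. A point $(x_0,y_0)$, written as $z_0=x_0+iy_0$, is an equilibrium of \eqref{hvf} precisely when $\dot x=\dot y=0$ there; since by construction $\dot x=\operatorname{Re}(f)$ and $\dot y=\operatorname{Im}(f)$, this happens if and only if $\operatorname{Re}(f(z_0))=\operatorname{Im}(f(z_0))=0$, that is, if and only if $f(z_0)=0$. Thus the set of equilibria is exactly the zero set $Z(f)=\{z : f(z)=0\}$ of $f$ inside its domain, and it suffices to study $Z(f)$.

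Next I would prove that every equilibrium is isolated. Let $z_0\in Z(f)$. Since $f$ is holomorphic and not identically zero, its Taylor expansion \eqref{analF} at $z_0$ has a first nonvanishing coefficient, say of index $m\ge 1$, so that $f(z)=(z-z_0)^m g(z)$ with $g$ holomorphic and $g(z_0)\neq 0$. By continuity $g$ stays nonzero on a neighborhood of $z_0$, whence $f$ vanishes only at $z_0$ there. Hence each point of $Z(f)$ is isolated.

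Finally I would establish finiteness by contradiction. If $Z(f)$ were infinite then, being contained in the bounded disk $D(0,R)$, it would have an accumulation point $z^*\in\overline{D(0,R)}$ by Bolzano--Weierstrass. Whenever $z^*$ lies in the open region where $f$ is holomorphic, continuity forces $f(z^*)=0$, so $z^*$ would be a nonisolated zero, contradicting the previous paragraph; equivalently, the Identity Theorem forbids the zeros of a nonidentically-zero holomorphic function from accumulating inside its domain of holomorphy. Thus $z^*$ can only sit on the topological boundary of that domain, namely at the singular point $z=0$ or on the circle $|z|=R$. In the meromorphic case $0$ is a pole, so $|f(z)|\to\infty$ as $z\to 0$ and $f$ has no zeros on a punctured neighborhood of $0$; in the holomorphic case $f$ extends holomorphically across $0$ and the identity-theorem argument again excludes $z^*=0$. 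For the functions under study, the polynomials and rational (Moebius) maps, $f$ is in fact holomorphic on a neighborhood of the whole circle $|z|=R$ as well, so that outer boundary is ruled out by the same reasoning and $Z(f)$ is forced to be finite.

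The step I expect to be the main obstacle is controlling the zeros near the singularity $z=0$: a priori the isolated zeros produced above could pile up at $0$, and it is precisely here that the nature of the singularity is used. The argument succeeds because the singularity is a pole (or is removable), which makes $|f|$ blow up, respectively keeps $f$ holomorphic, near $0$ and so rules out accumulation there; an essential singularity would be the genuinely delicate case, excluded by the hypotheses of the paper.
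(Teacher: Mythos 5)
Your proposal follows the same basic route as the paper's proof: identify the equilibria of \eqref{hvf} with the zero set of $f$, get isolation from the local factorization $f(z)=(z-z_0)^m g(z)$ with $g(z_0)\neq 0$, and argue finiteness by extracting a convergent subsequence from a hypothetical infinite set of zeros and invoking the identity principle. The paper's proof is essentially your third paragraph compressed: it takes a convergent subsequence in $\overline{D(0,R)}$ and concludes $f\equiv 0$, without even separating out the isolation statement.

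Where you add something real is in noticing that the limit point $z^*$ need not lie in the domain of holomorphy: it can be the puncture $0$ or a point of the circle $|z|=R$, and in either case the identity principle does not apply there. The paper's proof passes over this silently. Your resolution, however, imports hypotheses that are not in the statement, namely that $0$ is a pole or removable and that $f$ is holomorphic across $|z|=R$. As literally stated the proposition is false without some such assumption: $f(z)=\sin(1/z)$ is holomorphic and not identically null on $D(0,R)\setminus\{0\}$ yet has infinitely many (isolated) zeros $1/(n\pi)$ accumulating at the essential singularity, and an infinite Blaschke product on the unit disk has infinitely many zeros accumulating on the boundary circle. So the claim that all equilibria are isolated is proved correctly and in full generality by your second paragraph, but the finiteness claim genuinely needs the extra hypotheses you invoke at the end; you have correctly located the obstacle that both the statement and the paper's own argument gloss over.
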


\begin{proof}
If there exists a sequence of distinct equilibria $(x_n,y_n)$ of \eqref{hvf}  
then the sequence $z_n=x_n+iy_n$ will be formed by zeros of $f$. Taking  $\overline{D(0,R)}$ if necessary, we can 
assume that $z_n$ admits  a convergent subsequence $z_{n_k}$. In this case $f$ is identically null  in a set that 
has an accumulation point.  From the principle of identity of analytic functions it follows that $f \equiv 0$. 
\end{proof}  

Holomorphic systems have only three kinds of simple equilibrium points, 
all of them have index +1 (see \cite{DLA}), they are foci, centers or nodes (see Theorem 2.1 of \cite{AlvGasPro}). Moreover, this class of system  do not have limit cycles, see \cite{Ben, Bro, Oto1, Oto2,GXG, NeeKing, Sverdlove}. 

There are many motivations for study holomorphic vector fields. In addition to those already mentioned, we can also cite \cite { GAP,BraDias,DiasEnu,Dias}. We can highlight the study of parabolic bifurcations, see \cite{BufTan,Shi}, the study of simultaneous bifurcation of limit cycles \cite{GAG1}, the study of dynamical fluids through conjugate systems, see \cite{avila, bat, mars, mey}, the study of integrability of holomorphic vector fields, more specifically, study the problem of upper bounds of bifurcation of limit cycles relating to Hilbert's 16th Problem, see \cite{ Benzinger,Sverdlove}. For more details about this problem, we recommend also \cite{GinGouTor2020,GouTor2020}.\\

In this work, we study the phase portraits of the equations
\begin{equation}\label{eqhol}
\dot{z}=f(z),\quad\quad \dot{z}=\dfrac{1}{f(z)} \quad\mbox{and}\quad \dot{z}=T(z)
\end{equation}
where $f$ is a complex polynomial of degree $2,3,4$ and $T(z)=\frac{Az+B}{Cz+D}$ is a Moebius transformation, 
that is, $A,B,C,D\in\C$ with $AD-BC\neq0$.\\

We provide the following possible phase portraits on the Poincar{\'e} disk, unless topological equivalence:
\begin{itemize}
\item If $f$ is a complex polynomial of degree $2$ (resp $3$, $4$) then there are $3$ (resp $9$, $22$) possible phase portraits of  $\dot{z}=f(z)$.
See Theorem \ref{teoquad} (resp \ref{teocubphase}, \ref{teoquartphase}.)
\item If $f$ is a complex polynomial of degree $2$ (resp $3$, $4$)  then there are $3$ (resp $11$, $8$) possible phase portraits  of  $\dot{z}=\frac{1}{f(z)}$. See Theorem \ref{teoquadinv} (resp \ref{teocubinv}, \ref{teoquartinv}).
\item If $T$ is a Moebius transformation  then there are $9$  possible phase portraits  of  $\dot{z}=T(z)$. See Theorem \ref{proptransmoebius}.\end{itemize}

The paper is organized as follows. In Section \ref{sec2} we present some preliminary results. In Section \ref{sec7} we discuss some aspects of fluid dynamics and integrability. We show that the phase portrait of $\dot{z}=\dfrac{1}{f(z)}$ is equal to the phase portrait of $\dot{z}=\overline{f(z)}$, where $f(z)\neq0$. In addition, we show that the complex potential of the conjugate holomorphic system is a primitive of $f(z)$. In Section \ref{sectionlocal}, we present the local phase portraits for the system $\dot{z}=f(z)$, where $f(z)$ is a polynomial of degree $n=2,3,4$. This section will help us to show all the phase portrait on Poincar\'e disk. In Section \ref{sec3}, \ref{sec4}, \ref{sec5}, \ref{sec6} we state and prove Theorems \ref{teoquad}, \ref{teocubphase}, \ref{teoquartphase}, \ref{teoquadinv}, \ref{teocubinv}, and \ref{teoquartinv}. In Section \ref{sec9}, we study the local dynamics of  polynomial conjugate systems $\overline{f_{j}(z)}$, $j=2,3,$ and $4$. Finally in Section \ref{sec10} we state and prove Theorem  \ref{proptransmoebius}.

\section{Preliminaries}\label{sec2}
This section is devoted to state some classic results that will help us to classify the local phase portraits of holomorphic systems. 
In order to do this, we will start by introducing the 
concept of conformal conjugation.

Let $f$ and $g$ be holomorphic functions defined in some punctured neighborhood of $0\in\C$. We say that $f$ and $g$ are \textbf{ \textit{$0$--conformally conjugated} } if there exist $R>0$ and  a conformal map $\Phi:D(0,R)\rightarrow D(0,R)$ such that  $\Phi(0)=0$ and $ \Phi(\varphi_f(t,z)) =\varphi_g(t,\Phi(z))$,  for any $z\in D(0,R)\setminus\{0\}$ and all $t $ for which the above expressions  are well defined and the corresponding points are in $ D(0,R)$.

Let $f$ and $g$ be holomorphic functions defined in some punctured neighborhoods of $z_1\in\C$ and $z_2\in\C$, respectively. 
We say that  $f $ and $g$ are \textbf{\textit{$z_1z_2$--conformally conjugated }} if $ f \circ (z-z_1)$ and $g\circ  (z-z_2 )$ 
are conformally conjugated at $0$.\\

If $f$ and $g$ are holomorphic in  $D(0,R)$ then we have:
\begin{itemize}
	\item If $f(0)\neq0$ and $g(0)\neq0$ then $f$ and $g$ are $0$--conformally conjugated;
	\item If $f(0)\neq0$ and $g(0)=0$  then $f$ and $g$ are not $0$--conformally conjugated;
	\item If $f(0)=0$ and $g(0)=0$ and $f,g$ are non constant then 
	\[  \Phi(\varphi_f(t,z)) =\varphi_g(t,\Phi(z))\Leftrightarrow \Phi'(z)f(z)=g(\Phi(z)),\]
	for $ |z|$ sufficiently small.
\end{itemize}

Conformal conjugation classes are known in the literature. See for instance \cite{BLT} and \cite{gx}.
If $f$ is a holomorphic function defined in some punctured neighborhood of $z_0\in\C$ we have:
	\begin{itemize}
		\item [(a)] If $f(z_0)\neq0$  then $f$ and $g(z)\equiv 1$   are $z_00$--conformally conjugated.
		\item [(b)] If $f(z_0)=0$ and  $f'(z_0)\neq0$ then $f$ and $g(z)\equiv f'(z_0)z$ are $z_00$--conformally conjugated. 
		\item [(c)] If $f(z_0)=0$, $z_0$  is a zero of $f$ of order $n>1$ and  $\operatorname{Res}(1/f,z_0)=1/\gamma$ then 
		$f$ and $g(z)\equiv \gamma z^n/(1+z^{n-1})$   are $z_00$--conformally conjugated. 
		\item[(d)] If $f(z_0)=0$, $z_0$  is a zero of $f$ of order $n>1$ and  $\operatorname{Res}(1/f,z_0)=0$ then 
		$f$ and $g(z)\equiv z^n$   are $z_00$--conformally conjugated. 
		\item[(e)] If $z_0$  is a pole of $f$ of order $n$  then 
		$f$ and $g(z)\equiv \dfrac{1}{z^n} $   are $z_00$--conformally conjugated. 
	\end{itemize}

If $z_0$ is an essential singularity of $f$ then for any direction $w_0$,
	 there exists $z$, arbitrarily close to $z_0$,  whose flow $\varphi_f(0,z)$ follows the direction $w$, 
	 with $w$ being arbitrarily close to $w_0$. More precisely we have the following theorem.

\begin{theorem} Let $\dot{z}=f(z)$ be a holomorphic system defined in some punctured neighborhood of 
	an essential singularity $z_0\in\C$. For $\varepsilon, \delta>0$  sufficiently small and an arbitrary direction
	$w_0\in\C$ there exist $z,w\in\C$ such that $|z-z_0|<\delta$, $|w-w_0|<\varepsilon$
	and $\dfrac{d}{dt}\varphi_f(0,z)=w$.
	\end{theorem}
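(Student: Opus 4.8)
The plan is to reduce the statement to the classical Casorati--Weierstrass theorem on the local image of a function near an essential singularity. The first observation is that the quantity $\frac{d}{dt}\varphi_f(0,z)$ is nothing but the vector field evaluated at the base point: since $\varphi_f(t,z)$ solves \eqref{hde} with $\varphi_f(0,z)=z$, differentiating in $t$ and setting $t=0$ yields $\frac{d}{dt}\varphi_f(0,z)=f(\varphi_f(0,z))=f(z)$. Consequently, the requirement that $\frac{d}{dt}\varphi_f(0,z)=w$ with $|w-w_0|<\varepsilon$ is equivalent to the existence of a point $z$ with $0<|z-z_0|<\delta$ and $|f(z)-w_0|<\varepsilon$. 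In other words, the theorem asserts exactly that the image $f\big(D(z_0,\delta)\setminus\{z_0\}\big)$ is dense in $\C$ for every sufficiently small $\delta>0$.

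I would then establish this density statement by contradiction, which is the standard route to Casorati--Weierstrass. Suppose it fails: there exist $w_0\in\C$ and $\varepsilon,\delta>0$ such that $|f(z)-w_0|\geq\varepsilon$ for all $z$ with $0<|z-z_0|<\delta$. Define $g(z)=\dfrac{1}{f(z)-w_0}$ on the punctured disk $D(z_0,\delta)\setminus\{z_0\}$. By construction $g$ is holomorphic there and satisfies $|g(z)|\leq 1/\varepsilon$, so it is bounded near $z_0$. By Riemann's removable singularity theorem, $g$ extends holomorphically across $z_0$.

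Finally I would draw the contradiction with the essential character of $z_0$. Writing $f(z)=w_0+\dfrac{1}{g(z)}$, there are two cases for the extended function $g$. If $g(z_0)\neq0$, then $f$ itself extends holomorphically to $z_0$, so $z_0$ is a removable singularity. If $g(z_0)=0$, then $1/g$ has a pole at $z_0$, of order equal to the vanishing order of $g$, and hence $f$ has a pole at $z_0$. In either case $z_0$ is not an essential singularity, contradicting the hypothesis; therefore the density statement holds and the theorem follows.

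I do not anticipate a serious obstacle here: the proof is essentially bookkeeping once the reduction is made. The only point requiring care is the correct interpretation of the symbol $\frac{d}{dt}\varphi_f(0,z)$ as $f(z)$, together with the observation that requiring $w$ to lie within $\varepsilon$ of an \emph{arbitrary} $w_0\in\C$ is precisely the assertion that an essential singularity forces the local image of $f$ to be dense, which is exactly what Casorati--Weierstrass provides.
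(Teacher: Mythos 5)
Your proposal is correct and follows essentially the same route as the paper: both reduce the statement to the observation that $\frac{d}{dt}\varphi_f(0,z)=f(z)$ and then apply the Casorati--Weierstrass theorem to conclude that $f\big(D(z_0,\delta)\setminus\{z_0\}\big)$ is dense in $\C$. The only difference is that the paper cites Casorati--Weierstrass as a known result, while you additionally supply its standard proof via Riemann's removable singularity theorem; that extra material is correct but not needed for the argument.
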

	
\begin{proof} Let $\varepsilon, \delta, z_0$ and $w_0$ be as in the statement. 
	The Casorati-Weierstrass theorem states that the image of $D(z_0,\delta)\setminus\{z_0\}$ is dense in $\C$. 
	Thus there exists $w\in f(D(z_0,\delta)\setminus\{z_0\})$ such that $|w- w_0|<\varepsilon$.
	 So $w=f(z)$ for some $z\in D(z_0,\delta)\setminus\{z_0\}$. 
	 Since $\dfrac{d}{dt}\varphi_f(0,z))=f(z)=w$, we conclude the proof.
\end{proof}

\begin{center}
	\begin{figure}[h]
		\begin{overpic}[scale=0.5]
			{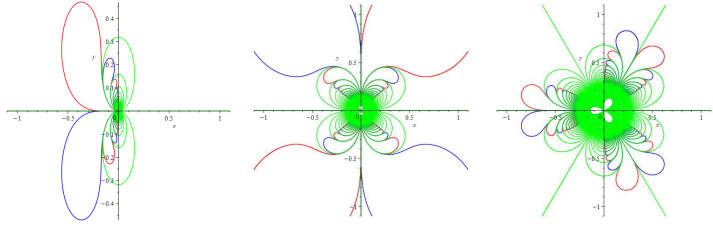} 
		\end{overpic}
		\caption{Local dynamics for $\dot{z}=z^{m}exp(1/z^{n})$ for $(n,m)=(1,2), (n,m)=(2,3)$, and $(n,m)=(3,4)$, resp.}\label{zexp}
	\end{figure}
\end{center} 

In the Figure \ref{zexp}, using the first integrals $H_{1,2}=\exp(-x/(x^2 + y^2))\sin(y/(x^2 + y^2))$, $H_{2,3}=\exp(-(x - y)(x + y)/(x^2 + y^2)^2)\sin(2xy/(x^2 + y^2)^2)/2$, and $H_{3,4}=\exp(-x(x^2 - 3y^2)/(x^2 + y^2)^3)\sin(y(3x^2 - y^2)/(x^2 + y^2)^3)/3$, we obtain the local dynamics around the essential singularity of  $\dot{z}=z^{m}exp(1/z^{n})$ for $(n,m)=(1,2), (n,m)=(2,3)$, and $(n,m)=(3,4)$.

Next proposition, whose proof can be found in \cite{AlvGasPro}, gives us important information about 
how the configuration of the equilibrium points of a polynomial holomorphic system can be if they are all simple.

\begin{proposition}\label{teoGas1}
Consider equation $\dot{z}=f(z)$ where $f(z)$ is a complex polynomial of degree $n$. 
Assume that all equilibrium points $z_{k}$, $k=1,\cdots,n$ are simple. Then
\begin{enumerate}[a)]
	\item If $z_{1},\cdots,z_{n-1}$ are centers, then $z_{n}$ is also a center.
	\item If $z_{1},\cdots,z_{n-1}$ are nodes, then $z_{n}$ is also a node.
	\item If not all the equilibrium points are centers, then there exist at least two of them that have different stability.
\end{enumerate}	
\end{proposition}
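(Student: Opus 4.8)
First I would translate the three qualitative statements into arithmetic conditions on the numbers $f'(z_k)$. By the earlier Proposition on the Jacobian matrix, at a simple zero $z_k$ of $f$ we have $f'(z_k)=a_k+ib_k$ with $(a_k,b_k)\neq(0,0)$, and the linearization has eigenvalues $a_k\pm ib_k$. Hence $z_k$ is a center exactly when $a_k=0$ (so $b_k\neq0$), a node exactly when $b_k=0$ (so $a_k\neq0$), and a focus otherwise; in the two non-center cases the equilibrium is stable or unstable according to the sign of $a_k=\operatorname{Re}f'(z_k)$. It is convenient to record all of this through the reciprocal $1/f'(z_k)=(a_k-ib_k)/(a_k^2+b_k^2)$: this number is purely imaginary iff $z_k$ is a center, purely real iff $z_k$ is a node, and its real part $a_k/(a_k^2+b_k^2)$ carries the sign of $a_k$, hence the stability.

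The engine of the whole proof is the identity
\[ \sum_{k=1}^{n}\frac{1}{f'(z_k)}=0, \]
valid for every degree $n\geq2$ polynomial with simple roots. I would prove it by residues: since the $z_k$ are simple zeros, $1/f$ is meromorphic with simple poles at the $z_k$ and $\operatorname{Res}(1/f,z_k)=1/f'(z_k)$; integrating over a circle $|z|=R$ enclosing all roots and letting $R\to\infty$, the integral is $O(R^{1-n})\to0$ because $\deg f=n\geq2$, while the residue theorem evaluates it as $2\pi i\sum_k 1/f'(z_k)$.

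With this in hand the three items are short. For (a), if $z_1,\dots,z_{n-1}$ are centers then each $1/f'(z_j)$, $j<n$, is purely imaginary, so the identity forces $1/f'(z_n)=-\sum_{j<n}1/f'(z_j)$ to be purely imaginary as well; thus $a_n=0$ and, $z_n$ being simple, $b_n\neq0$, so $z_n$ is a center. Item (b) is identical, with \emph{purely imaginary} replaced by \emph{purely real} and \emph{center} by \emph{node}. For (c), I would take the real part of the identity to get $\sum_k a_k/(a_k^2+b_k^2)=0$; centers contribute $0$, so if some equilibrium is not a center its nonzero contribution must be balanced, forcing at least one strictly positive and at least one strictly negative term, i.e.\ one equilibrium with $a_k>0$ (unstable) and another with $a_k<0$ (stable).

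The only real subtlety, the hard part, is justifying the identity: one must check that $1/f$ genuinely has only simple poles (guaranteed by simplicity of the zeros) and that the contour integral at infinity vanishes, which is exactly where the hypothesis $\deg f\geq2$ enters (for $n=1$ the sum of reciprocal derivatives need not vanish, and indeed statement (c) can fail). Everything after the identity is bookkeeping on real and imaginary parts, using only that a simple zero has $(a_k,b_k)\neq(0,0)$ and that for holomorphic systems a purely imaginary linear part is a genuine center, as recorded in the preliminaries.
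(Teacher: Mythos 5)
Your proof is correct and complete: the partial-fraction/residue identity $\sum_{k=1}^{n}1/f'(z_k)=0$ for $\deg f=n\geq 2$ with simple zeros, combined with the fact that for holomorphic systems the type and stability of a simple equilibrium are read off directly from $\operatorname{Re} f'(z_k)$ and $\operatorname{Im} f'(z_k)$, yields all three items exactly as you describe. The paper itself offers no proof of this proposition — it defers entirely to the cited reference \cite{AlvGasPro} — and your argument is precisely the standard one used there, so there is nothing to correct or add.
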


As it can be seen in \cite{AlvGasPro}, there are many other 
results that help us to study the phase portraits. We can cite here some results. Considering 
the equation \eqref{eqhol}, with $f(z)$ a complex polynomial of degree $n$ and assuming 
that all their equilibrium points are simple, so we have that if all the equilibrium points are foci, then any geometrical distribution in $\mathbb{C}$ can be achieved. Moreover, not all the 
equilibrium points have same stability. Besides we can check that if all the 
equilibrium points are collinear, then all of them are of the same type and if these points are 
not center, then they have alternated stability. In the same way, as a direct consequence, 
we have that $n$ aligned equilibrium points have alternating stability. Moreover, $n-2$ aligned
 equilibrium points have alternating stability and the two symmetric with respect to this 
 line and sharing stability. 
 
Another important  information to obtain the phase portrait of any system is to study 
the dynamics at infinity. Consider the equation \eqref{eqhol} with $\operatorname{deg}(f)=n$. Then, it 
has exactly $n-1$ equilibrium points at infinity, all of them of saddle type. Moreover,  the points 
at infinity in the Poincar\'e compactification, have exactly $n-1$ 
pairs of saddle points, see \cite{AlvGasPro}, Theorem 5.1.

\begin{proposition}\label{sver1}
	Consider equation $\dot{z}=f(z)$ where $f(z)$ is a complex polynomial.
	Then every equilibrium point  has a positive index $n$, 
	where $n$ is the order of the zero of $f$. If $n=1$, the 
	point is a source, sink or center, depending on the sign of the real part of 
	$f'(z)$. If $n>1$, the point is of purely elliptic type with $2n-2$ elliptic sectors.
\end{proposition}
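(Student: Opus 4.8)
The plan is to treat the three assertions separately, using the conformal conjugacy normal forms recalled above together with the argument principle and the Poincar\'e--Bendixson index formula. First I would settle the index. An equilibrium $z_0$ of $\dot z=f(z)$ is a zero of the polynomial $f$; let $n$ be its order, so $f(z)=A(z-z_0)^n+\cdots$ with $A\neq0$ near $z_0$. By definition the index of the field $(\operatorname{Re}f,\operatorname{Im}f)$ at $z_0$ is the degree of the Gauss map $z\mapsto f(z)/|f(z)|$ along a small positively oriented circle $C_\varepsilon$ about $z_0$, and this degree equals the winding number of $f|_{C_\varepsilon}$ around the origin. By the argument principle that winding number is exactly the number of zeros of $f$ enclosed by $C_\varepsilon$ counted with multiplicity, namely $n$; hence the index equals $n>0$, and this argument works uniformly in $n$.

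Second, for $n=1$ I would pass to the local model. By item (b) of the conjugacy list, $f$ near $z_0$ is conformally conjugated to the linear germ $g(z)=f'(z_0)\,z$ at the origin, and such a conjugacy is an orbit- and time-preserving homeomorphism, so $z_0$ has the same local phase portrait as the origin of $\dot z=(a+ib)z$ with $f'(z_0)=a+ib$. Its solutions $z(t)=z(0)e^{(a+ib)t}$ satisfy $|z(t)|=|z(0)|e^{at}$, so the origin is a source if $a=\operatorname{Re}f'(z_0)>0$, a sink if $a<0$, and (since $a=0$ forces $b\neq0$) a genuine center if $a=0$. Equivalently this trichotomy is read off the Jacobian $\left[\begin{smallmatrix}a&-b\\ b&a\end{smallmatrix}\right]$, whose eigenvalues are $a\pm ib$.

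Third, for $n>1$ the crux is the local sector structure. By items (c)--(d) the point is conformally conjugate to the origin of a normal form whose leading term is $z^n$ up to rotation and scaling, and since the decomposition into hyperbolic, parabolic and elliptic sectors is a topological invariant preserved by the conjugating homeomorphism, it suffices to analyze $\dot z=z^n$. Writing $z=re^{i\theta}$ yields $\dot r=r^n\cos((n-1)\theta)$ and $\dot\theta=r^{n-1}\sin((n-1)\theta)$. The invariant rays are those with $\sin((n-1)\theta)=0$, i.e. $\theta_k=k\pi/(n-1)$ for $k=0,\dots,2(n-1)-1$; there are $2n-2$ of them, and along $\theta_k$ one has $\dot r=(-1)^k r^n$, so outgoing and incoming rays alternate. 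On each open sector $(\theta_k,\theta_{k+1})$ the angle $\theta$ is strictly monotone while $\cos((n-1)\theta)$ changes sign exactly once, so every orbit first recedes from and then returns to the origin, tending to it as $t\to\pm\infty$: each of the $2n-2$ sectors is elliptic and none is hyperbolic, so the point is of purely elliptic type. As a check, the index formula $1+(e-h)/2=n$ with $h=0$ gives $e=2n-2$.

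The step I expect to be most delicate is the third one. The care lies in (i) confirming that the listed normal forms genuinely govern a polynomial zero of order $n$ and that the residue dichotomy distinguishing (c) from (d) does not change the local count of elliptic sectors, and (ii) checking that the higher-order tail of the normal form (such as $\gamma z^n/(1+z^{n-1})$ in case (c)) introduces no parabolic or hyperbolic sectors beyond those of the leading monomial $z^n$. The conformal-conjugacy reduction is precisely what disposes of this difficulty cleanly, since the conjugacy maps orbits to orbits preserving the time direction and hence transports the entire sector decomposition of the model verbatim to $z_0$.
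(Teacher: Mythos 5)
The paper offers no proof of this proposition; it simply refers the reader to \cite{Sverdlove}, so your argument is necessarily an independent reconstruction rather than a variant of an in-text proof. Judged on its own terms, your first two steps are correct and standard: the argument-principle computation identifies the index of $(\operatorname{Re}f,\operatorname{Im}f)$ at a zero of order $n$ with the winding number of $f$ on a small circle, hence with $n$; and for $n=1$ the conformal (hence time-preserving) conjugacy to $\dot z=f'(z_0)z$ reduces everything to the explicit solutions $z(0)e^{(a+ib)t}$. Your polar analysis of $\dot z=z^n$ is also correct, and the convergence of each orbit to the origin as $\theta\to\theta_{k}^{\pm}$ can be made airtight with the first integral $\Im\bigl(z^{1-n}\bigr)$.

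The gap is in the sentence ``it suffices to analyze $\dot z=z^n$'' and in your closing claim that the conformal-conjugacy reduction ``disposes of this difficulty cleanly.'' When $\operatorname{Res}(1/f,z_0)\neq0$ (case (c) of the list you invoke), the conformal normal form is $\gamma z^n/(1+z^{n-1})$, which is \emph{not} conformally conjugate to $z^n$ --- that is precisely why (c) and (d) are listed as distinct classes. The conjugacy therefore transports to $z_0$ the sector decomposition of $\gamma z^n/(1+z^{n-1})$, a field you never analyze; your computation covers only case (d). The statement is still true, and the fix is short: work directly with the germ $\dot z=(z-z_0)^n\bigl(A+O(z-z_0)\bigr)$, $A\neq0$. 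After a linear change of variable making $A=1$, polar coordinates and division by the positive factor $r^{n-1}$ give $\dot r=r\bigl(\cos((n-1)\theta)+O(r)\bigr)$, $\dot\theta=\sin((n-1)\theta)+O(r)$; the $2n-2$ equilibria on the exceptional circle $r=0$ are hyperbolic nodes with alternating stability (eigenvalues $(-1)^k$ and $(n-1)(-1)^k$ at $\theta_k=k\pi/(n-1)$), so every nearby orbit tends to the origin in both time directions and all $2n-2$ sectors are elliptic, with no hyperbolic or parabolic sector, for \emph{any} higher-order tail --- in particular in case (c). Alternatively, repeat your polar computation verbatim for $\gamma z^n/(1+z^{n-1})$; either route closes the gap.
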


For a proof see \cite{Sverdlove}.

Regarding periodic orbits in holomorphic systems, it is possible to calculate the time needed of an orbit to leave a point $z_{0}$ 
and reach a point $z_{1}$. 

\begin{proposition} Let $f$  be a complex polynomial function.
	If $z=\varphi_f(t,w)$  and $c=\operatorname{Res}(1/f,0)$ then
	\begin{itemize}
		\item [(a)] $\operatorname{exp}\left(\displaystyle\int_w^z\dfrac{1}{cf(s)}ds\right)=\operatorname{exp}\left(\dfrac{t}{c}\right)$ if $c\neq0$.
		\item[(b)] $\displaystyle\int_w^z\dfrac{1}{f(s)}ds=t$ if $c=0$.
	\end{itemize}
\end{proposition}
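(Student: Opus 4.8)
The plan is to exploit the defining relation $\dot z=f(z)$ directly along the orbit and then to interpret the resulting complex line integral correctly, accounting for its multivaluedness through the residue at the origin. First I would parametrize the orbit by $s=\varphi_f(\tau,w)$ for $\tau\in[0,t]$, so that $z=\varphi_f(t,w)$ and $ds=\dot s\,d\tau=f(s)\,d\tau$. Substituting into the integral taken along the orbit gives
\[
\int_w^z \frac{ds}{f(s)}=\int_0^t \frac{f(s)}{f(s)}\,d\tau=\int_0^t d\tau=t,
\]
where the cancellation is legitimate because an orbit never reaches an equilibrium in finite time (an equilibrium is itself a constant solution, and solutions are unique), so $f(s)\neq0$ along the path. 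This single computation is the engine behind both assertions; what remains is to read the left-hand side correctly as a complex integral.

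The key observation is that $\int_w^z \frac{ds}{f(s)}$ is in general path-dependent: deforming the path across the singularity at $0$ changes its value by $2\pi i$ times the residue enclosed, i.e. by an integer multiple of $2\pi i\,c$ with $c=\operatorname{Res}(1/f,0)$. I would make this precise using the Laurent expansion $1/f(s)=c/s+R(s)$ near $0$, where $R$ carries no $s^{-1}$ term and hence admits a single-valued primitive in a punctured neighbourhood of $0$. Thus any primitive of $1/f$ has the form $L(s)=c\log s+(\text{single-valued})$, and its only monodromy around $0$ is the additive period $2\pi i\,c$.

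For part (b), when $c=0$ this period vanishes, so $1/f$ has a single-valued primitive near $0$ and the integral is genuinely path-independent; the computation above then yields $\int_w^z \frac{ds}{f(s)}=t$ unambiguously. For part (a), when $c\neq0$ the integral is only defined modulo $2\pi i\,c$, so a bare equality with $t$ would be ill-posed. Instead I divide by $c$, which reduces the ambiguity to an integer multiple of $2\pi i$, and then exponentiate. Since $\exp(2\pi i k)=1$, the quantity $\exp\!\left(\frac{1}{c}\int_w^z \frac{ds}{f(s)}\right)=\exp\!\left(\int_w^z\frac{ds}{c\,f(s)}\right)$ is single-valued, and by the orbit computation it equals $\exp(t/c)$, which is the claim.

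The main obstacle I anticipate is the careful bookkeeping of this multivaluedness: one must confirm that the origin is the only singularity of $1/f$ relevant to the homotopy class of the orbit, so that the period lattice is exactly $2\pi i\,c\,\mathbb{Z}$ rather than a mixture of contributions from several equilibria, and that the branch of $\log$ implicit in $L$ is tracked consistently along the orbit so that $L(z)-L(w)$ genuinely equals $t$ for the chosen lift. Restricting $w$, $z$ and the orbit to a punctured neighbourhood of $0$, as the surrounding discussion of periodic orbits does, removes this difficulty and makes the two cases a clean dichotomy between a vanishing and a nonvanishing period.
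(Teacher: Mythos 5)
Your argument is correct. The paper does not actually prove this proposition itself (it defers to the reference \cite{GXG}), so there is no in-paper proof to compare against line by line; however, the engine of your argument --- parametrizing the path of integration by the orbit, so that $ds=f(s)\,d\tau$ and the integral collapses to $\int_0^t d\tau=t$ --- is precisely the separation-of-variables computation the paper itself uses later to obtain the first integral $H=\Im G$ with $G'=1/f$, so your route is entirely in the spirit of the text. What you add beyond that one-line computation is the monodromy bookkeeping: identifying the period $2\pi i\,c$ of a primitive of $1/f$ around the origin and observing that dividing by $c$ and exponentiating kills it. This is the right explanation of why part (a) is phrased with an exponential and why the residue enters at all, and your closing caveat --- that one must work in a punctured neighbourhood of $0$ (or otherwise control the homotopy class of the path) so that the residues of $1/f$ at the other zeros of $f$, which need not be commensurable with $c$, do not contribute further periods --- is a genuine and necessary precision that the paper's bare statement glosses over but that is consistent with how the result is applied in the example $\dot z=(-1+i)z$.
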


See \cite{GXG} for a proof.

\begin{example}
	Consider $\dot{z}=(-1+i)z$. The holomorphic system is given by
\[\dot{x}=-x-y,\quad \dot{y}=x-y.\]
The equilibrium point $(0,0)$ is an attracting focus. The solution that passes through $(1,0)$ at $t=0$
will intersect the $x$ axis again at the point $(-e^{-\pi}, 0)$ and this will occur after the time $t = \pi$.
Indeed, 
$\dfrac{1}{f(z)}=\dfrac{1}{(-1+i)z}$ implies $\operatorname{res}(1/f,0)=\dfrac{1}{-1+i}, $
and thus if $w=\varphi(t,z)$ we have
$ \exp\left(\int_z^w \dfrac{ds}{\frac{1}{-1+i}(-1+i)s}\right)=\exp\left(\dfrac{t}{\frac{1}{-1+i}}\right) , $ and taking $t=\pi, z=1$ we get $w=-e^{-\pi}.$
\end{example}

For the beauty of the argument used in \cite {GXG} to prove the next theorem, let is reproduce its proof here.

\begin{theorem}
	Let $f$ be a holomorphic function defined in a domain $\mathcal{V}\subseteq\C$. The phase portrait of
	$\dot{z}=f(z)$ has no limit cycle.
\end{theorem}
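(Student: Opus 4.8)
The plan is to argue by contradiction, exploiting the fact that a primitive of $1/f$ rectifies the field $\dot z=f(z)$ into the constant field $\dot G=1$. Suppose that $\gamma$ is a limit cycle, realized as the orbit through a point $z_0$, with minimal period $T>0$. Since no equilibrium can lie on a periodic orbit, $f$ does not vanish on $\gamma$; hence $1/f$ is holomorphic in a neighborhood of $\gamma$, and the (a priori multivalued) primitive $G(z)=\int_{z_0}^{z} ds/f(s)$ is well defined by analytic continuation along paths near $\gamma$. Because $f(z_0)\neq 0$ we have $G'(z_0)=1/f(z_0)\neq 0$, so $G$ is a biholomorphism from a neighborhood of $z_0$ onto a neighborhood of $0$, and in this coordinate the system becomes $\dot G=G'(z)\,\dot z=f(z)/f(z)=1$.

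First I would compute the monodromy of $G$ around $\gamma$. Along the periodic orbit $\dot G=1$ forces $G$ to increase at unit rate, so after one turn $G$ has grown by exactly $T$ while $z$ returns to $z_0$; therefore the period of $\gamma$ equals the contour integral $P:=\oint_{\gamma} ds/f(s)=T$, which by the residue theorem is $2\pi i$ times the sum of the residues of $1/f$ at the zeros of $f$ enclosed by $\gamma$. In particular $P$ is a positive real number, so $\operatorname{Im}(G)$ is single valued and is a genuine first integral of the system near $\gamma$.

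Next I would transport this information to nearby orbits. Fix a short transversal $\Sigma$ through $z_0$ and consider the first return map; for points $w\in\Sigma$ close enough to $z_0$ the orbit of $w$ stays near $\gamma$ and encircles exactly the same zeros of $f$ as $\gamma$ does, so the analytic continuation of $G$ around that loop produces the same monodromy $P=T$. Reading the dynamics in the straightened coordinate, the identification $G\sim G+T$ turns the strip swept by these orbits into a cylinder $\C/T\Z$ on which the flow is the rightward translation $\dot G=1$; since $T$ is real, every horizontal line $\operatorname{Im}(G)=\text{const}$ closes up after $G$--time $T$. Hence every orbit sufficiently close to $\gamma$ is itself periodic, with the same period $T$.

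Thus $\gamma$ sits inside a continuous band of periodic orbits and cannot be isolated, contradicting the definition of a limit cycle; this proves the theorem. The delicate point, and the step I expect to be the main obstacle, is the monodromy bookkeeping: one must define $G$ coherently by continuation along the genuinely nonclosed nearby orbits, verify that for $w$ close to $z_0$ the corresponding loop encloses precisely the same singular set as $\gamma$ (so the monodromy is literally unchanged rather than merely close), and phrase ``nearby orbits close up'' precisely enough, via the return map on $\Sigma$, to contradict isolation rather than merely nonhyperbolicity.
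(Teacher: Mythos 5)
Your proof is correct, but it takes a genuinely different route from the one in the paper. The paper's argument is shorter and softer: it considers the time-$T$ map $\xi(z)=\varphi_f(T,z)$, observes that it is analytic in $z$ (holomorphic dependence on initial conditions for holomorphic ODEs) and equals the identity on the set $\gamma$, which has accumulation points, and then invokes the identity theorem to conclude that $\xi=\mathrm{id}$ on a whole neighborhood, so $\gamma$ sits in a continuum of $T$-periodic orbits. Your argument instead rectifies the field by the multivalued primitive $G=\int dz/f$, computes the monodromy of $G$ around $\gamma$ to be the (real, positive) period $T$, deduces that $\operatorname{Im}G$ is a single-valued first integral on an annular neighborhood of $\gamma$, and closes up nearby orbits via the return map. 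Both are sound; yours costs more bookkeeping (monodromy, the cylinder $\C/T\Z$, injectivity of $G$ near the compact set $\gamma$) but buys more information: it identifies the period as the period integral $\oint_\gamma dz/f$ and ties the non-existence of limit cycles directly to the first integral $\operatorname{Im}\int dz/f$ that the paper itself constructs in its integrability section, whereas the paper's proof only needs analyticity of the flow plus the identity principle. Two small points of precision in your write-up: the parenthetical residue-theorem identity $T=2\pi i\sum\operatorname{res}(1/f)$ requires the interior of $\gamma$ to be contained in $\mathcal{V}$, which need not hold if $\mathcal{V}$ is not simply connected (it is inessential, since $P=T$ already follows from integrating $\dot G=1$ along the orbit); and the statement that nearby return loops ``encircle the same zeros'' is better phrased as saying they are homotopic to $\gamma$ inside the annular neighborhood where $1/f$ is holomorphic, which is all the monodromy computation needs.
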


\begin{proof}
Suppose $ \gamma $ is a periodic orbit of $ \dot {z} = f (z) $ with period $ T $, that is, 
$ \varphi_f (z, T) = z $ whatever $ z \in\gamma $. Let us fix any point in $ \gamma $ and consider the transition
 function given $ \xi (z) = \varphi_f (z, T)$. The transition function is analytic and it is equal to identity at all 
 points that are in $ \gamma $. So this function coincides with the identity in a neighborhood of $ z $. 
 This means that the periodic orbit belongs to a continuum of periodic orbits, all with the same period $T$.
 \end{proof}

\section{Fluid Dynamics and Integrability}\label{sec7}

Consider a perfect, homogeneous and incompressible fluid. In this context, being perfect means that the force is due to pressure only, perpendicular to the separation surface between the parts of the fluid. Being homogeneous and incompressible means that the mass density $\rho$ is constant at all points of motion. Furthermore, let is assume that the velocity remains parallel to the plane $xy$ regardless of the third spatial coordinate $z$ such that the motion is the same in all planes parallel to the plane $xy$.\\

Let $X(x,y)=(X_1(x,y),X_2(x,y))$  be the velocity vector of this movement at the point $(x,y)$. Components $X_1$ and $X_2$ are smooth functions in $\R^2$ and must satisfy the following equations
\begin{equation} 
	\operatorname{div} X=\dfrac{\partial X_1}{\partial x}+\dfrac{\partial X_2}{\partial y}=0.\label{condDIV} \end{equation}

Let is also add the hypothesis that the motion is irrotational. We start with the line integral
\[\Gamma= \int_C X.t ds\]
where $C$ is a simple closed path and $t$ is the unit tangent vector to $C$. Note that $X.t$ represents the scalar value of the tangential velocity
and $\Gamma$ represents a measure of how much particles tend to circulate along the $C$ circuit. A fundamental theorem due to Lord Kelvin states that circulation remains constant over time. Since motion originates from rest, we conclude that circulation is zero for all time. Thus

\[ 0=\int_C X.t ds=\int_C X_1dx+X_2dy=\iint_R\left(\dfrac{\partial X_2}{\partial x}-\dfrac{\partial X_1}{\partial y}\right)dxdy,\]
where is the region inside the circuit $C$ and the last equation is due to the Green theorem. Since the integrand is a continuous function and $R$ is an arbitrary region
components $X_1$ and $X_2$ must also satisfy the following equations
\begin{equation}  
	\dfrac{\partial X_2}{\partial x}-\dfrac{\partial X_1}{\partial y}=0. \label{condIRRO} \end{equation}
Note that the equations \eqref{condDIV} and \eqref{condIRRO} are exactly the Cauchy-Riemann equations for the functions $X_1$ and $-X_2$.\\

Based on the above, it becomes of our interest to study systems 
\begin{equation} \label{sisCONJ}  
	\dot{x}=u(x,y), \\ \dot{y}=-v(x,y). 
\end{equation}
where $f(z)=u(x,y)+iv(x,y)$ is a holomorphic function.  Let is refer to the above system as the \textbf{conjugate system} $\overline{f(z)}$.\\

The conjugate system is a \textbf{gradient system}, that is,
\[\dot{x}=u=\dfrac{\partial \phi}{\partial x},\quad \dot{y}=-v=\dfrac{\partial \phi}{\partial y}.\]

The function $\phi$ satisfies the Laplace equation
\[\Delta\phi=\dfrac{\partial^2\phi}{\partial x^2}+\dfrac{\partial^2\phi}{\partial y^2}=0\]
and therefore it is a harmonic function. This implies that $\phi$ is the real part of a holomorphic function $F(z)=\phi+\psi i$.\\

The orthogonality of the level curves of the real and imaginary parts of $F$ implies that the function $\psi$ is in fact a first integral of the conjugate system.\\

\noindent\textbf{Definition} $F(z)=\phi+\psi i$ is called the complex potential function of motion.\\

\begin{proposition} \label{intconj} Let f(z) be a holomorphic function. A first integral of the conjugate holomorphic system 
	$\dot{z}=\overline{f(z)}$ is given by $\psi(x,y)=\Im F(z)$ where $F'(z)= f(z)$. In other words, the complex potential of 
	the conjugate holomorphic system is a primitive of $f(z)$.
\end{proposition}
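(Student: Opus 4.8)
The plan is to exploit the fact that an antiderivative of a holomorphic function is again holomorphic, so that $F=\phi+\psi i$ obeys the Cauchy--Riemann equations, and then to verify by a one-line computation that $\psi$ is constant along the orbits of the conjugate field.

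First I would record that since $F'(z)=f(z)$ with $f$ holomorphic, $F$ is itself holomorphic on its domain; writing $F=\phi+\psi i$ we therefore have the Cauchy--Riemann relations $\phi_x=\psi_y$ and $\phi_y=-\psi_x$. Next I would translate the hypothesis $F'=f$ into statements about the partial derivatives. Using $F'(z)=\phi_x+i\psi_x$ together with $f=u+iv$ and matching real and imaginary parts, I obtain $\phi_x=u$ and $\psi_x=v$. Substituting these into the Cauchy--Riemann equations produces the remaining two derivatives, namely $\psi_y=\phi_x=u$ and $\phi_y=-\psi_x=-v$.

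With these four identities in hand the verification is immediate. Along any solution $(x(t),y(t))$ of the conjugate system \eqref{sisCONJ}, that is $\dot{x}=u$ and $\dot{y}=-v$, I would compute
\begin{equation*}
\frac{d}{dt}\,\psi(x(t),y(t))=\psi_x\,\dot{x}+\psi_y\,\dot{y}=v\cdot u+u\cdot(-v)=0.
\end{equation*}
Hence $\psi$ is constant on every orbit, which is exactly the assertion that $\psi=\operatorname{Im}F$ is a first integral of $\dot{z}=\overline{f(z)}$. The same relations $\phi_x=u$, $\phi_y=-v$ show that $\nabla\phi=(u,-v)$ coincides with the conjugate vector field, recovering the gradient structure announced just before the statement and confirming that the level curves of $\psi$ are orthogonal to those of $\phi$.

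I do not anticipate any genuine obstacle: the argument is pure bookkeeping with the Cauchy--Riemann equations, and the only point demanding care is the sign in the second component of the conjugate field, which is precisely what makes the two terms of $\dot{\psi}$ cancel rather than reinforce. The single conceptual ingredient that must be invoked rather than computed is that a primitive of a holomorphic function is holomorphic, so that $F$ truly satisfies the Cauchy--Riemann equations on the relevant domain; everything else follows by direct substitution.
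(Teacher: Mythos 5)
Your proof is correct, and it is organized in the opposite direction from the paper's. The paper starts from the gradient structure of the conjugate system: it takes $\phi$ with $u=\phi_x$, $v=-\phi_y$, forms the complex potential $F=\phi+\psi i$, and then only checks that $F'(z)=\phi_x+i\psi_x=u+iv=f(z)$; the fact that $\psi=\Im F$ is a first integral is inherited from the discussion immediately preceding the statement, where it is attributed to the orthogonality of the level curves of the harmonic conjugates $\phi$ and $\psi$. You instead begin with an arbitrary primitive $F$ of $f$, extract $\phi_x=u$, $\psi_x=v$, $\psi_y=u$, $\phi_y=-v$ from $F'=\phi_x+i\psi_x$ and the Cauchy--Riemann equations, and then verify the first-integral property directly by computing $\dot{\psi}=\psi_x\dot{x}+\psi_y\dot{y}=vu+u(-v)=0$ along orbits of \eqref{sisCONJ}. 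Your version is more self-contained, since it does not lean on the orthogonality argument or on the prior construction of $\phi$, and it makes the cancellation that drives the result explicit; the paper's version buys brevity by reusing the framework it has just set up. The only caveat worth keeping in mind, which applies equally to both arguments, is the degenerate case where $f$ vanishes identically on an open set, in which $\psi$ is constant there and "first integral" is vacuous.
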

\noindent\textit{Proof.} Indeed, for $f=u+iv$ the holomorphic conjugated system is 
\[\dot{x}=u,\quad \dot{y}=-v\]
which is gradient and thus it exists $\phi$ such that we obtain $u=\phi_x$ and $v=-\phi_y$.
The complex potential $F=\phi+\psi i$ is holomorphic and $F'(z)=\phi_x+i\psi_x=u+iv=f(z).$

\begin{proposition} Let f(z) be a holomorphic function. The equilibrium points of the conjugate holomorphic system $\dot{z}=\overline{f(z)}$
	are the critical points of $\psi$ and  those that are not degenerate are of the saddle type.
\end{proposition}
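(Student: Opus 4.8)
The plan is to prove the two assertions using the complex potential $F=\phi+i\psi$ with $F'(z)=f(z)$ furnished by the preceding proposition. Throughout I write $f=u+iv$, so that the conjugate system reads $\dot x=u$, $\dot y=-v$.

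First I would pin down the critical set of $\psi$. Expanding $F'(z)=\phi_x+i\psi_x=u+iv$ gives $\phi_x=u$ and $\psi_x=v$, and the Cauchy--Riemann equations $\phi_x=\psi_y$, $\phi_y=-\psi_x$ for $F$ then yield $\psi_y=u$ and $\phi_y=-v$. Consequently $\nabla\psi=(v,u)$, so $\psi$ has a critical point exactly where $u=v=0$, that is, where $f$ vanishes. Since the equilibria of $\dot x=u$, $\dot y=-v$ are also precisely the zeros of $f$, the two sets coincide, which is the first assertion.

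For the type of a nondegenerate equilibrium I would use that the conjugate system is the gradient system $\dot{\mathbf x}=\nabla\phi$, so its linearization at an equilibrium is the Hessian
\[
H_\phi=\begin{pmatrix}\phi_{xx}&\phi_{xy}\\ \phi_{xy}&\phi_{yy}\end{pmatrix},
\]
a symmetric matrix, hence with real eigenvalues. As $\phi=\operatorname{Re}F$ is harmonic, $\phi_{yy}=-\phi_{xx}$, so $\operatorname{tr}H_\phi=0$ and $\det H_\phi=-\phi_{xx}^2-\phi_{xy}^2\le 0$. Nondegeneracy of the equilibrium forces $\det H_\phi\ne 0$, hence $\det H_\phi<0$; the eigenvalues $\pm\sqrt{-\det H_\phi}$ are then real and of opposite sign, which is exactly a saddle.

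The whole argument is bookkeeping with the Cauchy--Riemann relations and harmonicity, and the only point I would be careful about is that \emph{nondegeneracy of the critical point of $\psi$} truly matches \emph{hyperbolicity of the equilibrium}. A short computation using $u_x=v_y$, $u_y=-v_x$ shows $\det H_\phi=\det H_\psi=-(u_x^2+u_y^2)$, so the two determinants vanish simultaneously and there is no gap; a nondegenerate equilibrium is precisely a simple zero of $f$. I would close by remarking that the two transversal separatrices of such a saddle are exactly the level curves of the first integral $\psi$ crossing at the point.
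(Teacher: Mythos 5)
Your proof is correct, and both halves are sound: the identification of equilibria with critical points of $\psi$ via the Cauchy--Riemann equations for $F$ is exactly the paper's argument. The second half takes a slightly different, and arguably cleaner, route. The paper computes the Hessian of the \emph{first integral} $\psi$, uses harmonicity and Schwarz to get $\psi_{xx}\psi_{yy}-\psi_{xy}^2=-\psi_{xx}^2-\psi_{xy}^2\le 0$, and concludes that a nondegenerate critical point of $\psi$ is a saddle of that function (leaving implicit the final step from ``saddle of the first integral'' to ``saddle of the flow''). You instead note that the linearization of the gradient system $\dot{\mathbf x}=\nabla\phi$ \emph{is} the Hessian of $\phi$, which is symmetric and trace-free by harmonicity, so at a nondegenerate equilibrium its determinant $-\phi_{xx}^2-\phi_{xy}^2$ is strictly negative and the eigenvalues are real of opposite sign: the equilibrium is a hyperbolic saddle of the vector field directly. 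Your closing observation that $\det H_\phi=\det H_\psi=-(u_x^2+u_y^2)=-|f'|^2$ is a worthwhile addition, since it shows the two possible readings of ``nondegenerate'' (as a critical point of $\psi$ or as an equilibrium) coincide and both amount to $f'\neq 0$ at the zero --- a point the paper glosses over.
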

\noindent\textit{Proof.} Since $\dot{x}=u=\phi_ x, \quad \dot{y}=-v= \phi_y$
the claim about equilibria being the critics of $\psi$ holds true. Indeed , 
$u=-v=0$ implies $\psi_x=-\phi_y=0,\quad \psi_y=\phi_x=0 .$

The function $\psi$ is harmonic and $\psi\in C^{\infty}$, 
thus $\psi_{xx}+\psi_{yy}=0$
and, for the Schwartz theorem $\psi_{xy}-\psi_{yx}=0.$
Consequently the determinant of the Hessian matrix $\psi_{xx}.\psi_{yy}-\psi_{xy}\psi_{yx}=-\psi_{xx}^2-\psi_{xy}^2\leq 0.$

\noindent

\textbf {Example.} Let us consider $f(z)=z^2$. So the conjugate system is
\[ \dot{x}=x^2-y^2, \quad \dot{y}=-2xy, \]
and the first integral is given by $\psi(x,y)=x^2y-\dfrac{y^3}{3}$.

\begin{center}
	\begin{figure}[h]
		\begin{overpic}[scale=0.30]
			{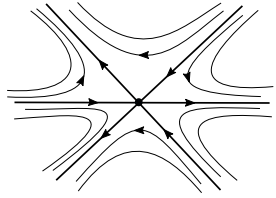} 
		\end{overpic}
		\caption{Level curves $x^2y-\dfrac{y^3}{3}=k$.}
	\end{figure}
\end{center}

\textbf {Example.} Let us consider $f(z)=\dfrac{z^2}{1+z}$. The complex potential is given by
$F(z)=\dfrac{z^2}{2}-z+\log(z+1)$. So the conjugate system is
\[ \dot{x}=\dfrac{x^3+x^2+xy^2-y^2}{(1+x)^2+y^2}, \quad
\dot{y}=-\dfrac{x^2y+2xy+y^3}{(1+x)^2+y^2}. \]
and the first integral is given by $\psi(x,y)=(x-1)y+\arctan \dfrac{y}{x+1}.$

\begin{center}
	\begin{figure}[h]
		\begin{overpic}[scale=0.30]
			{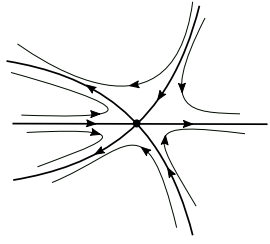} 
		\end{overpic}
		\caption{Level curves $(x-1)y+\arctan \dfrac{y}{x+1}=k$.}
	\end{figure}
\end{center}

\subsection{On the integrability of holomorphic systems}
Note that the phase portrait of $\dot{z}=\dfrac{1}{f(z)}$ is equal to the phase portrait of $\dot{z}=\overline{f(z)}$  where $f(z)\neq0$.
In fact, it follows from the fact that $\dfrac{1}{f(z)}=\dfrac{\overline{f(z)}}{|f(z)|^2}$ and $\dfrac{1}{|f(z)|^2}>0.$ 
In particular, the classification of the possible phase portraits of $\dot{z}=\dfrac{1}{p(z)}$ with $\partial p(z)\leq 4$, is the same classification of the possible phase portraits of the conjugate systems $\dot{z}=\overline{p(z)}$ with $\partial p(z)\leq 4$. The only difference is that the equilibrium points of $\dot{z}=\overline{p(z)}$ are polo-type singularities of $\dot{z}=\dfrac{1}{p(z)}.$\\

Now consider $\dot{z}=f(z)$ with $z\in A\subseteq\C$. Consider $g(z)=\dfrac{1}{f(z)}$. As before, the phase portrait of $\dot{z}=f(z)$
is equal to the phase portrait of $\dot{z}=\overline{g(z)}$ and the equilibrium points of $\dot{z}=f(z)$ are singularities of $g(z)$.  According Proposition \ref{intconj} 
a first integral of $\dot{z}=f(z)$ is $H(x,y)=\Im G(z)$ where $G'(z) =g(z).$  In short we have the following theorem.

\begin{theorem} Let $\dot{z}=f(z)$ be a holomorphic system defined on the open set $A\subseteq\C$. Thus its
	trajectories are contained in the level curves of $H(x,y)=\Im G(z)$ where $G'(z) =\dfrac{1}{f(z)}.$
\end{theorem}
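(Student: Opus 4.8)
The plan is to reduce the statement to Proposition \ref{intconj} by means of the phase-portrait equivalence established immediately above. First I would set $g(z)=1/f(z)$, so that on the open subset of $A$ where $f$ is holomorphic and non-vanishing one has $f(z)=1/g(z)$, and the system $\dot z=f(z)$ is literally $\dot z=1/g(z)$. The key algebraic identity is
\[
\frac{1}{g(z)}=\frac{\overline{g(z)}}{|g(z)|^2},\qquad \frac{1}{|g(z)|^2}>0,
\]
which exhibits the vector field $f=1/g$ as a strictly positive real multiple (a positive function of $z$) of the conjugate vector field $\overline{g}$. Consequently $\dot z=f(z)$ and $\dot z=\overline{g(z)}$ define the same oriented orbits, differing only by a time reparametrization.

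Next I would apply Proposition \ref{intconj} to the conjugate system $\dot z=\overline{g(z)}$. That proposition yields a first integral $H(x,y)=\Im G(z)$ with $G'(z)=g(z)=1/f(z)$, so that $G$ is precisely the complex potential whose derivative is $1/f$. Since $H$ is constant along every solution of $\dot z=\overline{g(z)}$, and the orbits of that system coincide, as point sets, with the orbits of $\dot z=f(z)$, the function $H$ is constant along the trajectories of $\dot z=f(z)$ as well. Therefore those trajectories are contained in the level curves $\{\,H=\text{const}\,\}$, which is the assertion.

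The one point that deserves care, and which I regard as the main (though mild) obstacle, is the justification that a first integral is preserved under multiplication of the vector field by a positive function: one must observe that if $\Phi(t)$ solves $\dot z=\overline{g(z)}$, then a solution of $\dot z=f(z)$ is obtained by a strictly increasing time change, so the two flows sweep out identical curves and any function constant on one is constant on the other. A secondary technical point is the domain: the identity above requires $g(z)\neq 0$, equivalently $f$ finite, and $g$ itself to be defined, equivalently $f(z)\neq 0$; thus the argument is carried out on $A$ minus the zeros of $f$, and the level-curve conclusion extends to the equilibria by continuity of $H$.

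Finally, I would record a direct verification as an independent check. Writing $f=u+iv$ and $G=\phi+i\psi$, the Cauchy--Riemann equations give $\psi_x=-\phi_y$ and $\psi_y=\phi_x$, while $G'(z)=1/f$ forces $\phi_x=u/(u^2+v^2)$ and $\phi_y=v/(u^2+v^2)$. Substituting into $\dot H=\psi_x\,\dot x+\psi_y\,\dot y=\psi_x u+\psi_y v$ yields $(-vu+uv)/(u^2+v^2)=0$, confirming that $H=\Im G$ is indeed a first integral of $\dot z=f(z)$ and that its level curves contain the trajectories.
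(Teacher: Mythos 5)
Your proposal is correct and follows essentially the same route as the paper: the theorem is stated to ``follow from the above considerations,'' namely the observation that $\dot z=f(z)$ and $\dot z=\overline{g(z)}$ with $g=1/f$ have the same orbits (differing by the positive factor $1/|g|^2$) combined with Proposition~\ref{intconj}, which is exactly your reduction, and you usefully make explicit the time-reparametrization and domain points the paper leaves implicit. The paper additionally records a quicker alternative derivation by separation of variables ($dz/f(z)=dt$, so $G(z)=t+a+bi$ and $\Im G(z)=b$), which you replace by an equivalent direct Cauchy--Riemann computation showing $\dot H=0$.
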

\noindent\textit{Proof.} The result follows from the above considerations. However, a very simple way to obtain the same result is to consider the technique of separating variables. In fact, being $\dot{z}=f(z)$ it follows that
\[\dfrac{dz}{dt}=f(z)\implies \dfrac{dz}{f(z)}=dt\]
and integrating both sides of the equation we get
\[G(z)=t+a+bi\] where $G(z)$ is a primitive of $\dfrac{1}{f(z)}$ and $a+bi$ is a complex constant. 
So the imaginary part on the right side must be equal to the imaginary part on the left side and therefore
\[\Im{G(z)}=b.\]

\noindent\textbf{Example.} Consider $\dot{z}=z$. Thus $g(z)=\dfrac{1}{z}$, $G(z)=\operatorname{log}(z)$
and 
\[H(x,y)=\Im(\operatorname{log}(z))=\operatorname{artan}\dfrac{y}{x}.\]

\noindent\textbf{Example.} Consider $\dot{z}=z^2$. Thus $g(z)=\dfrac{1}{z^2}$, $G(z)=-\dfrac{1}{z}$
and 
\[H(x,y)=\Im(-\dfrac{1}{z})=\dfrac{y}{x^2+y^2}.\]

\noindent\textbf{Example.} Consider $\dot{z}=(1+i)z$. Thus $g(z)=\dfrac{1}{(1+i)z}$, $G(z)=\dfrac{1}{1+i}\operatorname{log}(z)$
and 
\[H(x,y)=\Im(\dfrac{1}{1+i}\operatorname{log}(z))=\operatorname{artan}\dfrac{y}{x}-\dfrac{1}{2}\operatorname{ log}(x^2+y^2).\]
See Figure \eqref{figex2}.

\begin{center}
	\begin{figure}[h]
		\begin{overpic}[scale=0.3]{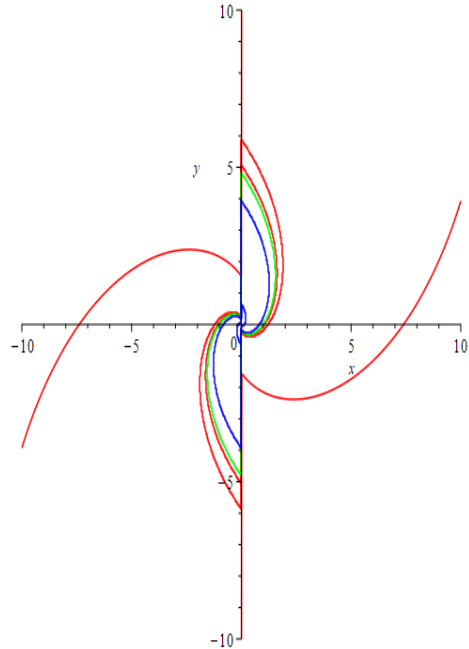}
		\end{overpic}
		\caption{Level curves $\operatorname{arctan}\dfrac{y}{x}-\dfrac{1}{2}\operatorname{log}(x^2+y^2)=k$,  $k=0$ green, $k>0$ blue, $k<0$ red.}
		\label{figex2}
	\end{figure}
\end{center}

\noindent\textbf{Example.} Consider $\dot{z}=z^2\exp(1/z)$. Thus $g(z)=\dfrac{1}{z^2}\exp(-1/z)$, $G(z)=\exp(-1/z)$
and 
\[H(x,y)=\Im(\exp(-1/z))=\exp({\dfrac{-x}{x^2+y^2}})\sin(\dfrac{y}{x^2+y^2}).\]
See Figure \eqref{figex3}.

\begin{center}
	\begin{figure}[h]
		\begin{overpic}[scale=0.4]{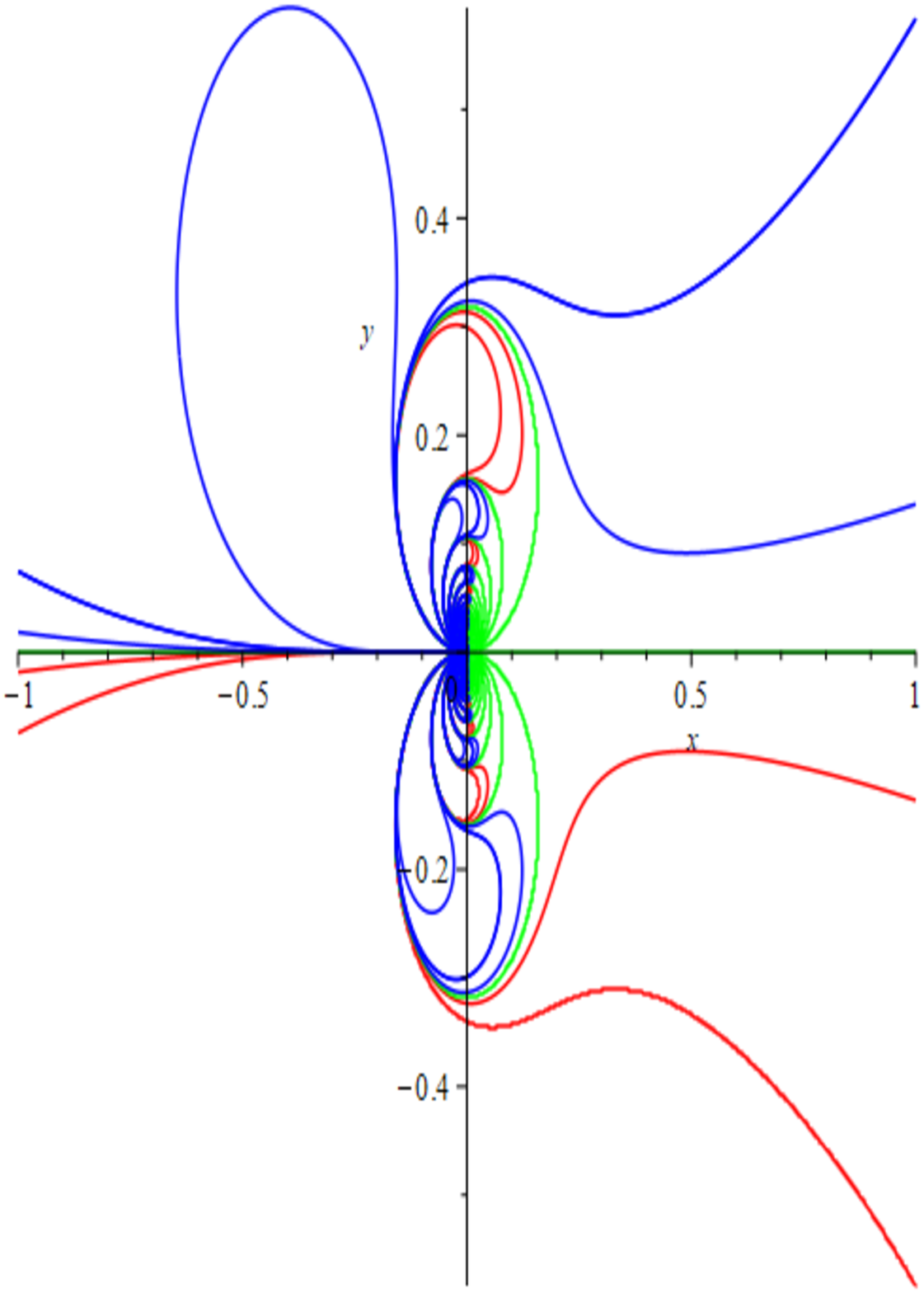}
		\end{overpic}
		\caption{Level curves $\exp\big({\dfrac{-x}{x^2+y^2}}\big)\sin(\dfrac{y}{x^2+y^2})=k$, $k=0$ green, $k>0$ blue, $k<0$ red.}
		\label{figex3}
	\end{figure}
\end{center}

\section{Local phase portraits and center-focus problem}\label{sectionlocal}

This section is devote to study the local phase portrait of the systems  $ \dot {z} = f (z) $ with f$(z)$ holomorphic polynomial of degree $n=2,3,4$. Moreover, we will present the triviality of center-focus problem.



\subsection{Center-Focus Problem}

Consider $\dot{z}=f(z)$, $f$ polynomial and $f(z_{0})=0$. Let us suppose  $f'(z_{0})\neq0$. We know that $f$ and $f'(z_0)z$ are $z_00$--conformally conjugated. So, $z_{0}$ is a center if $\operatorname{Re}(f'(z_{0}))=0$ and $z_{0}$ is a focus/node if  $\operatorname{Re}(f'(z_{0}))\neq 0$. Moreover, if  
$f'(z_{0})=0$, then there exists $n$ such that $f$ is $z_00$--conformally conjugated to $z^{n}$ or $(\gamma z^{n})/(1+z^{n})$. In both cases, the equilibrium point is neither a center nor a focus. See more details in \cite{BLT}. Therefore, the center-problem for holomorphic systems is trivial. It is enough analyzing $f'(z_{0})$.

\subsection{Quartic Polynomial Holomorphic Systems}

Let $p(z)$ be a non constant  polynomial holomorphic systems with degree $\partial p\leq 4$.  Without loss  of generality we assume that $z_0=0$ is an equilibrium point.
\[p(z)=A_1z+A_2z^2+A_3z^3+A_4z^4.\]
\begin{itemize}
	\item If $A_1\neq0$ then $z_0$ is a simple zero. In this case $p(z)$ and $g(z)=A_1z$ are  conformally conjugated.
	\item If $A_1=0$,  $A_2\neq 0$ and $A_3\neq0$ then $z_0$ is a zero of order $2$
	and $\operatorname{res}\left(\dfrac{1}{p},0\right)=\dfrac{1}{\lambda}=-\dfrac{A_3}{A_2^2}$. Thus follow that 
	$p(z)$ and $g(z)=\dfrac{\lambda z^2}{1+z}$ are  conformally conjugated.
	\item If $A_1=0$,  $A_2\neq 0$ and $A_3=0$ then $z_0$ is a zero of order $2$
	and $\operatorname{res}\left(\dfrac{1}{p},0\right)=0$. Thus follow that 
	$p(z)$ and $g(z)=A_2z^2$ are  conformally conjugated.
	\item If $A_1=A_2=0$  and $A_3\neq0$ then $z_0$ is a zero of order $3$. 
	Since $\operatorname{res}\left(\dfrac{1}{p},0\right)=\dfrac{1}{\alpha}=\dfrac{A_4^2}{A_3^3}$ follow that 
	$p(z)$ and $g(z)=\dfrac{\alpha z^3}{1+z^2}$ are  conformally conjugated.
	
	\item If $A_1=A_2=A_3=0$  then  $p(z)=A_4z^4$.
\end{itemize}

For a  general equilibrium point $z_0$, $p(z_0)=0$, we have:
\begin{itemize}
	\item If $p'(z_0)\neq0$ then  $p(z)$ and $g(z)=p'(z_0)z$ are  $z_00-$conformally conjugated.
	\item If $p'(z_0)=0$, $p''(z_0)\neq 0$ and $p'''(z_0)\neq0$ then 
	$p(z)$ and $g(z)=\dfrac{\lambda z^2}{1+z}$ are  $z_00-$conformally conjugated where $\dfrac{1}{\lambda}=-\dfrac{2p'''(z_0)}{3p''(z_0)^2}$.
	\item If $p'(z_0)=0$, $p''(z_0)\neq0$  and $p'''(z_0)=0$ then $p(z)$ and $g(z)=\dfrac{p''(z_0)}{2}z^2$ are  $z_00-$conformally conjugated.
	\item If $p'(z_0)=p''(z_0)=0$  and $p'''(z_0)\neq0$ then  $p(z)$ and $g(z)=\dfrac{\alpha z^3}{1+z^2}$ are  $z_00-$conformally conjugated, 
	where $\dfrac{1}{\alpha}=\dfrac{3p^{(4)}(z_0)^2}{8p'''(z_0)^3}$.
	\item If $p'(z_0)=p''(z_0)=p''(z_0)=0$  then $p(z)$ and $g(z)=\dfrac{p^{(4)}(z_0)}{24}z^4$ are  $z_00-$conformally conjugated.
\end{itemize}


\begin{center}
	\begin{figure}[h]
		\begin{overpic}[scale=0.6]
			{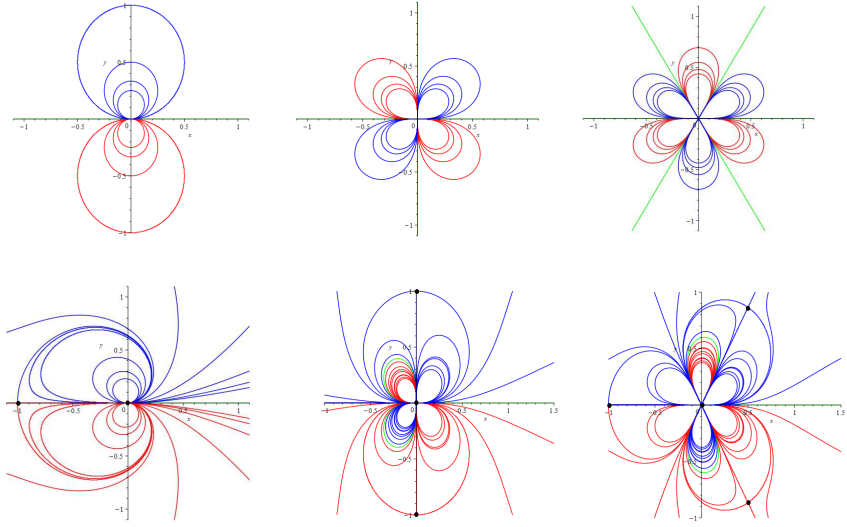} 
		\end{overpic}
		\caption{Local dynamics: $\dot{z}=z^2$, $\dot{z}=z^3$, $\dot{z}=z^4$, $\dot{z}=\dfrac{z^2}{1+z}$, $\dot{z}=\dfrac{z^3}{1+z^2}$ and $\dot{z}=\dfrac{z^4}{1+z^3}$, respectively.}
	\end{figure}
\end{center}
\newpage

\section{Quadratic  Polynomial Holomorphic Systems}\label{sec3}

Consider a quadratic polynomial holomorphic function

\begin{equation}
f(z)=A_0+A_1z+A_2z^2, \quad A_k=a_k+ib_k,\quad  A_2\neq 0.\label{hol-q-vec}
\end{equation}	
 
Its trajectory are the solutions of the differential system
\begin{equation}\left\{\begin{array}{ll}
\dot{x}&=a_0+a_1x-b_1y+ a_2(x^2-y^2)-b_2(2xy),\\
\dot{y}&=b_0+b_1x+a_1y+b_2(x^2-y^2)+a_2(2xy).
\end{array}
\right.\label{qhvf}
\end{equation}
 
\subsection{Finite equilibrium points.}

\begin{proposition}
	If $z(t)$ is a periodic orbit of the system \eqref{qhvf} then $z(t)$ intersects the straight-line $a_1+2a_2x-2b_2y=0$.
\end{proposition}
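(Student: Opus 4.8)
The plan is to recognize that the line $a_1+2a_2x-2b_2y=0$ is exactly the zero set of the divergence of the field \eqref{qhvf}. Writing $f(z)=A_0+A_1z+A_2z^2$ we have $f'(z)=A_1+2A_2z$, so $\operatorname{Re}(f'(z))=a_1+2a_2x-2b_2y$; equivalently, differentiating the right-hand sides of \eqref{qhvf} gives $u_x=v_y=a_1+2a_2x-2b_2y$ (in agreement with the Cauchy--Riemann equations $u_x=v_y$). Hence, setting $X=(u,v)$,
\[ \operatorname{div} X = u_x+v_y = 2\,(a_1+2a_2x-2b_2y), \]
so the claimed straight-line is precisely $\{\operatorname{div} X=0\}$. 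This identification is the crux of the argument.

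Next I would apply Green's theorem to the periodic orbit. By uniqueness of solutions a periodic orbit $\gamma$ of \eqref{qhvf} is a simple closed curve, so by the Jordan curve theorem it bounds a region $R$ of positive area. Parametrizing $\gamma$ by the flow we have $dx=u\,dt$ and $dy=v\,dt$, whence the one-form $-v\,dx+u\,dy=(-vu+uv)\,dt$ vanishes identically along $\gamma$. Green's theorem then yields
\[ 0=\oint_\gamma(-v\,dx+u\,dy)=\iint_R(u_x+v_y)\,dx\,dy=2\iint_R\big(a_1+2a_2x-2b_2y\big)\,dx\,dy, \]
so the affine function $h(x,y)=a_1+2a_2x-2b_2y$ has zero average over $R$.

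From here I would run a sign-change argument. Since $A_2\neq0$ we have $(a_2,b_2)\neq(0,0)$, so $h$ is a non-constant affine function whose zero set is a genuine line $\ell$. If $h$ had constant sign on the open connected set $R$, its integral over $R$ would be strictly positive or strictly negative, contradicting the above; therefore $h$ attains both signs on $R$, and by the intermediate value theorem along a path in $R$ it vanishes at some interior point, i.e. $\ell\cap R\neq\emptyset$. Because $R$ is bounded while $\ell$ is unbounded, travelling along $\ell$ away from that interior point forces $\ell$ to leave $R$, crossing the boundary $\partial R=\gamma$; thus $\gamma$ meets $\ell$, which is exactly the assertion that $z(t)$ intersects $a_1+2a_2x-2b_2y=0$.

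The divergence computation and the vanishing of the boundary integral are routine. The only step needing care is the final topological passage from \emph{``$\ell$ meets the interior region $R$''} to \emph{``$\ell$ meets the orbit $\gamma$ itself''}; this rests on $\gamma$ being a Jordan curve enclosing a bounded region, so that the unbounded line $\ell$, once it enters $R$, must exit across $\gamma$.
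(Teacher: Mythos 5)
Your proof is correct and follows essentially the same route as the paper: both arguments hinge on identifying the given line as the zero set of $\operatorname{div}X=u_x+v_y=2(a_1+2a_2x-2b_2y)$ and then excluding a periodic orbit confined to a half-plane where the divergence has constant sign. The only difference is that the paper simply invokes Bendixson's criterion at that point, whereas you re-derive it via Green's theorem and supply the (correct) final topological step showing the line must actually cross the orbit itself.
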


\begin{proof}
The divergence of \eqref{qhvf}  is 
\[u_x+v_y=2(a_1+2a_2x-2b_2y) .\]
Thus the divergence has constant sign for $(x,y)$ out of the straight-line $a_1+2a_2x-2b_2y=0$.
Applying Bendixson's criteria  we conclude the proof. 	
\end{proof}

	\begin{proposition} 
		Let $f$ be a quadratic holomorphic polynomial function as in \eqref{hol-q-vec} with $a_0=b_0=0$.
		Then system \eqref{qhvf} has at most two equilibrium points.
		Moreover
		\begin{itemize}
			\item [(a)] If $a_1\neq0$ then there exist two equilibrium points, one of them being a repelling focus or node and
			the other being an attracting focus or node.
			\item [(b)] If $a_1=0$  and $b_1\neq0$ then both equilibria are centers.
			\item [(c)] If $a_1=b_1=0$ and $a_2\neq0$ then there exists only one equilibrium, non-hyperbolic, and the straight-line 
			$y=-\frac{b_2}{a_2}x$ is invariant.
			\item [(d)] If $a_1=b_1=a_2=0$ and $b_2\neq0$ then there exists only one equilibrium, non-hyperbolic, and the straight-line 
			$x=0$ is invariant.
		\end{itemize}
	\end{proposition}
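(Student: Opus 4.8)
The plan is to exploit the factorization forced by $a_0=b_0=0$, reduce the classification of the finite equilibria to the sign of $\operatorname{Re}f'$ at each zero via Proposition~\ref{sver1}, and then treat the two degenerate cases by a direct argument locating the invariant ray. First I would observe that $A_0=a_0+ib_0=0$ gives $f(z)=A_1z+A_2z^2=z(A_1+A_2z)$, so the finite equilibria of \eqref{qhvf} are exactly the zeros $z=0$ and, when $A_1\neq 0$, the point $z=-A_1/A_2$; since $A_2\neq 0$ these are the only possibilities, which already yields the bound of at most two equilibria. Differentiating, $f'(z)=A_1+2A_2z$, so $f'(0)=A_1$ while $f'(-A_1/A_2)=-A_1$. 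The key structural observation is therefore that the two equilibria carry the opposite derivatives $\pm A_1$, and everything in (a)--(b) follows from feeding this into Proposition~\ref{sver1}.

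For (a), the hypothesis $a_1\neq 0$ forces $A_1\neq 0$, so the two zeros are distinct and simple; Proposition~\ref{sver1} then classifies each as source, sink or center according to the sign of $\operatorname{Re}f'$. Since $\operatorname{Re}f'(0)=a_1$ and $\operatorname{Re}f'(-A_1/A_2)=-a_1$ are nonzero and of opposite sign, one equilibrium is repelling and the other attracting, each a focus or a node. For (b), the conditions $a_1=0$, $b_1\neq 0$ make $A_1=ib_1$ purely imaginary and nonzero, so $f'=\pm ib_1$ at the two simple zeros; as $\operatorname{Re}f'=0$ at both, Proposition~\ref{sver1} gives two centers.

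For (c) and (d) we have $A_1=0$ and hence $f(z)=A_2z^2$, with a single double zero at the origin that is non-hyperbolic since $f'(0)=0$. To find the invariant line I would pass to the complex form and note that a ray $z=se^{i\theta}$, $s\in\R$, is invariant for $\dot z=A_2z^2$ exactly when $A_2z^2=A_2s^2e^{2i\theta}$ is a real multiple of $e^{i\theta}$, i.e.\ when $A_2e^{i\theta}\in\R$; writing $A_2=a_2+ib_2$ this is the condition $a_2\sin\theta+b_2\cos\theta=0$. In case (c), where $a_2\neq 0$, this gives $\tan\theta=-b_2/a_2$, i.e.\ the invariant line $y=-\tfrac{b_2}{a_2}x$; in case (d), where $a_2=0$ and $b_2\neq 0$, it forces $\cos\theta=0$, i.e.\ the line $x=0$.

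I expect the derivative evaluations and the appeal to Proposition~\ref{sver1} to be entirely routine. The one place demanding care is the invariant-line step of (c)--(d), where invariance of a line through the origin must be phrased correctly as tangency of the field (equivalently $A_2e^{i\theta}\in\R$), and the degeneration $a_2=0$ must be separated out explicitly, since the slope formula $-b_2/a_2$ is undefined there and is replaced by the vertical line $x=0$.
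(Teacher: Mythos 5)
Your proof is correct, and its skeleton is the same as the paper's: factor $f(z)=z(A_1+A_2z)$, observe that $f'(0)=A_1$ and $f'(-A_1/A_2)=-A_1$, and let the sign of $\operatorname{Re}f'$ decide everything. Where you diverge is in the tools. For (a) and (b) you invoke Proposition~\ref{sver1} directly, whereas the paper recomputes the classification by hand: it writes out the Jacobian matrices, reads off trace $T(E_1)=-T(E_2)=2a_1$ and determinant $a_1^2+b_1^2$ for (a), and for (b) it computes the Lyapunov coefficients $V_1=a_1$, $V_2=0$, $V_3=\pi a_1(a_2^2+b_2^2)/b_1^3$ and then translates $E_2$ to the origin to repeat the computation. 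The paper's own footnote concedes that the Lyapunov calculation is unnecessary for holomorphic systems and is included only for illustration, so your shortcut loses nothing; the paper's version buys an explicit numerical verification that a reader can check against general (non-holomorphic) quadratic theory. For (c) and (d) you derive the invariant line from the tangency condition $A_2e^{i\theta}\in\R$, i.e.\ $a_2\sin\theta+b_2\cos\theta=0$, which handles both cases uniformly and makes transparent why the vertical line appears when $a_2=0$; the paper instead substitutes the candidate line into $f$ and checks that $\operatorname{Im}f/\operatorname{Re}f=-b_2/a_2$. The two computations are equivalent, but yours explains where the line comes from rather than merely verifying it.
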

	
\begin{proof}

Considering a translation $x\mapsto x-a_0$, $y\mapsto y-b_0$ if necessary, we can assume that 
	$a_0=b_0=0.$  We have  
\[f(z)=A_1z+A_2z^2=0 \Leftrightarrow z=0\quad\mbox{or}\quad z=-\frac{A_1}{A_2}.\]	
So the equilibria are
$E_1=(0,0)$ and $E_2=\left( \frac{-a_1a_2-b_1b_2}{a_2^2+b_2^2},\frac{a_1b_2-a_2b_1}{a_2^2+b_2^2}\right).$  
Since $f'(0)=A_1$ and $f'(-\frac{A_1}{A_2})=-A_1$ the jacobian matrices at the equilibrium points of system \eqref{qhvf} are
\[  J(E_1) =\left[ \begin{array}{lr}
a_1&-b_1\\ b_1&a_1
\end{array}      \right]   \quad\mbox{and}   \quad J(E_2) =\left[ \begin{array}{lr}
-a_1&b_1\\ -b_1&-a_1
\end{array}      \right] . \]
The determinant $D$ and the trace $T$ are 
\[D(E_1)=D(E_2)=a_1^2+b_1^2\quad\mbox{and}\quad T(E_1)=-T(E_2)=2a_1.\]
We conclude that if $a_1> 0$ then $E_1$ is a repelling focus and $E_2$ is an attracting focus and if $a_1 <0$ then $E_1$
 is an attracting focus and $E_2$ is a repelling focus. This concludes the proof of (a).
 
The Lyapunov coefficients (see the appendix) of the equilibrium $E_1$ are $V_1=a_1$, $V_2=0$
and $V_3=\frac{\pi a_1(a_2^2+b_2^2)}{b_1^3}.$
Thus, if $a_1=0$ and $b_1\neq0$  then $E_1$ is a center. 
\footnote{We remember that if an equilibrium of a quadratic system has $V_1=V_2=V_3=0$ then it is a center. Moreover, for holomorphic systems,  the calculation of Lyapunov Constants are not necessary. This is not true for general systems. This fact shows one more special property of holomorphic systems. Above, we present the calculation for the reader to compare the results and has more materiality. } \\
	
Similarly we prove that if $a_1 = 0$ and $b_1\neq0$  then $ E_2 $ will also be a center. 
To do this we move the singularity to the origin considering 
	\[ x\rightarrow x+\frac{b_1b_2}{a_2^2+b_2^2} \quad y\rightarrow y+\frac{a_2b_1}{a_2^2+b_2^2}\]
and we follow the same steps.  Note that in this case the equilibria $E_1$ and $E_2$ are 
contained in the straight line $a_1+2a_2x-2b_2y=0$. This concludes the proof of (b). 

If $a_1=b_1=0$ we have 
$f(z)=[a_2(x^2-y^2)-2b_2xy]+i[b_2(x^2-y^2)+2a_2xy]$
and if $a_2\neq0,$ 
$ f\left(x-i\frac{b_2}{a_2}x\right)  = \left(a_2+\frac{b_2^2}{a_2}\right)x^2-i\left(b_2+\frac{b_2^3}{a_2^2}\right)x^2 . $

Since 
$\frac{\operatorname{Im} f(z)}{\operatorname{Re} f(z)}=-\frac{b_2}{a_2}$
the straight line 
$y=-\frac{b_2}{a_2}x$ is invariant. This concludes the proof of (c).

If $a_1=b_1=a_2=0$ we have 
$f(z)=-2b_2xy+ib_2(x^2-y^2)$
and 
$ f(-iy)  = -ib_2y^2 . $
then the straight line 
$x=0$ is invariant. This concludes the proof of (d).
\end{proof}

\subsection{Infinite equilibrium points} The formulas we will use in compactification are presented in the appendix.

	\begin{proposition}
			Let $f$ be a quadratic holomorphic polynomial function as in \eqref{hol-q-vec} with $a_0=b_0=0$.
		\begin{itemize}
			\item [(a)] If $a_1\neq0$ then there exist two saddle points on the infinite $\s^1$. One of them is the $\omega-$limit of 
			a repelling focus (or node) and the other one is the $\alpha-$limit of an attracting focus (or node).
			\item [(b)] If $a_1=0$ and $b_1\neq0$ then there exist two saddle points on the infinite $\s^1$
			which are connected by a finite orbit.
			\item [(c)] If $a_1=b_1=0$ and $a_2\neq0$ then there exist two saddle points on the infinite $\s^1$ with finite separatrix contained
		    in the straight line $y=-\frac{b_2}{a_2}x$.
			\item [(d)] If $a_1=b_1=a_2=0$ and $b_2\neq0$ then there exist two saddle points on the infinite $\s^1$ with finite separatrix contained
			in the straight line $x=0$.
		\end{itemize}
	\end{proposition}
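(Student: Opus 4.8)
The plan is to compactify the field \eqref{qhvf} on the Poincar\'e disk, locate its singular points on the line at infinity $\s^1$, determine their type, and finally trace the separatrices in each of the four cases.

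First I would pass to the chart $U_1$ of the Poincar\'e compactification, $u=y/x$, $v=1/x$, and rescale time by $v$ (since $\partial p=2$). A direct computation turns \eqref{qhvf} into
\[\dot u=(1+u^2)\,(a_2u+b_2+b_1v),\qquad \dot v=-v\big[v(a_1-b_1u)+a_2(1-u^2)-2b_2u\big],\]
so that $\{v=0\}$, the circle at infinity, is invariant. Because $1+u^2>0$, on $\{v=0\}$ the equilibria are the roots of $a_2u+b_2=0$. Equivalently, writing $P_2,Q_2$ for the quadratic homogeneous parts of \eqref{qhvf}, one checks $xQ_2-yP_2=(x^2+y^2)(b_2x+a_2y)$, so the infinite singular points are exactly the two antipodal directions of the line $b_2x+a_2y=0$: when $a_2\neq0$ they are $u_0=-b_2/a_2$ in $U_1$ and its antipode, and when $a_2=0$ (so $b_2\neq0$) they are the vertical direction $x=0$, best seen in the chart $U_2$.

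Next I would read off the type. At $(u_0,0)$ the Jacobian of the $U_1$ field is the triangular matrix
\[\begin{pmatrix} a_2(1+u_0^2) & b_1(1+u_0^2)\\ 0 & -a_2(1+u_0^2)\end{pmatrix},\]
whose eigenvalues $\pm a_2(1+u_0^2)$ have opposite sign; hence the point is a saddle, and the antipodal point (studied in $V_1$, where the field is multiplied by $(-1)^{\partial p-1}=-1$) is a saddle as well. When $a_2=0$ the same computation in $U_2$ gives eigenvalues $\pm b_2$, again a saddle. This recovers, by an explicit calculation, the general fact that a degree $n$ holomorphic field has $n-1$ pairs of infinite saddles; here $n-1=1$.

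It remains to describe the separatrices, and this is the step I expect to be the substantive part. Cases (c) and (d) are immediate: the invariant straight line produced in the finite analysis --- $y=-\tfrac{b_2}{a_2}x$ when $a_2\neq0$, and $x=0$ when $a_2=0$ --- has direction exactly $b_2x+a_2y=0$, so its two ends on $\s^1$ are the two infinite saddles and it passes through the unique finite (non-hyperbolic) equilibrium; this line is therefore the claimed finite separatrix joining the saddles. For case (a), the previous proposition gives one repelling and one attracting finite equilibrium (focus or node); each infinite saddle sends exactly one separatrix into the open disk, and since the system has no limit cycles, Poincar\'e--Bendixson forces the $\alpha$- or $\omega$-limit of such a separatrix to be a finite equilibrium; matching the eigenvalue signs at the two (antipodal, oppositely oriented) saddles shows one saddle is the $\omega$-limit of an orbit issuing from the repelling equilibrium and the other is the $\alpha$-limit of an orbit entering the attracting one. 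For case (b), where both finite equilibria are centers, I would use the global first integral $H=\operatorname{Im}G$ with $G'=1/f$: the two period annuli must be bounded, and the only admissible boundary compatible with two infinite saddles, two centers, no further finite equilibria and no limit cycles is a single finite orbit whose $\alpha$- and $\omega$-limits are the two infinite saddles; exhibiting this connecting level curve of $H$ (equivalently, using the reflection symmetry across the line $a_1+2a_2x-2b_2y=0$ carrying the two centers to one another) completes (b). The main obstacle is precisely this last point --- ruling out spiralling and pinning down the heteroclinic connection in (a) and (b) --- and it is exactly where the holomorphic structure (existence of $H$ and absence of limit cycles) is indispensable.
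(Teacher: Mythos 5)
Your proposal is correct and, for the part the paper actually proves, follows the same route: your $(u,v)$ system on the first chart is literally the paper's $(s,w)$ system on $U_1$ (with $\dot s$ factored as $(1+s^2)(a_2s+b_2+b_1w)$), you obtain the same triangular Jacobian with eigenvalues $\pm a_2(1+u_0^2)$, and you draw the same conclusion that the two antipodal infinite equilibria are saddles, repeating the computation in $U_2$ when $a_2=0$. The paper's written proof stops at that point and leaves the separatrix claims in (a)--(d) unargued, so your closing paragraph (the invariant lines for (c) and (d), Poincar\'e--Bendixson together with the absence of limit cycles for (a), and the first integral and the symmetry for (b)) supplies precisely what the paper omits; of these, only the heteroclinic connection in (b) would need to be written out with full care, but your sketch is on the right track and the overall argument is sound.
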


\begin{proof}  The phase portrait on $U_1$ is the central projection of the phase
	portrait of the system
	\begin{equation}
	\left\{\begin{array}{ll}
	\dot{s}&=b_2+b_1w+a_2s+b_2s^2+b_1s^2w+a_2s^3,\\
	\dot{w}&=-a_2w-a_1w^2+2b_2sw+b_1sw^2+a_2ws^2.
	\end{array}
	\right.
	\end{equation}
	The equilibrium points in $\s^1$ are determined by
	\[w=0,\quad b_2+a_2s+b_2s^2+a_2s^3=(b_2+a_2s)(1+s^2)=0.\]
	Thus if $a_2\neq0$ the equilibrium is $(-\frac{b_2}{a_2},0)$ and if $a_2=0$ either
	all points are equilibrium points or none point is equilibrium.
	Since the jacobian matrix at $(-\frac{b_2}{a_2},0)$  is
	\[J\left(-\frac{b_2}{a_2},0\right)=\left[ \begin{array}{rl}
(a_2+\frac{b_2^2}{a_2}) &(b_1+b_1\frac{b_2^2}{a_2})\\
&\\
0&-(a_2+\frac{b_2^2}{a_2})
	\end{array}   \right]
	\]
we conclude that $(-\frac{b_2}{a_2},0)$ is a saddle because  $\det\left(J\left(-\frac{b_2}{a_2},0\right)\right)<0.$

The phase portrait on $U_2$ is the central projection of the phase
portrait of the system
\begin{equation}
\left\{\begin{array}{ll}
\dot{s}&=-a_2-b_2s-b_1w-a_2s^2-b_2s^3-b_1s^2w,\\
\dot{w}&=-b_1sw^2-a_1w^2-b_2s^2w+b_2w-2a_2ws.
\end{array}
\right.
\end{equation}
The equilibrium points in $\s^1$ are determined by
$w=0,$ $-a_2-b_2s-a_2s^2-b_2s^3=(a_2+b_2s)(-1-s^2)=0.$
Thus if $b_2\neq0$ the equilibrium is $(-\frac{a_2}{b_2},0)$ and if $b_2=0$ either
all points are equilibrium points or none point is equilibrium.
Since the jacobian matrix at $(-\frac{a_2}{b_2},0)$  is
\[J\left(-\frac{a_2}{b_2},0\right)=\left[ \begin{array}{rl}
-(b_2+\frac{a_2^2}{b_2}) &-(b_1+b_1\frac{a_2^2}{b_2})\\
&\\
0&(b_2+\frac{a_2^2}{b_2})
\end{array}   \right]
\]
we conclude that $(-\frac{a_2}{b_2},0)$ is a saddle because  $\det\left(J\left(-\frac{a_2}{b_2},0\right)\right)<0.$
\end{proof}

\begin{theorem} \label{teoquad} Any quadratic holomorphic polynomial system  $\dot{z}=z(z-(a+ib))$ is topologically equivalent to one of the 3 phase portraits presented in next figure.
\end{theorem}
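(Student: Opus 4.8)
The plan is to first reduce an arbitrary quadratic holomorphic system to the one-parameter normal form in the statement, and then to read off exactly three topological types from the local data already computed above, combined with the global constraints peculiar to holomorphic systems. First I would justify the normal form. Writing a general quadratic holomorphic field as $f(z)=A_2(z-r_1)(z-r_2)$ with $A_2\neq0$, the complex-affine change of variable $z=\mu w+r_1$ followed by a rescaling of time by the positive constant $|A_2|\,|\mu|^2$ conjugates $\dot z=f(z)$ to $\dot w=w\bigl(w-(r_2-r_1)/\mu\bigr)$; choosing $\arg\mu=-\tfrac12\arg A_2$ makes the leading coefficient $A_2\mu^2$ real and positive, so after the rescaling the system becomes $\dot w=w(w-c)$ with $c=(r_2-r_1)/\mu=:a+ib$. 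Since $w\mapsto\mu w+r_1$ is an orientation-preserving homeomorphism of the plane and time was rescaled by a positive factor, this conjugacy preserves the global phase portrait up to topological equivalence, and it therefore suffices to classify $\dot z=z(z-(a+ib))$.

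Next I would split into the three cases dictated by the finite-equilibrium analysis carried out above. For $f(z)=z(z-c)$ one has, in the notation of \eqref{hol-q-vec}, $a_1=-a$, $b_1=-b$, $a_2=1$, $b_2=0$, so the Proposition on finite equilibria yields precisely: (i) if $a\neq0$ there are two equilibria of index $+1$ with opposite stability, one a repelling focus/node and the other an attracting focus/node; (ii) if $a=0$ and $b\neq0$ both equilibria are centers; (iii) if $a=b=0$, i.e. $c=0$, the field is $\dot z=z^2$ with a single equilibrium, which by Proposition \ref{sver1} is purely elliptic with $2n-2=2$ elliptic sectors, and the line $y=0$ is the invariant line $y=-\tfrac{b_2}{a_2}x$. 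In all three cases the Proposition on infinite equilibria provides a single pair of saddle points on $\s^1$ (consistent with the $n-1=1$ pair at infinity), and Proposition \ref{teoGas1} together with the index count confirms the finite configuration in each case. Note that a focus and a node are topologically equivalent, so case (i) produces a single topological type.

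Finally I would assemble each global portrait and verify uniqueness and pairwise inequivalence. The decisive structural facts are that holomorphic systems have no limit cycles (the no-limit-cycle theorem proved above) and that the trajectories lie on the level curves of the explicit first integral $H=\Im G$ with $G'=1/f$; for $c\neq0$ partial fractions give $G(z)=\tfrac1c\log\tfrac{z-c}{z}$, while for $c=0$ one gets $G(z)=-1/z$ and $H=y/(x^2+y^2)$. Using $H$ together with the local pictures I would fix the separatrix connections: in case (i) the two infinite saddles are the $\omega$- and $\alpha$-limits of the repelling and attracting foci/nodes, giving a source--sink portrait; in case (ii) the absence of limit cycles forces genuine centers whose two period annuli fill the disk, separated by the finite heteroclinic orbit joining the two infinite saddles; in case (iii) the two infinite saddles are joined through the finite degenerate equilibrium along the invariant line $y=0$, giving the elliptic-sector portrait of $\dot z=z^2$. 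The hard part will be precisely this last step: the local analysis at the finite and at the infinite equilibria does not by itself determine how the separatrices of the infinite saddles are glued, so one must invoke the first integral (equivalently, the gradient/conjugate structure of $\dot z=\overline{1/f}$) and the no-limit-cycle property to exclude alternative connection schemes and to certify that the three resulting portraits are pairwise non-equivalent.
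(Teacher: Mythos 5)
Your proposal is correct and follows essentially the same route as the paper, which obtains the three portraits (DD, CC, FF) directly from the two propositions on finite and infinite equilibria stated just before the theorem; your normal-form reduction and the use of the first integral $H=\Im G$ to pin down the separatrix connections only make explicit what the paper leaves implicit. One small slip: under $z=\mu w+r_1$ the new leading coefficient is $A_2\mu$, not $A_2\mu^2$, so one should take $\arg\mu=-\arg A_2$; this does not affect the validity of the reduction.
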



\begin{center}
	\begin{overpic}[scale=0.2]
	{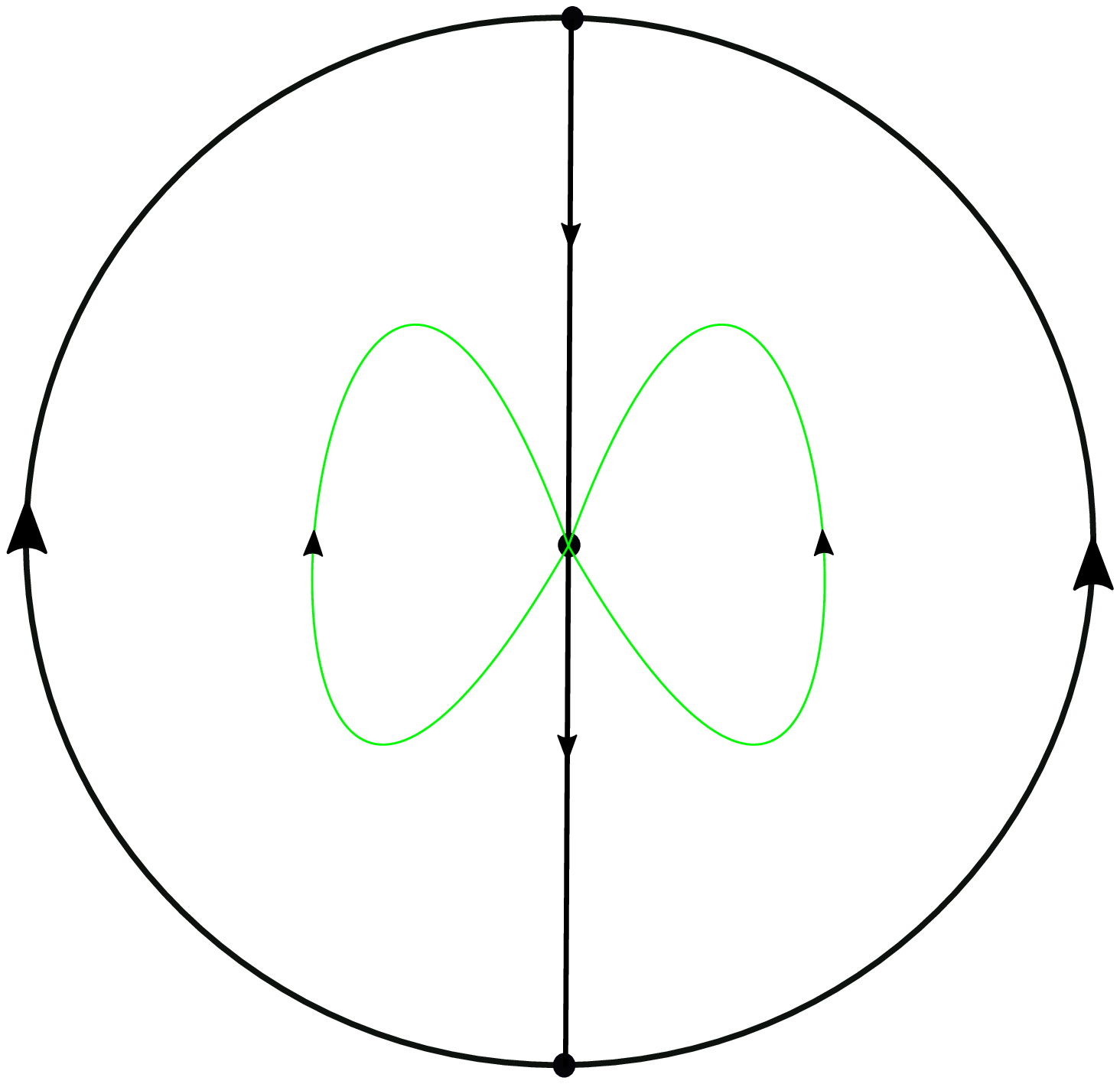} 
\put(0,-20){Double Point - DD}
	\end{overpic}
\hspace{1cm}
\begin{overpic}[scale=0.2]
{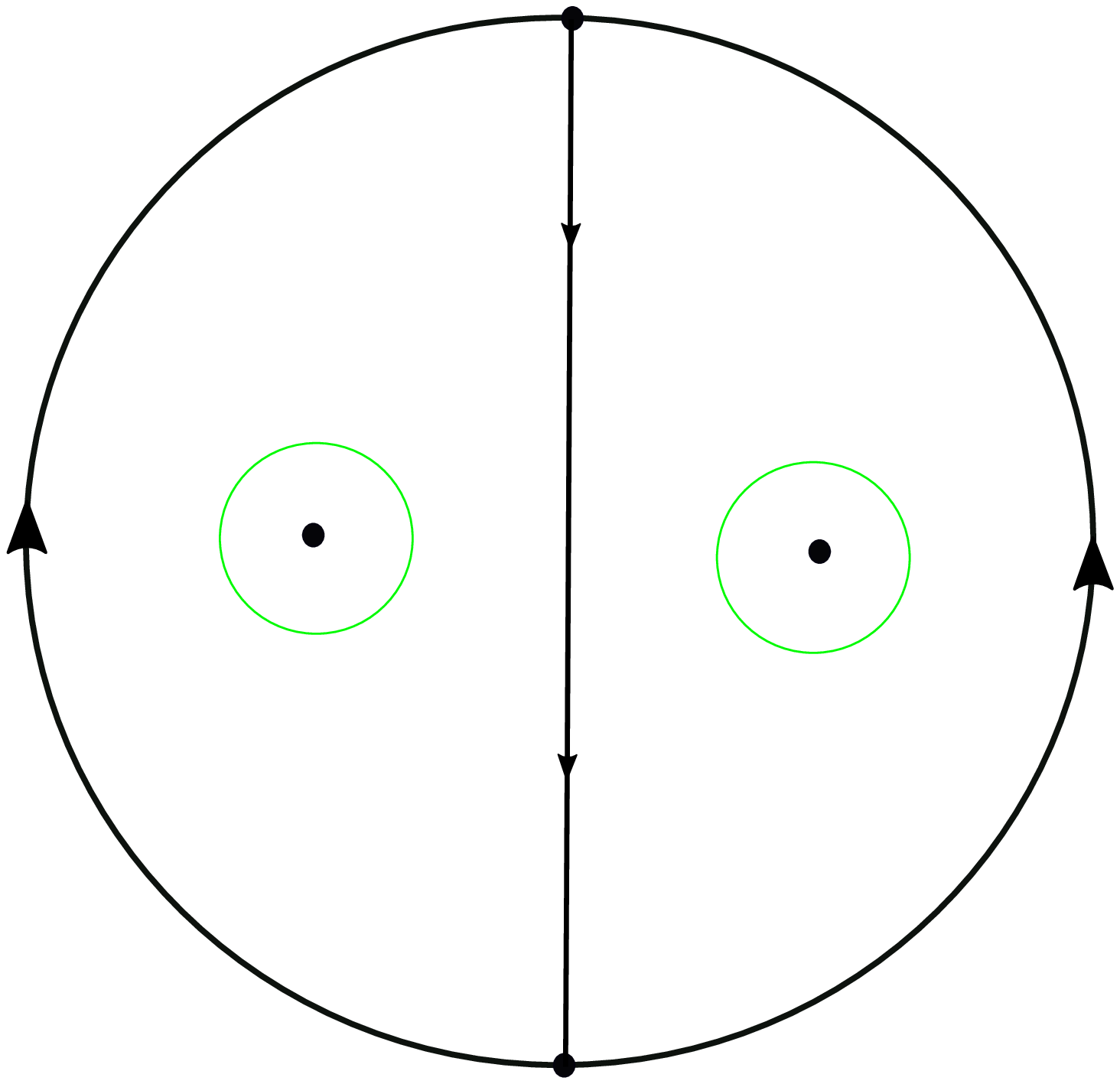}	
\put(0,-20){Two centers - CC}
\end{overpic}
\hspace{1cm}
\begin{overpic}[scale=0.2]
{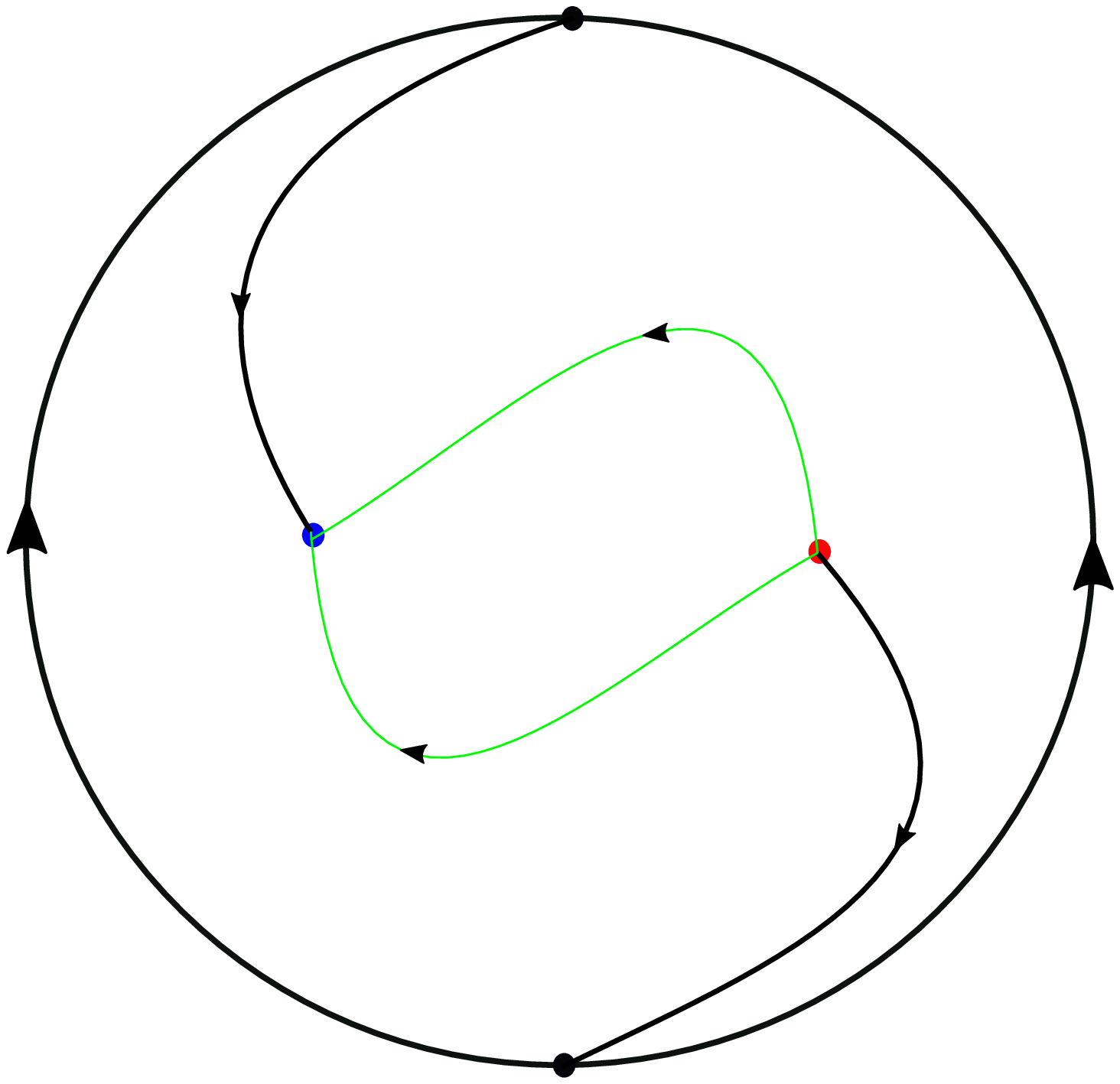}
\put(10,-20){Two foci - FF}
\end{overpic}
\end{center}

\vspace{0.5cm}

\section{Cubic Holomorphic Polynomial Systems}\label{sec4}

In this section, we are interested in the study all the possibilities of phase portrait of a cubic
holomorphic polynomial system. We know that the equilibrium points can be 
foci, centers, nodes or of purely elliptic type with $2n-2$ elliptic sectors. So, consider the function.

\begin{equation}\label{holcubvec}
f(z)=A_0+A_1z+A_2z^2+A_{3}z^3, \quad A_k=a_k+ib_k,\quad  A_3\neq 0.
\end{equation}

\begin{remark}\label{remark1}
As we know, by Fundamental Algebra Theorem we can rewrite $f(z)$ as $f(z)=\alpha(z-A_{1})(z-A_{2})(z-A_{3})$, $\alpha \in \mathbb{C}$. 
Making the change of variables in the form $z=z+A_{1}$, and making a rescheduling of time, we obtain 
\[\tilde{f}=e^{i\theta}z(z-B_{1})(z-B_{2}).\] We denote $B_k=a_k+ib_k$.	
\end{remark}

\begin{lemma}
	Let $v_{j}$ be an eigenvalue of the system $\dot{z}=f(z)$. Then $e^{i \theta}v_{j}$ is an eigenvalues of $\dot{z}=e^{i \theta}f(z)$.
\end{lemma}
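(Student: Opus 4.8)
The plan is to reduce everything to the complex derivative and to the elementary fact that multiplication by $e^{i\theta}$ is $\C$-linear. First I would recall the Proposition proved earlier: if $z_0$ is an equilibrium of the holomorphic system $\dot z=f(z)$ (so $f(z_0)=0$) and $f'(z_0)=a+ib$, then the Jacobian of the associated real planar system at $z_0$ is
\[
J=\left[\begin{array}{lr} a&-b\\ b&a\end{array}\right],
\]
whose characteristic polynomial is $(\lambda-a)^2+b^2$ and whose eigenvalues are therefore $a\pm ib=f'(z_0)$ and $\overline{f'(z_0)}$. Thus the eigenvalue $v_j$ attached to $z_0$ is precisely the complex number $f'(z_0)$, and I would open the proof by making this identification explicit, since the entire content of the lemma is how this quantity behaves under the rescaling $f\mapsto e^{i\theta}f$.

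Next I would set $g(z)=e^{i\theta}f(z)$ and observe two things. Since $e^{i\theta}\neq0$, the functions $g$ and $f$ have exactly the same zeros, so $\dot z=g(z)$ has the same equilibria as $\dot z=f(z)$; and since differentiation is $\C$-linear, $g'(z)=e^{i\theta}f'(z)$ for every $z$. Evaluating at an equilibrium $z_0$ gives $g'(z_0)=e^{i\theta}f'(z_0)=e^{i\theta}v_j$. Applying the Proposition once more, now to the holomorphic function $g$, the eigenvalue of $\dot z=e^{i\theta}f(z)$ at $z_0$ is $g'(z_0)=e^{i\theta}v_j$, which is exactly the assertion.

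I do not expect any genuine obstacle here; the only point deserving a sentence of care is the meaning of ``eigenvalue of the system.'' Interpreting it as the holomorphic eigenvalue $f'(z_0)$, that is, one of the two conjugate eigenvalues of $J$, makes the statement an immediate consequence of the chain rule, whereas the companion eigenvalue transforms instead as $\overline{f'(z_0)}\mapsto e^{-i\theta}\overline{f'(z_0)}$. I would flag this explicitly so the reader is not misled into expecting \emph{both} Jacobian eigenvalues to pick up the same factor $e^{i\theta}$. This interpretation is also what makes the construction useful in Remark~\ref{remark1}: multiplying $f$ by $e^{i\theta}$ merely rotates all the holomorphic eigenvalues by the common angle $\theta$, which is precisely the ``rescheduling of time'' used to normalize $\tilde f=e^{i\theta}z(z-B_1)(z-B_2)$.
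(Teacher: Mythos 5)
Your proof is correct and follows essentially the same route as the paper's: both identify the eigenvalue at an equilibrium $p_j$ with the complex derivative $f'(p_j)$, note that $e^{i\theta}f$ has the same zeros as $f$, and use linearity of differentiation to conclude that the new eigenvalue is $e^{i\theta}f'(p_j)$. Your added caveat that the companion Jacobian eigenvalue transforms as $\overline{f'(p_j)}\mapsto e^{-i\theta}\,\overline{f'(p_j)}$, so that only the holomorphic eigenvalue is rotated by $e^{i\theta}$, is a precision the paper's proof leaves implicit.
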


\begin{proof}
	Let $p_{j}$ be a complex number  $j =1,\cdots,n$ such that $f(p_{j})=0$. As we know, the eigenvalues of $\dot{z}=f(z)$ are given by $f'(p_{j})=v_{j}$. It is clear that $p_{j}$ is also equilibrium point of $\dot{z}=e^{i \theta}f(z)$. Then, their respective eigenvalues are given by  $e^{i \theta}f'(p_{j})=e^{i \theta}v_{j}$. 	
\end{proof}

\begin{remark}
Note that the phase portrait of the systems $\dot{z}=f(z)$ and $\dot{z}=e^{i \theta}f(z)$   will not necessarily be  the same. For example, take  $\dot{z}=iz$ and  $\dot{z}=e^{i \pi/2}iz$. For the first system we have a center in the origin. However, taking $\theta=\pi/2$ we obtain an attracting node in the origin. On the other hand, as already mentioned, holomorphic systems have only three kinds of simple equilibrium points that are foci, centers and nodes (see Theorem 2.1 of \cite{AlvGasPro}). Therefore, in order to simplify the study we will consider $\theta=0$.
\end{remark}


To get eigenvalues of $\eqref{holcubvec}$, it is enough to calculate $f'(z)$ at the equilibrium points $0$, $B_{1}$ and $B_{2}$, 
that is, the eigenvalues are given by $f'(0)$, $f'(B_{1})$ and $f'(B_{2})$ and their respective conjugates, see \cite{Bro}. 
Therefore, the eigenvalues are given by $v_{0}=B_{1}B_{2}$ and $\overline{v_{0}}=\overline{B_{1}B_{2}}$, 
$v_{B_{1}}=B_{1}^2-B_{1}B_{2}$ and $\overline{v_{B_{1}}}=\overline{B_{1}^2-B_{1}B_{2}}$, and
 $v_{B_{2}}=B_{2}^2-B_{1}B_{2}$ and $\overline{v_{B_{2}}}=\overline{B_{2}^2-B_{1}B_{2}}$. 
 
%
%
%
%
%

To study the infinite equilibrium points, we  use the  Poincar\'e Compactification. The expression of  Poincar\'e Compactification in the chart $U_{1}$ is given by

\begin{equation*}
\dot{s}=w^{3}\left(-su\left(\frac{1}{w},\frac{s}{w}\right)+v\left(\frac{1}{w},\frac{s}{w}\right)\right), \dot{w}=-w^{4}u\left(\frac{1}{w},\frac{s}{w}\right).
\end{equation*}
And the expression in $U_{2}$ is given by 

\begin{equation*}
\dot{s}=w^{3}\left(-u\left(\frac{s}{w},\frac{1}{w}\right)-sv\left(\frac{s}{w},\frac{1}{w}\right)\right),\dot{w}=-w^{4}v\left(\frac{s}{w},\frac{1}{w}\right).
\end{equation*}
Note that, in this charts, the point $(s,w)$ at infinity has its coordinate in $(s,0)$. So, for the chart $U_{1}$  we must study the system $\dot{s}=2s(s^{2}+1),\dot{w}=0.$

Note that $s=0$ is a saddle  of the system above. For the other charts, following the same steps, we also found a saddle point. We call the four points at infinity as $I_{1}, I_{2}, I_{3}$ and $I_{4}$. 

\begin{center}
	\begin{figure}[h]
\begin{overpic}[abs,unit=1mm,scale=.3]
	{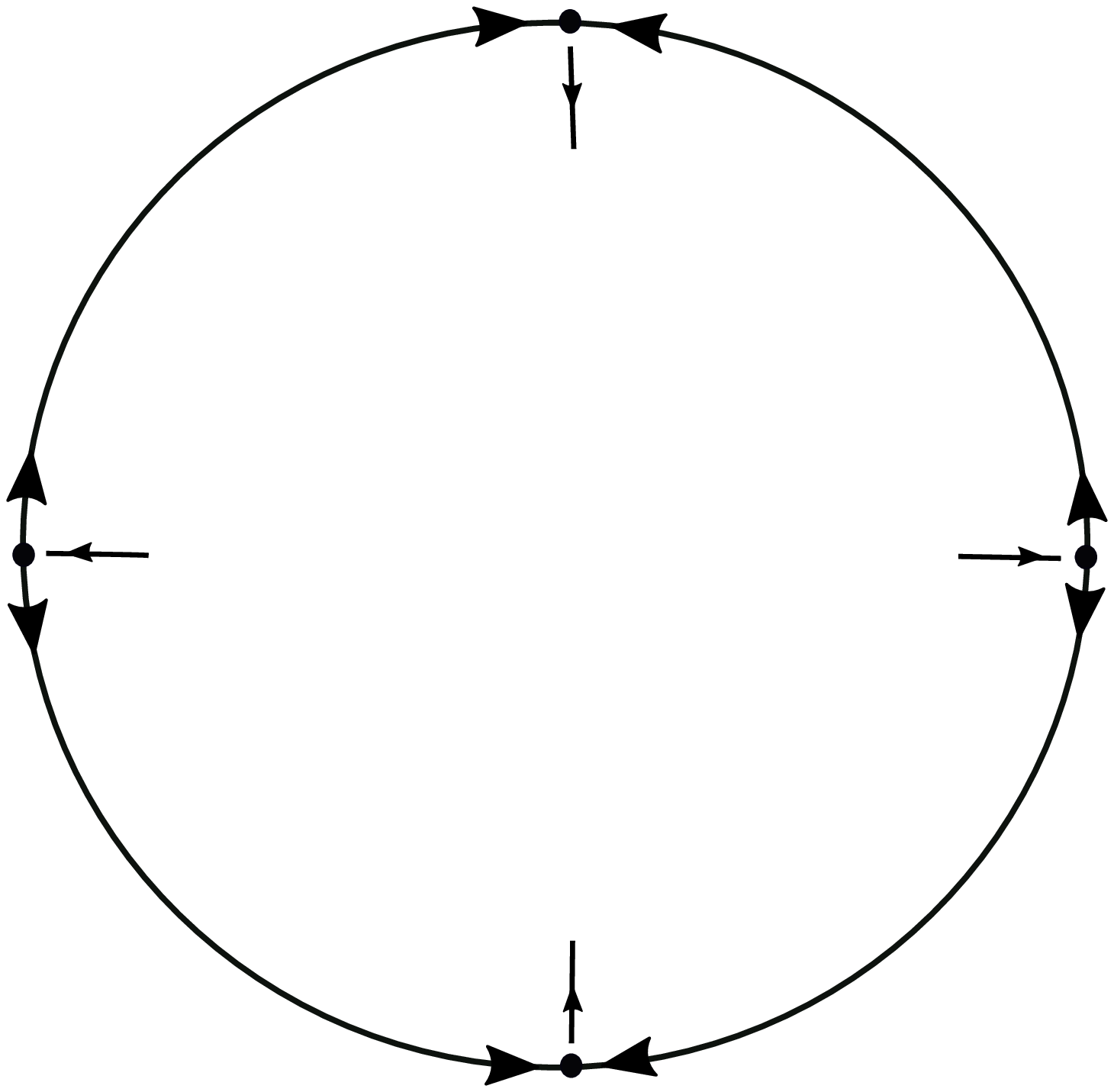}
\put(21,44){$I_{1}$}
\put(45,20){$I_{2}$}
\put(20,-5){$I_{3}$}
\put(-5,20){$I_{4}$}
\end{overpic}
\caption{Saddle points at infinity.}
\end{figure}
\end{center}

\begin{remark}
We would like to emphasize, as already mentioned in \cite{GXG}, that conformal conjugacy is stronger than topological conjugacy or topological equivalence. Moreover, the angles in the tangent space are preserved.
\end{remark}

\begin{theorem}\label{teocubphase}
	If we distinguish nodes and foci, there are only nine different topologically phase portraits of the cubic holomorphic system. Without this distinction, there are only six phase portraits.
\end{theorem}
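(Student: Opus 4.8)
The plan is to reduce everything to the eigenvalue data and then assemble global pictures from local ones. By Remark \ref{remark1} I may assume, after a translation, an affine change of variable and a rescaling of time with $\theta=0$, the normal form $\dot z=f(z)=z(z-B_1)(z-B_2)$, so that the whole family is governed by the two complex parameters $B_1,B_2$. The finite equilibria $0,B_1,B_2$ carry the eigenvalues $v_0=B_1B_2$, $v_{B_1}=B_1(B_1-B_2)$, $v_{B_2}=B_2(B_2-B_1)$ already recorded above, and by Proposition \ref{sver1} the local type of each \emph{simple} equilibrium is read directly off its eigenvalue $v$: a node when $v\in\mathbb R$, a center when $v\in i\mathbb R$, a focus otherwise, with stability given by the sign of $\operatorname{Re}(v)$.

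First I would split according to the multiplicity of the roots of $f$: three simple roots; a double plus a simple root, where $f=z^2(z-B)$ and $0$ is an elliptic point with $2n-2=2$ sectors by Proposition \ref{sver1}; and a triple root $f=z^3$, a single elliptic point with $4$ sectors. On the generic (three-simple-root) stratum the admissible local configurations are cut down by Proposition \ref{teoGas1} together with the residue identity $\sum_k\operatorname{Res}(1/f,z_k)=0$, i.e. $\tfrac1{v_0}+\tfrac1{v_{B_1}}+\tfrac1{v_{B_2}}=0$. Taking real and imaginary parts of this identity forbids exactly two centers, forces mixed stability whenever no center is present, and forces the two hyperbolic equilibria accompanying a single center to have opposite stability; the collinearity statements quoted after Proposition \ref{teoGas1} dispose of the aligned cases (all equilibria of the same type, with alternating stability away from centers). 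I would also use the reflection $z\mapsto\bar z$, which sends the parameters to $\bar B_1,\bar B_2$ and preserves types and stabilities, to shrink the parameter space.

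Next I would insert the behaviour at infinity: by the Poincar\'e compactification computed above there are exactly four equilibria $I_1,\dots,I_4$ on $\s^1$, all saddles (the chart reduces to $\dot s=2s(s^2+1)$). With the local pictures at finite and infinite equilibria in hand, the global portrait is assembled from two facts proved earlier in the paper: the system has no limit cycles, and it admits the explicit first integral $H=\operatorname{Im}G$ with $G'=1/f$. Absence of limit cycles rules out recurrence other than the center regions, while the level sets of $H$ determine which separatrices of the infinite saddles terminate at which finite equilibria, or connect two infinite saddles; thus each admissible combinatorial type determines a single phase portrait on the disk.

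Finally I would collect the portraits up to topological equivalence. Since a node and a focus of the same stability are locally, hence globally, topologically equivalent, the bookkeeping of the previous step yields exactly six topological classes; re-introducing the node/focus distinction (a finer, $C^1$-type invariant) splits precisely the classes that can be realized with a real eigenvalue at some equilibrium, refining the six into nine by recording at each equilibrium whether $v$ is real or genuinely complex. The main obstacle I anticipate is not any single computation but the completeness and non-redundancy of this enumeration: one must show (i) that every $(B_1,B_2)$ falls into one of the listed combinatorial types and that each listed type is realized by some $(B_1,B_2)$ with $\theta=0$, and (ii) that the listed portraits are pairwise inequivalent, for which the invariants (numbers of sources, sinks, centers and elliptic points, together with the separatrix-connection pattern furnished by $H$) must be shown to separate them. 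Deriving the separatrix connections rigorously from the first integral, rather than reading them off the figures, is the delicate point.
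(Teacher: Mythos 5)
Your overall strategy coincides with the paper's: reduce to the normal form of Remark \ref{remark1}, classify the finite equilibria by multiplicity and by the eigenvalues $v_0,v_{B_1},v_{B_2}$, excise the impossible configurations via Proposition \ref{teoGas1}, attach the four saddles at infinity, and assemble the global portraits using the absence of limit cycles. Two of your ingredients are genuinely different and worth keeping: you rederive the constraints of Proposition \ref{teoGas1} from the residue identity $\tfrac1{v_0}+\tfrac1{v_{B_1}}+\tfrac1{v_{B_2}}=0$ (the paper simply cites \cite{AlvGasPro}), and you propose to settle the separatrix connections by tracking level sets of the first integral $H=\operatorname{Im}G$, $G'=1/f$, where the paper instead argues qualitatively about $\omega$--limits of the orbits emanating from the finite equilibria and from $I_1,\dots,I_4$. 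The first-integral route is, in principle, more rigorous than reading connections off figures, and you are right to identify it as the delicate point.

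However, as written the argument has a concrete gap: the theorem is a counting statement, and you never produce the count. The paper's proof lists the twelve a priori configurations, eliminates exactly two of them (two centers plus a node/focus, and two nodes plus a center/focus), and then exhibits an explicit cubic realizing each of the ten survivors, which is what certifies the numbers nine and six after identifying the topologically equivalent pictures. Your plan asserts that "the bookkeeping yields exactly six classes" and that the node/focus refinement "yields nine," but the bookkeeping itself, and above all the realizability of each admissible configuration, is deferred. The one case where realizability is genuinely nontrivial is the configuration with one center, one node and one focus: the constraints you derive from the residue identity do not obviously permit it (indeed \cite{LliValls} claimed it impossible for the closely related Abel family), and the paper must produce the explicit witness $\dot z=z(z-(2-\tfrac32 i))(z-(\tfrac{36}{25}-\tfrac{48}{25}i))$ to include it. Without that existence step your enumeration could just as well close at eight portraits with the distinction and five without, so the stated counts are not yet established.
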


\begin{overpic}[scale=0.25]
{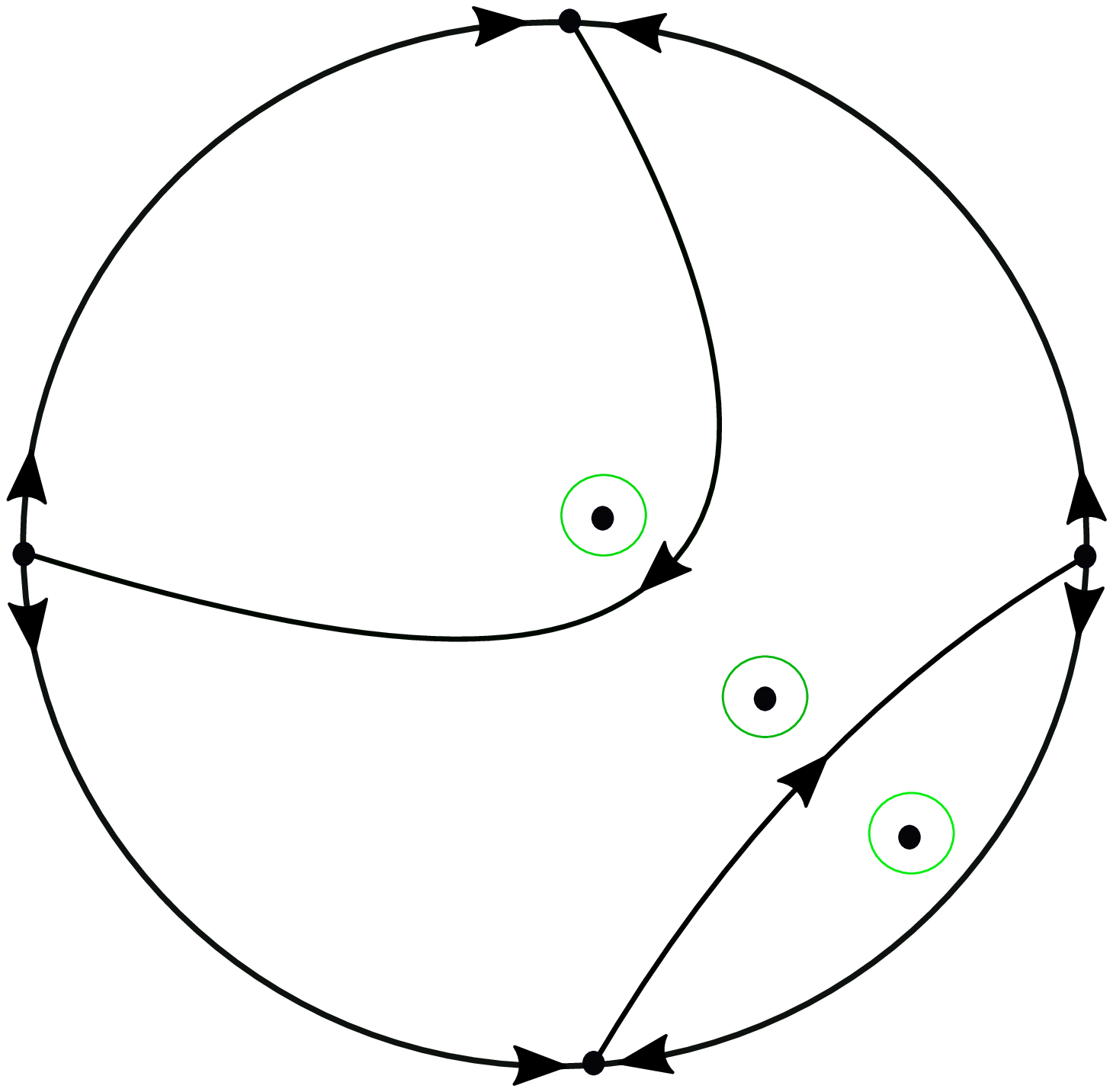} 
\put(50,-20){\textbf{c1)}}
\put(40,50){$p_{1}$}
\put(55,35){$p_{2}$}
\put(75,15){$p_{3}$}
\end{overpic}
\begin{overpic}[scale=0.25]
	{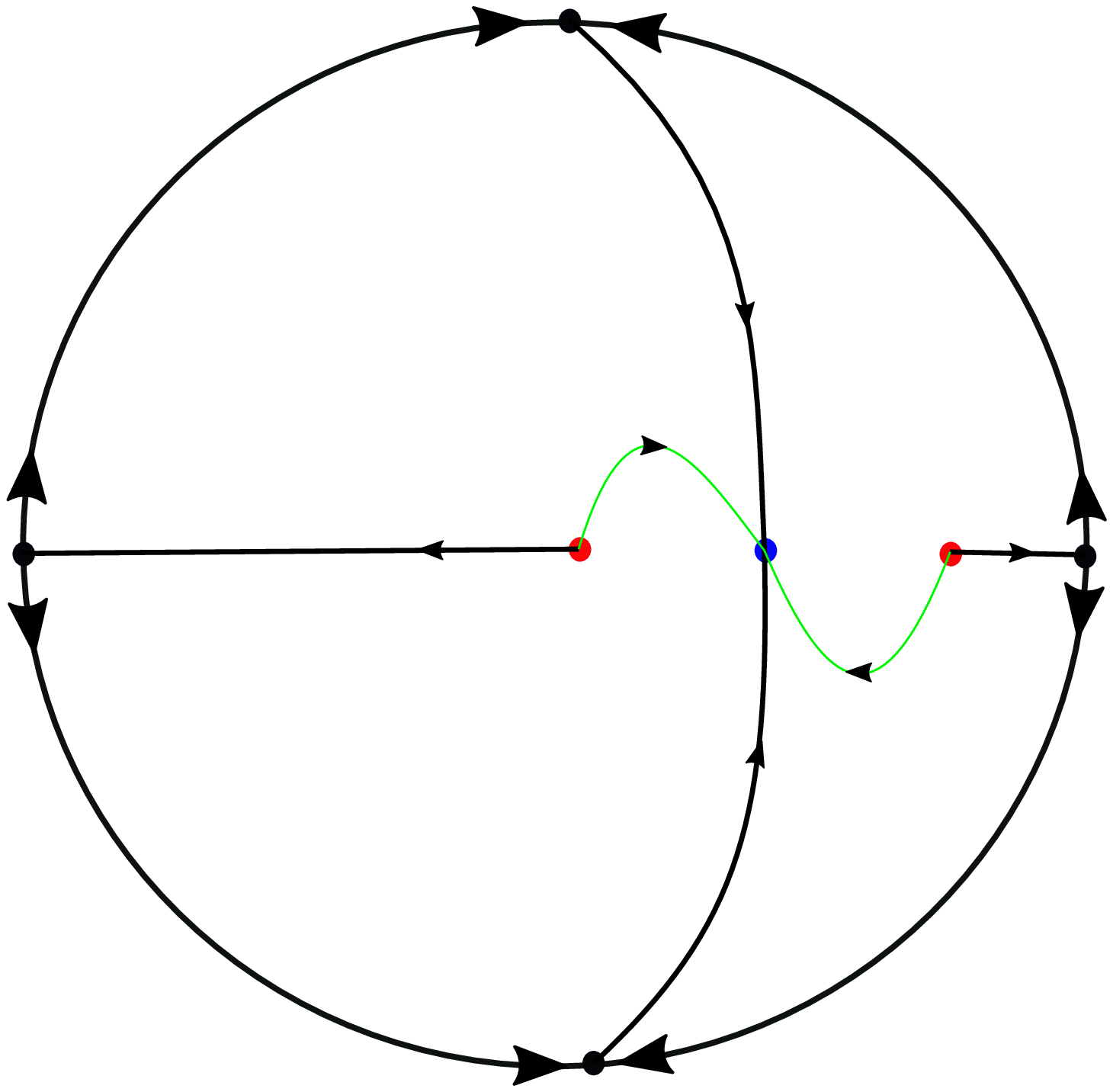} 
	\put(50,-20){\textbf{c2)}}
	\put(45,43){$p_{1}$}
	\put(72,55){$p_{2}$}
	\put(88,43){$p_{3}$}
\end{overpic}
\begin{overpic}[scale=0.25]
	{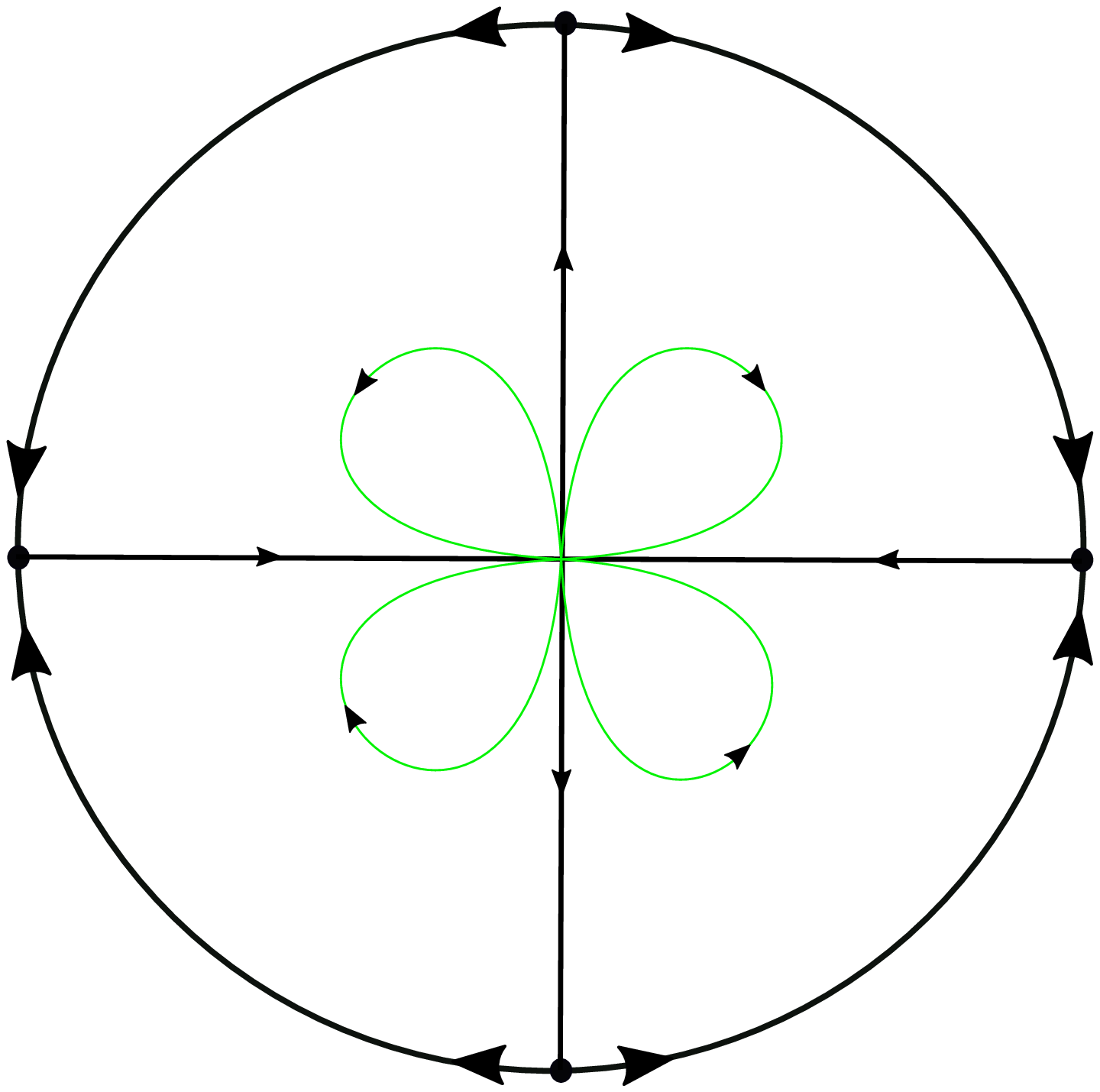} 
	\put(50,-20){\textbf{c3)}}
\end{overpic}

\vspace{1cm}

\begin{overpic}[scale=0.25]
	{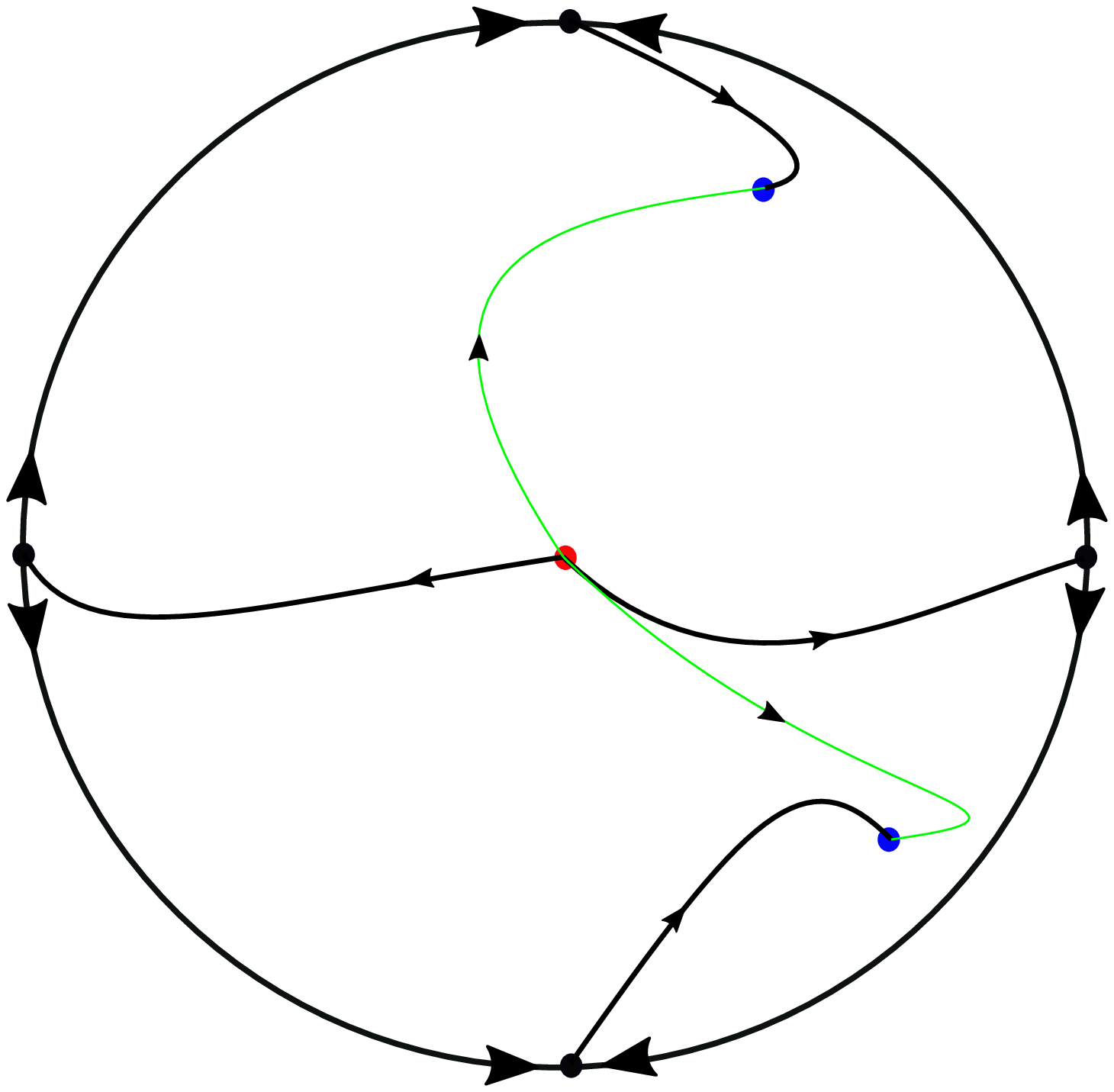} 
	\put(50,-20){\textbf{c4)}}
	\put(45,40){$p_{1}$}
	\put(72,79){$p_{2}$}
	\put(75,18){$p_{3}$}
\end{overpic}
\begin{overpic}[scale=0.25]
	{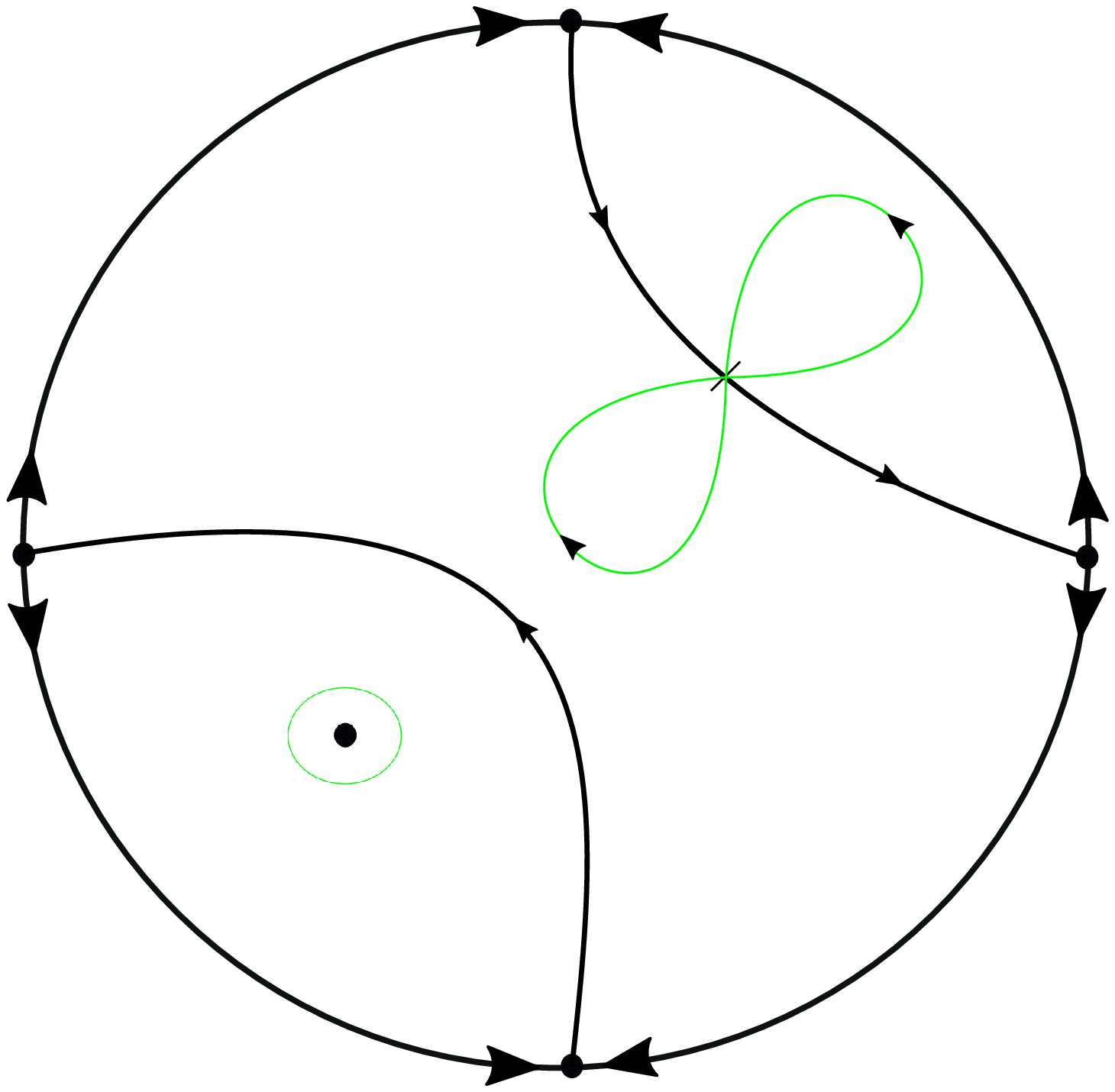} 
	\put(50,-20){\textbf{c5)}}
	\put(30,20){$p_{1}$}
\end{overpic}
\begin{overpic}[scale=0.25]
	{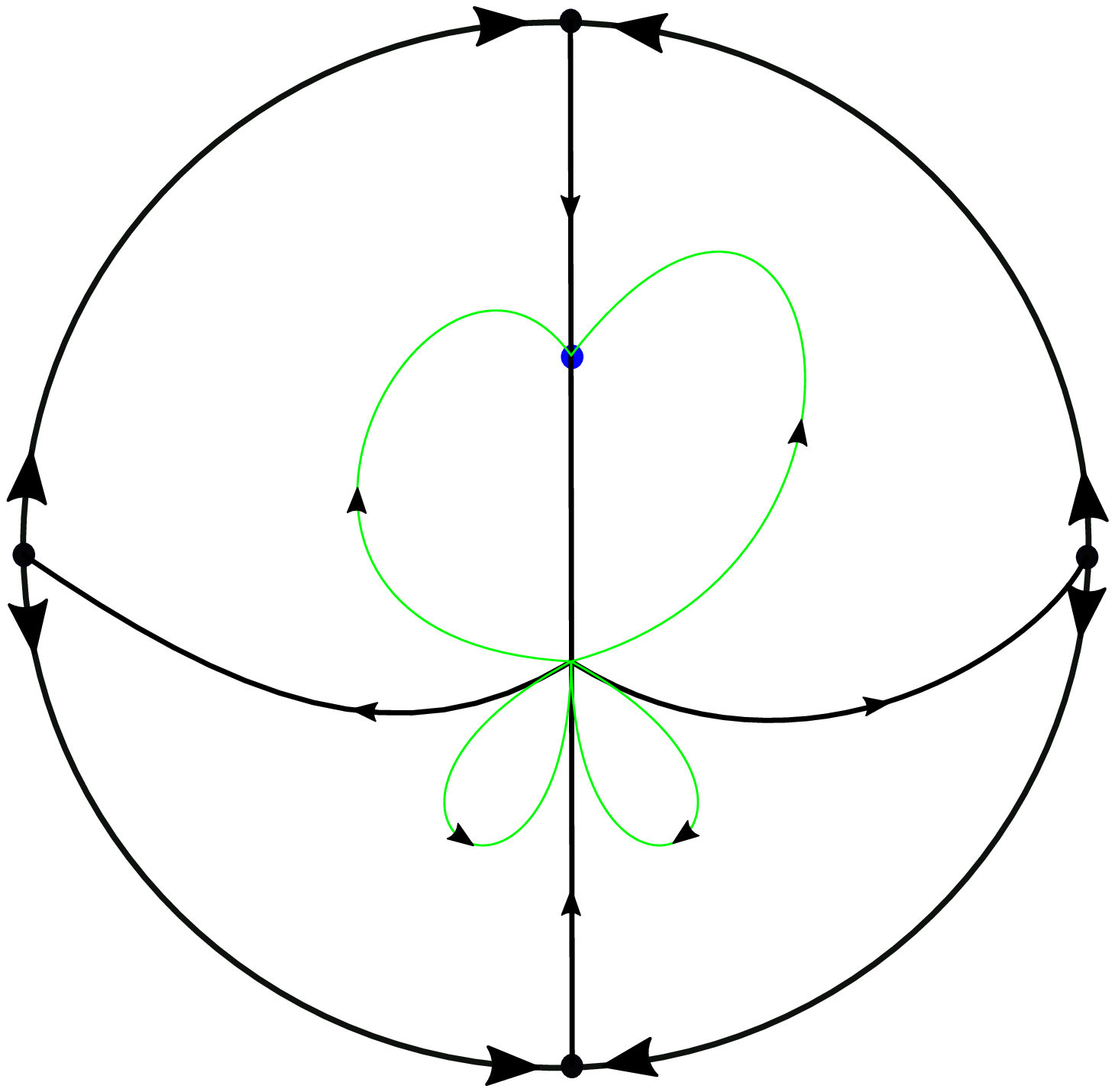} 
	\put(50,-20){\textbf{c6)}}
	\put(42,65){$p_{1}$}
\end{overpic}

\vspace{1cm}

\begin{overpic}[scale=0.25]
	{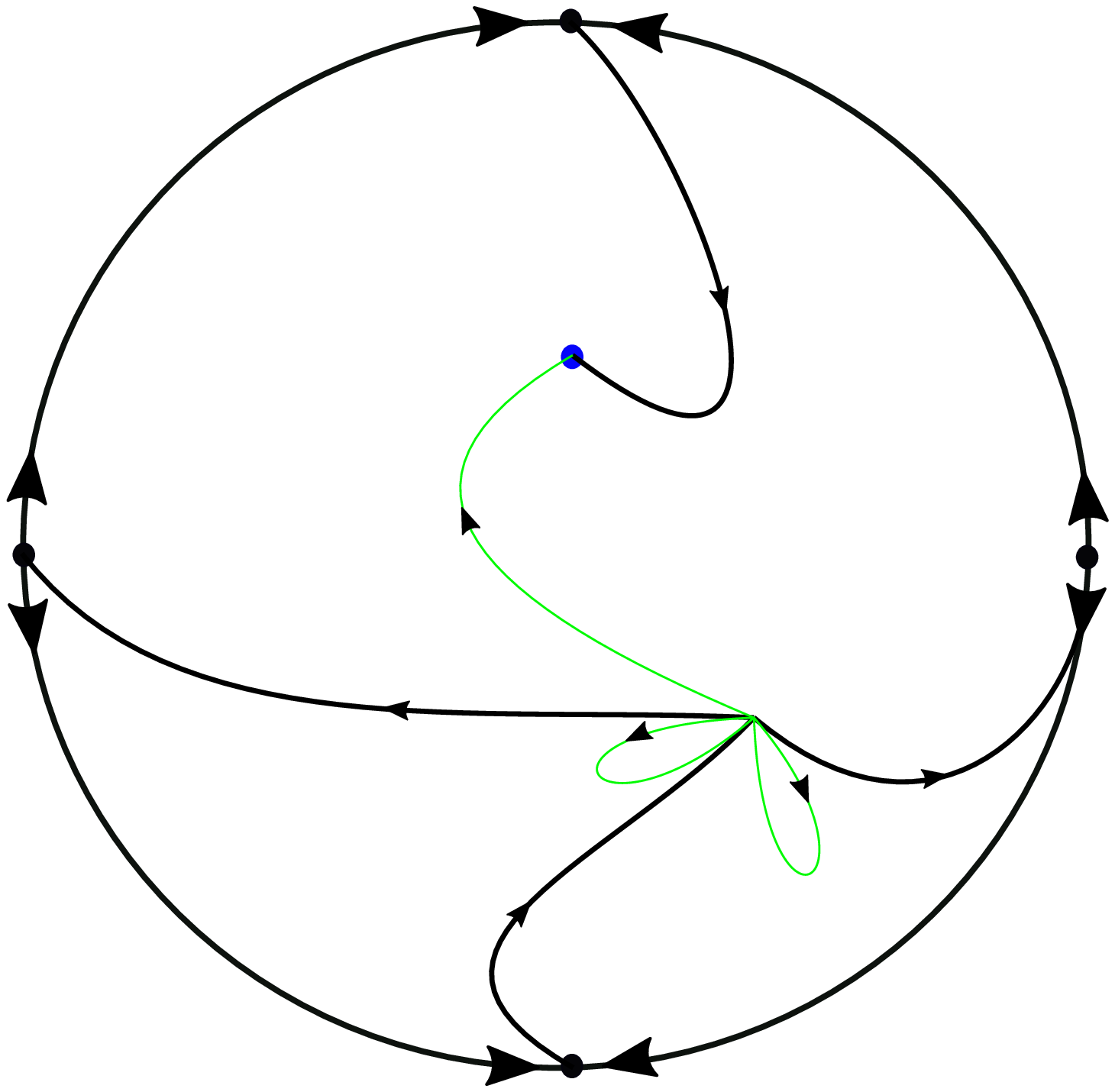} 
	\put(50,-20){\textbf{c7)}}
	\put(42,70){$p_{1}$}
\end{overpic}
\begin{overpic}[scale=0.25]
	{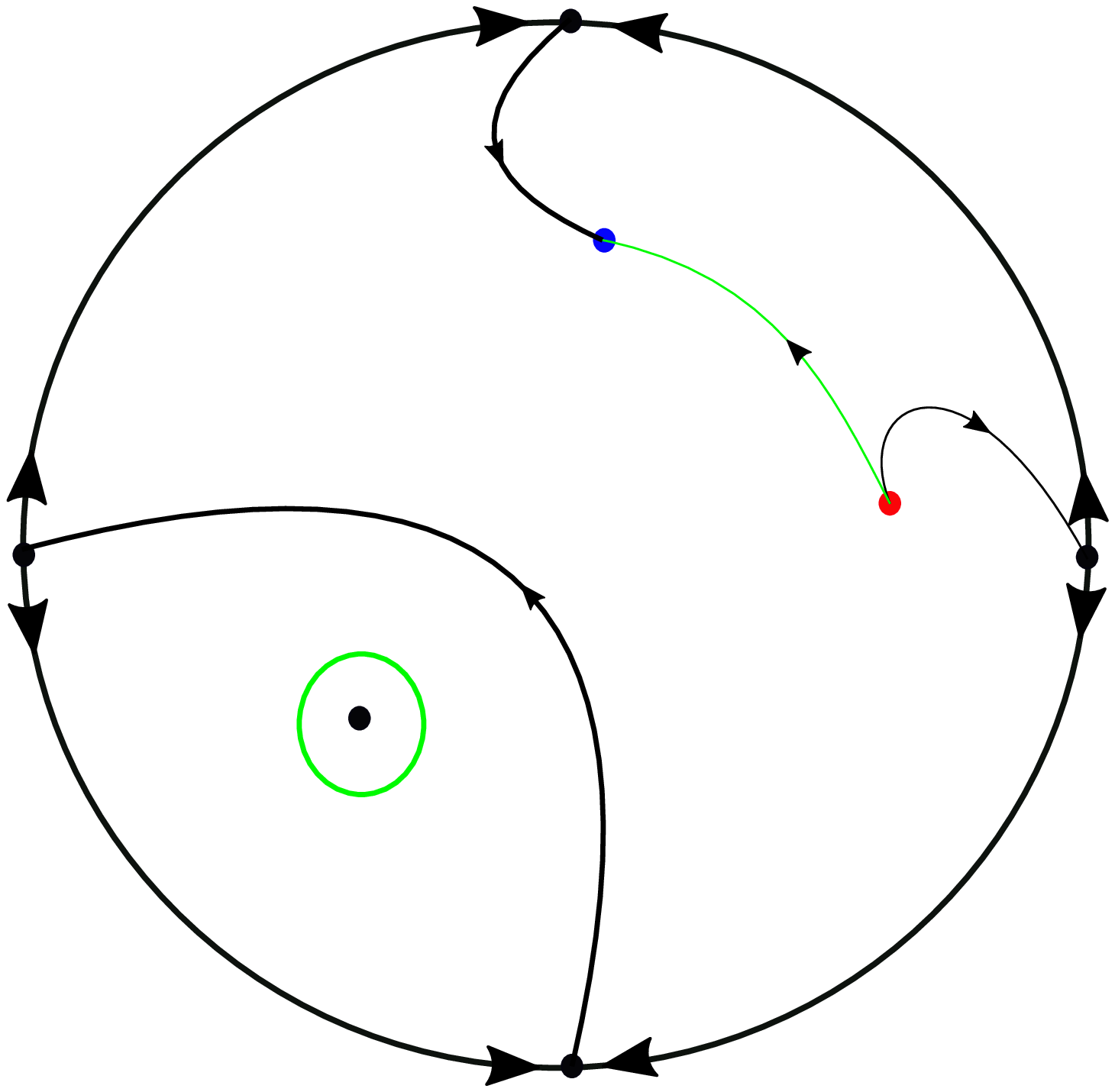} 
	\put(50,-20){\textbf{c8)}}
	\put(25,20){$p_{1}$}
	\put(72,55){$p_{2}$}
	\put(50,72){$p_{3}$}
	\end{overpic}
\begin{overpic}[scale=0.25]
	{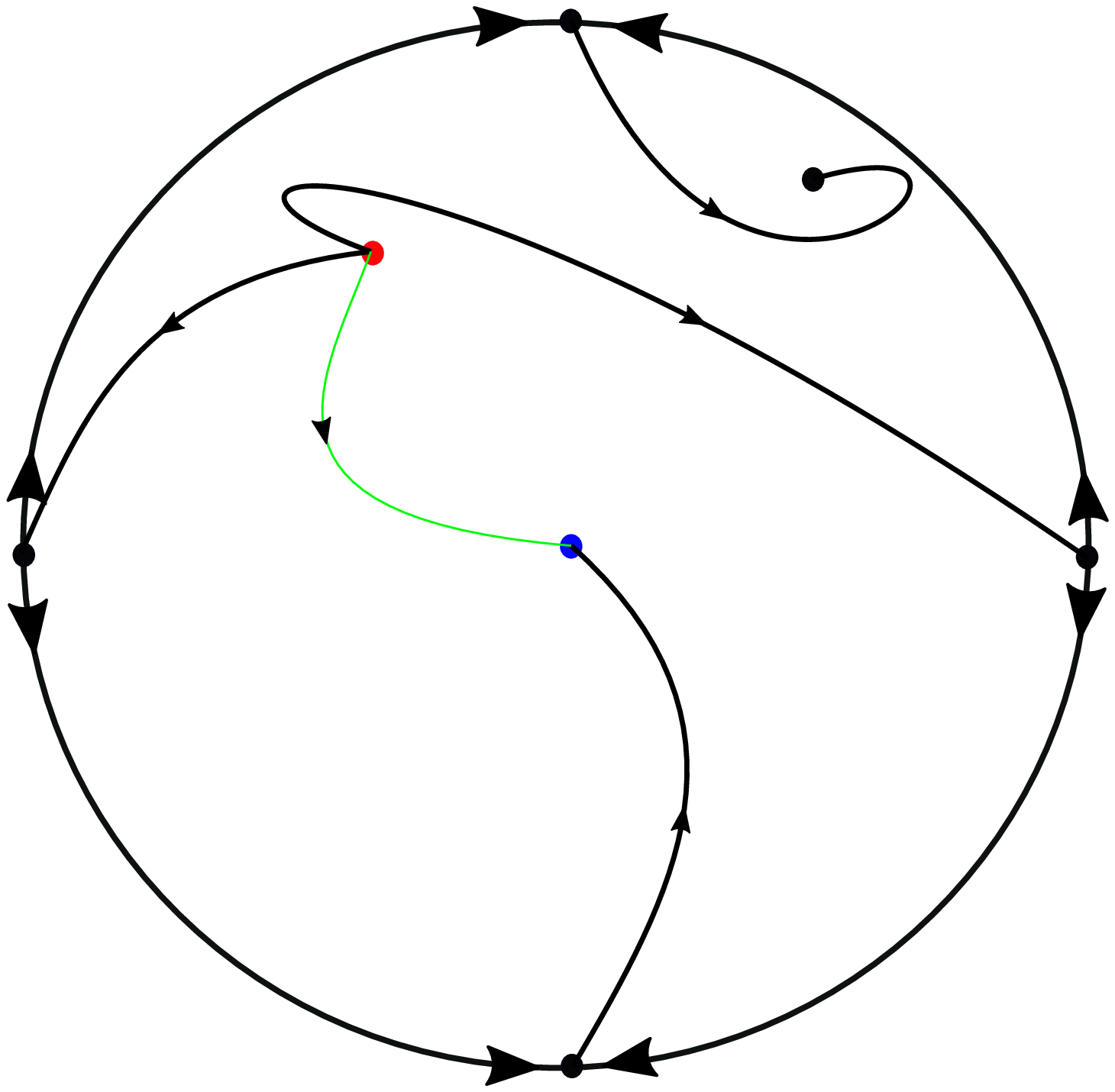} 
	\put(50,-20){\textbf{c9)}}
	\put(45,43){$p_{1}$}
	\put(68,90){$p_{2}$}
	\put(35,70){$p_{3}$}
\end{overpic}

\vspace{1cm}

\begin{remark}
	Without distinction between nodes and foci, the phase portraits represented by the Figures $\textbf{c2)}$, $\textbf{c4)}$ and $\textbf{c9)}$, are topologically equivalent. The phase portraits $\textbf{c6)}$ and $\textbf{c7)}$ are also topologically equivalent.
\end{remark}

\begin{proof}
To prove this result is enough to show that among all configurations possible, some of them are no-realizable and the remaining is obtained. All possible configurations are given by

\begin{multicols}{2}
\begin{enumerate}[(a)]
\item 2 centers and 1 node/focus,	
\item 2 nodes and 1 center/focus,
\item 3 centers,
\item 3 nodes,
\item 1 triple,
\item 3 foci,
\item 1 center and 1 double, 
\item 1 node and 1 double,
\item 1 focus and 1 double,
\item 1 center and 2 foci, 
\item 1 node and 2 foci,
\item 1 center, 1 node and 1 focus.
\end{enumerate}
\end{multicols}

The items $(a)$ and $(b)$ are no-realizable. From Proposition \ref{teoGas1} it is impossible to obtain two centers and one focus/node. In the same way, it is impossible to obtain two nodes and one focus/center. The remaining  cases, all of them are realizable and we are going to show examples.

According \cite{GAP}, there exists only possibility for three centers, see the phase portrait $\textbf{c1)}$. Using Proposition \ref{teoGas1}, we have Figure $\textbf{c2)}$. Using Proposition \ref{sver1}, we obtain Figure $\textbf{c3)}$. For the remaining cases, it is sufficient to analyze the possibilities of the separatrices. Before we continue the proof, let us analyze  Figure $\textbf{c2)}$. 

\begin{center}
	\begin{figure}[h]
\begin{overpic}[scale=0.25]
	{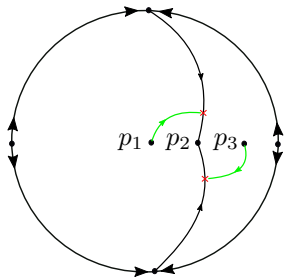} 
	\put(40,-20){}
	\put(42,50){$p_{1}$}
	\put(60,50){$p_{2}$}
	\put(78,50){$p_{3}$}
\end{overpic}
\caption{Impossible orbit paths.}
\end{figure}
\end{center}

Since $p_{2}$ is an attracting node, it  is the $\omega-limit$ of $I_{1}$ and $I_{3}$. Then, there is no orbit coming out from $p_{1}$ and arriving at  $I_{2}$. Analogously, the same conclusion is true for any orbit coming out from $p_{3}$ and arriving at $I_{4}$. Therefore, the only phase portrait possible is given by $\textbf{c2)}$.

In the case $(f)$ we have that $p_{1}$ is a repelling focus and $p_{2}$ and $p_{3}$  are attracting foci. The orbits coming from $p_{1}$ have $I_{2}$, $I_{4}$, $p_{2}$, and $p_{3}$ as $\omega-limit$ . The orbits coming from $I_{1}$ and $I_{3}$ have $p_{2}$ and $p_{3}$ as $\omega-limit$. See, Figure $\textbf{c4)}$. 

In the case $(g)$, we have one double point formed by two elliptic sectors separated by parabolic sectors and one center. See Figure $\textbf{c5)}$.

In the case $(h)$, we have one node and one double point. The double point is formed by two elliptic sectors separated by parabolic sectors. The attracting node is the $\omega-limit$ of the orbits coming from the infinity and from the double point.  See $\textbf{c6)}$. The same happens for the case $(i)$. See Figure $\textbf{c7)}$. 

In the case $(j)$, we have one center  $p_{1}$, and two foci. The point  $p_{2}$ is repelling, and the point $p_{3}$ is attracting. Since $p_{3}$ is attracting, the orbits coming from $I_{1}$ and $p_{2}$ have $p_{3}$ as  $\omega-limit$. We also have that $I_{2}$ is the  $\omega-limit$ of the orbits coming from $p_{2}$. See Figure $\textbf{c8)}$.

In the case $(k)$, we have that $p_{1}$ is an attracting node, $p_{2}$ is an attracting focus, and $p_{3}$ is a repelling focus. The orbits coming from $p_{3}$ have, $I_{2}$, $I_{4}$, $p_{1}$, and $p_{2}$ as $\omega-limit$. The orbits coming from $I_{3}$ and $I_{1}$ have $p_{1}$ and $p_{2}$ as $\omega-limit$. See Figure $\textbf{c9)}$.

Finally, in the case $(l)$, we have a center $p_{1}$,  a repelling node $p_{2}$  and an attracting focus  $p_{3}$. The point $p_{3}$ is the $\omega-limit$ of the orbits coming from $I_{1}$ and $p_{2}$. We also have  that $I_{2}$ is  $\omega-limit$ of the orbit coming from $p_{2}$. See Figure $\textbf{c8)}$.
\end{proof}

\begin{remark}
	This result is similar to the result showed in \cite{LliValls}, where the authors study the phase portrait of Abel polynomial systems, that is, $F(z)=(z-A)(z-B)(z-C)$, where $A$, $B$ and $C$ are complex numbers. However, using remark \eqref{remark1}, to studying $F(z)=(z-A)(z-B)(z-C)$ is the same to as studying $F(z)=z(z-\tilde{A})(z-\tilde{B})$. In \cite{LliValls} the authors say that are not possible the case $l$. Since we work with two less parameters, we can find conditions to exist 1 center, 1 node and 1 focus. Moreover, here we use $F'(0), F'(\tilde{A})$ and $F'(\tilde{B})$ to find the eigenvalues.  
\end{remark}

\begin{remark}\label{remarke1}
	In order to obtain the phase portrait in Figures  $\textbf{1)}-\textbf{9)}$, we take the following parameter values:
	
	
		\begin{enumerate}[$(a)$]
		\item 2 centers and 1 node/focus: no-realizable.	
		\item 2 nodes and 1 center/focus: no-realizable.
		\item 3 centers: $\dot{z}=z(z-(1+i))(z-(3+3i)).$ 
		\item 3 nodes:  $\dot{z}=z(z-1))(z-2).$
		\item 1 triple: $\dot{z}=z^3.$
		\item 3 foci: $\dot{z}=z(z-(1-3i))(z-(2+4i)).$
		\item 1 center and 1 double:  $\dot{z}=z(z-(1-i))(z-(1-i)).$
		\item 1 node and 1 double: $\dot{z}=z(z+i)(z+i).$ 
		\item 1 focus and 1 double: $\dot{z}=z(z-(1+3i))(z-(1+3i)).$
		\item 1 center and 2 foci:  $\dot{z}=z(z-(3/2-i))(z-(2-3i)).$
		\item 1 node and 2 foci:  $\dot{z}=z(z-(-2/3-i))(z-(2-3i)).$
		\item 1 center, 1 node and 1 focus: $\dot{z}=z(z-(2-3/2i))(z-(36/25-48/25i)).$
	\end{enumerate}
\end{remark}

\section{Quartic Holomorphic Polynomial System }\label{sec5}
Consider a quartic holomorphic polynomial function

\begin{equation}
F(z)=A_0+A_1z+A_2z^2+A_{3}z^3+A_{4}z^{4}, \quad A_k=a_k+ib_k,\quad  A_4\neq 0\label{holquarvec}.
\end{equation}
By the Remark \ref{remark1}, it is enough study the system $F(z)=z(z-B_{1})(z-B_{2})(z-B_{3})$. 

It is clear that the roots of $F(z)=z(z-B_{1})(z-B_{2})(z-B_{3})$ are $0$, $B_{1}$, $B_{2}$ and $B_{3}$ and the associated eigenvalues  are $v_{0}=-B_{1}B_{2}B_{3}$, $v_{B_{1}}=B_{1}^3 - B_{1}^2B_{2} - B_{1}^2B_{3}+B_{1}B_{2}B_{3}$,  $v_{B_{2}}=-B_{1}B_{2}^2 + B_{1}B_{2}B_{3} + B_{2}^3 - B_{2}^2B_{3}$ and $v_{B_{3}}=B_{1}B_{2}B_{3} - B_{1}B_{3}^2 - B_{2}B_{3}^2 + B_{3}^3$ and their respective conjugates.

\begin{remark}
As in Section \ref{sec4}, it is easy to find six saddle points at infinity.
\end{remark}
\begin{center}
	\begin{figure}[h]
\begin{overpic}[scale=0.2]
	{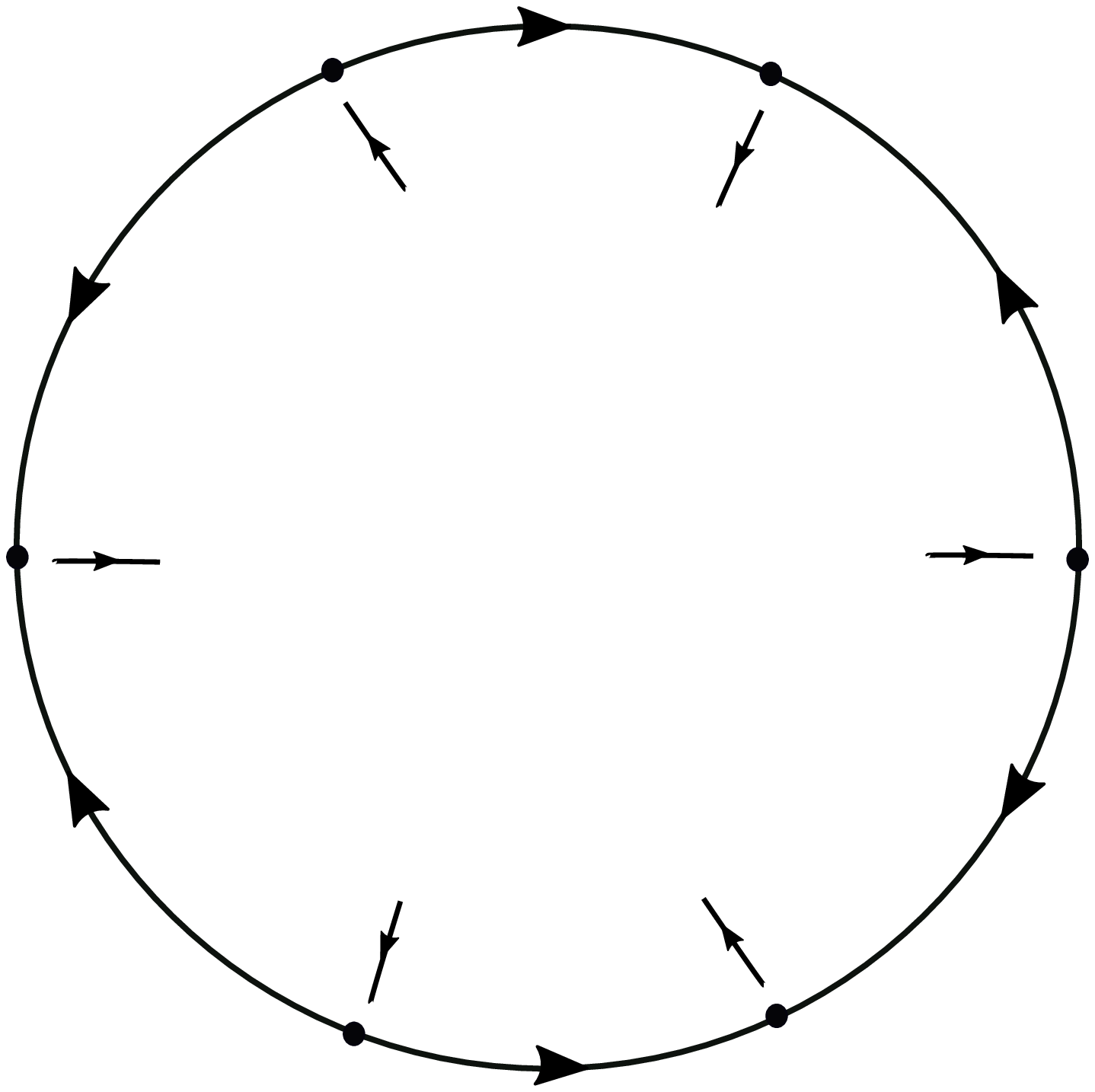} 
	\put(15,80){$I_{1}$}
	\put(60,80){$I_{2}$}
	\put(85,35){$I_{3}$}
	\put(60,-5){$I_{4}$}
	\put(10,-5){$I_{5}$}
	\put(-10,40){$I_{6}$}
\end{overpic}
\caption{Saddle points at infinity for a quartic system.}
\end{figure}
\end{center}

\begin{remark}\label{remakfoci}
	As we can see in \cite{AlvGasPro}, when we have four equilibrium points of type focus, the points can take four different geometrical distributions.
	
	\begin{itemize}
		\item Collinear: all of them are aligned.
		\item Triangle: three on the vertices of a triangle and the other one inside.
		\item Border: three aligned and the other one not.
		\item Quadrilateral: on the vertices of a quadrilateral.
	\end{itemize} 
	
\begin{figure}
\begin{center}
	\begin{overpic}[scale=0.15]
		{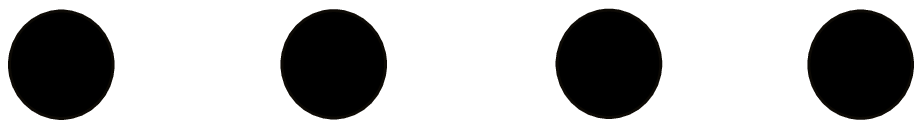} 
		\put(-5,-20){\textbf{Collinear}}
	\end{overpic}
	\hspace{0.5cm}
	\begin{overpic}[scale=0.15]
		{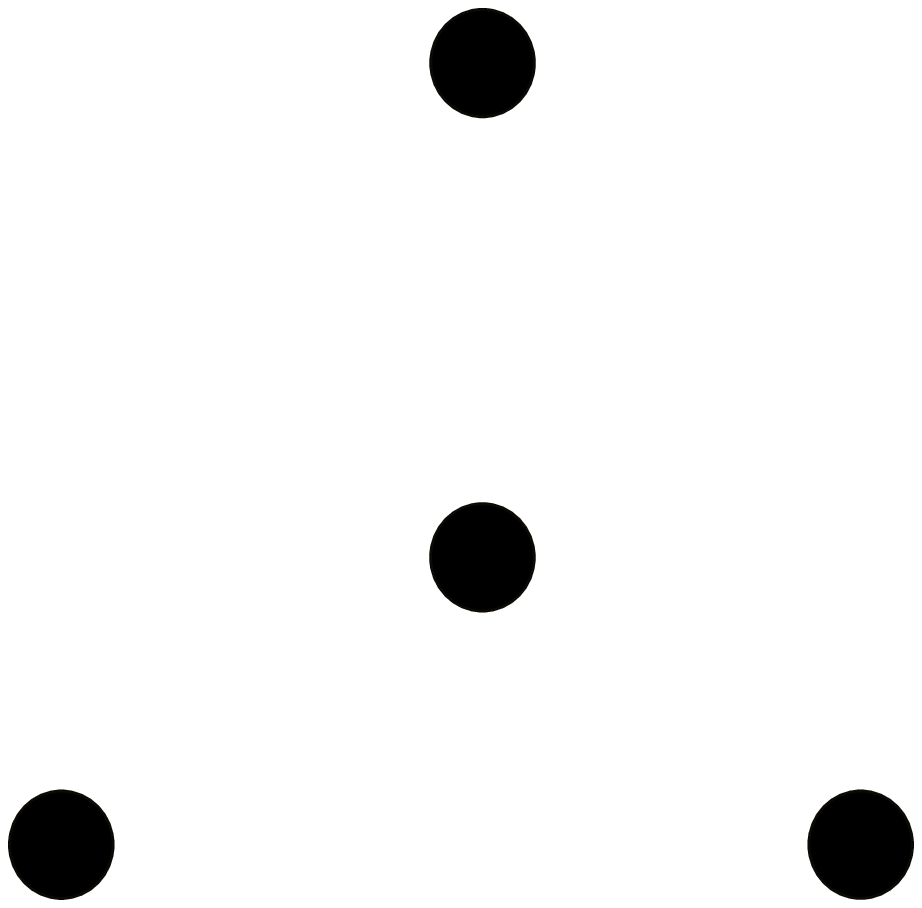} 
		\put(0,-20){\textbf{Triangle}}
	\end{overpic}
	\hspace{0.5cm}
	\begin{overpic}[scale=0.15]
		{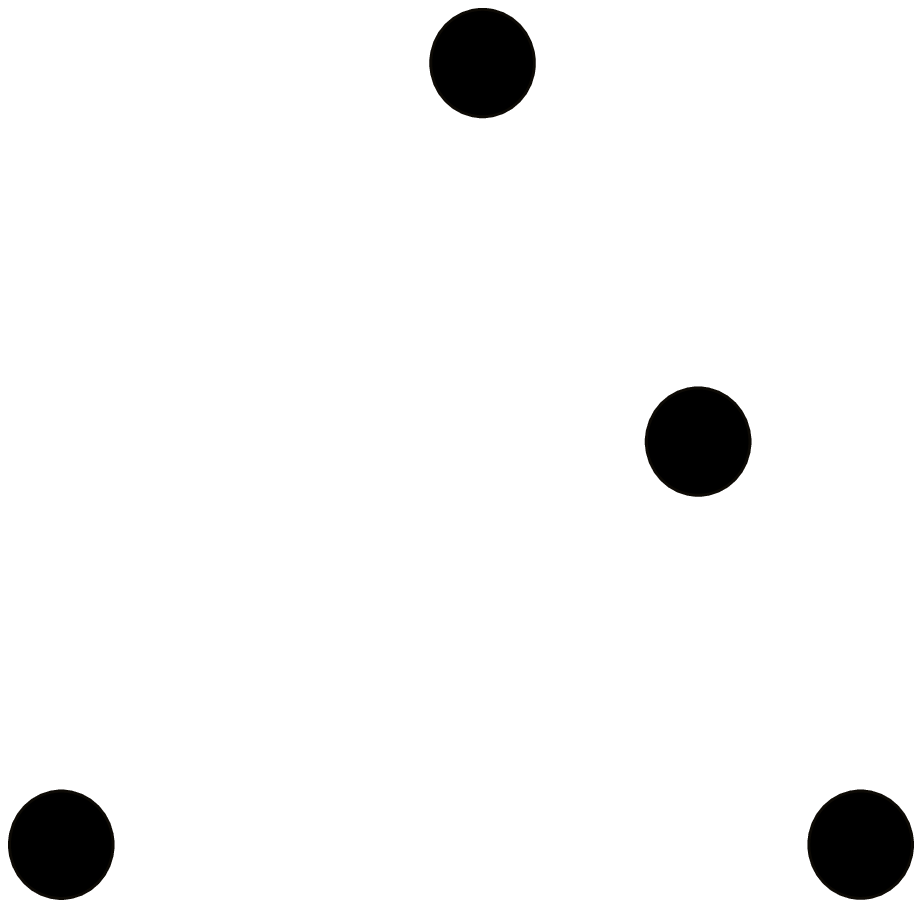} 
		\put(0,-20){\textbf{Border}}
	\end{overpic}
	\hspace{0.5cm}
	\begin{overpic}[scale=0.15]
		{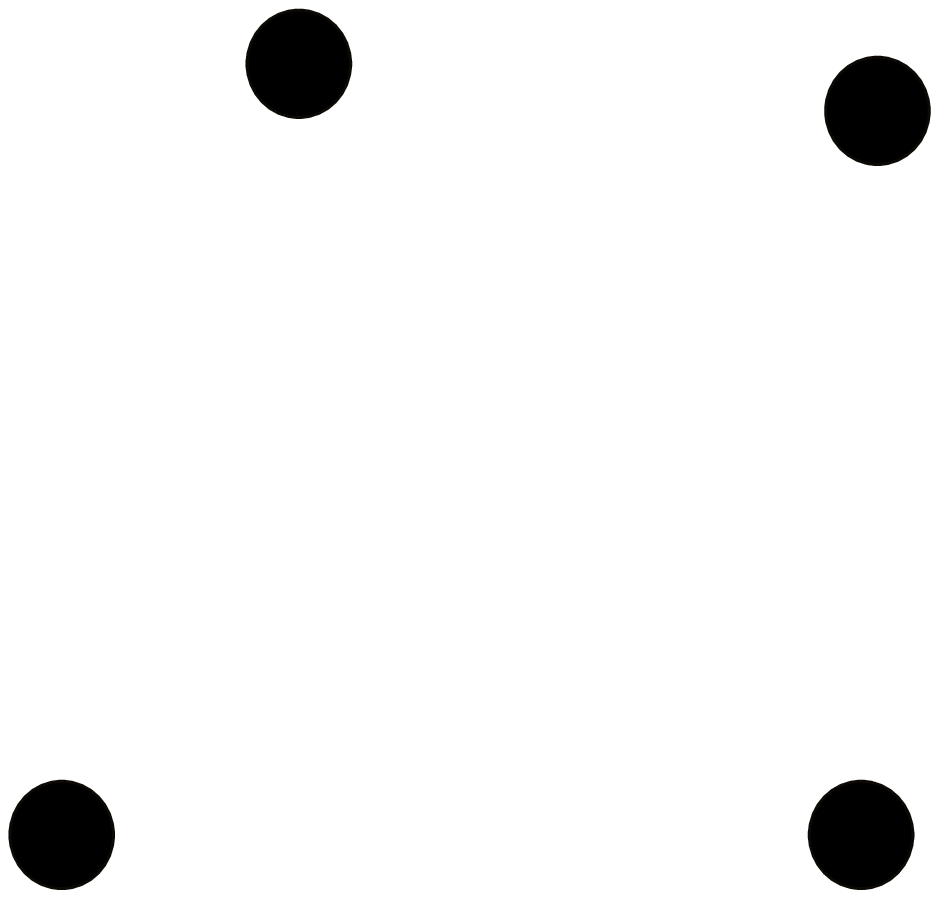} 
		\put(-10,-20){\textbf{Quadrilateral}}
	\end{overpic}
\end{center}
\vspace{0.5cm}
\caption{Geometrical distributions.}
\end{figure}
\end{remark}

\vspace{0.5cm}

\begin{proposition}If a quartic system has four equilibrium points, all of them of focus type, then  its phase portrait is given one of the figures \textbf{Q22)}-\textbf{Q27)}.
\end{proposition}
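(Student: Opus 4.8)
The plan is to prove this the same way Theorem \ref{teocubphase} was handled: first pin down the finitely many admissible configurations of four foci using the structural results of \cite{AlvGasPro}, and then recover a unique global portrait for each by an elimination argument on separatrices.

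First I would record the two constraints that the hypothesis imposes. By the facts quoted after Proposition \ref{teoGas1}, when all four equilibria of $F(z)=z(z-B_1)(z-B_2)(z-B_3)$ are foci they may be placed in any of the four geometric distributions of Remark \ref{remakfoci} (Collinear, Triangle, Border, Quadrilateral), and they cannot all have the same stability. I would then determine the admissible stability patterns distribution by distribution. In the Collinear case the cited result forces alternation along the line, i.e. the pattern repelling--attracting--repelling--attracting, which is unique up to the time-reversal symmetry $t\mapsto -t$. In the Border and Triangle cases the statement that the points symmetric with respect to the line of the remaining ones must share their stability cuts down the patterns, while in the Quadrilateral case the only restriction is that the four stabilities are not all equal. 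Carrying out this bookkeeping yields a short finite list of candidate pairs (distribution, stability), the sign of $\operatorname{Re}$ of each eigenvalue $v_0,v_{B_1},v_{B_2},v_{B_3}$ recording the stability of the corresponding focus.

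Next, for each candidate I would reconstruct the global flow. The four finite points are foci of index $+1$ (Proposition \ref{sver1}) and infinity carries the six saddles $I_1,\dots,I_6$. Because the system has no limit cycles and the finite phase portrait is organized by the level curves of the first integral $H=\operatorname{Im}G$, the only freedom left is the way the stable and unstable separatrices of the $I_j$ are distributed among the repelling and attracting foci. I would fix this exactly as in the discussion of Figure \textbf{c2)}: an attracting focus that is already the $\omega$-limit of two neighbouring infinite saddles blocks every orbit issuing from a repelling focus from reaching the infinite saddle lying behind it, and dually for repelling foci; together with a Poincar\'e--Hopf index count on the Poincar\'e disk this forces the separatrix assignment to be unique. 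Hence each admissible candidate determines exactly one topological portrait.

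Finally I would verify that the portraits produced are pairwise inequivalent and coincide with the six shown in Figures \textbf{Q22)}--\textbf{Q27)}, and that each is genuinely realized, by exhibiting explicit roots $B_1,B_2,B_3$ realizing the required geometry and stabilities (as was done for the cubic in Remark \ref{remarke1}). The main obstacle is the separatrix step: with four foci and six infinite saddles the blocking argument splits into many more sub-cases than in the cubic setting, and one must check in every distribution that the cyclic positions of the attracting and repelling foci relative to $I_1,\dots,I_6$ leave no ambiguity in the connections.
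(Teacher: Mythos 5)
The paper does not prove this proposition at all: its entire ``proof'' is the citation ``See \cite{AlvGasPro} for a proof,'' so your attempt has to stand on its own rather than be matched against an argument in the text. As a strategy yours is the natural one --- it mirrors how the authors treat the other configurations in Theorem \ref{teoquartphase} --- but it has a genuine gap exactly where the content of the proposition lies. The decisive step is your claim that, once the geometric distribution and the stability pattern of the four foci are fixed, the blocking argument together with a Poincar\'e--Hopf index count ``forces the separatrix assignment to be unique.'' This is asserted, not proved. With four foci and six infinite saddles there are a priori many ways to match the separatrices of $I_1,\dots,I_6$ with the attracting and repelling foci; an index count supplies a single global relation, not a unique matching, and the no-limit-cycle property does not by itself exclude inequivalent connection schemes. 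The quadrilateral distribution alone realizes three inequivalent portraits (\textbf{Q25)}--\textbf{Q27)}, obtained in the text by moving one root through a center transition), so ``one candidate, one portrait'' cannot be taken for granted without tracking how the stability pattern interacts with the cyclic order of the foci as seen from infinity. You name this as ``the main obstacle,'' which is the correct diagnosis, but it means what you have is a plan for a proof rather than a proof; the case analysis it defers is precisely what \cite{AlvGasPro} carries out.

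A secondary problem is in the bookkeeping of admissible stability patterns. The alignment result you invoke for the Border and Triangle cases (``$n-2$ aligned points alternate and the two symmetric ones share stability'') applies to configurations with exactly two points placed symmetrically about a line containing the other $n-2$; for $n=4$ that is two aligned plus two symmetric, which is neither the Border distribution (three aligned, one off the line) nor the Triangle distribution (three vertices, one interior). So those two cases are not constrained in the way you claim, and the ``short finite list'' of candidates is not actually produced. Together with the unexecuted separatrix step, this leaves both halves of the argument --- enumeration of candidates and uniqueness of the portrait for each --- incomplete.
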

See \cite{AlvGasPro} for a proof.\\

\begin{theorem}\label{teoquartphase}
	If we distinguish nodes and foci, there are  twenty-nine topologically different phase portraits of the quartic holomorphic system. Without this distinction, there are twenty two phase portraits. See figures below.
\end{theorem}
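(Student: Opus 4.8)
The plan is to follow the same strategy that produced the cubic classification in Theorem \ref{teocubphase}, organizing the argument around two independent pieces of data: the configuration of the finite equilibria (their multiplicities and local types) and the way the separatrices of the infinite saddles are routed through the Poincar\'e disk. First I would enumerate all admissible configurations of the four finite equilibria of $F(z)=z(z-B_1)(z-B_2)(z-B_3)$ according to the partitions of the multiplicity $4$: four simple zeros, one double and two simple, two double zeros, one triple and one simple, and one quadruple zero. For each partition the simple zeros are classified as center, node or focus by the sign of $\operatorname{Re}F'$ at the zero, while every multiple zero is forced to be of purely elliptic type with $2n-2$ elliptic sectors by Proposition \ref{sver1}.

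The second step is to prune this list using Proposition \ref{teoGas1}. Exactly as in the cubic case, items (a) and (b) of that proposition forbid configurations in which all but one of the equilibria are centers while the remaining one is a node or focus, and symmetrically for nodes; several a priori patterns are therefore discarded at once. What survives is a finite list of realizable local configurations, from which I would split off the special case of four simple foci: here the four admissible geometric distributions (collinear, triangle, border and quadrilateral, see Remark \ref{remakfoci}) together with their stability arrangements are already tabulated, yielding the six portraits \textbf{Q22)}--\textbf{Q27)}, so I would simply invoke the proposition preceding the statement rather than redo that analysis.

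For every remaining configuration the global portrait is determined by analysing the $\alpha$- and $\omega$-limits of the six infinite saddles $I_1,\dots,I_6$. Each infinite saddle sends a pair of separatrices into the disk, and these must terminate at a finite attracting or repelling equilibrium, inside a period annulus of a center, at an elliptic sector, or at another infinite saddle. The routing is constrained by the index and stability data: an attracting node or focus absorbs the orbits coming from the neighbouring infinite saddles and from the repelling equilibria, which forbids certain crossing connections — this is precisely the mechanism illustrated by the ``impossible orbit paths'' figure in the cubic proof. I would carry out this bookkeeping configuration by configuration, in each case exhibiting an explicit representative polynomial (as in Remark \ref{remarke1} for the cubic) to certify realizability and ruling out the non-realizable separatrix schemes by the limit-set argument.

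Finally I would assemble the count. Distinguishing nodes from foci, the surviving portraits number twenty-nine; collapsing the node/focus distinction identifies those portraits that differ only by replacing an attracting (resp.\ repelling) focus with a node of the same stability, which are topologically equivalent, and this merges the appropriate families down to twenty-two. The main obstacle I anticipate is precisely the separatrix-routing analysis with six infinite saddles: unlike the cubic case, where four saddles and at most three finite points made the connection pattern almost forced, the quartic admits genuinely distinct global gluings for one and the same local configuration, so the real work is to show that the limit-set constraints cut the combinatorial possibilities down to exactly the listed portraits and that no two retained portraits are topologically equivalent.
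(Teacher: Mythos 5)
Your proposal follows essentially the same route as the paper's proof: enumerate the admissible local configurations of the four finite equilibria, discard the impossible ones via Proposition \ref{teoGas1}, handle the multiple zeros with Proposition \ref{sver1} and the four-foci case with Remark \ref{remakfoci} and the preceding proposition, and then settle each remaining case by tracking the $\omega$-limits of the separatrices of the six infinite saddles, certifying realizability with explicit polynomials. The only difference is organizational (you sort configurations by partitions of the multiplicity $4$ before assigning local types, whereas the paper lists the twenty-two configurations directly), which does not change the argument.
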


\begin{overpic}[scale=0.25]
	{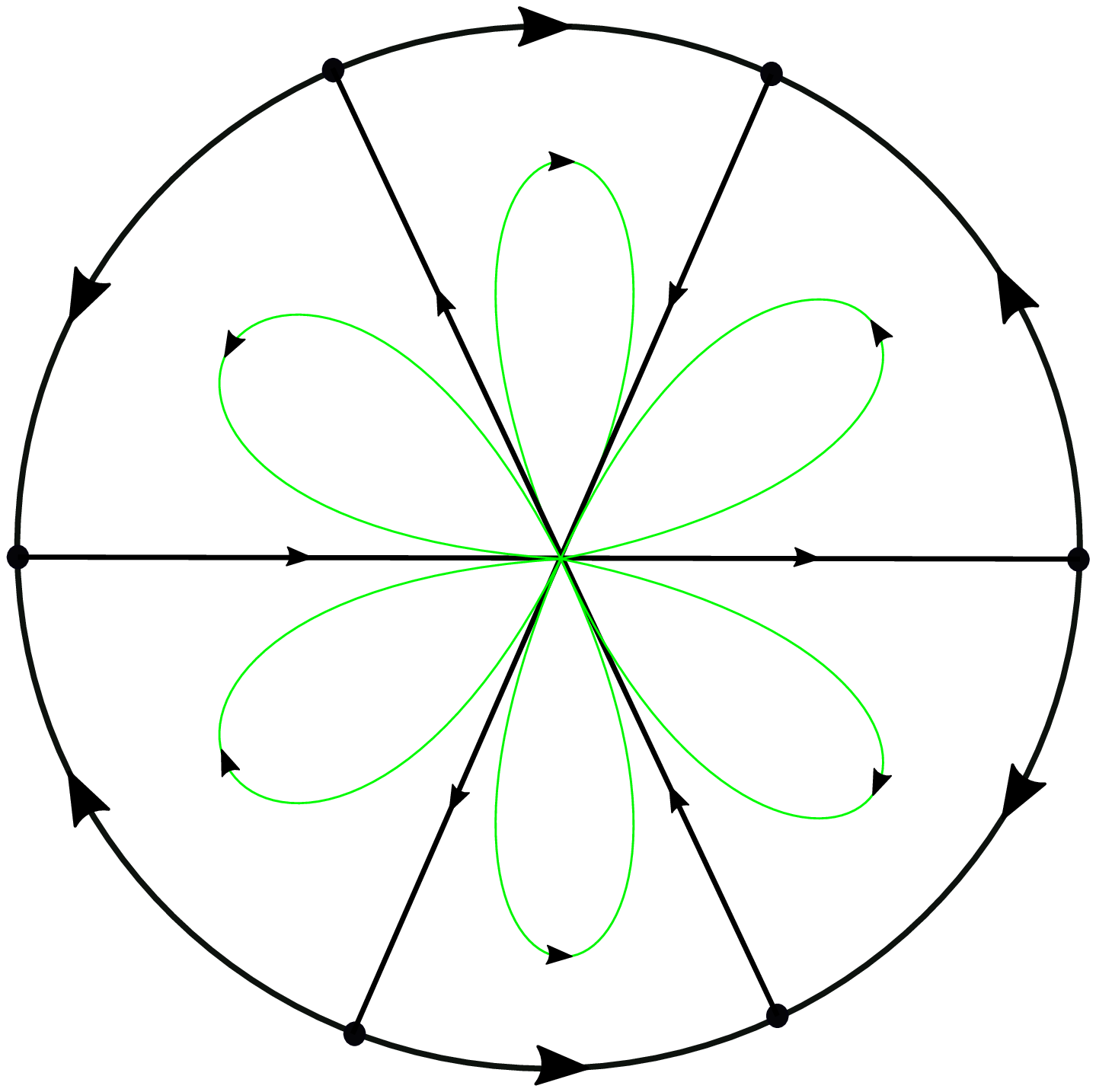} 
	\put(50,-20){\textbf{Q1)}}
\end{overpic}
\begin{overpic}[scale=0.25]
	{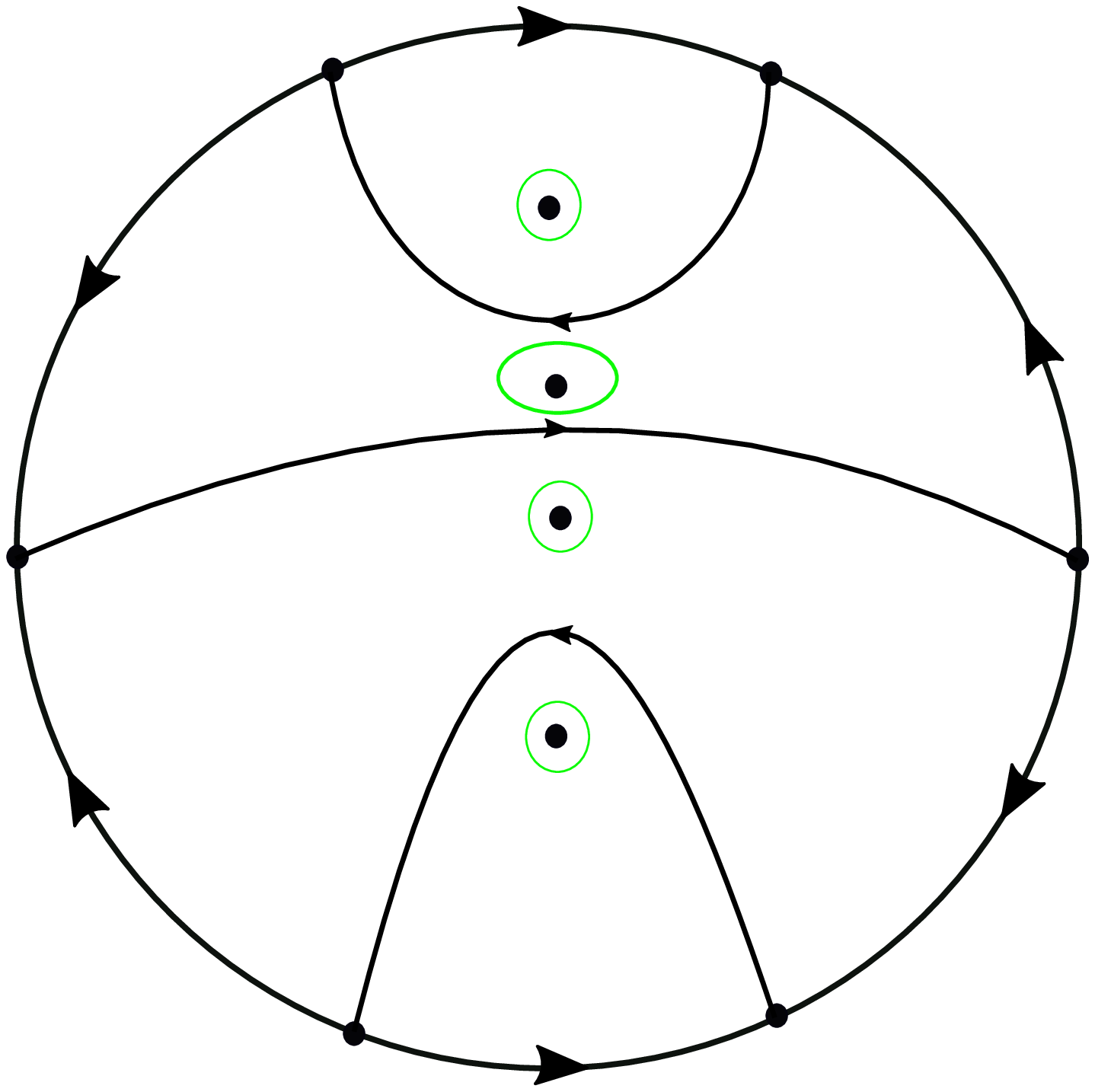} 
	\put(50,-20){\textbf{Q2)}}
	\put(50,24){$p_{1}$}
	\put(40,50){$p_{2}$}
	\put(35,65){$p_{3}$}
	\put(40,80){$p_{4}$}
\end{overpic}
\begin{overpic}[scale=0.25]
	{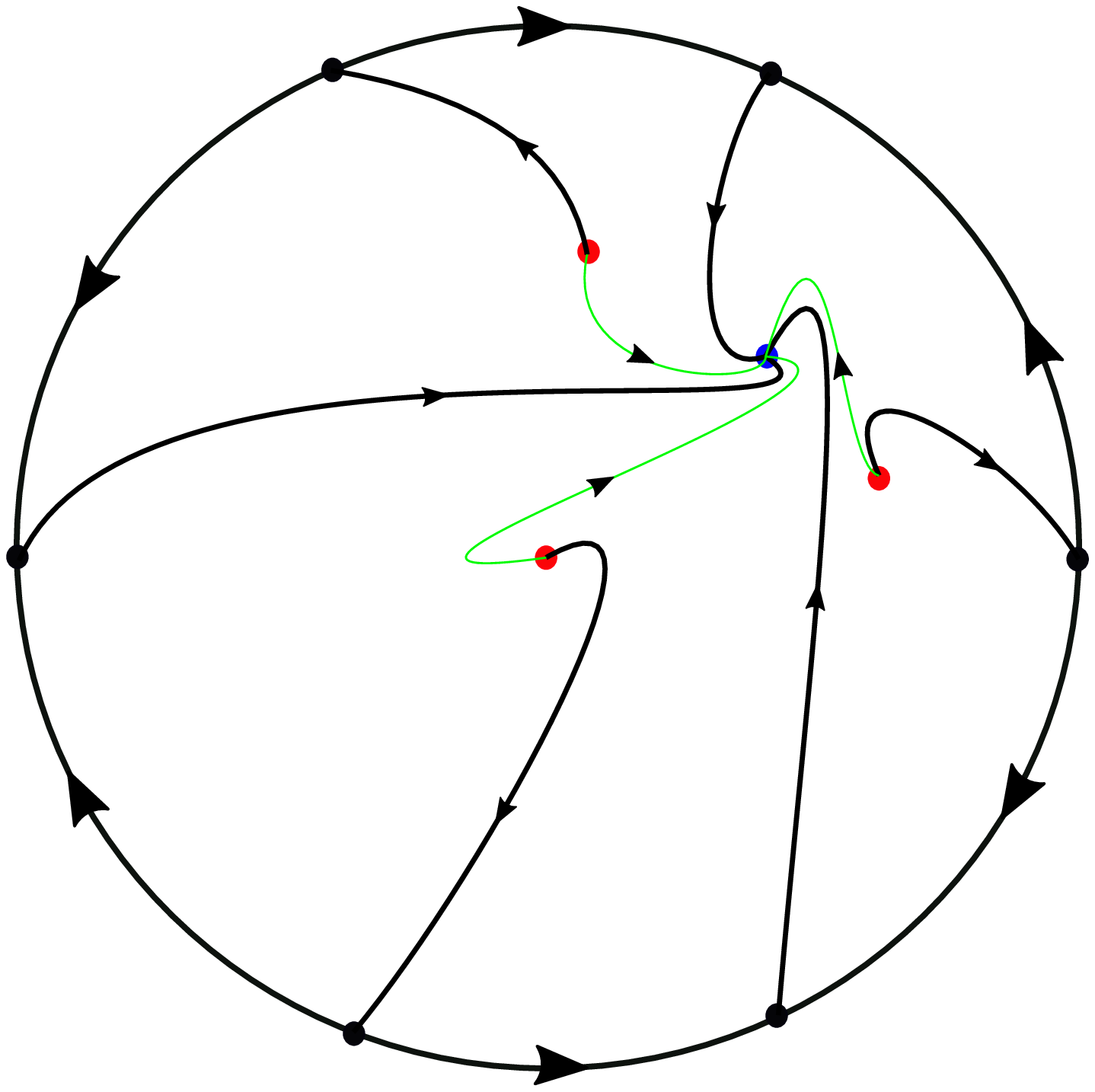} 
	\put(50,-20){\textbf{Q3)}}
	\put(40,44){$p_{1}$}
	\put(85,50){$p_{2}$}
	\put(70,80){$p_{3}$}
	\put(40,80){$p_{4}$}
\end{overpic}

\vspace{1cm}

\begin{overpic}[scale=0.25]
	{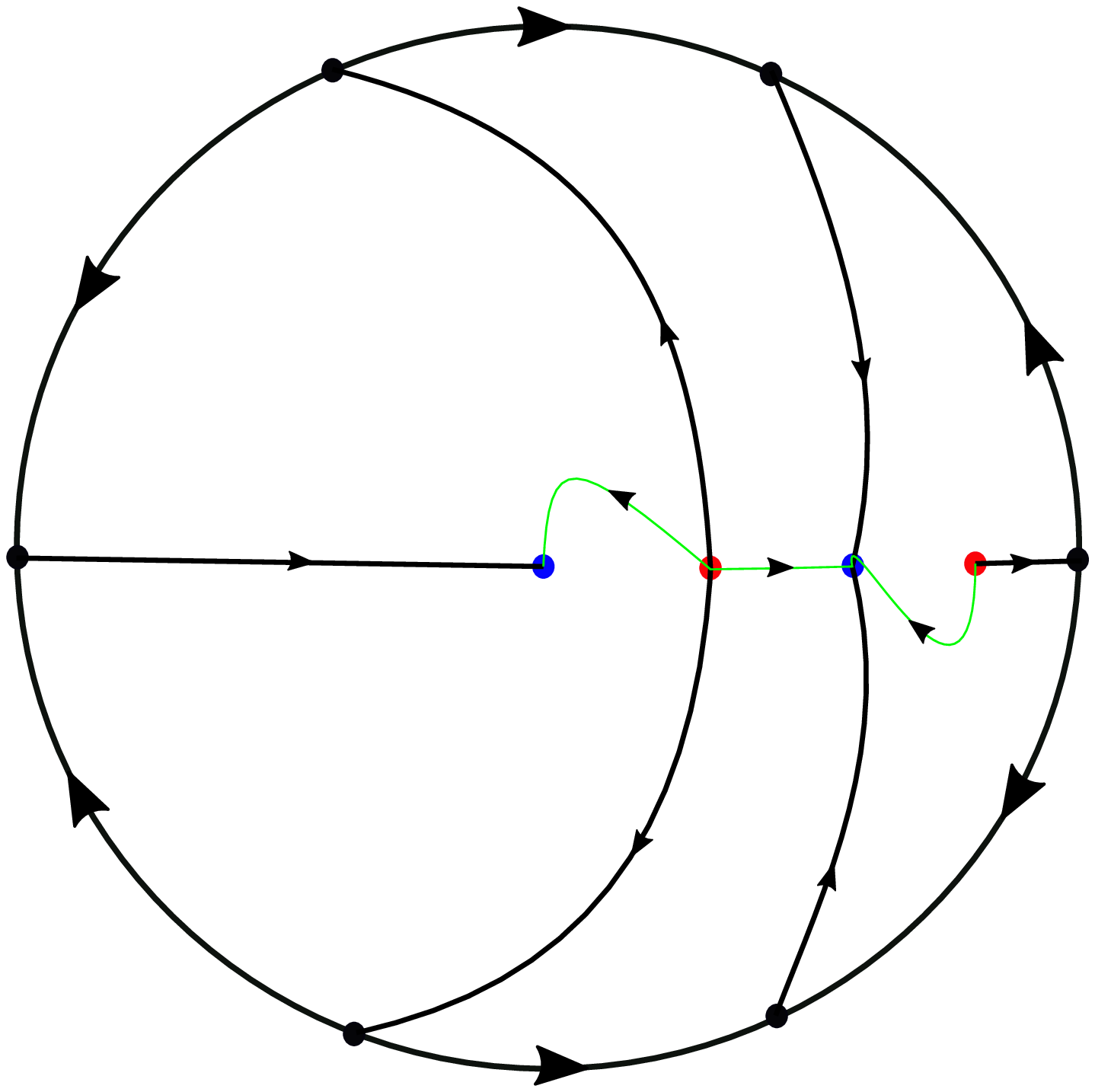} 
	\put(50,-20){\textbf{Q4)}}
	\put(40,40){$p_{1}$}
	\put(55,40){$p_{2}$}
	\put(70,40){$p_{3}$}
	\put(85,55){$p_{4}$}
\end{overpic}
\begin{overpic}[scale=0.25]
	{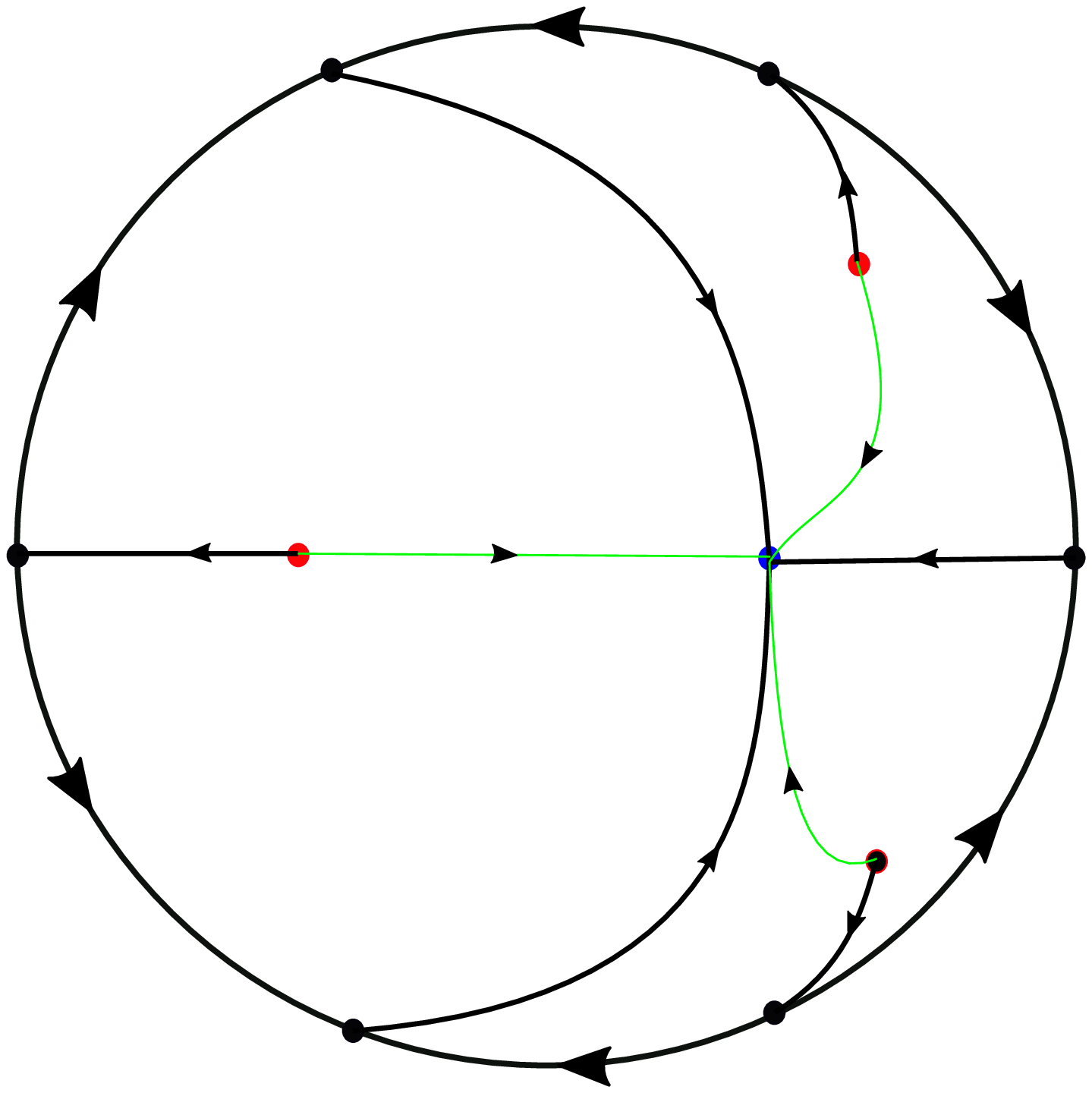} 
	\put(50,-20){\textbf{Q5)}}
	\put(60,50){$p_{1}$}
	\put(20,40){$p_{2}$}
	\put(70,15){$p_{3}$}
	\put(85,70){$p_{4}$}
\end{overpic}
\begin{overpic}[scale=0.25]
	{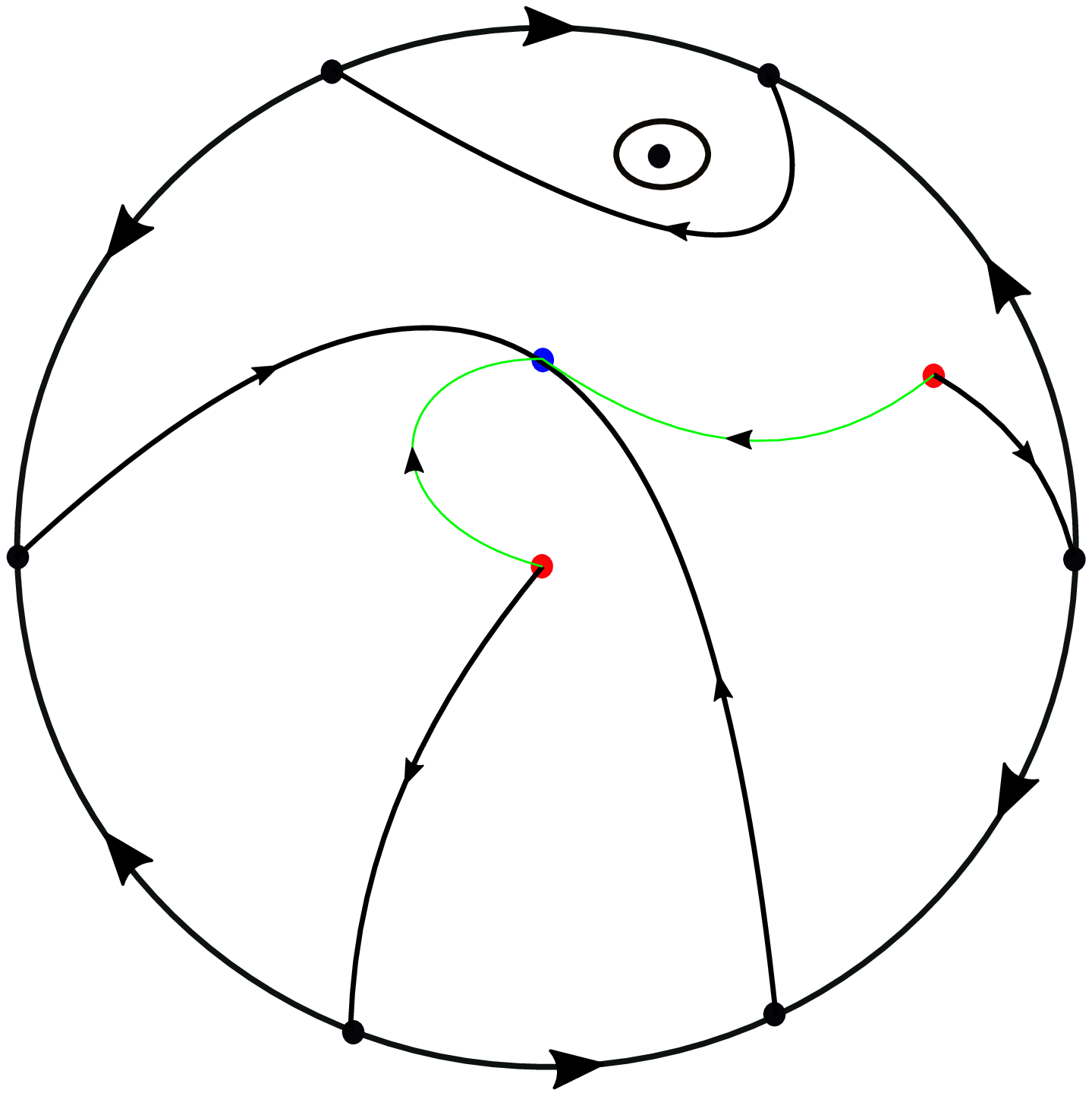} 
	\put(50,-20){\textbf{Q6)}}
	\put(50,40){$p_{1}$}
	\put(50,75){$p_{2}$}
	\put(50,95){$p_{3}$}
	\put(85,70){$p_{4}$}
\end{overpic}

\vspace{1cm}

\begin{overpic}[scale=0.25]
	{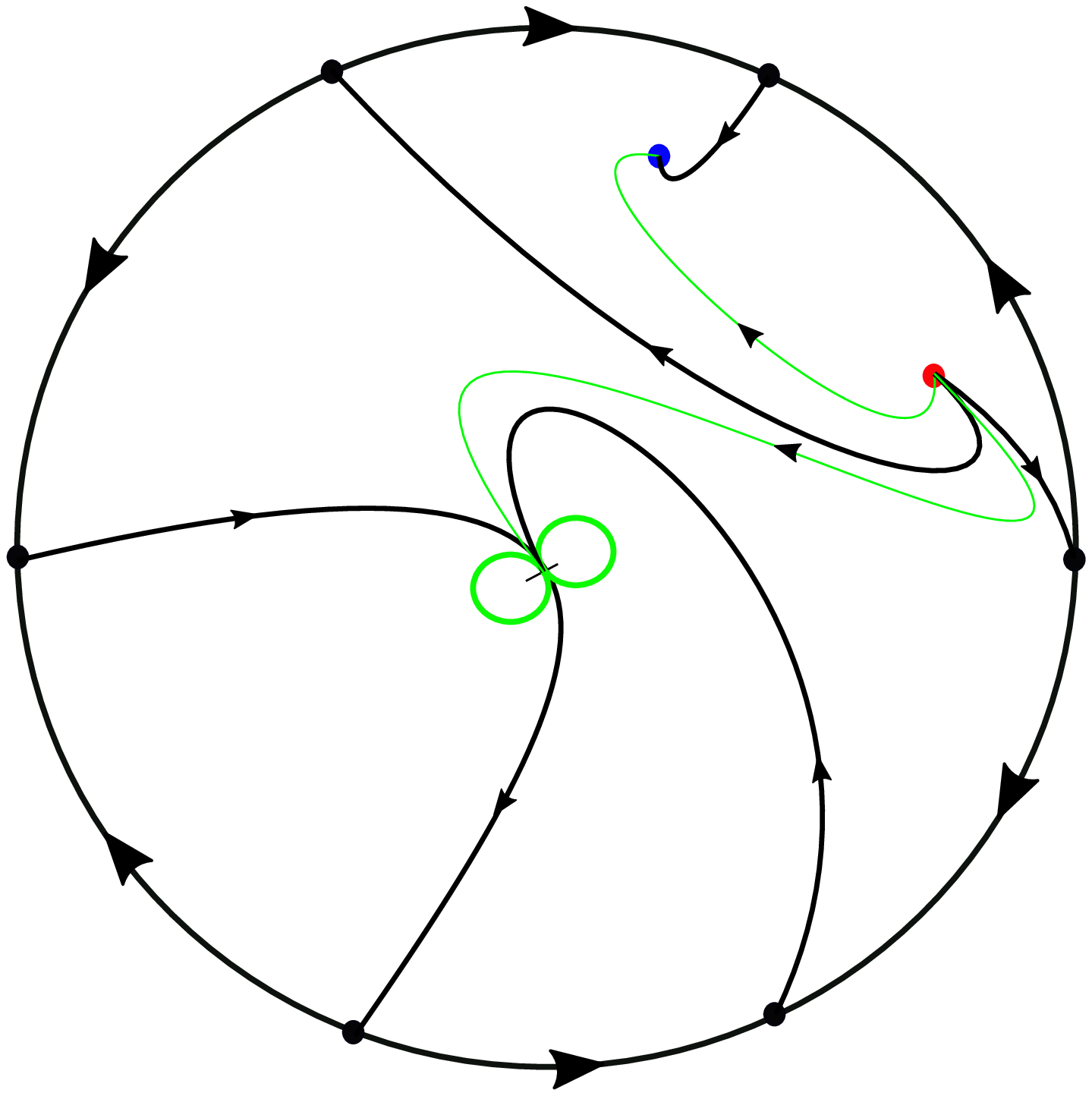} 
	\put(50,-20){\textbf{Q7)}}
	\put(80,75){$p_{1}$}
	\put(50,92){$p_{2}$}
\end{overpic}
\begin{overpic}[scale=0.25]
	{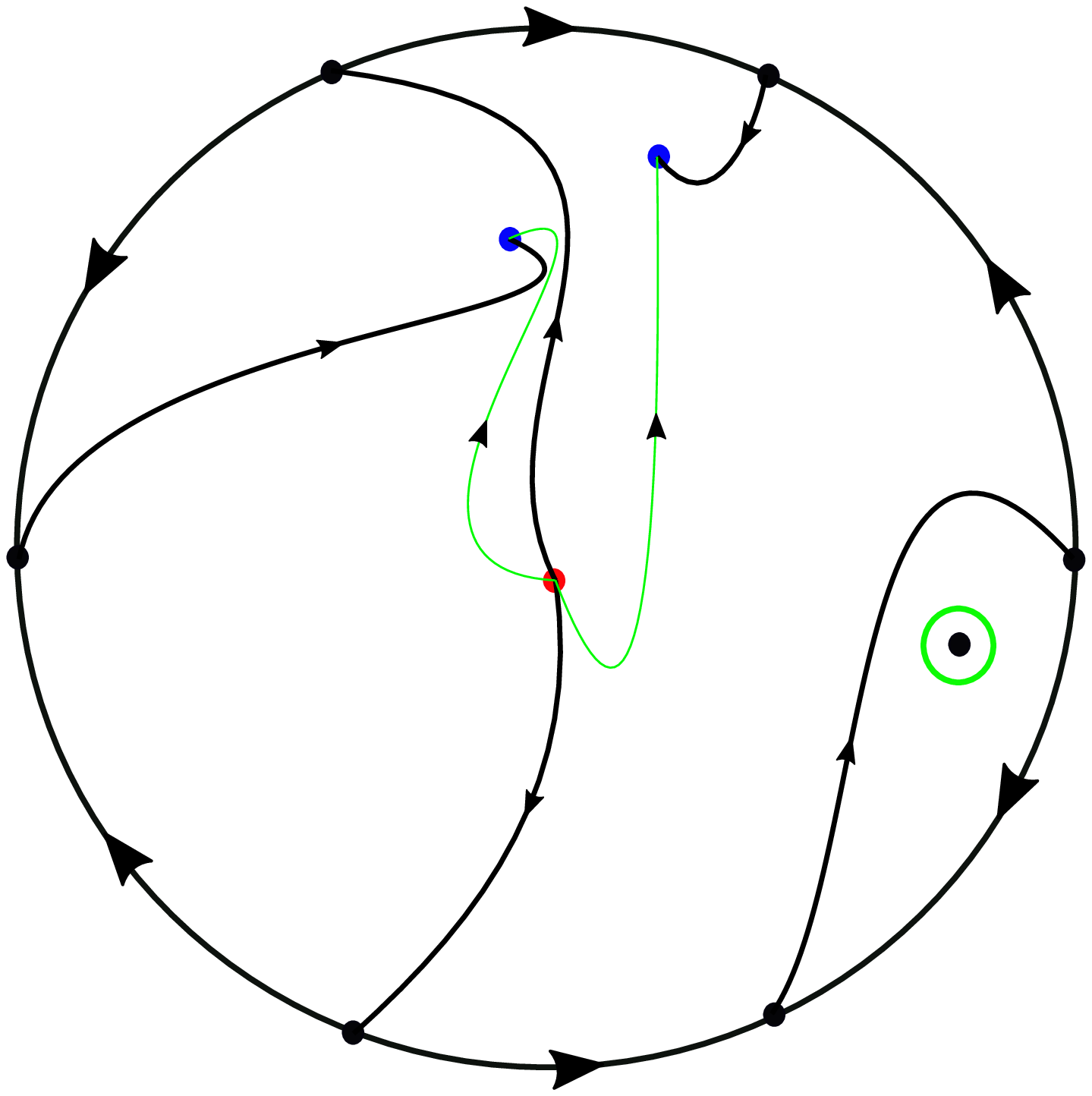} 
	\put(50,-20){\textbf{Q8)}}
	\put(40,40){$p_{1}$}
	\put(35,80){$p_{2}$}
	\put(52,93){$p_{3}$}
	\put(85,30){$p_{4}$}
\end{overpic}
\begin{overpic}[scale=0.25]
	{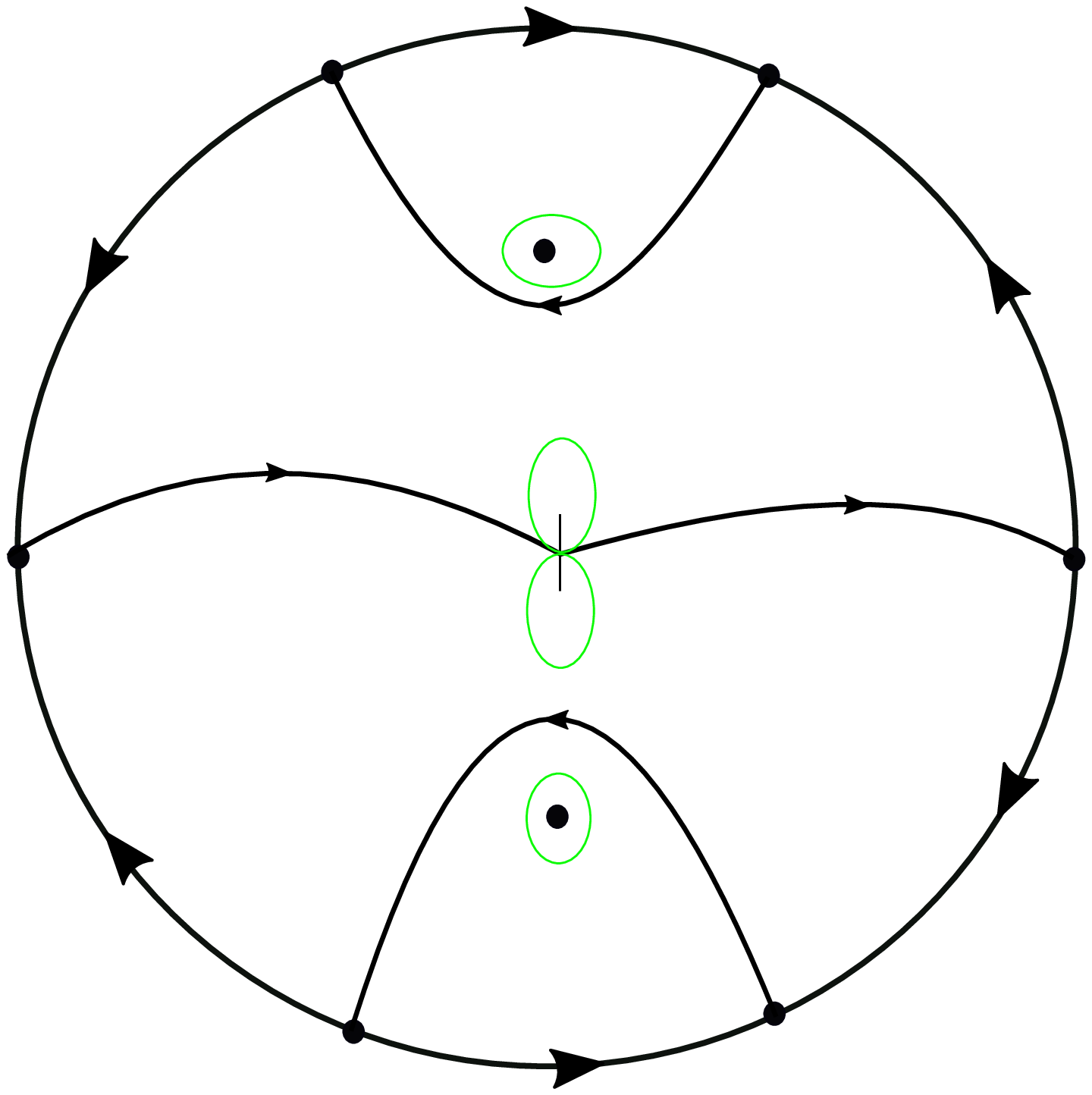} 
	\put(50,-20){\textbf{Q9)}}
	\put(45,15){$p_{1}$}
	\put(45,86){$p_{2}$}
	\end{overpic}

\vspace{1cm}

\begin{overpic}[scale=0.25]
	{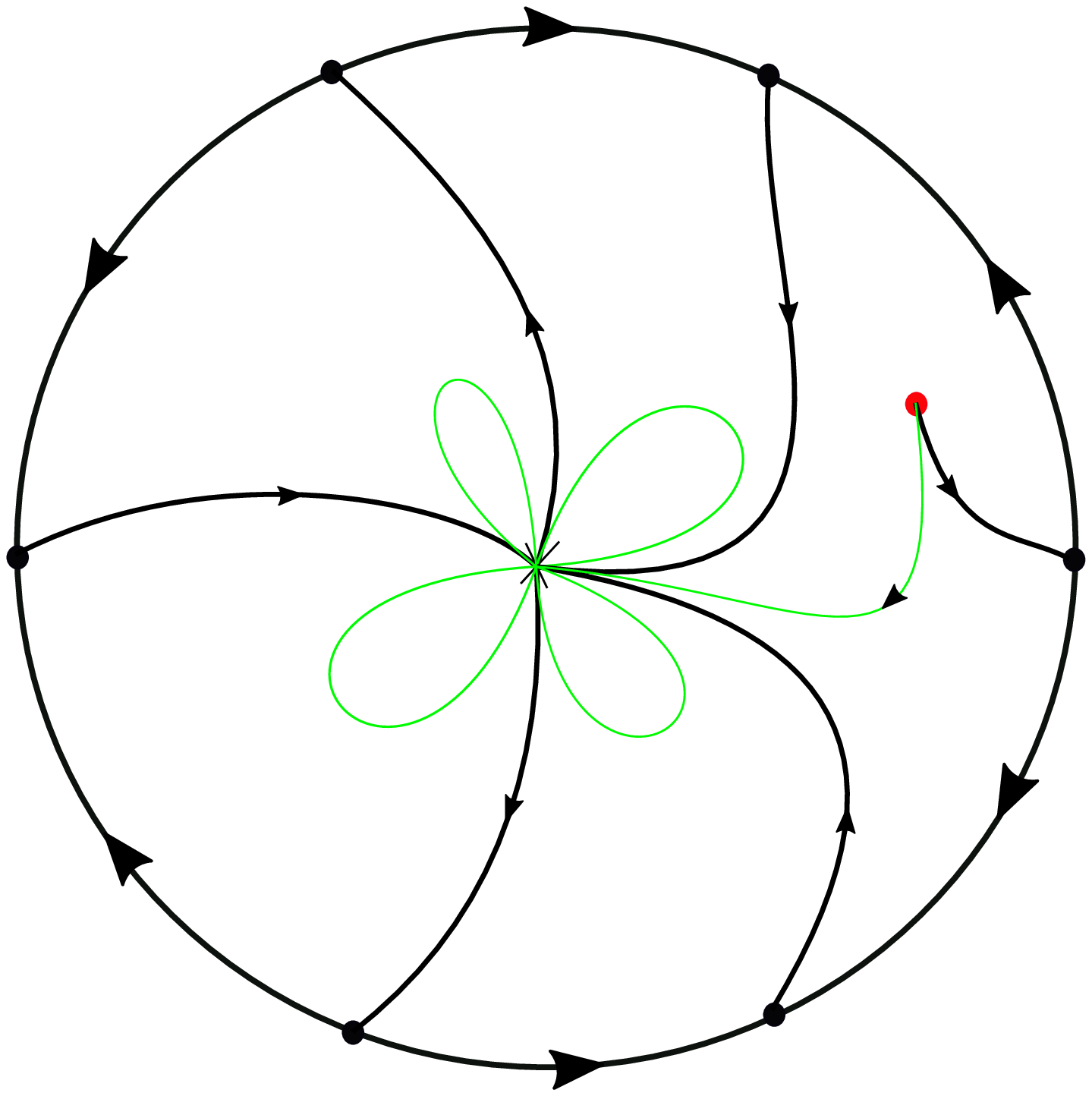} 
	\put(50,-20){\textbf{Q10)}}
    \put(78,70){$p_{1}$}
\end{overpic}\begin{overpic}[scale=0.25]
	{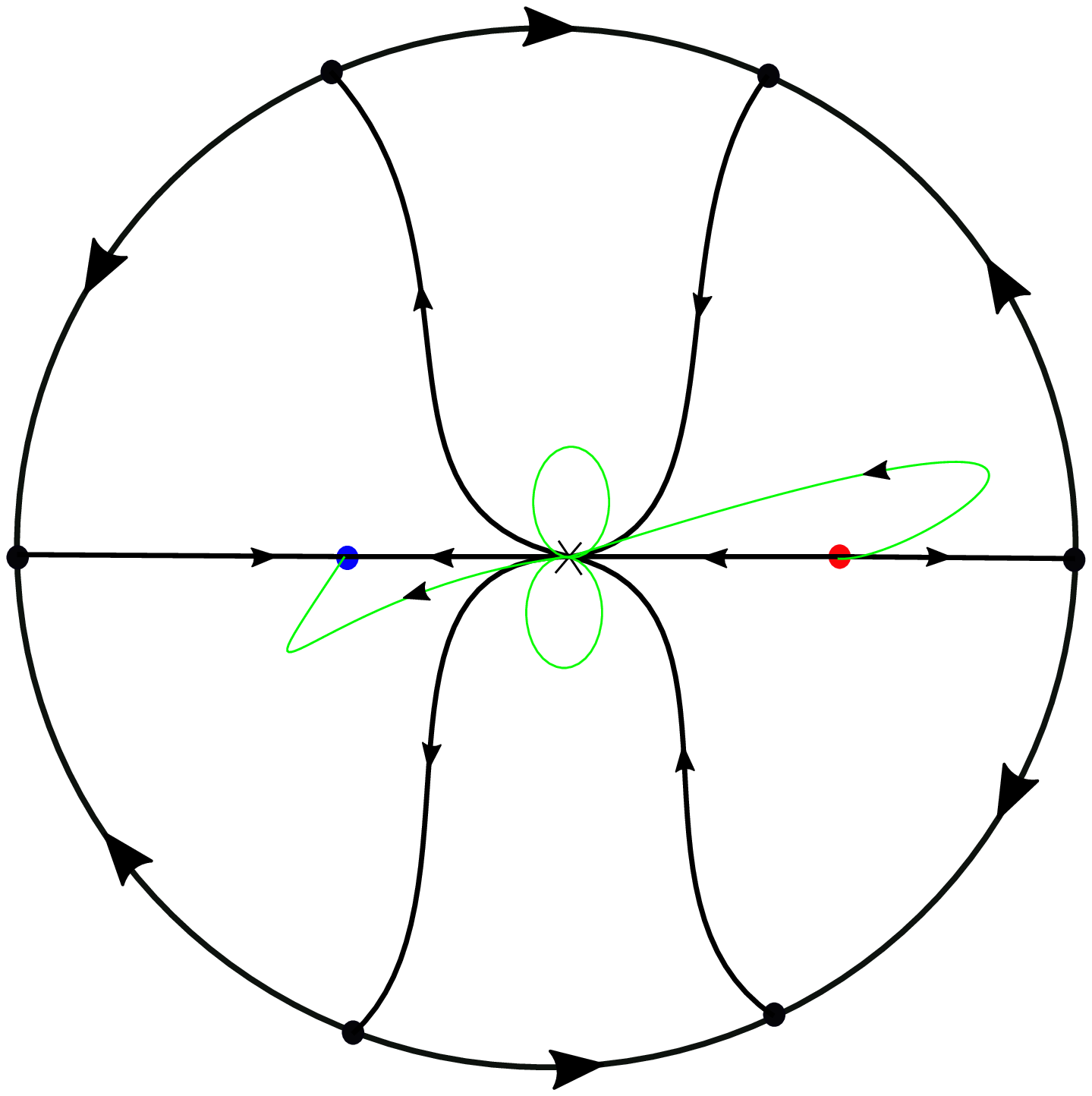} 
	\put(50,-20){\textbf{Q11)}}
	\put(30,55){$p_{1}$}
	\put(75,43){$p_{2}$}
\end{overpic}
\begin{overpic}[scale=0.25]
	{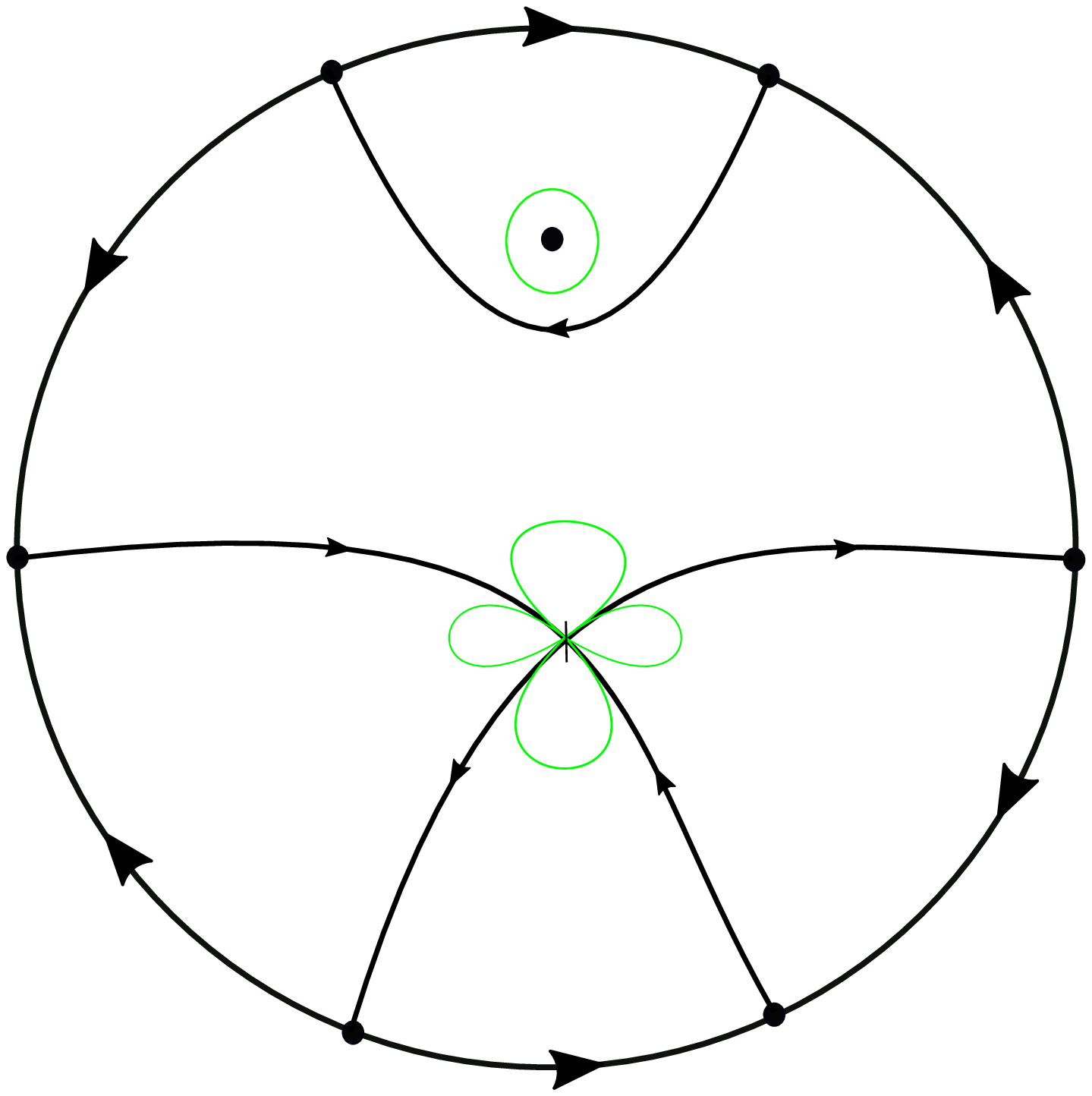} 
	\put(50,-20){\textbf{Q12)}}
	\put(45,90){$p_{1}$}
\end{overpic}

\vspace{1cm}

\begin{overpic}[scale=0.25]
	{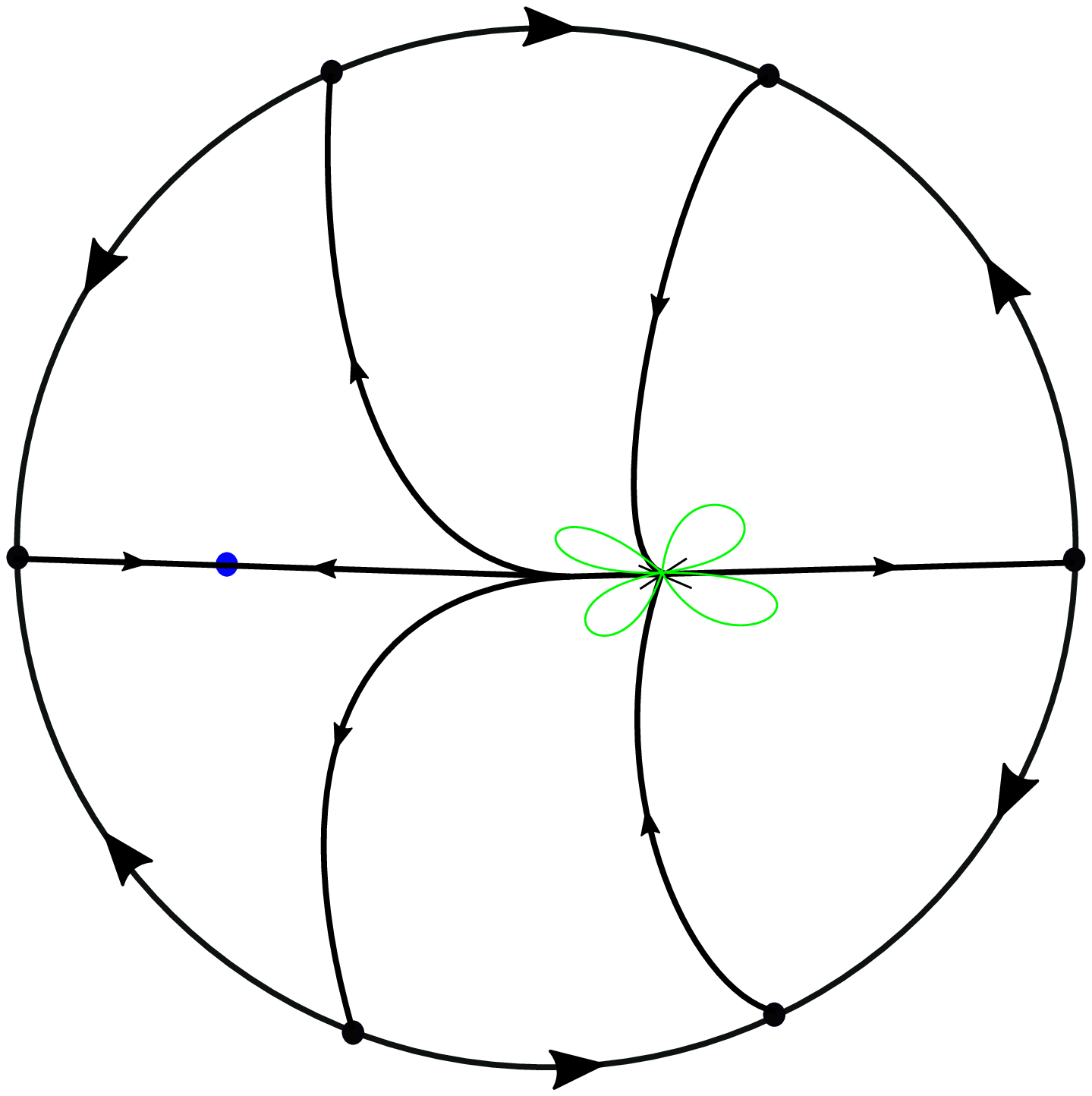} 
	\put(50,-20){\textbf{Q13)}}
	\put(15,55){$p_{1}$}
\end{overpic}
\begin{overpic}[scale=0.25]
	{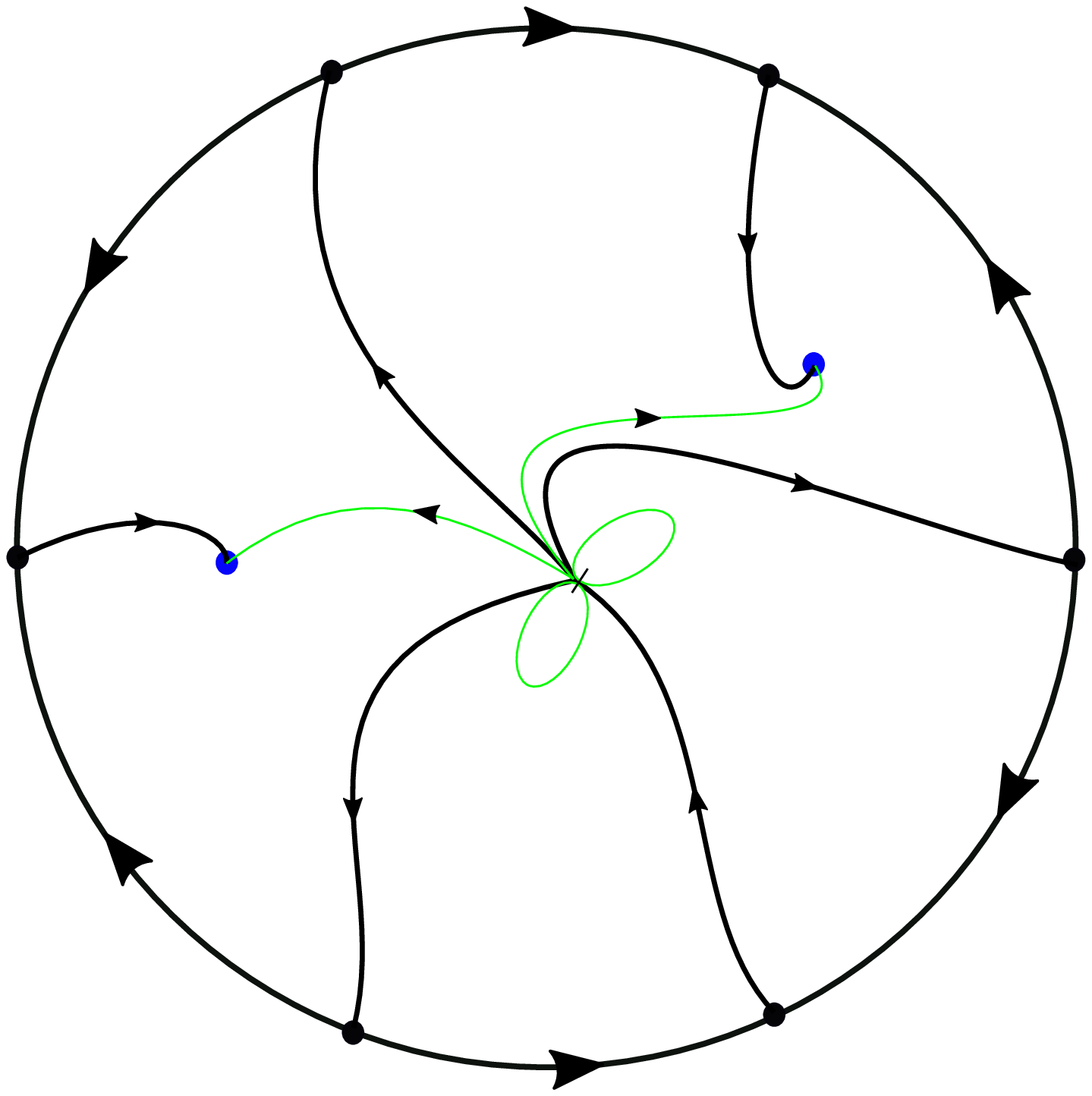} 
	\put(50,-20){\textbf{Q14)}}
	\put(15,45){$p_{1}$}
	\put(80,70){$p_{2}$}
\end{overpic}
\begin{overpic}[scale=0.25]
	{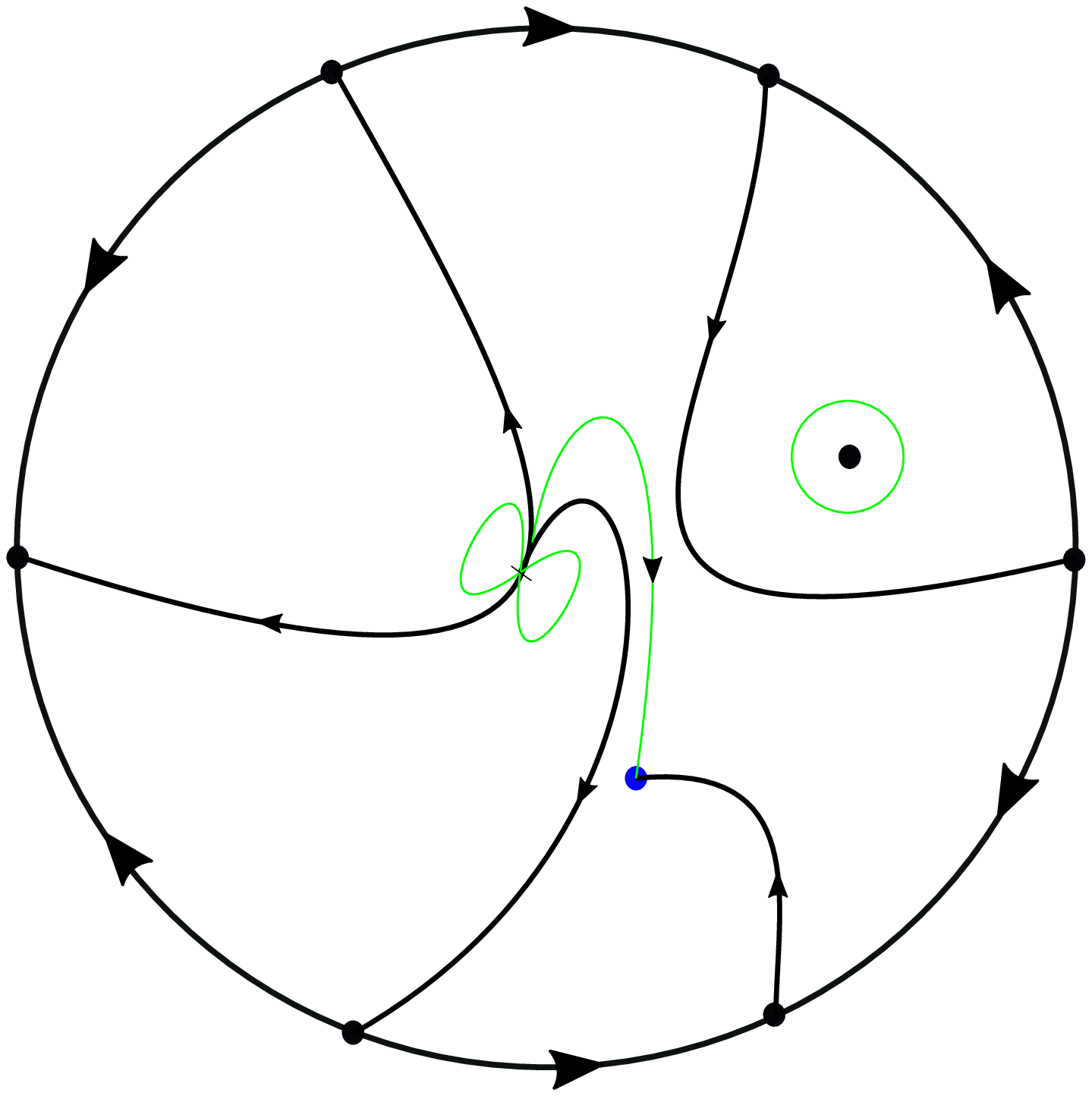} 
	\put(50,-20){\textbf{Q15)}}
	\put(60,20){$p_{1}$}
	\put(80,70){$p_{2}$}
\end{overpic}

\vspace{1cm}

\begin{overpic}[scale=0.25]
	{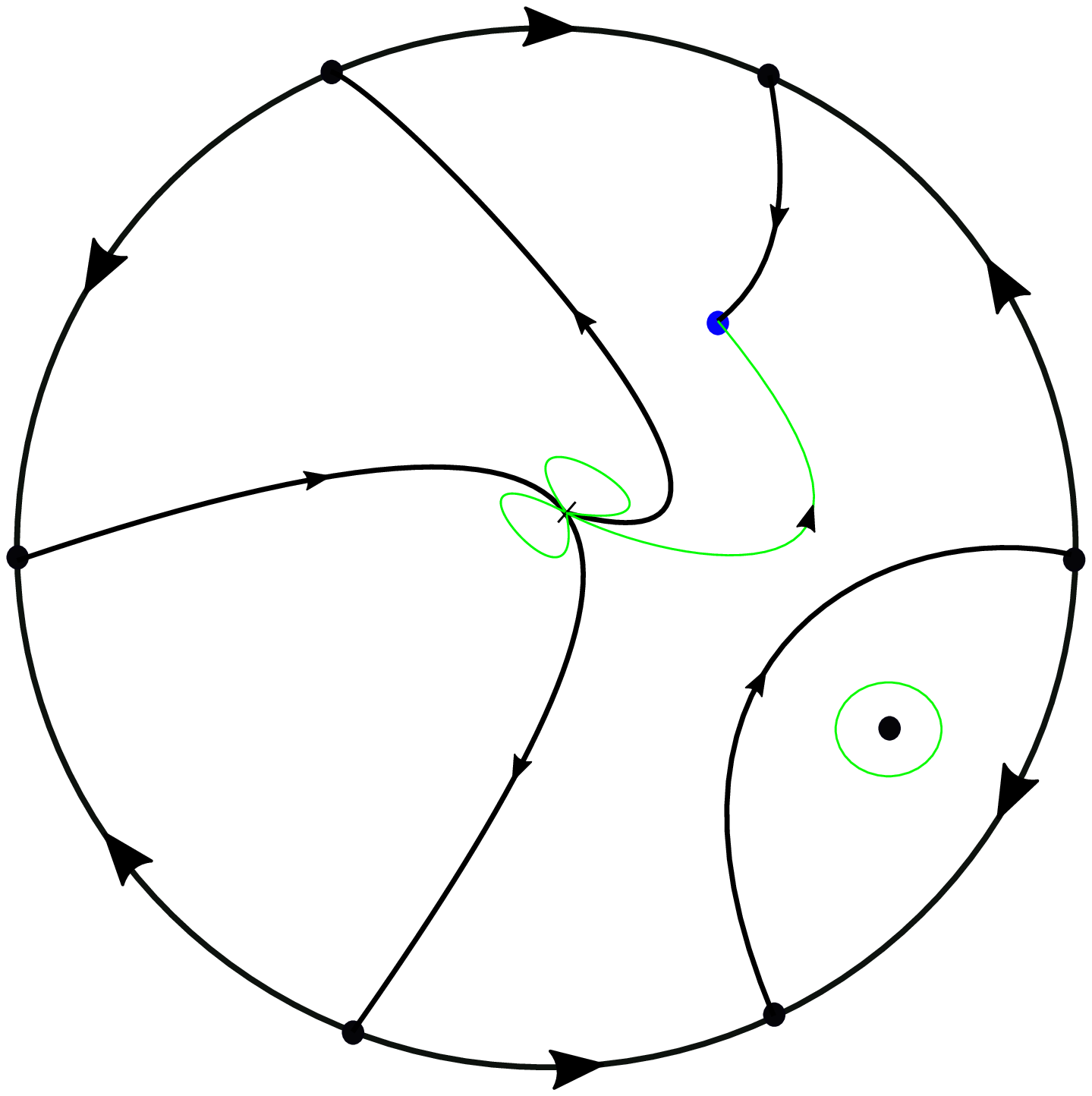} 
	\put(50,-20){\textbf{Q16)}}
	\put(60,80){$p_{2}$}
	\put(80,23){$p_{1}$}
\end{overpic}
\begin{overpic}[scale=0.25]
	{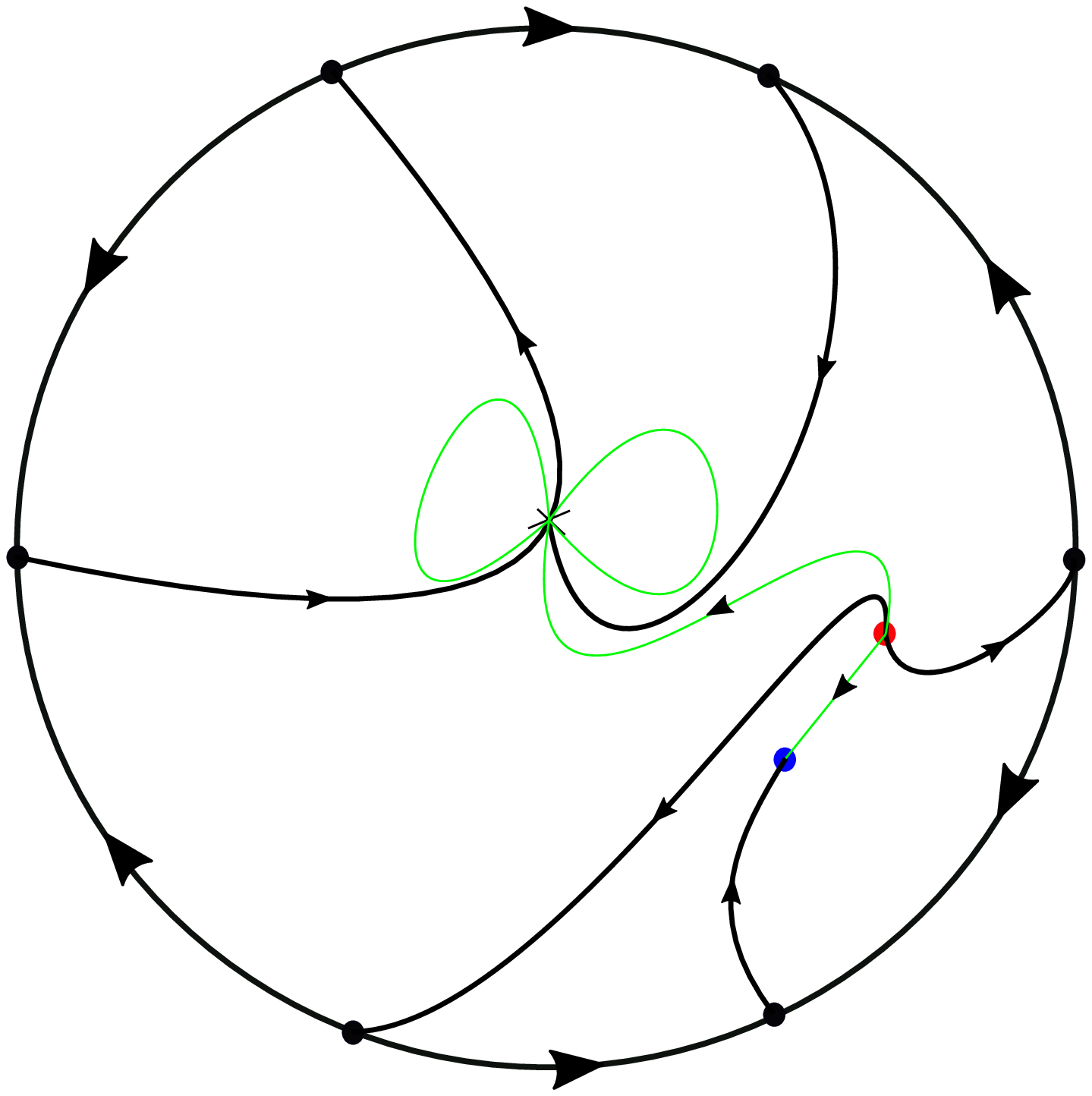} 
	\put(50,-20){\textbf{Q17)}}
	\put(85,50){$p_{1}$}
	\put(75,25){$p_{2}$}
\end{overpic}
\begin{overpic}[scale=0.25]
	{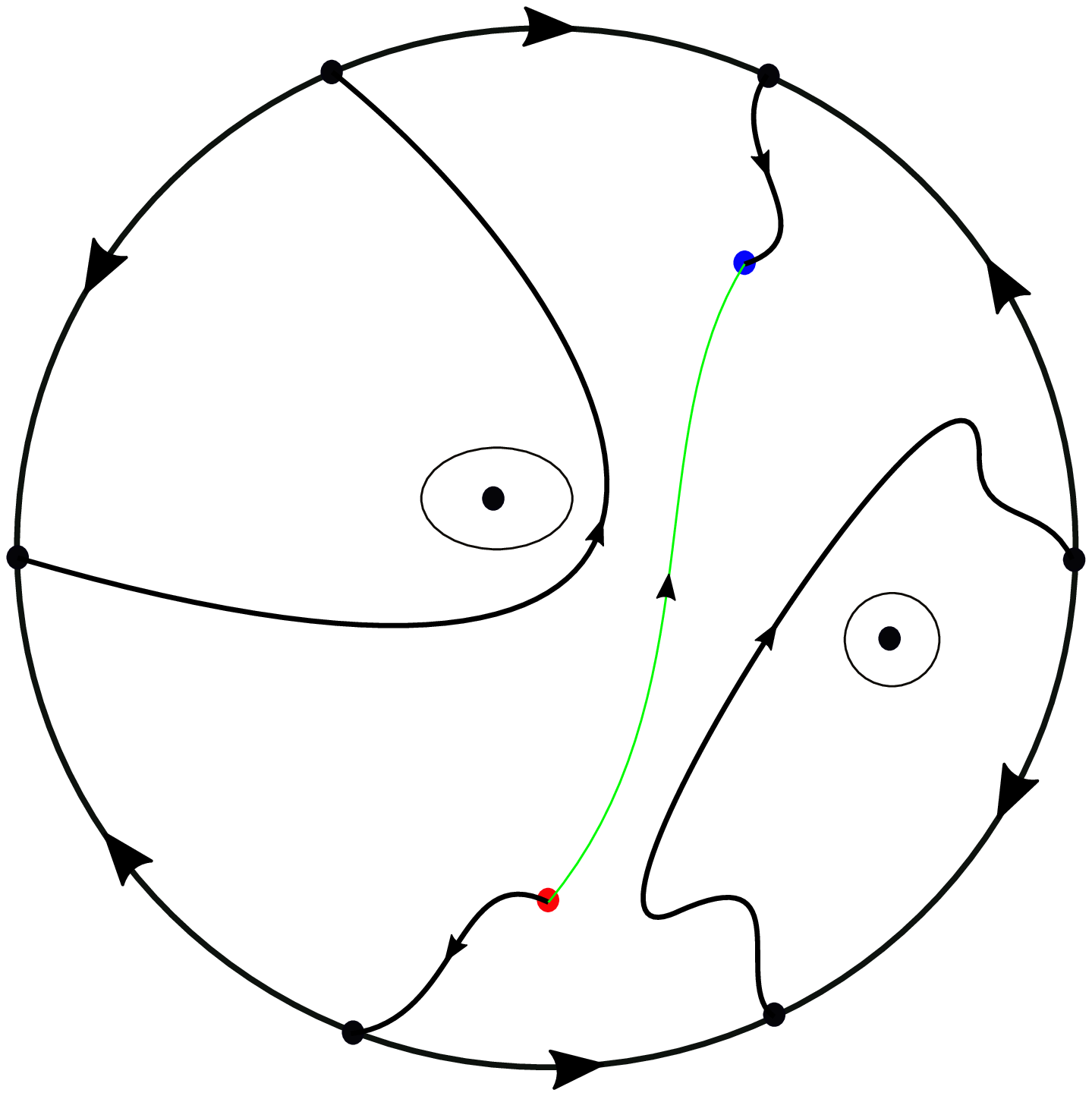} 
	\put(50,-20){\textbf{Q18)}}
	\put(85,30){$p_{1}$}
	\put(35,65){$p_{2}$}
	\put(60,85){$p_{3}$}
	\put(40,22){$p_{4}$}
\end{overpic}

\vspace{1cm}

\begin{overpic}[scale=0.25]
	{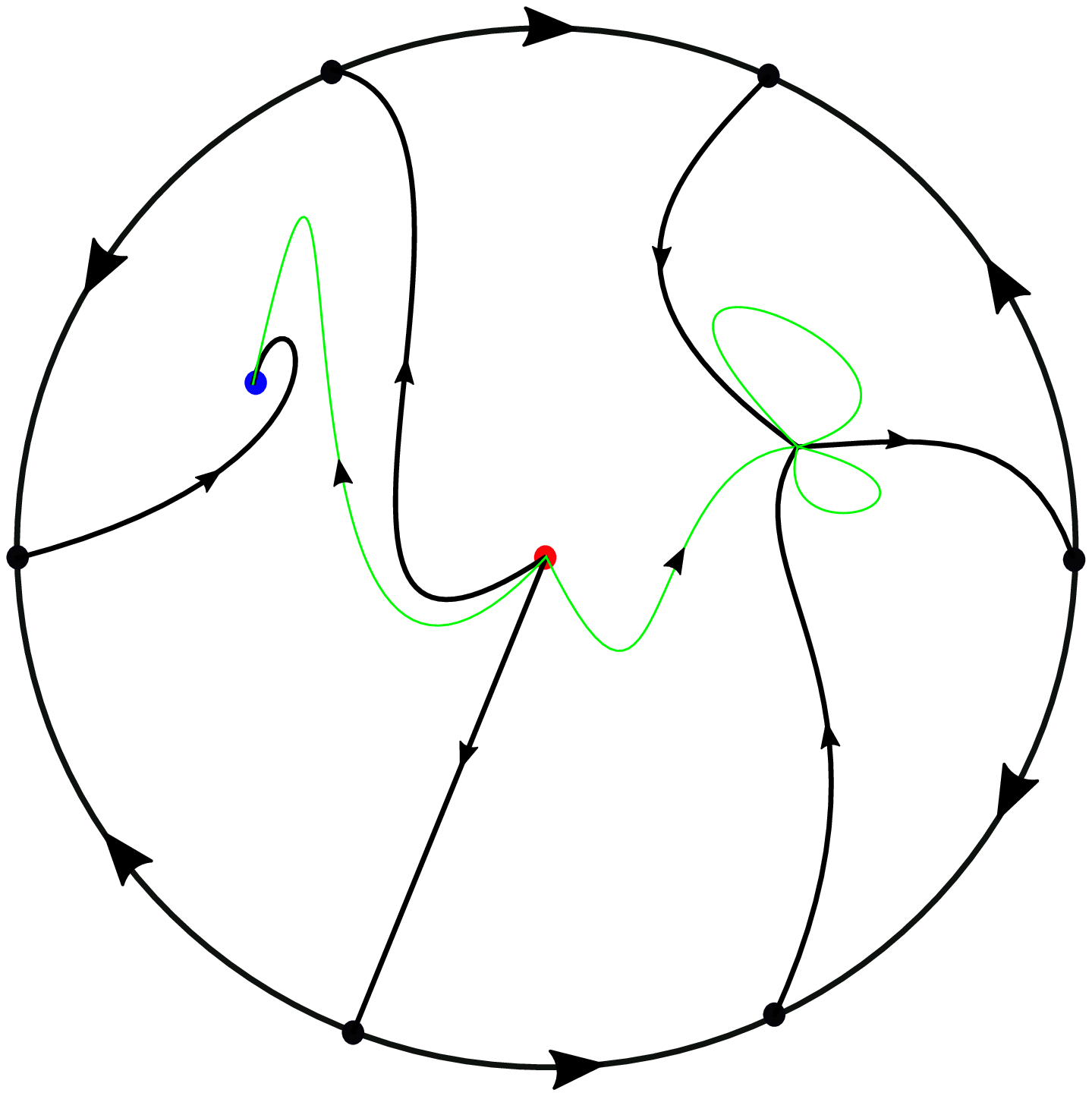} 
	\put(50,-20){\textbf{Q19)}}
	\put(50,55){$p_{1}$}
	\put(10,65){$p_{2}$}
\end{overpic}
\begin{overpic}[scale=0.25]
	{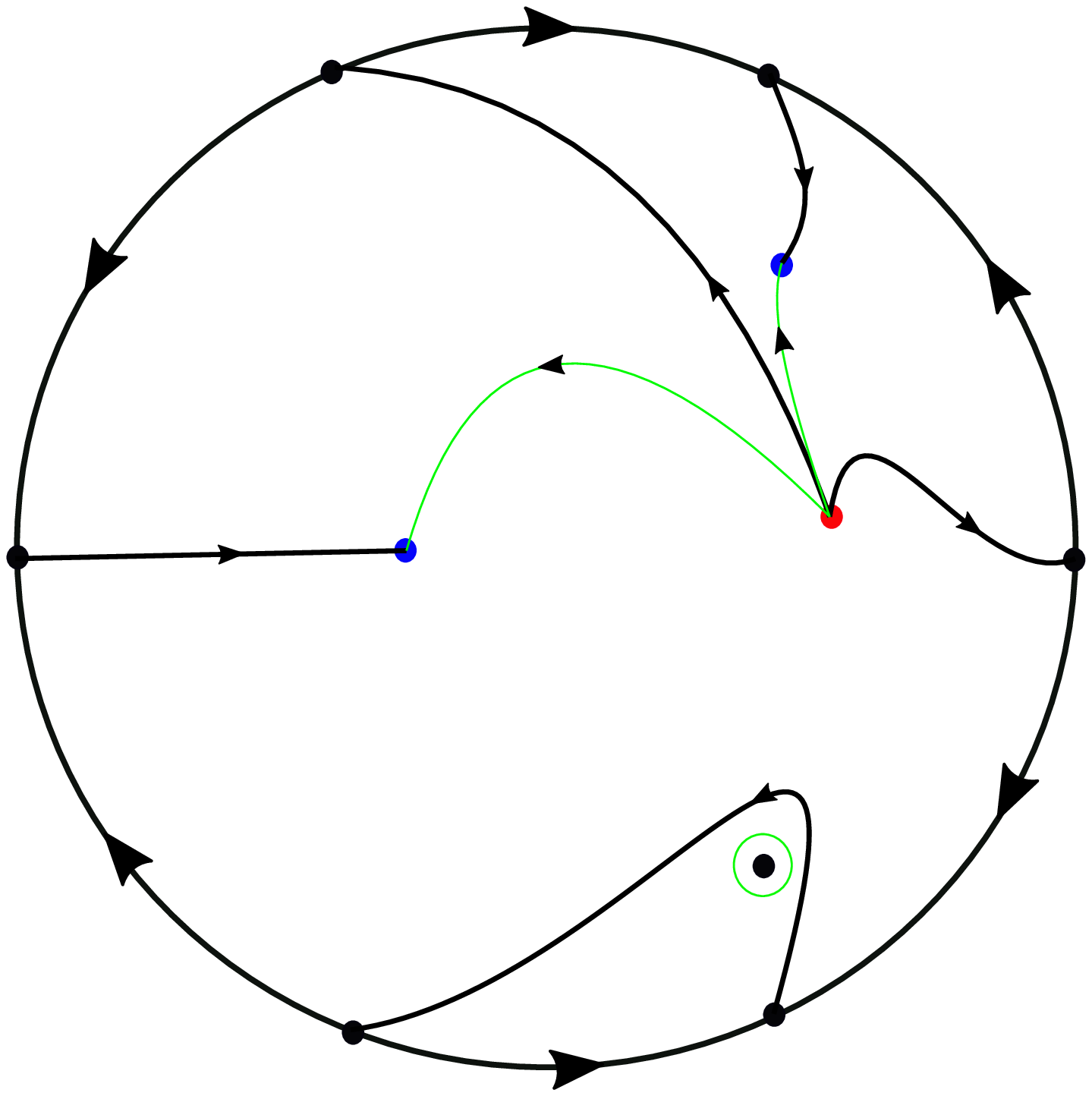} 
	\put(50,-20){\textbf{Q20)}}
	\put(60,12){$p_{1}$}
	\put(30,60){$p_{2}$}
	\put(65,85){$p_{3}$}
	\put(70,45){$p_{4}$}
\end{overpic}
\begin{overpic}[scale=0.25]
	{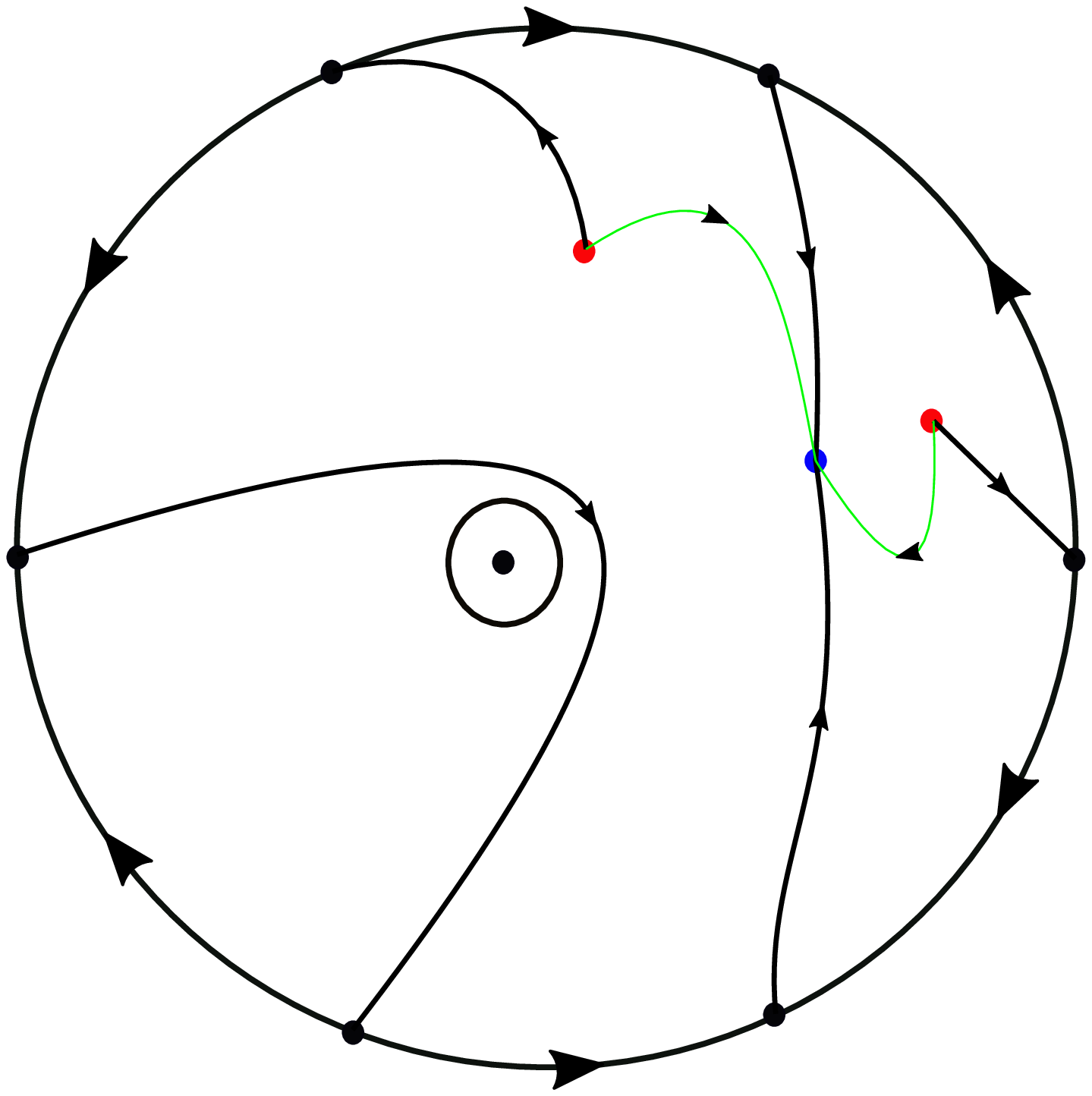} 
	\put(50,-20){\textbf{Q21)}}
	\put(40,35){$p_{1}$}
	\put(65,60){$p_{2}$}
	\put(60,85){$p_{3}$}
	\put(86,70){$p_{4}$}
\end{overpic}

\vspace{1cm}

\begin{overpic}[scale=0.25]
	{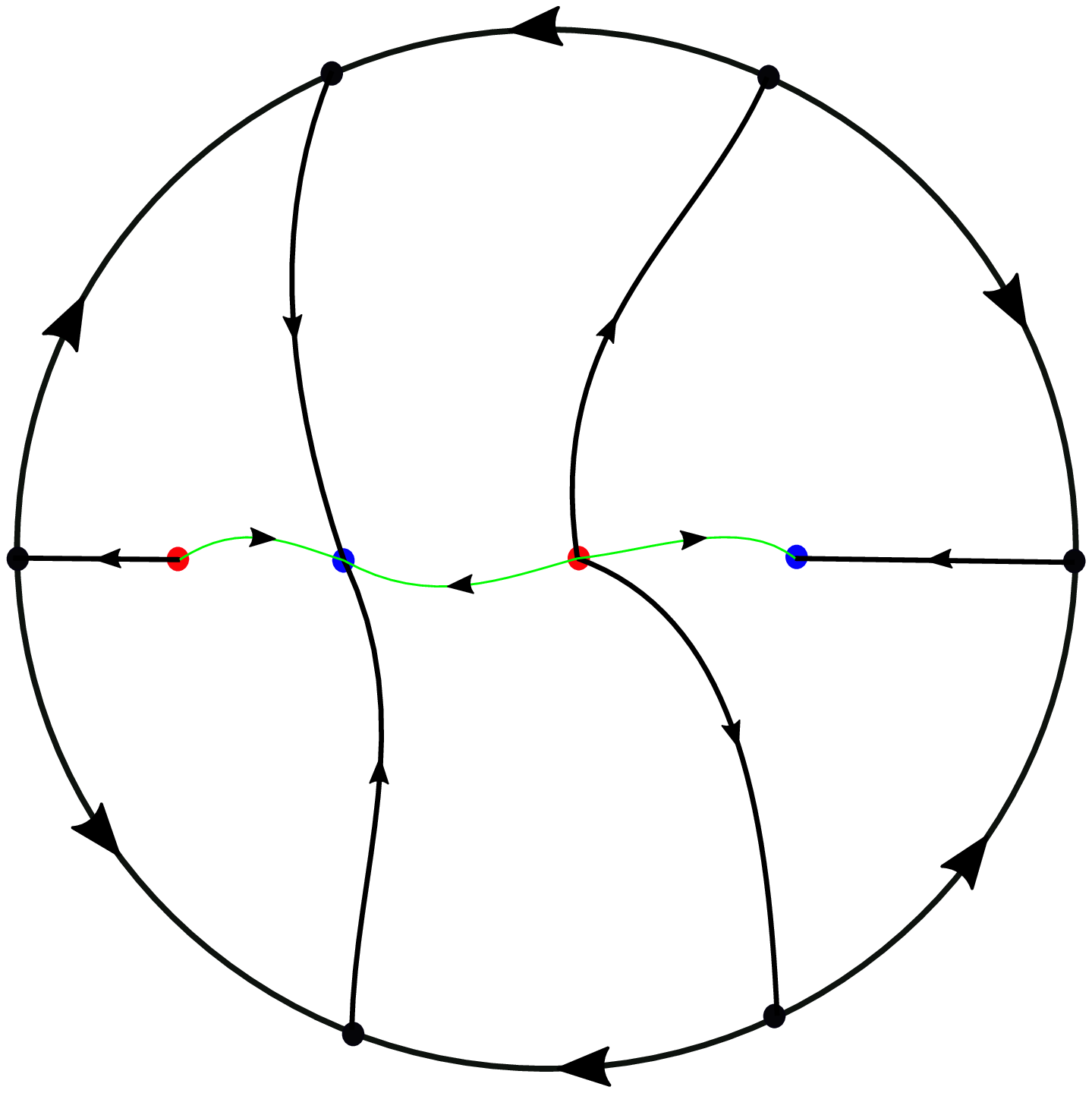} 
	\put(50,-20){\textbf{Q22)}}
	\put(10,42){$p_{1}$}
	\put(32,58){$p_{2}$}
	\put(58,58){$p_{3}$}
	\put(70,40){$p_{4}$}
\end{overpic}
\begin{overpic}[scale=0.25]
	{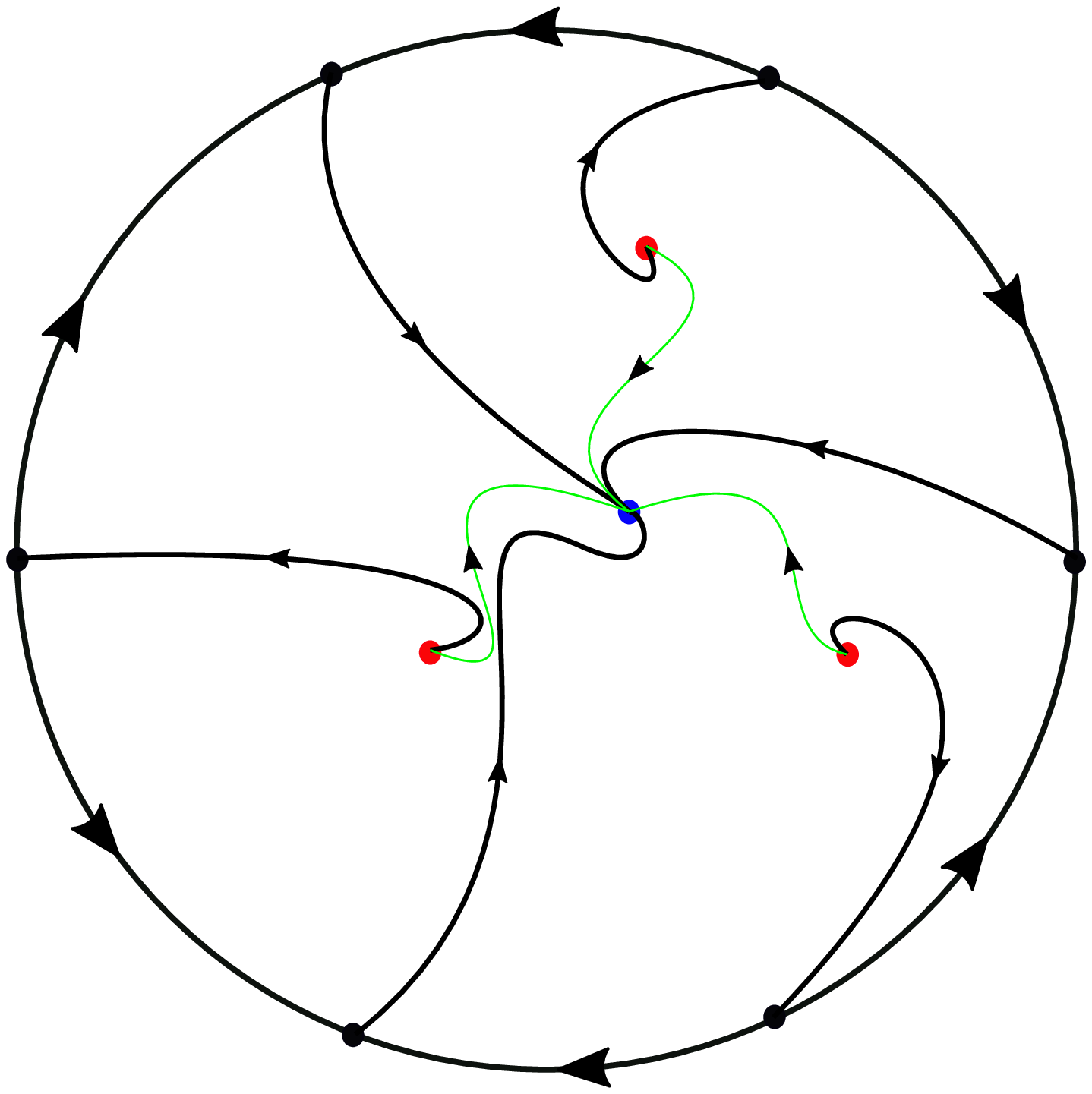} 
	\put(50,-20){\textbf{Q23)}}
	\put(35,35){$p_{1}$}
	\put(62,50){$p_{2}$}
	\put(60,85){$p_{3}$}
	\put(72,35){$p_{4}$}
\end{overpic}
\begin{overpic}[scale=0.25]
	{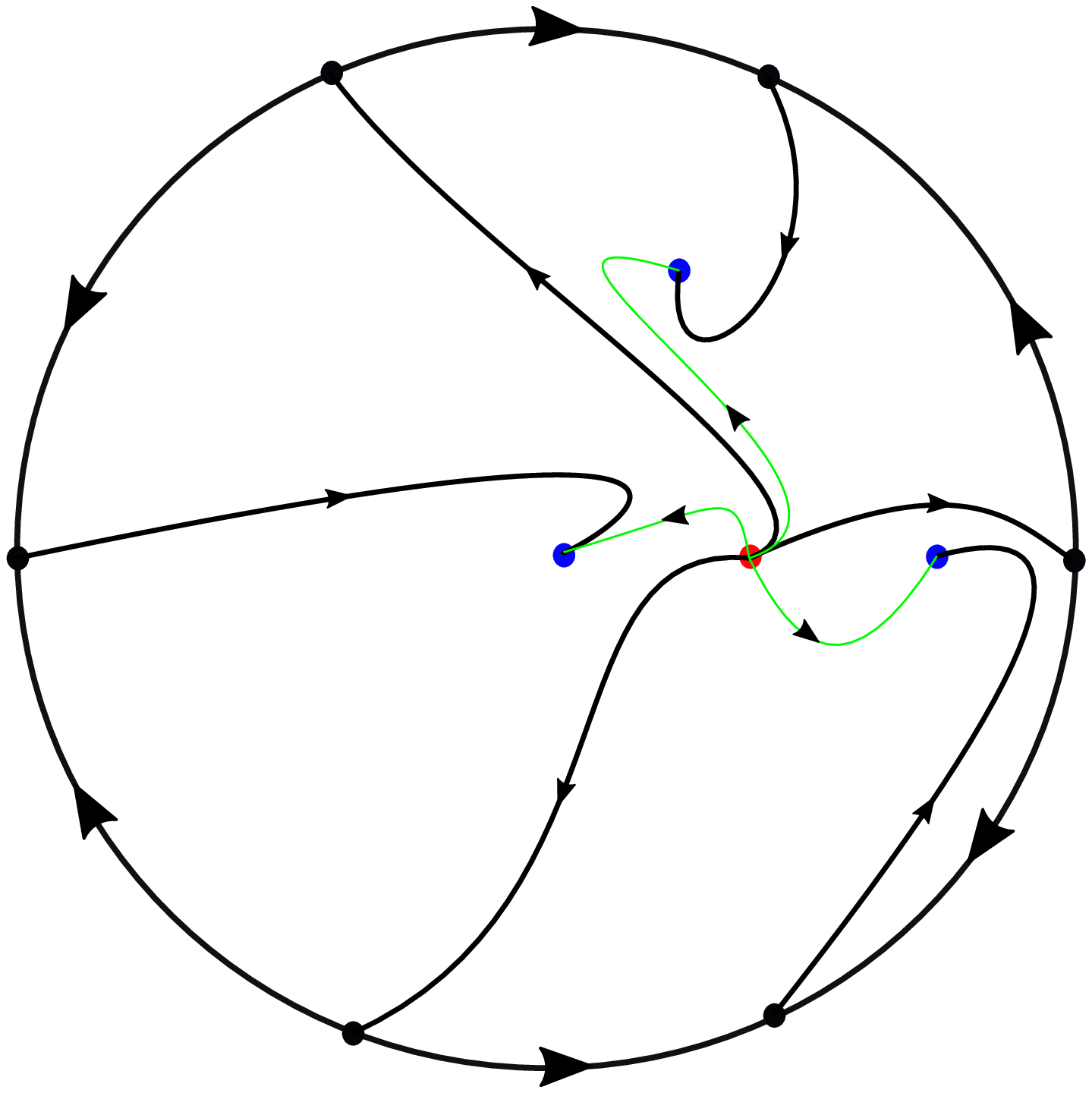} 
	\put(50,-20){\textbf{Q24)}}
	\put(40,50){$p_{1}$}
	\put(65,40){$p_{2}$}
	\put(60,85){$p_{3}$}
	\put(86,40){$p_{4}$}
\end{overpic}

\vspace{1cm}

\begin{overpic}[scale=0.25]
	{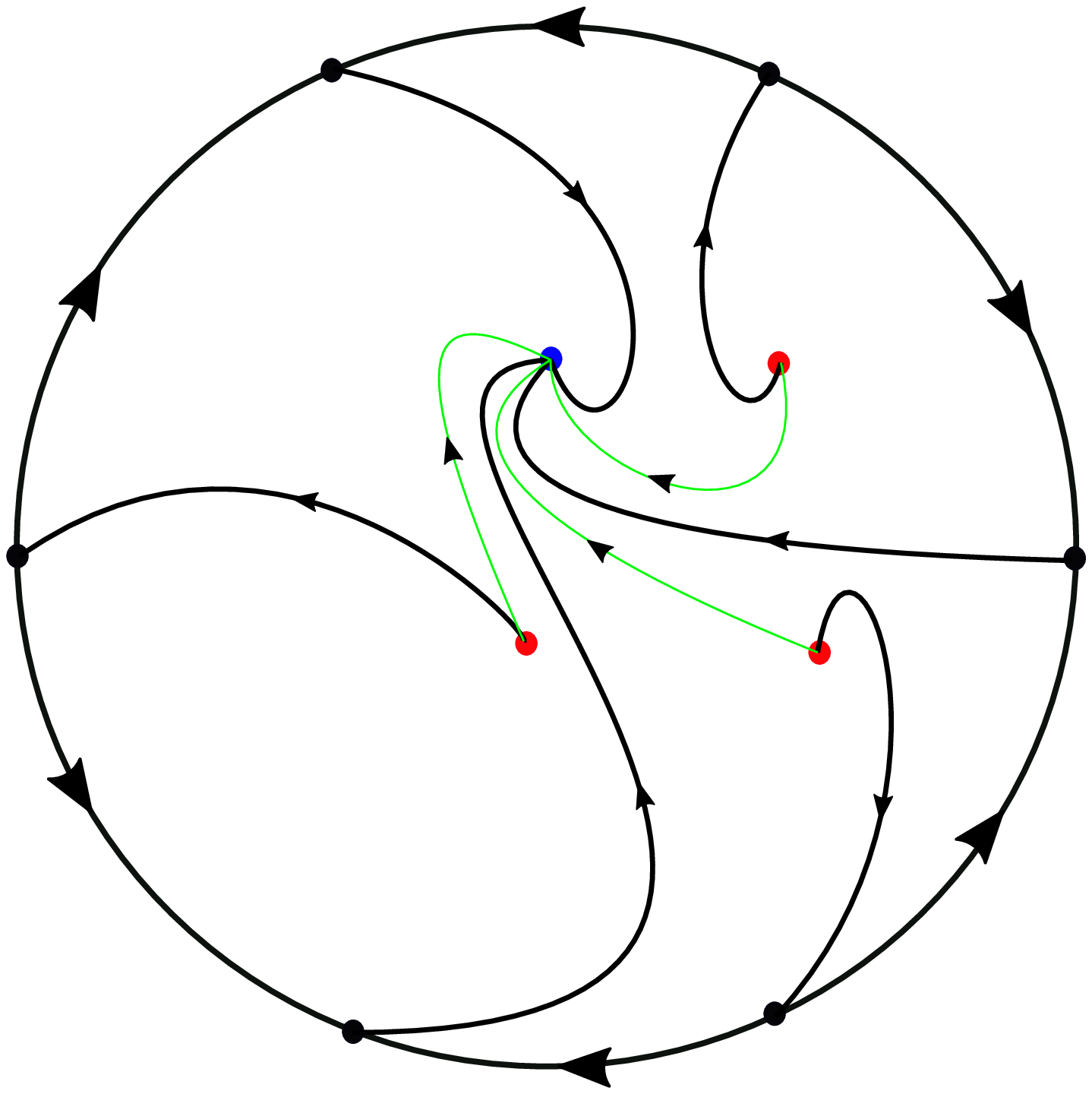} 
	\put(50,-20){\textbf{Q25)}}
	\put(40,35){$p_{1}$}
	\put(45,75){$p_{2}$}
	\put(72,72){$p_{3}$}
	\put(70,32){$p_{4}$}
\end{overpic}
\begin{overpic}[scale=0.25]
	{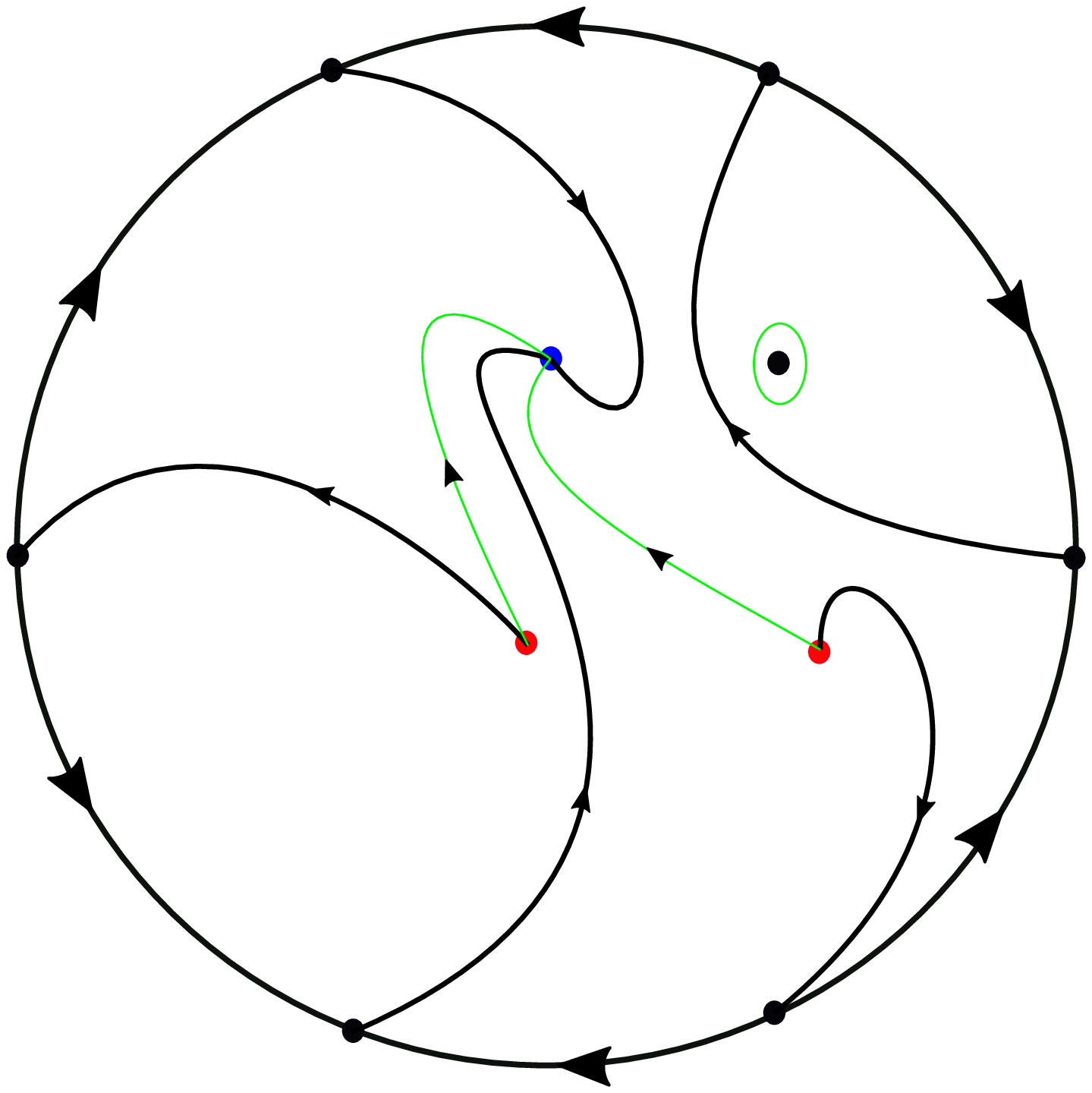} 
	\put(50,-20){\textbf{Q26)}}
	\put(40,35){$p_{1}$}
	\put(75,75){$p_{2}$}
	\put(45,75){$p_{3}$}
	\put(70,32){$p_{4}$}
\end{overpic}
\begin{overpic}[scale=0.25]
	{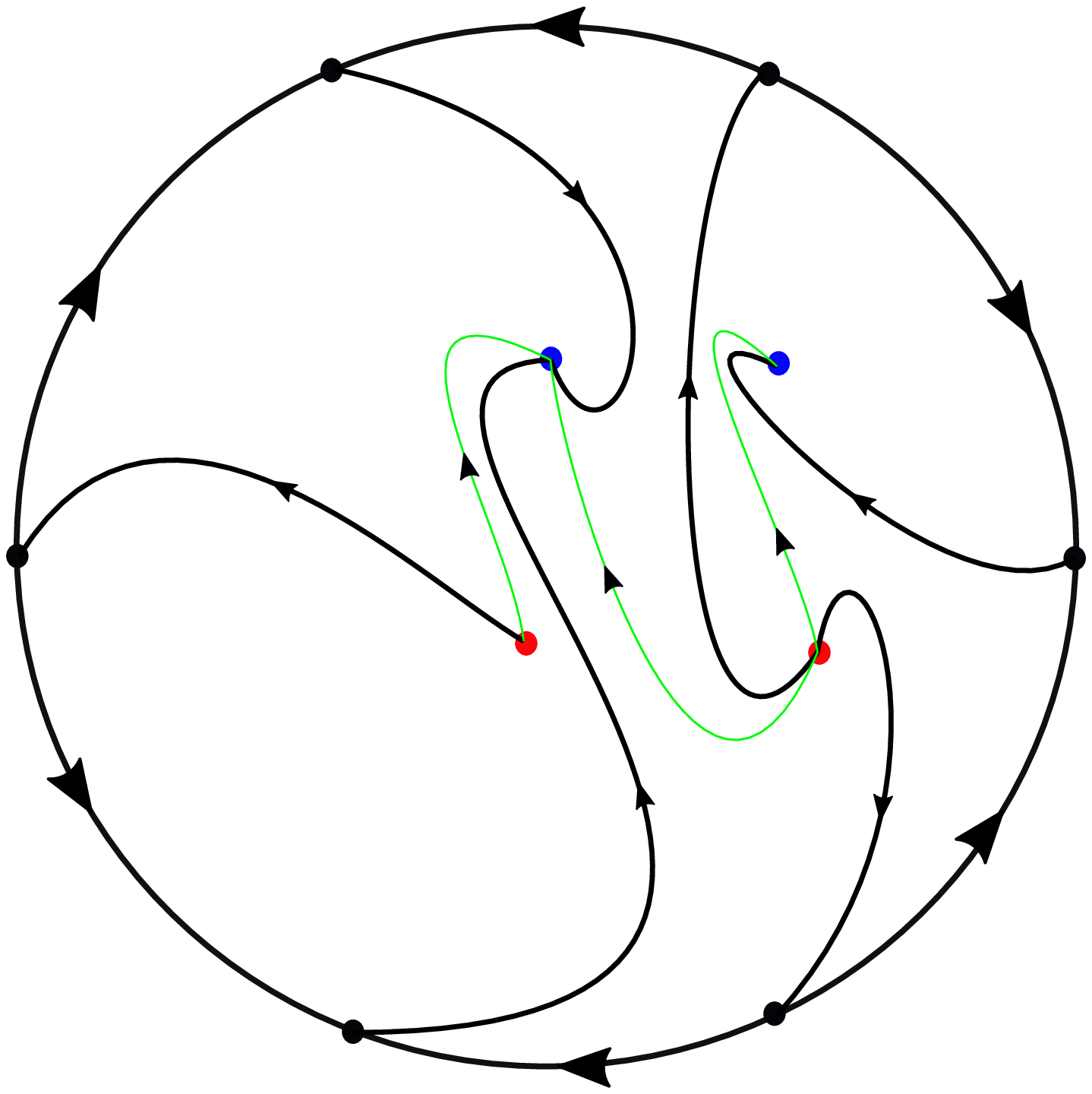} 
	\put(50,-20){\textbf{Q27)}}
	\put(40,35){$p_{1}$}
	\put(72,30){$p_{2}$}
	\put(40,75){$p_{3}$}
	\put(75,70){$p_{4}$}
\end{overpic}

\vspace{1cm}

\begin{overpic}[scale=0.25]
	{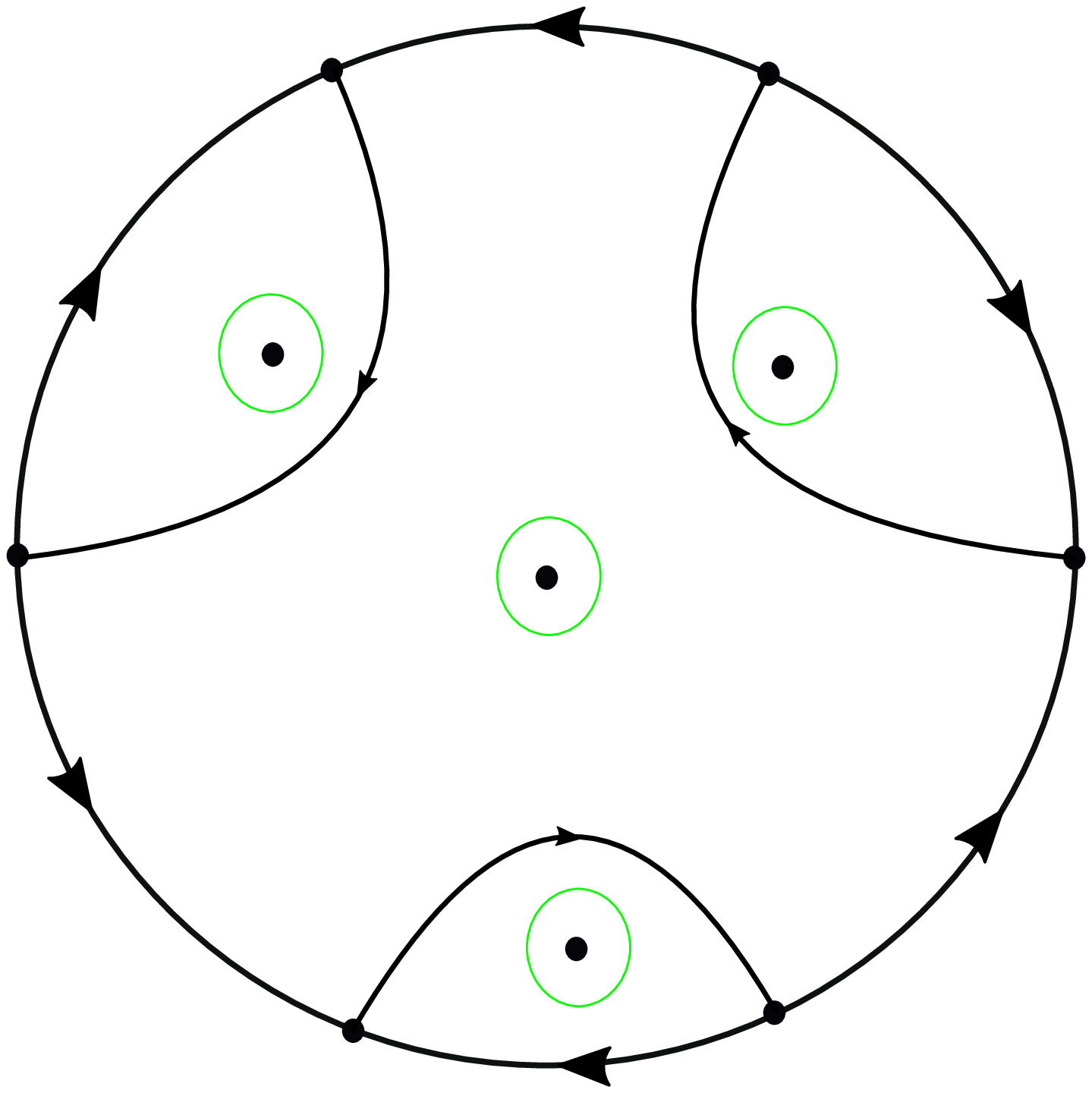} 
	\put(50,-20){\textbf{Q28)}}
	\put(40,35){$p_{1}$}
	\put(40,10){$p_{2}$}
	\put(10,70){$p_{3}$}
	\put(70,80){$p_{4}$}
\end{overpic}
\begin{overpic}[scale=0.25]
	{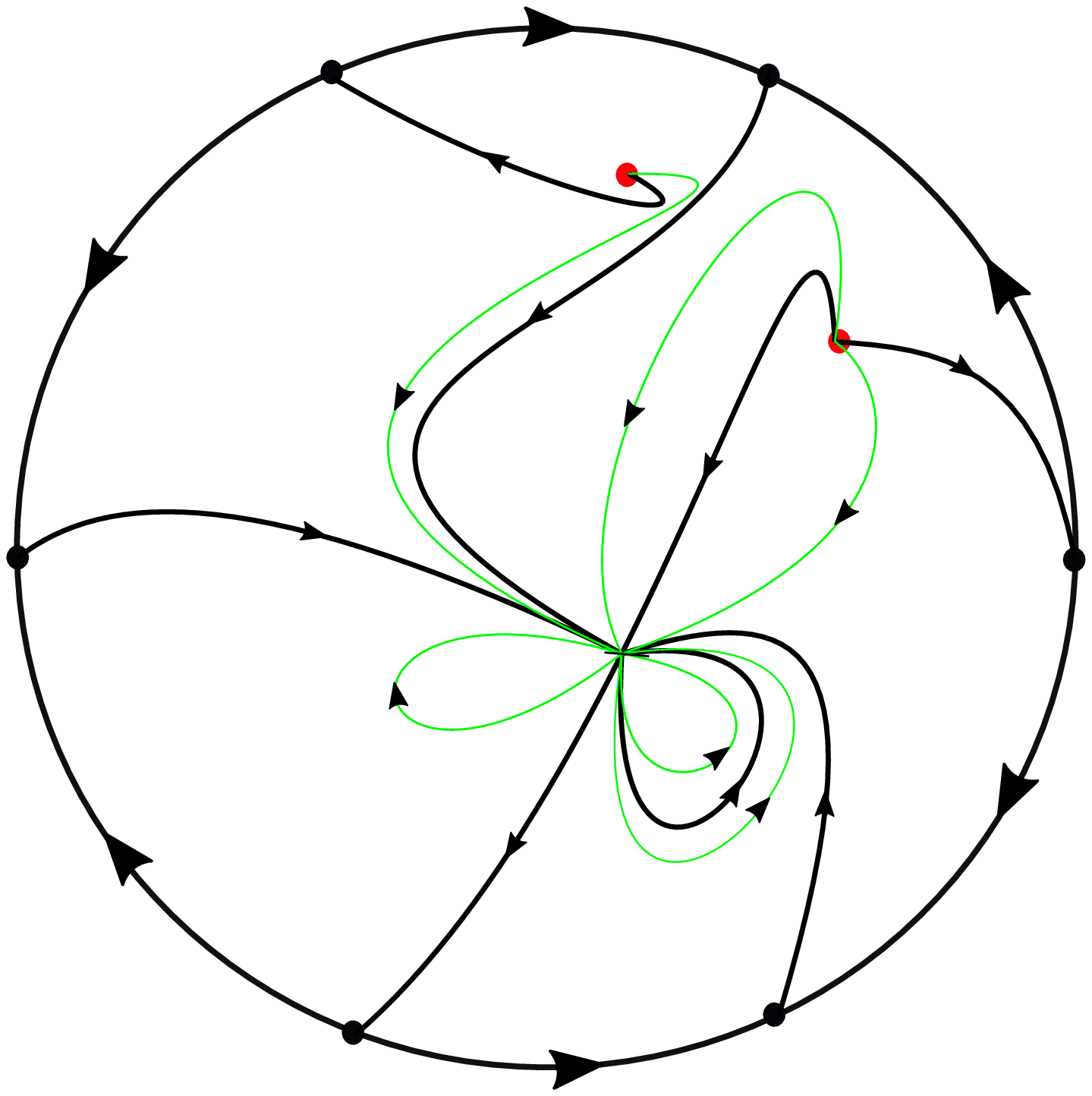} 
	\put(50,-20){\textbf{Q29)}}
	\put(50,90){$p_{1}$}
	\put(82,75){$p_{2}$}
\end{overpic}
\vspace{1cm}

\begin{proof}
	Following the same scheme of Theorem $\ref{teocubphase}$, to prove this result is enough to show that, among all configurations, some of them are no-realizable and the remaining is obtained. All possible configurations are given by:
\begin{multicols}{2}
	\begin{enumerate}[(a)]
	 \item 3 centers and 1 node/focus,
	\item 3 nodes and 1 center/focus,
	\item 1 quadruple,
	\item 4 centers,
	\item 4 node, 
	\item 4 foci, 
	\item 1 node, 2 foci and 1 center, 
	\item 2 foci and 1 double, 
	\item 3 foci and 1 center,
	\item 2 centers and 1 double,
	\item 1 focus and 2 double,
	\item 1 double and 2 node,
	\item 1 center and 2 double,
	\item 1 node and 2 double, 
	\item 2 node and 1 double, 
	\item 1 center, 1 double and 1 focus,
	\item 1 center, 1 double and 1 node,
	\item 1 double, 1 focus and 1 node,
	\item 2 foci and 2 centers,
	\item 1 node, 1 center and 2 foci,
	\item 2 nodes, 1 center and 1 focus, 
	\item 4 foci.
	\end{enumerate}
\end{multicols}	


From the Proposition \ref{teoGas1} it is impossible to obtain items $(a)$ and $(b)$. All the remaining cases are realizable and we are going to show examples.

For the case $(c)$, using Proposition \ref{sver1}, we obtain Figure $Q1$. The items $(d)$ and $(e)$ were analysed in \cite{AlvGasPro,GAP} and correspond to Figures $Q2$, $Q4$ and $Q28$.

For the case $(f)$, using Remark $\ref{remakfoci}$, we have 4 possibilities that correspond to Figures $Q22-Q27$. According \cite{AlvGasPro},  the phase $Q25$ to $Q27$ are obtained  considering the system $\dot{z}=(-1+2i)z(z-3)(z-2i)(z-(\eta+2i))$. If  $\eta=\eta^{\pm}=(-5\pm \sqrt{89})/2$, we have a center. If $\eta \in (\eta^{-},\eta^{+})$ we have a repelling focus and otherwise an attracting focus. The  phase portraits $Q25$ to $Q27$ are obtained taking $\eta=2, \eta \approx 2.2$ and $\eta=3$ respectively.

For the case $(g)$ we have that  $p_{2}$ is an attracting node, $p_{3}$ is a center and $p_{1}$ and $p_{4}$ are repelling foci. $p_{2}$  is the 
$\omega$--limit of the orbits coming from $I_{4}$, $I_{6}$, $p_{1}$, and $p_{4}$. In the same way, since $p_{1}$ and $p_{4}$ are repelling, their respective $\omega$--limit are $I_{3}$ and $I_{5}$.  See Figure $Q6$.

For the case $(h)$ we have that $p_{2}$ is an attracting focus and $p_{1}$ is a repelling focus. Moreover, we have one double point.  The orbits coming from $p_{1}$ have as $\omega$--limit $I_{3}$, $I_{1}$, the double point, and the point $p_{2}$. The orbit coming from $I_{2}$ has  $p_{2}$ as $\omega$--limit. See Figure $Q7$.

For the case $(i)$ we have three foci,  the points $p_{2}$ and $p_{3}$ are attracting, and the point $p_{1}$ is repelling. Moreover, we  also have a center $p_{4}$. This center  is contained in a region limited by the orbit coming from $I_{3}$ and arriving at $I_{4}$. Since the points $p_{2}$ and $p_{3}$ are attracting, they are $\omega$--limit for the orbits coming from $I_{2}$, $I_{6}$ and the orbits coming from $p_{1}$. Furthermore, the orbits coming from $p_{1}$ also have $I_{1}$ and $I_{5}$ as $\omega$--limit. See Figure $Q8$.

For the case $(j)$ we have two centers $p_{1}$ and $p_{2}$ and one double point. The centers are contained in the regions limited by the orbits coming from $I_{4}$ and arriving at $I_{5}$ and the orbits coming from $I_{2}$ and arriving at $I_{1}$, respectively.  See  Figure $Q9$.

For the case $(k)$ we have one repelling focus $p_{1}$ and two double points. On the other hand, since $p_{1}$ is a repelling  focus, the $\omega$--limit of the orbits coming from $p_{1}$ only can be the double point and $I_{3}$. See Figure $Q10$.

For  the case $(l)$ we have an attracting node $p_{1}$, a repelling node  $p_{2}$  and one double point. The orbits coming from the double point of the left side have  the point $p_{1}$ and $I_{5}$ as $\omega$--limit. The point $p_{1}$ is also $\omega$--limit of the orbits coming from $I_{6}$. For the right side of the double point, we have that $p_{2}$ is repelling with $\omega$--limit being $I_{3}$ and the double point. See Figure $Q11$.

For the case $(m)$ we have one center $p_{1}$ and two double points. The center is contained in a region limited by the orbits coming from $I_{2}$ and arriving at $I_{1}$. See Figure $Q12$.

For the case $(n)$ we have one attracting node $p_{1}$ and two double points. We have 4 orbits coming from double points with $\omega$--limit being $I_{1}$, $p_{1}$,  $I_{5}$ and $I_{3}$. And we also have the double point being $\omega$--limit of the orbits coming from $I_{2}$ and $I_{4}$. Lastly, we can see that $p_{1}$ is $\omega$--limit of the orbits coming from $I_{6}$. See Figure $Q13$.

For the case $(o)$ we have  two repelling foci $p_{1}$ and $p_{2}$ which are the $\omega$--limit of the orbits coming from $I_{1}$ and $I_{3}$ respectively. Moreover, the 
$\omega$--limit of the orbits coming from  $I_{2}, I_{4}, I_{5}$, and $I_{6}$ is the double point. See Figure $Q29$.

For the case $(p)$, we have a center $p_{2}$, an attracting focus $p_1$, and a double point.  $p_{1}$ is  the $\omega$--limit of the orbits coming from  the double point, and of the orbits coming from $I_{2}$. See Figure $Q15$.

For the case $(q)$ we have a center  $p_{1}$, an attracting node $p_{2}$ and a double point. $p_{2}$ is the $\omega$--limit of the orbits coming from double point, and of the orbits coming from $I_{2}$. See Figure $Q16$.

For the case $(r)$ we have  a repelling focus $p_{1}$,  an attracting node $p_{2}$ and  a double point. The orbits coming from $p_{1}$ have $\omega$--limit in the double point, $I_{3}$, $I_{5}$, and $p_{2}$. Lastly, $p_{2}$ is $\omega$--limit of the orbits coming from $I_{4}$. See Figure $Q17$.

For the case $(s)$ we have two foci. The point $p_{3}$ is an attracting focus and the point $p_{4}$ is a repelling focus. Furthermore, we also have two center $p_{1}$ and $p_{4}$. Each center is contained in a limited region formed for a separatrix coming $I_{3}$ and arriving at $I_{4}$ and coming from $I_{6}$ and arriving $I_{1}$. The orbits coming from $p_{4}$ have $\omega$--limit in $I_{5}$ and $p_{2}$. The orbits coming from $I_{2}$ are arriving at $p_{2}$. See Figure $Q18$.  

For the case $(t)$ we have an attracting node $p_{2}$, a center $p_{1}$, an attracting focus $p_{3}$ and  a repelling focus $p_{4}$. The orbits coming from $p_{4}$ have, $p_{2}$, $p_{3}$, $I_{1}$, and $I_{3}$ as $\omega$--limit. On the other way, the only possibility for the orbits coming from $I_{2}$ and $I_{6}$ is arriving at $p_{3}$ and $p_{2}$ respectively. This case is Figure $Q20$.  

For the case $(u)$  we have an attracting node  $p_{2}$, a repelling node  $p_{3}$, a center $p_{1}$ and a repelling focus $p_{4}$.  $p_{2}$ is the $\omega$--limit from the orbits coming from $p_{3}$, $p_{4}$, $I_{2}$, and $I_{4}$. On the other hand, $I_{1}$ and $I_{3}$ are $\omega$--limit of the orbits coming from $p_{3}$ and $p_{4}$ respectively . This case is Figure $Q21$.  
\end{proof}

\begin{remark}
	To obtain the phase portrait in Figures $(a)$ to $(t)$, we take the following systems. 
	\begin{enumerate}[1)]
		\item 1 quadruple: $\dot{z}=z^4.$
		\item 4 centers: $\dot{z}=z(z-i)(z-2i)(z-3i).$
		\item 4 foci: $\dot{z}=z(z-(1-3i))(z-(2-2i))(z-(3-3i)).$
		\item 4 node: $\dot{z}=z(z-1)(z-2)(z-3).$
	    \item 4 nodes triangle: $\dot{z}=z(z-(3+2i))(z-5/3)(z-(3-2i)).$
	    \item 1 node, 2 foci and 1 center: $\dot{z}=z(z-(1-3i))(z-(2-2i))(z-i).$
	    \item 2 foci and 1 double: $\dot{z}=z^2(z-(2-2i)(z-(3-i)).$
		\item 3 foci and 1 center: $\dot{z}=z(z-(1-3i))(z-2i)(z-3).$
		\item 2 centers and 1 double: $\dot{z}=z^2(z-2i)(z+i).$
		\item 1 focus and 2 double: $\dot{z}=z^3(z-(3+i)).$
		\item 1 double and 2 node: $\dot{z}=z^2(z-1)(z+1).$
		\item 1 center and 2 double: $\dot{z}=z^3(z-i).$
		\item 1 node and 2 double: $\dot{z}=z^3(z+1).$
		\item 2 foci and 1 double: $\dot{z}=z^2(z-(1-3i))(z-(2-2i)).$
		\item 1 center, 1 double and 1 focus: $\dot{z}=z^2(z-(1+3i))(z-1).$
		\item 1 center, 1 double and 1 node: $\dot{z}=z^2(z-(1-i))(z-1).$
		\item 1 double, 1 focus and 1 node: $\dot{z}=z^2(z-(2+2i))(z-(2+i)).$
		\item 2 foci and 2 centers: $\dot{z}=z(z-(1+i))(z-(3/5+3i))(z-(3-2i)).$
		\item 1 node, 1 focus and 1 double: $\dot{z}=z^2(z-(1-3i))(z-(-1/3-2i)).$
		\item 1 node, 1 center and 2 foci: $\dot{z}=z(z-(2+2i))(z-(4-2i))(z-(3-i)).$
		\item 2 nodes, 1 center and 1 focus: $\dot{z}=z(z-(3-i))(z-(39/25-52/25i))(z-(507/125-169/125i)).$
		\item 4 foci colinear: $\dot{z}=(-1+3i)z(z-1)(z-4)(z-8).$

       \item 4 foci triangle: $\dot{z}=(-1+3i)z(z-(1+3i))(z-2)(z-(3+12i)).$
	\item 4 foci border: $\dot{z}=(-1+i)z(z-1)(z-2)(z-(3+12i)).$
%
\item 4 foci Quadrilateral: $\dot{z}=(-1+2i)z(z-3)(z-2i)(z-(2+2i)).$
	\item 4 foci Quadrilateral: $\dot{z}=(-1+2i)z(z-3)(z-2i)(z-(22/10+2i)).$
	
	\item 4 foci Quadrilateral: $\dot{z}=(-1+2i)z(z-3)(z-2i)(z-(3+2i)).$
	\item 4 centers: $\dot{z}=z(z^3-i/3).$
\end{enumerate}
\end{remark}

\begin{remark}
	The examples given in phase portraits $\textbf{Q22)}-\textbf{Q27)}$ was given in \cite{AlvGasPro} and Figure $28)$ was given in \cite{GAP}.
\end{remark}

\section{Phase portrait of the family $\dot{z}=\dfrac{1}{f(z)}$}\label{sec6}

In this Section, we study the phase portraits of the family $\dot{z}=\dfrac{1}{f(z)}$, with $f(z)$ polynomial of degree $2,3,$ and $4$.

\begin{proposition}\label{teo1pq}
	Let $f(z)$ be a rational function, i.e., $f(z)=\dfrac{P(z)}{Q(z)},$ where $P(z)=a_{n}z^{n}+\cdots+a_{0}$ and $Q(z)=b_{m}z^{m}+\cdots+b_{0}$ are polynomials in $z$ of degree $n$ and $m$ respectively. Let $c$ be the residue of $g(z)=\dfrac{Q(1/z)}{z^{2}P(1/z)}$ at $z=0$. Then, there exists $R>0$ such that the corresponding equation $\dot{z}=f(z)$ is conformally conjugated, in $\mathbb{C}\backslash \overline{\mathbb{D}(0,R)}$, to
	
\begin{enumerate}[(a)]
	\item $\dot{z}=(1/z)^{m-n}+c(1/z)^{2(m-n)+1}$, if $n<m+1$,
	\item $\dot{z}=(a_{n}/b_{m})z$, if $n=m+1$,
	\item $\dot{z}=z^{2}$, if $n=m+2$,
	\item $\dot{z}=(z)^{n-m}$, if $n>m+2$.
\end{enumerate}	
\end{proposition}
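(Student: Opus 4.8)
The plan is to reduce the analysis of $\dot z = f(z)$ near $z=\infty$ to the analysis of a holomorphic field near $z=0$, and then invoke the conformal-conjugacy classification, items (a)--(e), recalled in Section~\ref{sec2}. First I would fix $R$ larger than the modulus of every zero of $P$ and of $Q$, so that on $\C\setminus\overline{\D(0,R)}$ the function $f$ is holomorphic and non-vanishing apart from the point at infinity. The conformal map $\Phi(z)=1/z$ sends $\C\setminus\overline{\D(0,R)}$ onto the punctured disk $\D(0,1/R)\setminus\{0\}$ and conjugates $\dot z=f(z)$ to $\dot w=h(w)$, where the pushforward field is
\[
h(w)=\Phi'(\Phi^{-1}(w))\,f(\Phi^{-1}(w))=-w^2 f(1/w).
\]
Thus the dynamics at infinity is exactly the local dynamics of $h$ at $w=0$, and it suffices to locate $w=0$ in the classification list.

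Next I would read off the local data of $h$ at $0$. Writing $P(1/w)=w^{-n}(a_n+\cdots)$ and $Q(1/w)=w^{-m}(b_m+\cdots)$ gives
\[
h(w)=-w^{\,m-n+2}\,\frac{a_n+a_{n-1}w+\cdots}{b_m+b_{m-1}w+\cdots},
\]
whose quotient is a unit at $0$ with value $a_n/b_m\neq0$. Hence the order of $h$ at $0$ equals $m-n+2$, which is $\ge 2$, $=1$, $=0$, or $<0$ according as $n\le m$, $n=m+1$, $n=m+2$, or $n>m+2$; these are precisely the four cases (a)--(d) of the proposition. The residue enters only in case (a): a direct computation shows $1/h(z)=-g(z)$, whence $\operatorname{Res}(1/h,0)=-c$, with $c=\operatorname{Res}(g,0)$ the quantity in the statement. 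This identity is the bookkeeping on which the normal form in (a) hinges.

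Finally I would apply the classification case by case and transport each model back through $w=1/z$. For $n=m+2$ ($h$ regular and nonzero at $0$) item (a) gives $h\sim\text{const}$, which pulls back to $\dot z=z^2$; for $n>m+2$ ($h$ a pole of order $n-m-2$) item (e) gives $h\sim 1/w^{\,n-m-2}$, pulling back to $\dot z=z^{\,n-m}$; for $n=m+1$ ($h$ a simple zero) item (b) gives $h\sim h'(0)w=-(a_n/b_m)w$, and pulling back through $w=1/z$ (whose Jacobian sign cancels the sign of $h'(0)$) restores $\dot z=(a_n/b_m)z$. In case (a), $h$ has a zero of order $k=m-n+2>1$ with $\operatorname{Res}(1/h,0)=-c$, so item (c) (or item (d) when $c=0$) puts $h$ into the model $\gamma w^{k}/(1+w^{k-1})$ with $\gamma$ fixed by $c$; expanding this model and pulling it back through $w=1/z$ leaves the two surviving terms $(1/z)^{m-n}+c(1/z)^{2(m-n)+1}$.

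The main obstacle I anticipate is not any individual case but the residue bookkeeping in case (a): one must carry the $-w^2$ Jacobian correctly, match $\operatorname{Res}(1/h,0)$ with the residue of $g$ (including its sign), and verify that the two-term expression in (a) is conformally conjugate, not merely asymptotically equal, to the model of item (c). This last point is exactly what makes the classification usable: since the conjugacy class of a zero of order $k$ is determined solely by $k$ and $\operatorname{Res}(1/h,0)$, it suffices to check that the proposed model $(1/z)^{m-n}+c(1/z)^{2(m-n)+1}$ shares the order and residue of $h$, after which items (c)--(d) supply the conjugacy automatically.
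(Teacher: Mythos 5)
The paper does not actually prove this proposition --- its ``proof'' is the single line ``See \cite{GXG} for a proof'' --- so there is no in-paper argument to compare against; your route (invert by $w=1/z$, read off the order of the pushforward $h(w)=-w^2f(1/w)$ at $w=0$, and invoke the local conformal classification (a)--(e) of Section \ref{sec2}) is certainly the intended one, and your treatment of cases (b), (c), (d) of the proposition is correct, including the cancellation of the Jacobian sign in case (b) and the irrelevance of the leading coefficient for the pole and the nonvanishing cases.

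The gap is in case (a), and it sits exactly at the step you flag as ``the main obstacle'' and then do not carry out. You correctly obtain $1/h=-g$, hence $\operatorname{Res}(1/h,0)=-c$. But the pushforward $\tilde h$ of the proposed model $\tilde f(z)=(1/z)^{m-n}+c(1/z)^{2(m-n)+1}$ is $\tilde h(w)=-w^{k}\left(1+cw^{k-1}\right)$ with $k=m-n+2$, so $1/\tilde h=-w^{-k}+cw^{-1}-\cdots$ and $\operatorname{Res}(1/\tilde h,0)=+c$. Since $\operatorname{Res}(1/\cdot\,,0)$ is a complete conformal invariant for a zero of fixed order $k>1$ (items (c)--(d) of the classification), the order-and-residue check you propose to rely on \emph{fails} whenever $c\neq0$. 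Concretely, take $f(z)=z/(z^2+1)$, for which $g(z)=z^{-3}+z^{-1}$ and $c=1$: here $\frac{1}{2\pi i}\oint_{|z|=R}\frac{dz}{f(z)}=1$, whereas for the model $1/z+1/z^{3}$ the same integral equals $-1$; this period of the time function is preserved by any conformal conjugacy fixing the end at infinity, so the two fields are not conjugate there. Also, your intermediate sentence that pulling $\gamma w^{k}/(1+w^{k-1})$ back through $w=1/z$ ``leaves the two surviving terms'' is literally false --- the pullback is $-\gamma z/(1+z^{k-1})$, an infinite Laurent series at infinity with coefficients $\pm\gamma$ --- though your fallback to matching invariants is the right idea. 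To close the argument you must either locate a compensating sign (I do not see one) or conclude that the normal form should be $(1/z)^{m-n}-c(1/z)^{2(m-n)+1}$, equivalently that $c$ should be defined as the residue of $-g=1/h$; as written, the proof of case (a) is incomplete and, taken at face value, contradicts the statement for $c\neq 0$.
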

See \cite{GXG} for a proof.\\

\begin{proposition}\label{phaseinfinity}
Consider the equations given by $\dot{z}_{1}=f_{1}(z)^{-1}$, $\dot{z}_{2}=f_{3}(z)^{-1}$ and $\dot{z}_{4}=f_{4}(z)^{-1},$ where $f_{1}(z),f_{2}(z),f_{3}(z)$ are polynomials of degree 2, 3 and 4, respectively. Then, in a neighborhood of infinity, the phase portrait of each equations above is conformally conjugated to the phase portrait in a neighborhood of $\dot{z}=(1/z)^2$, $\dot{z}=(1/z)^3$ and $\dot{z}=(1/z)^4$, respectively.

\begin{center}
\begin{figure}[h]	
\begin{overpic}[scale=0.2]
	{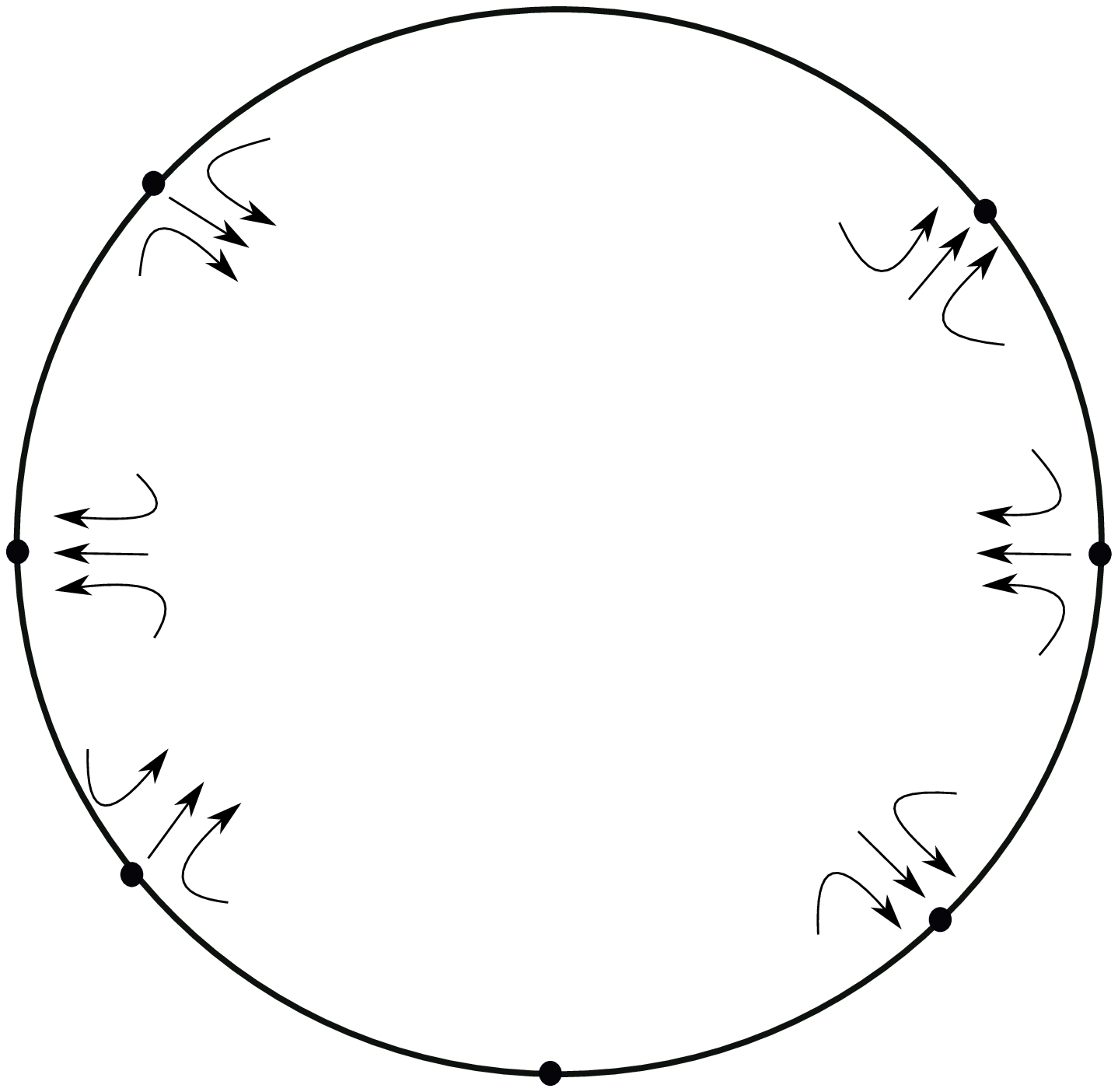} 
\end{overpic}
\hspace{1cm}
\begin{overpic}[scale=0.2]
	{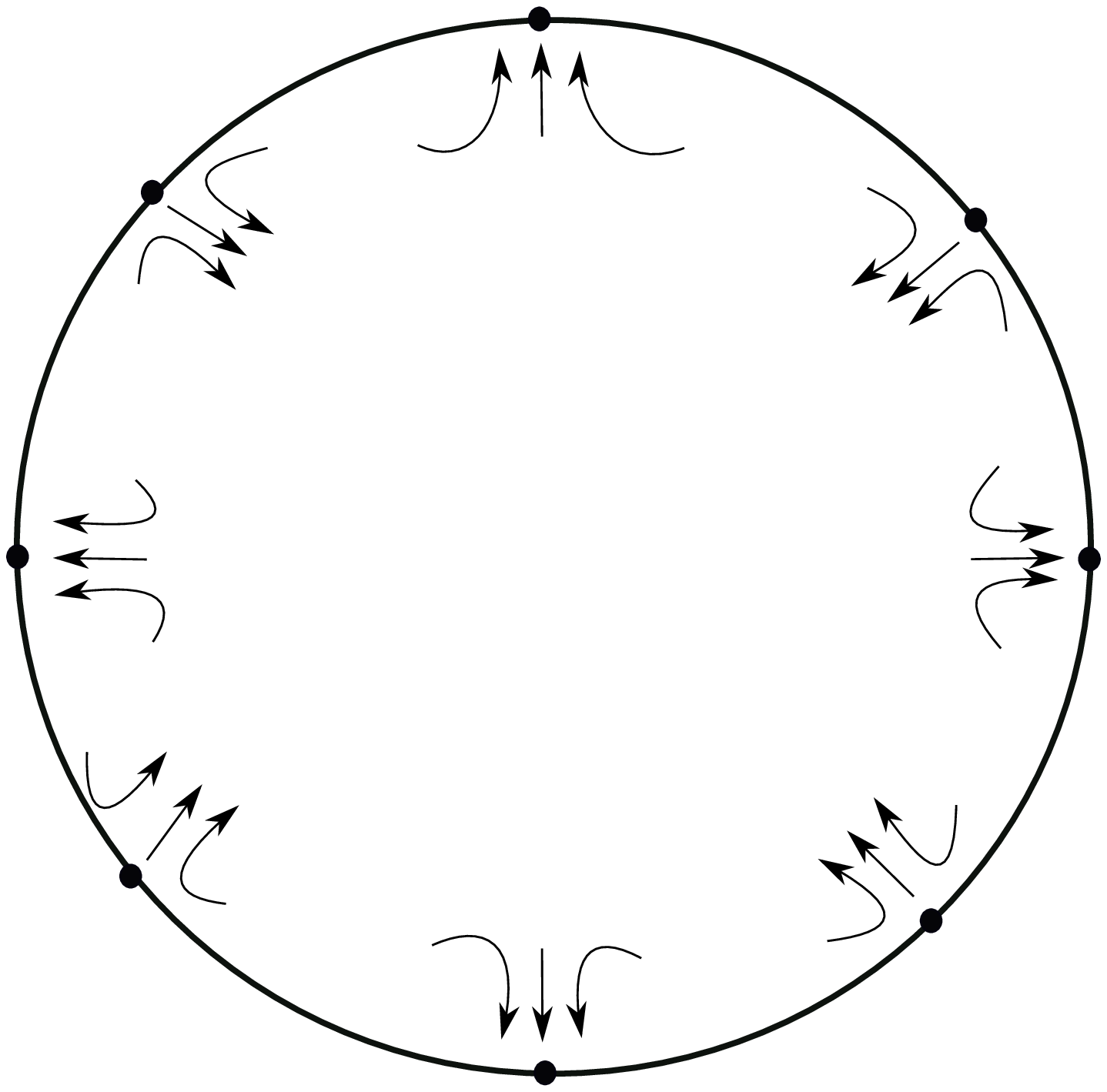} 
\end{overpic}
\hspace{1cm}
\begin{overpic}[scale=0.2]
	{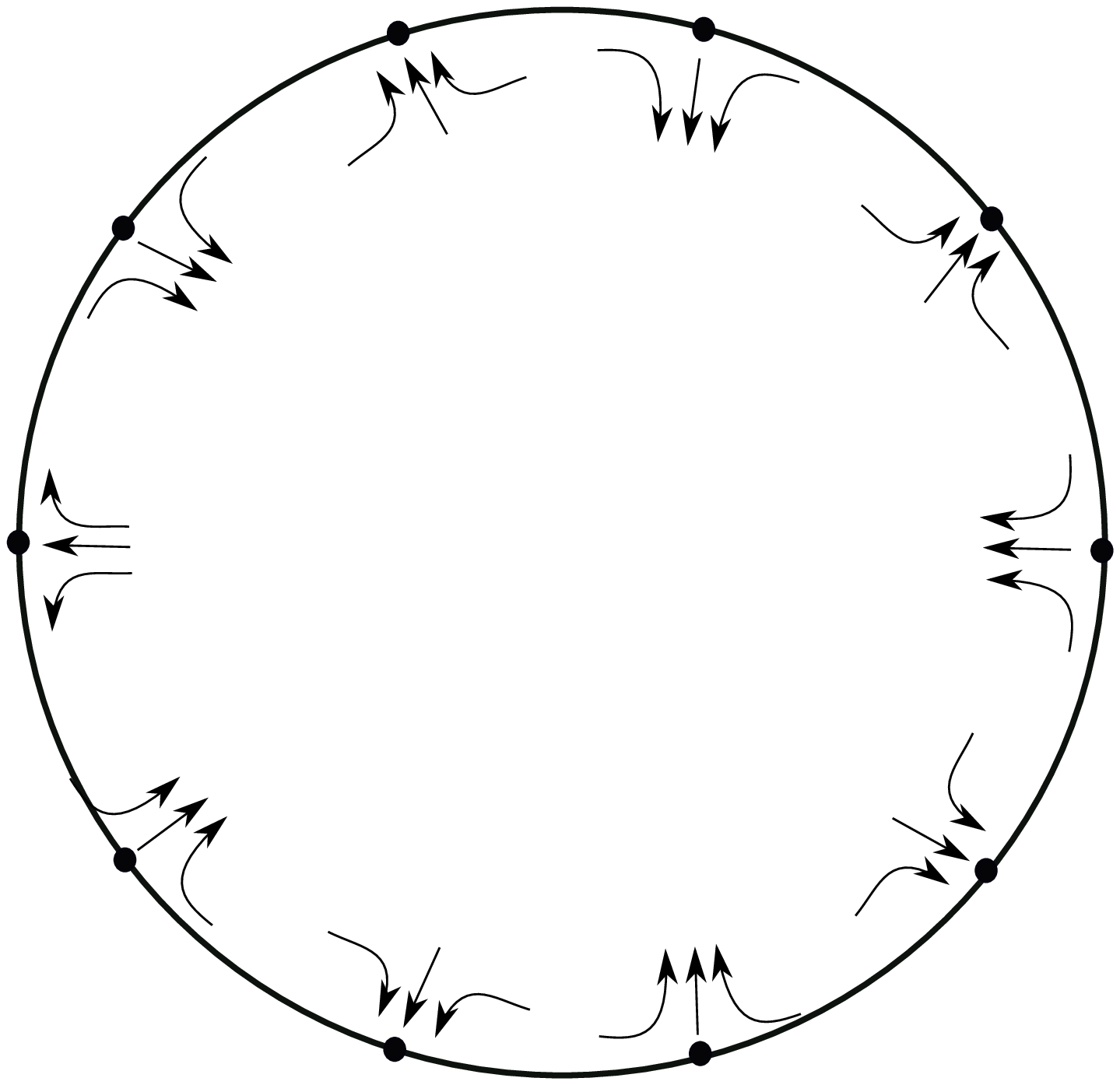} 
\end{overpic}
\caption{Dynamic near infinity of $\dot{z}=\dfrac{1}{z^2}$, $\dot{z}=\dfrac{1}{z^3}$ and $\dot{z}=\dfrac{1}{z^4}$ respectively.}
\end{figure}
\end{center}
\end{proposition}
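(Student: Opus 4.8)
The plan is to read the statement off Proposition~\ref{teo1pq} applied to the rational field $1/f(z)$. For $f$ of degree $d\in\{2,3,4\}$ I would write $\dot z=1/f(z)$ in the form $\dot z=P(z)/Q(z)$ with $P(z)\equiv 1$ and $Q(z)=f(z)$, so that $n=\deg P=0$ and $m=\deg Q=d$. Since $n=0<m+1$, the system falls under case~(a) of Proposition~\ref{teo1pq}, which produces a conformal conjugation on $\mathbb{C}\setminus\overline{\mathbb{D}(0,R)}$ between $\dot z=1/f(z)$ and
\[
\dot z=(1/z)^{m-n}+c\,(1/z)^{2(m-n)+1}=(1/z)^{d}+c\,(1/z)^{2d+1},
\]
where $c=\operatorname{Res}(g,0)$ for $g(z)=Q(1/z)/\big(z^{2}P(1/z)\big)=f(1/z)/z^{2}$.

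The decisive step is to check that $c=0$, since this is exactly what kills the correction term and leaves $(1/z)^{d}$. Writing $f(z)=\sum_{k=0}^{d}\alpha_k z^{k}$ with $\alpha_d\neq 0$, one has $f(1/z)=\sum_{k=0}^{d}\alpha_k z^{-k}$, hence
\[
g(z)=\frac{f(1/z)}{z^{2}}=\sum_{k=0}^{d}\alpha_k\,z^{-k-2}.
\]
The exponents $-k-2$ range over $-2,-3,\dots,-(d+2)$ and never equal $-1$, so the coefficient of $z^{-1}$ vanishes and $c=0$. Consequently the case~(a) field is precisely $\dot z=(1/z)^{d}$, and the conjugation furnished by Proposition~\ref{teo1pq} identifies the phase portrait of $\dot z=1/f(z)$ near infinity with that of $\dot z=(1/z)^{d}$. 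Specializing $d=2,3,4$ yields the three asserted models $\dot z=(1/z)^2$, $\dot z=(1/z)^3$, $\dot z=(1/z)^4$, which is the conclusion.

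I do not expect a genuine obstacle: the analytic content is entirely packaged in Proposition~\ref{teo1pq}, and the only fact specific to our setting is the vanishing of the residue, which is forced purely by $f$ being a polynomial (so that $f(1/z)/z^{2}$ carries no $z^{-1}$ term). The one point worth stating explicitly, in order to justify the word \emph{conformally} rather than merely \emph{topologically}, is precisely that $c=0$: had $c$ been nonzero the model would have acquired the term $c(1/z)^{2d+1}$, which alters the conformal class (compare items (c)--(d) of the conjugacy classification in Section~\ref{sec2}) even though the local picture at infinity is unchanged, namely a zero of order $d+2$ in the chart $w=1/z$ carrying $2(d+2)-2$ elliptic sectors by Proposition~\ref{sver1}. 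Here no such term survives, so the conjugacy to $\dot z=(1/z)^{d}$ is exact.
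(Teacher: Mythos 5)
Your proposal is correct and follows essentially the same route as the paper: both apply Proposition~\ref{teo1pq}(a) with $n=0$, $m=d$ and then use the vanishing of the residue $c$ to discard the correction term $c(1/z)^{2d+1}$. The only difference is that the paper merely asserts $c_1=c_2=c_3=0$, whereas you actually verify it by observing that $f(1/z)/z^{2}$ has no $z^{-1}$ term, which is a worthwhile addition rather than a deviation.
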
	

\begin{proof}
	Applying Proposition \ref{teo1pq}, item $(a)$, we get $n=0$ and $m=2$ for $\dot{z}_{1}$, $n=0$ and $m=3$, for $\dot{z}_{2}$ and $n=0$ and $m=4$ for $\dot{z}_{3}$. Then, we have $\dot{z}=(1/z)^{2}+c_{1}(1/z)^{5}$, $\dot{z}=(1/z)^{3}+c_{2}(1/z)^{7}$ and $\dot{z}=(1/z)^{4}+c_{3}(1/z)^{9}$. As we have $c_{1}=c_{2}=c_{3}=0$, we obtain $\dot{z}=(1/z)^{2}$, $\dot{z}=(1/z)^{3}$ and $\dot{z}=(1/z)^{4}$ respectively. 
\end{proof}

In this section, we  study  the phase portrait of the systems

\begin{equation}\label{eqs2}
\dot{z}=\dfrac{1}{z(z-A_{1})},
\end{equation}
\begin{equation}\label{eqs3}
 \dot{z}=\dfrac{1}{z(z-A_{1})(z-A_{2})},
\end{equation}
and 
\begin{equation}\label{eqs4}
\dot{z}=\dfrac{1}{z(z-{A}_{1})(z-{A}_{2})(z-{A}_{3})}.
\end{equation}

Applying the change of variables $w=\frac{1}{z}$ in $\dot{z}=\frac{1}{z^2}$, $\dot{z}=\frac{1}{z^3}$ and $\dot{z}=\frac{1}{z^4}$, we obtain

\begin{equation*}
\dot{w}=-\dfrac{1}{z^2}\dot{z}=-\dfrac{1}{z^2}\frac{1}{z^2}= -\dfrac{1}{z^4}=-w^4,
\end{equation*}

\begin{equation*}
\dot{w}=-\dfrac{1}{z^2}\dot{z}=-\dfrac{1}{z^2}\frac{1}{z^3}= -\dfrac{1}{z^5}=-w^5,
\end{equation*}

\begin{equation*}
\dot{w}=-\dfrac{1}{z^2}\dot{z}=-\dfrac{1}{z^2}\frac{1}{z^4}= -\dfrac{1}{z^5}=-w^6,
\end{equation*}
respectively. 

We know that the phase portrait of $\dot{w}=-w^5$ is topologically equivalent in a neighborhood of the origin of $w=0$, which is equivalent to $z=\infty$, to that of Figure 6.(1). Applying a blow-up of $w=0$, and a simple inversion we obtain Figure \ref{figblowup}.

\begin{center}
	\begin{figure}[h]\label{figblowup}
\begin{overpic}[scale=0.35]
	{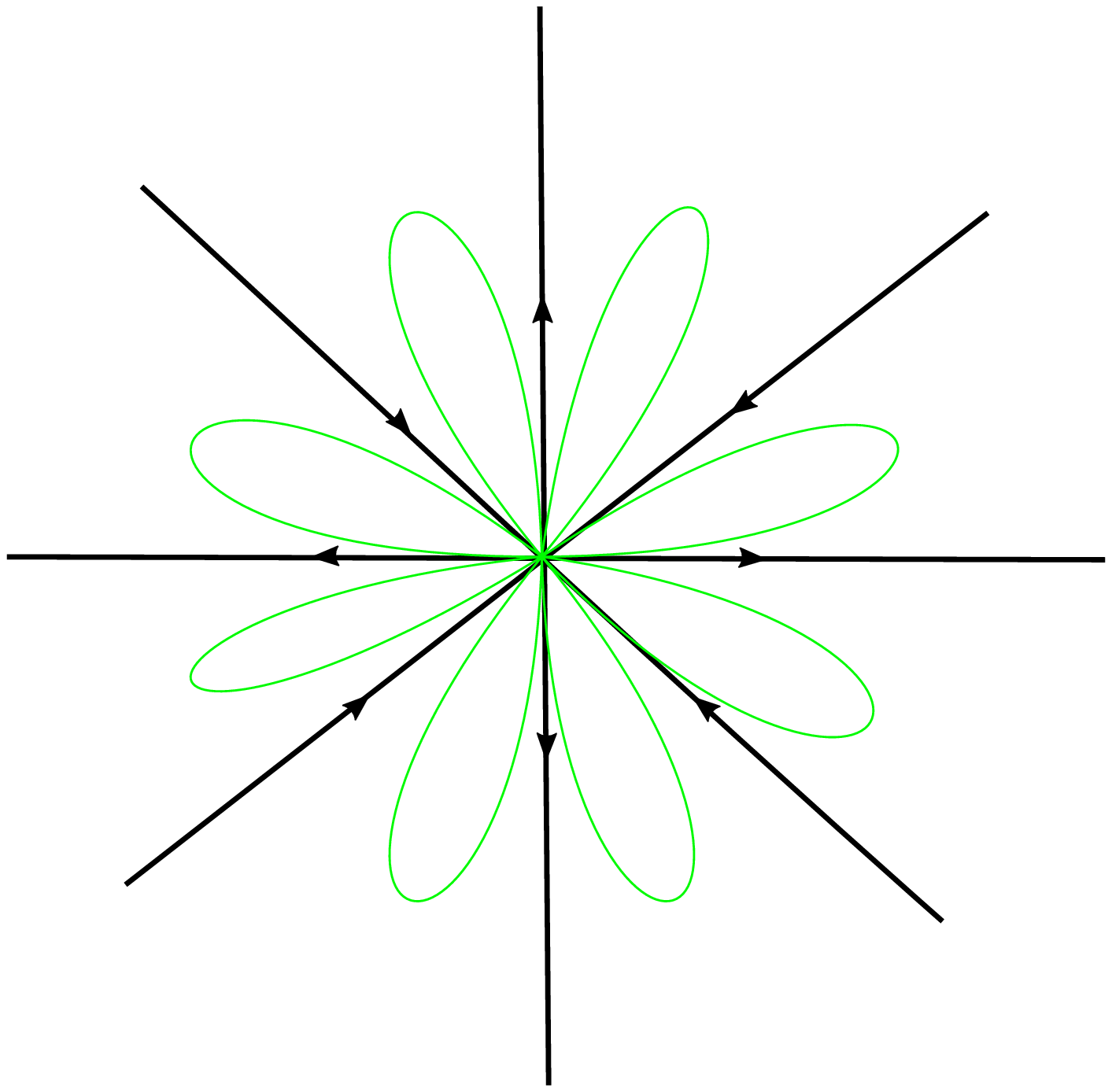} 
	\put(65,-20){(1)}
\end{overpic}
\hspace{1cm}
\begin{overpic}[scale=0.35]
	{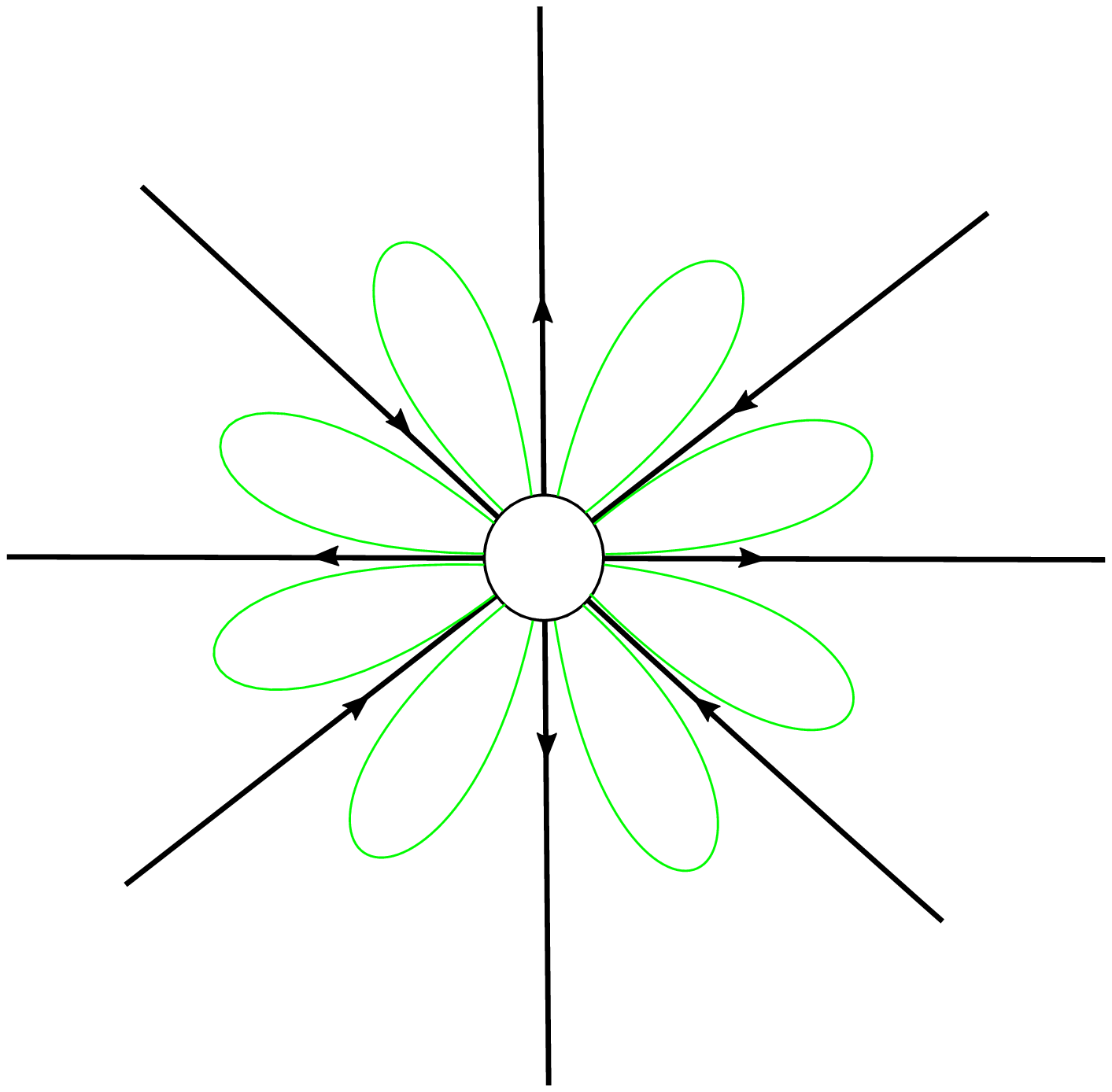} 
	\put(65,-20){(2)}
\end{overpic}
\vspace{0.5cm}
\caption{Topological phase portrait of $w=-w^5$ at $w=0$  and blow-up of the origin.}
\end{figure}
\end{center}

Now, we study the phase portrait in the whole Poincar\'e disc. The next theorems provides the phase portrait of the systems \eqref{eqs3} and \eqref{eqs4}. 

\begin{theorem}\label{teoquadinv}
	For the the system $\dot{z}=\dfrac{1}{z(z-A_{1})}$, we have, through topological equivalence,  the following phase portraits.\end{theorem}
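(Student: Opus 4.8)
The plan is to reduce the classification to the level curves of a harmonic polynomial. Recall from Section \ref{sec7} that the phase portrait of $\dot z = 1/f(z)$ coincides, orbit by orbit, with that of the conjugate system $\dot z = \overline{f(z)}$, since $1/f = \overline{f}/|f|^2$ differs from $\overline{f}$ only by the positive factor $1/|f|^2$. Applying the integrability result of that section with $f$ replaced by $1/f$, the trajectories of $\dot z = 1/f(z)$ lie on the level curves of $H(x,y) = \operatorname{Im} F(z)$, where $F$ is a primitive of $f$. For $f(z) = z(z - A_1)$ this gives the explicit global first integral
\[ H(x,y) = \operatorname{Im}\!\left(\frac{z^3}{3} - \frac{A_1}{2}\, z^2\right), \]
a harmonic polynomial whose level sets are exactly the orbits. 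Thus the whole classification reduces to describing, up to homeomorphism, the foliation of the Poincar\'e disk by the level curves of this $H$.

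First I would treat the finite singularities. The critical points of $H$ are the zeros of $F' = f$, namely $z = 0$ and $z = A_1$, which are precisely the poles of $1/f$. When $A_1 \neq 0$ these are two nondegenerate critical points of the harmonic function $H$; since a harmonic function has no local extrema, each is a simple saddle of the foliation (equivalently, each simple zero of $f$ is a saddle of $\overline f$, as shown in Section \ref{sec7}). When $A_1 = 0$ the two critical points merge into the double zero $f = z^2$, and by Proposition \ref{sver1} the resulting point is of purely elliptic type; this already isolates one special phase portrait. Next I would read off the behavior at infinity: by Proposition \ref{phaseinfinity}, near infinity the system with $\deg f = 2$ is conformally conjugate to $\dot z = (1/z)^2$, which via $w = 1/z$ becomes $\dot w = -w^4$, with blow-up as in Figure \ref{figblowup}. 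This fixes the sectorial, saddle-type structure of the separatrices reaching the boundary circle, also visible directly from the dominant term $H \approx \operatorname{Im}(z^3/3)$.

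With the local data at the two finite saddles and at infinity in hand, the global portrait is determined once one decides which separatrices are joined; since $H$ is a genuine global first integral there are no limit cycles, and the only freedom lies in the heteroclinic connections. Here the explicit first integral pays off: two finite saddles share a common separatrix exactly when they lie on the same critical level, i.e. $\operatorname{Im} F(0) = \operatorname{Im} F(A_1)$. As $F(0) = 0$ and $F(A_1) = -A_1^3/6$, this reduces to the single condition $\operatorname{Im}(A_1^3) = 0$. I would then normalize the parameter: a linear rescaling $z \mapsto \lambda z$ turns the system into $1/(\zeta(\zeta - A_1/\lambda))$ up to the scalar factor $\lambda^{-3}$, so fixing $|\lambda|$ normalizes $|A_1|$ and choosing $\arg\lambda \in \{0, 2\pi/3, 4\pi/3\}$ (the values making $\lambda^{-3}$ positive, hence a mere time reparametrization) restricts $\arg A_1$ without altering the topological type.

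This leaves exactly three inequivalent configurations — the degenerate double pole $A_1 = 0$, the case $A_1 \neq 0$ with a finite saddle connection ($\operatorname{Im} A_1^3 = 0$), and the case $A_1 \neq 0$ with no such connection ($\operatorname{Im} A_1^3 \neq 0$) — each realized by an explicit value of $A_1$ and separated topologically by the number of finite singularities together with the presence or absence of the connecting separatrix. The main obstacle I anticipate is precisely proving completeness and non-redundancy of this list: that every admissible $A_1$ yields one of the three pictures, and that the saddle-connection case is genuinely inequivalent to the generic one. This demands a careful global tracing of the level curves of $H$ across the whole disk, joining the local normal forms at $0$, $A_1$ and the infinite saddles into a single consistent foliation, rather than relying on the local pictures alone.
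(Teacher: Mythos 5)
Your proposal is essentially correct and follows the same skeleton as the paper's proof (split into $A_1=0$ versus $A_1\neq 0$, fix the local structure at the poles and at infinity, then observe that only the separatrix connections remain to be decided), but it travels a genuinely different road to close the argument. The paper's own proof is citation-based and terse: Proposition \ref{sver1} for the order-two pole, Proposition \ref{phaseinfinity} for the behavior near infinity, and then the bare assertion that the two remaining pictures ``depend on the separatrices connection'' and are realizable. You instead exploit the global first integral $H=\Im F$ with $F(z)=z^3/3-A_1z^2/2$ (correctly extracted from the integrability theorem of Section \ref{sec7}), which buys you two things the paper does not supply: an explicit algebraic criterion $\Im(A_1^3)=0$ separating \textbf{S2)} from \textbf{S3)}, and a normalization $z\mapsto\lambda z$ with $\lambda^3>0$ showing these are the only two generic classes. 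Two caveats. First, your biconditional ``same critical level $\Leftrightarrow$ saddle connection'' is only immediate in the ``only if'' direction; the converse needs the explicit level set (after normalizing $A_1$ real, $\Im F= y\bigl(x^2-y^2/3-A_1x\bigr)$, whose component $y=0$ visibly joins the two poles), and you rightly flag this as the remaining work. Second, in the case $A_1=0$ you call the degenerate singularity ``purely elliptic type'' via Proposition \ref{sver1}; that proposition describes zeros of $f$ for $\dot z=f(z)$, whereas here the origin is a double \emph{pole} of $1/f$, and your own first integral $\Im(z^3/3)$ shows the local foliation consists of six hyperbolic sectors (a degenerate saddle), not elliptic ones. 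The conclusion is unaffected, but the correct reference is the conformal conjugacy to $1/z^2$ (item (e) of the classification in Section \ref{sec2}) rather than the elliptic-sector statement.
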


\begin{overpic}[scale=0.25]
	{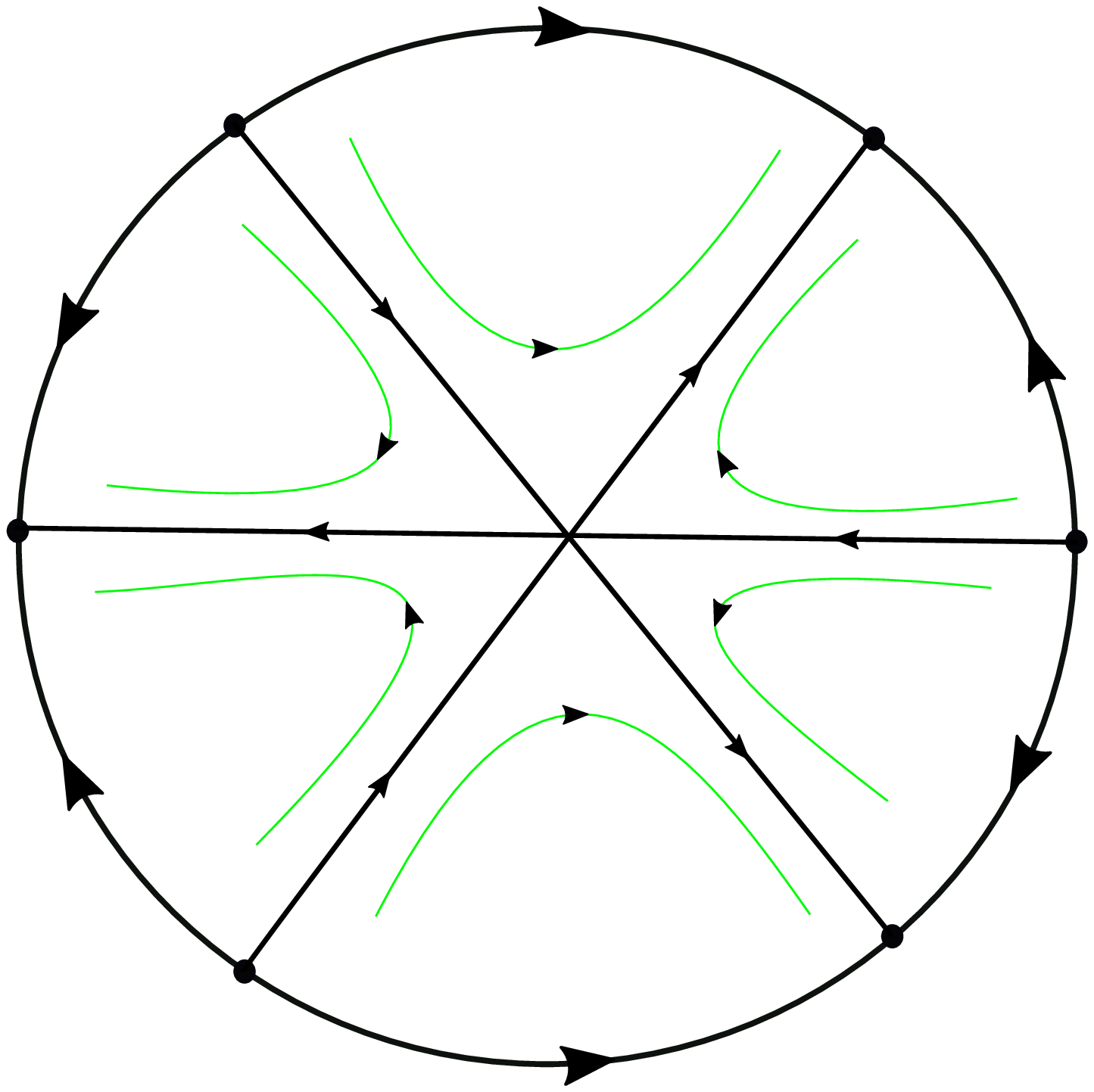} 
		\put(50,-20){\textbf{S1)}}
\end{overpic}
\begin{overpic}[scale=0.25]
	{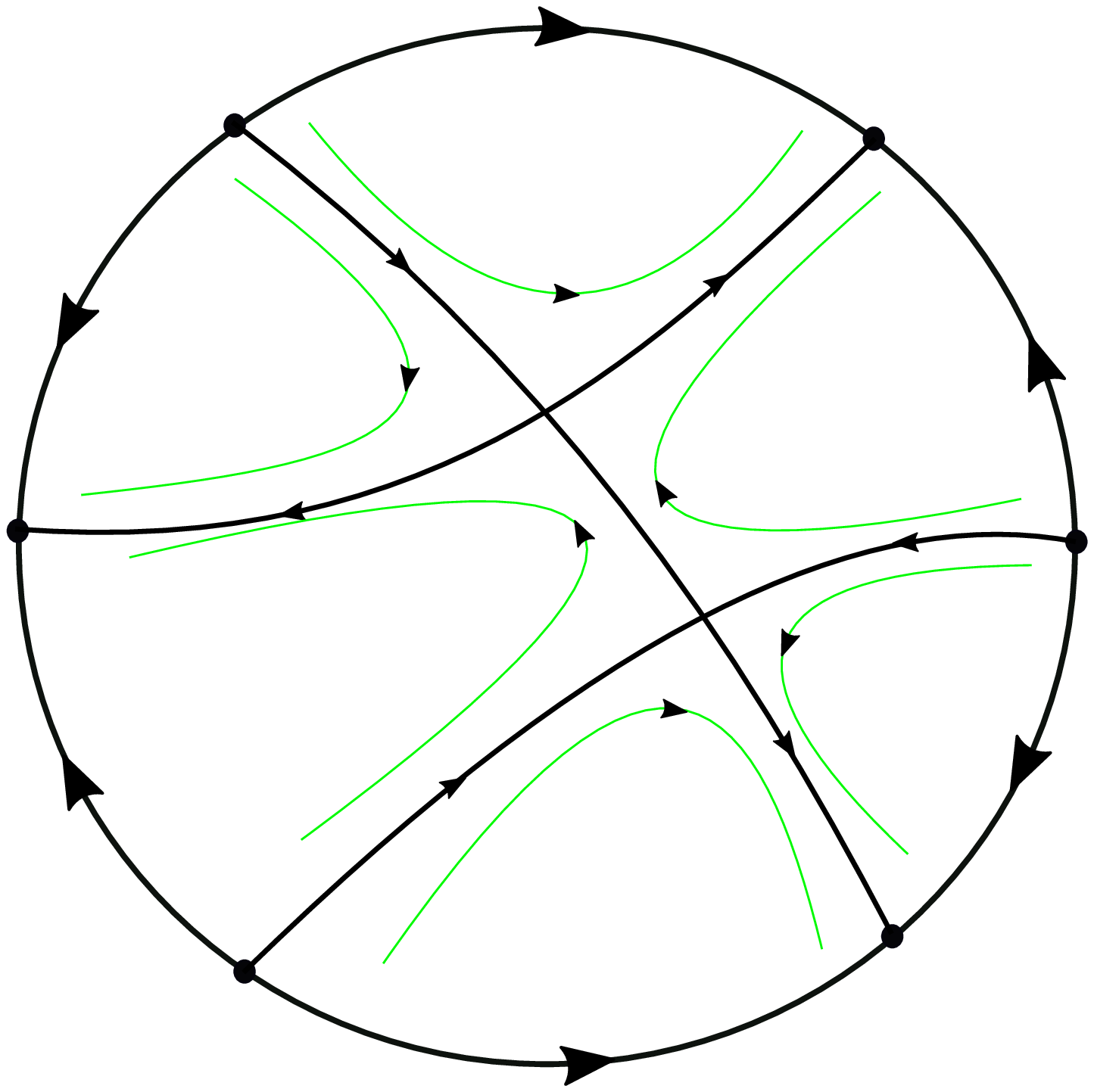} 
		\put(50,-20){\textbf{S2)}}
\end{overpic}
\begin{overpic}[scale=0.25]
	{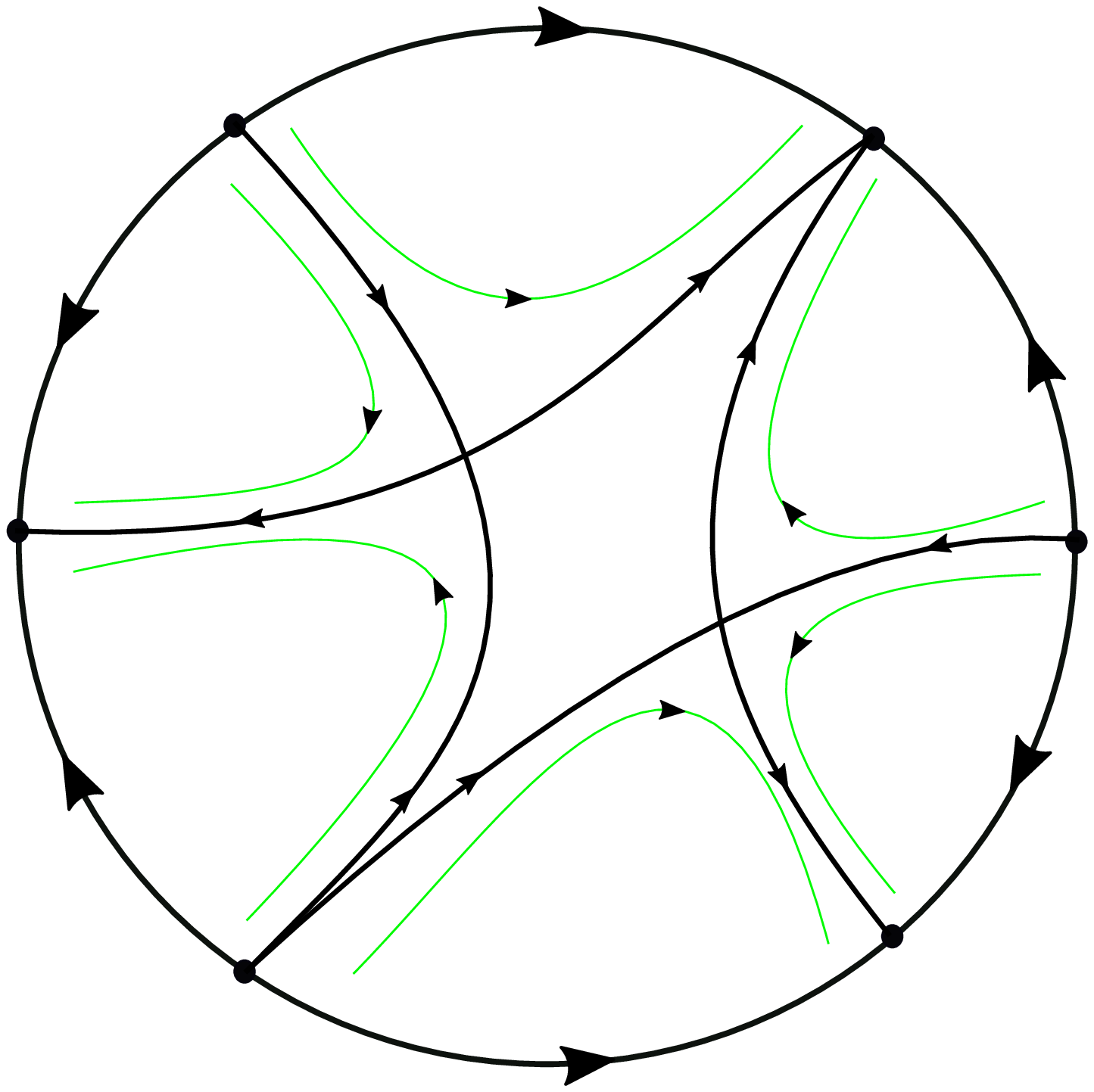} 
		\put(50,-20){\textbf{S3)}}
\end{overpic}

\vspace{1cm}

\begin{proof}
This Theorem can also be found in \cite{Alba} and, for completeness sake we will prove it below. Firstly, let us consider two cases. If $A_{1}=0$ then the origin is a pole of order 2 and by Proposition \ref{sver1} we have Figure $\textbf{S1)}$. If $A_{1}\neq0$, then $A_{1}$ and the origin are distinct poles of order 1. By Proposition \ref{phaseinfinity} we know the phase portrait near infinity, so it must be given by Figure $\textbf{S2)}$ or $\textbf{S3)}$ depending on the separatrices connection. Moreover, each case is realizable.
\end{proof}

\begin{theorem}\label{teocubinv}
	For the system $\dot{z}=\dfrac{1}{z(z-A_{1})(z-A2)}$, we have, through topological equivalence,  the following phase portraits.\end{theorem}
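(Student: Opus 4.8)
The plan is to reduce the problem to a gradient system with a polynomial first integral, for which the whole phase portrait is encoded in the level curves of a harmonic quartic together with the local configurations at finitely many singular points. Recall from Section \ref{sec7} that the orbits of $\dot z=1/f(z)$ coincide with those of the conjugate system $\dot z=\overline{f(z)}$, since $1/f=\overline{f}/|f|^2$ and $1/|f|^2>0$; the latter is gradient, has no periodic orbits, and admits the first integral $\psi=\operatorname{Im} F$ with $F'(z)=f(z)$. For $f(z)=z(z-A_1)(z-A_2)$ the primitive $F$ is an explicit quartic polynomial, so $\psi$ is a harmonic polynomial of degree $4$ and every separatrix is a branch of a level curve $\psi=\text{const}$. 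The finite singular points of the portrait are exactly the zeros of $f$, that is, the poles $0,A_1,A_2$ of $1/f$.

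First I would settle the local structure. By the conformal conjugacy classification (item (e) of Section \ref{sec2}), near a pole of order $k$ the system $\dot z=1/f$ is $z_0 0$--conformally conjugate to $\dot z=1/z^k$; a direct polar-coordinate computation shows that $\dot z=1/z^k$ has exactly $2k+2$ hyperbolic sectors and index $-k$. Hence a simple pole is a topological saddle, a double pole has $6$ hyperbolic sectors, and a triple pole has $8$. This reduces the finite local data to the multiplicity partition of $\{0,A_1,A_2\}$: three distinct simple poles (three saddles); one double and one simple pole; or a single triple pole ($f=z^3$ after translation). Note that, unlike for the direct field $\dot z=f$, the residue distinctions (c)--(d) of Section \ref{sec2} do not intervene here, since (e) depends only on the order. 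For the behaviour near infinity I would invoke Proposition \ref{phaseinfinity}: a cubic denominator makes the field conformally conjugate, near infinity, to $\dot z=1/z^3$; the substitution $w=1/z$ turns this into $\dot w=-w^5$, whose blow-up (exactly as in the computation preceding Theorem \ref{teoquadinv}) fixes once and for all the sectors along the boundary circle $\s^1$ of the Poincar\'e disk.

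Next I would carry out the global enumeration, following the scheme already used in Theorems \ref{teoquadinv} and \ref{teocubphase}. The triple-pole case is rigid and yields a single portrait. In the remaining cases the portrait is determined by the separatrix connections, which are heavily constrained: each separatrix lies on a level curve of the harmonic quartic $\psi$, so there is no recurrence; two finite saddles can be joined by a heteroclinic orbit only when they share the same critical value of $\psi$; and the structure at infinity is the fixed one obtained above. I would list all connection patterns compatible with these constraints together with the admissible cyclic orderings of the stable and unstable branches along $\s^1$, discard the non-realizable ones (using Proposition \ref{teoGas1}, the index count of Poincar\'e--Hopf on $\s^2$, and the fact that all simple finite zeros become saddles under conjugation), and collect the survivors; the bookkeeping is designed to reproduce the count of eleven topologically distinct portraits. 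The geometry of the roots (collinear versus generic position) enters precisely here, as it controls which saddle connections can coexist.

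Finally, I would certify realizability by exhibiting explicit parameter values $A_1,A_2$ (and, when convenient, an overall factor $e^{i\theta}$, which by the Lemma of Section \ref{sec4} only rotates the eigenvalues) producing each surviving configuration, matching them one by one to the figures. I expect the genuine difficulty to be the completeness of the separatrix-connection enumeration in the three-simple-pole case: because conjugation collapses the center/focus/node trichotomy of the direct cubic into saddles, new heteroclinic connections between finite saddles become possible, and the delicate point is to prove both that no admissible pattern has been overlooked and that each listed one is actually attained for some choice of $A_1,A_2$. The first-integral (level-curve) picture of the quartic $\psi$ is the tool I would rely on to make this argument rigorous.
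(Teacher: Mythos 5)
Your plan follows essentially the same route as the paper's proof: split into cases according to the multiplicity partition of the poles $0,A_1,A_2$ (one triple pole; one double plus one simple; three simple poles), read off the local structure at each pole from the conformal normal form $1/z^k$, fix the behaviour at infinity via Proposition \ref{phaseinfinity} and the substitution $w=1/z$, and then enumerate the admissible separatrix connections and certify each by an explicit choice of $A_1,A_2$. The layer you add on top --- viewing the flow as that of the conjugate gradient field $\dot z=\overline{f(z)}$ with harmonic first integral $\psi=\operatorname{Im}F$, $F'=f$, so that every separatrix sits on a level curve and a finite heteroclinic connection forces equal critical values of $\psi$ --- comes from Section \ref{sec7} and actually makes the connection-counting step more rigorous than the paper's, which simply asserts ``one possibility to connect'' and ``2 possibilities to connect'' without justification. (Two small quibbles: Proposition \ref{teoGas1} concerns centers and nodes of the direct field and contributes nothing once every finite point is a saddle-like pole; and the paper itself invokes Proposition \ref{sver1} for the triple pole, whereas your appeal to the order-$k$ normal form with $2k+2$ hyperbolic sectors and index $-k$ is the cleaner statement.)

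The one concrete error is the target count. The theorem exhibits four portraits, \textbf{Sc1)}--\textbf{Sc4)}: one for the triple pole, one for the double-plus-simple configuration, and two for three simple poles, distinguished by the separatrix connections (realized by collinear versus non-collinear roots, exactly the dichotomy you predict). The figure of eleven you aim to reproduce is taken from the introduction, where the counts for the cubic and quartic inverse families appear to have been transposed (it is Theorem \ref{teoquartinv} that carries eleven figures). If your bookkeeping is ``designed'' to land on eleven, it will necessarily manufacture spurious distinctions; carried out honestly under your own constraints it yields $1+1+2=4$.
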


\begin{center}
	\begin{overpic}[scale=0.25]
		{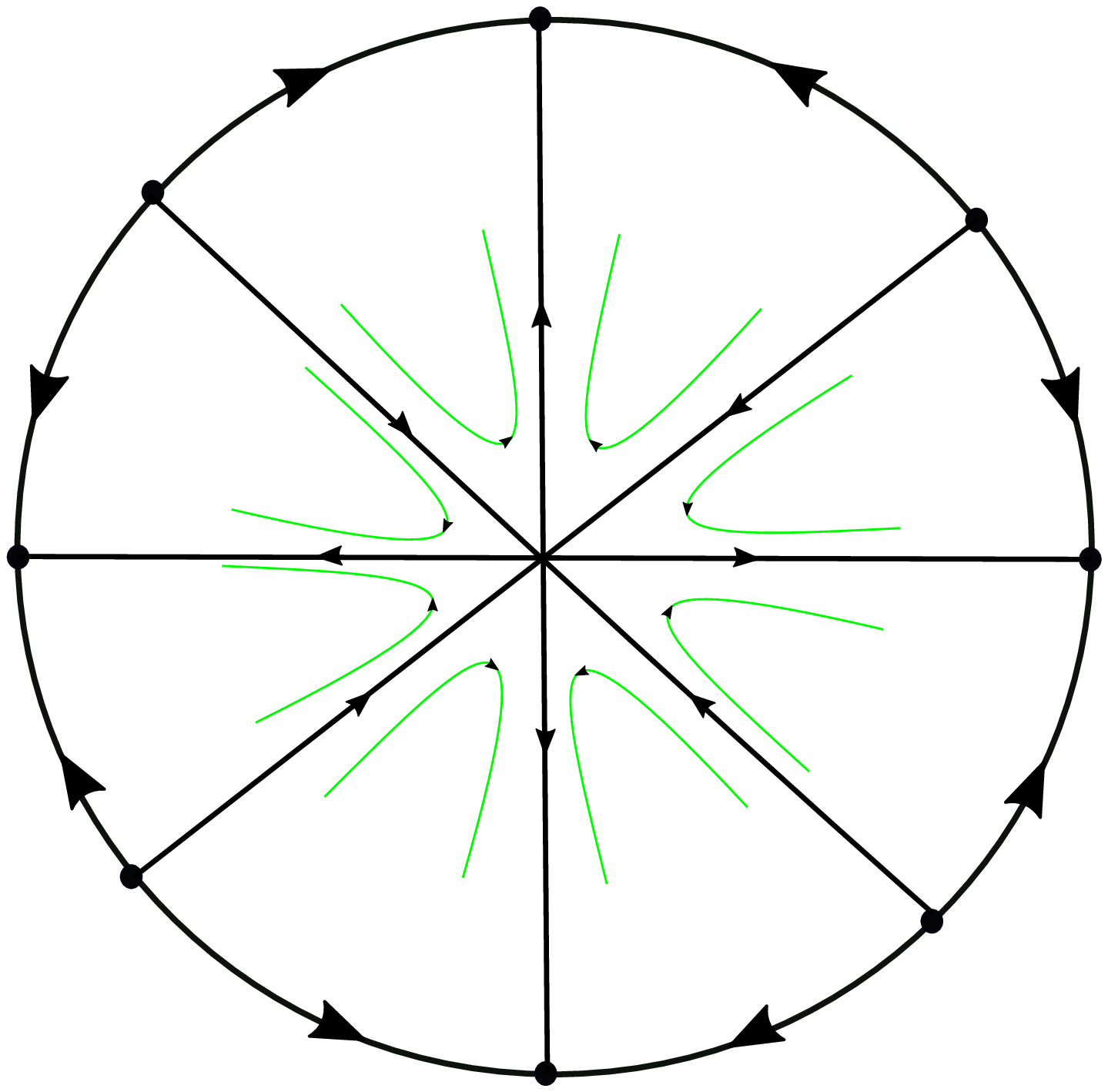} 
		\put(50,-20){\textbf{Sc1)}}
	\end{overpic}
\hspace{1cm}
	\begin{overpic}[scale=0.25]
		{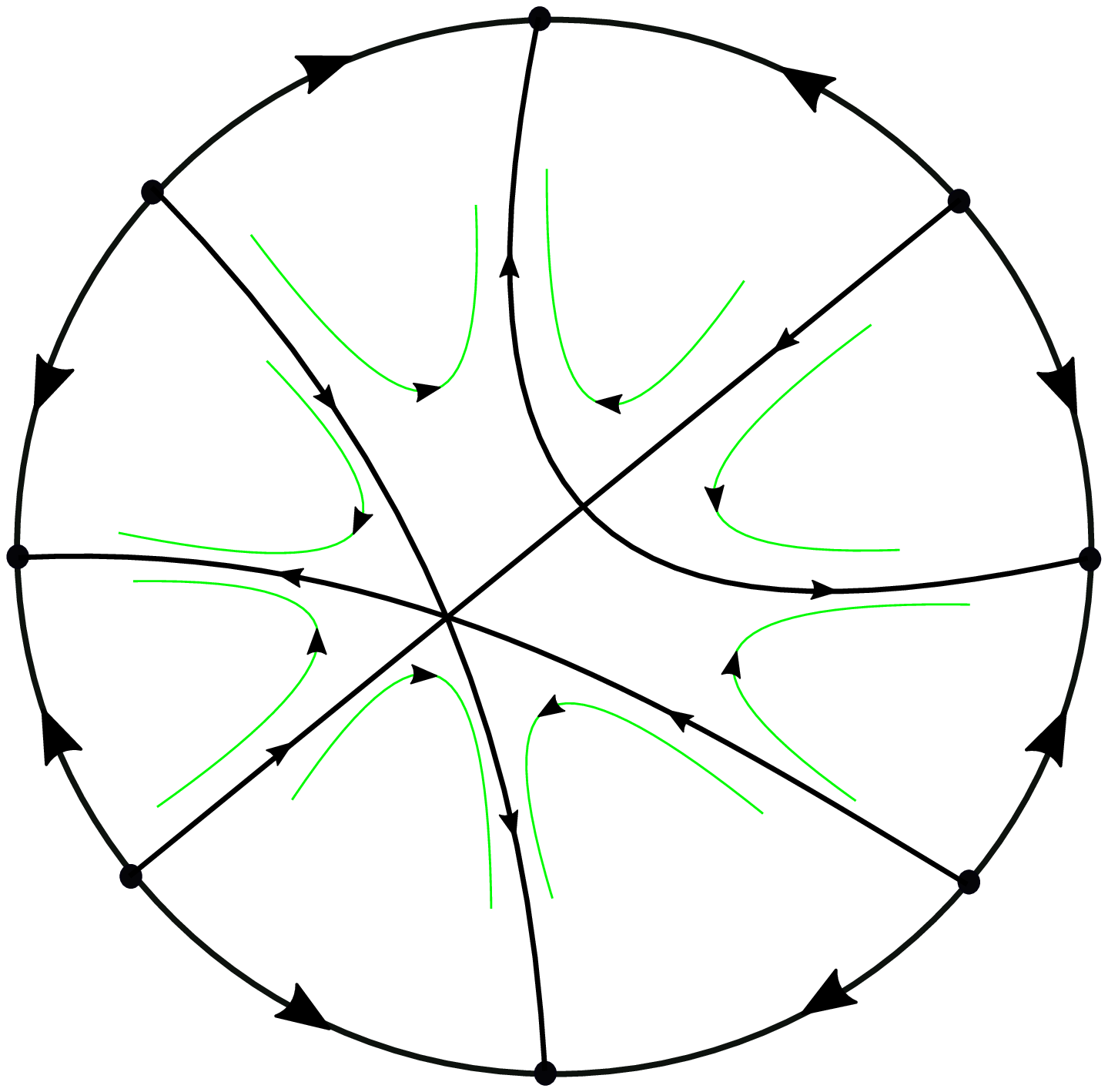} 
		\put(50,-20){\textbf{Sc2)}}
	\end{overpic}
\end{center}

\vspace{1cm}

\begin{center}
    \begin{overpic}[scale=0.25]
	{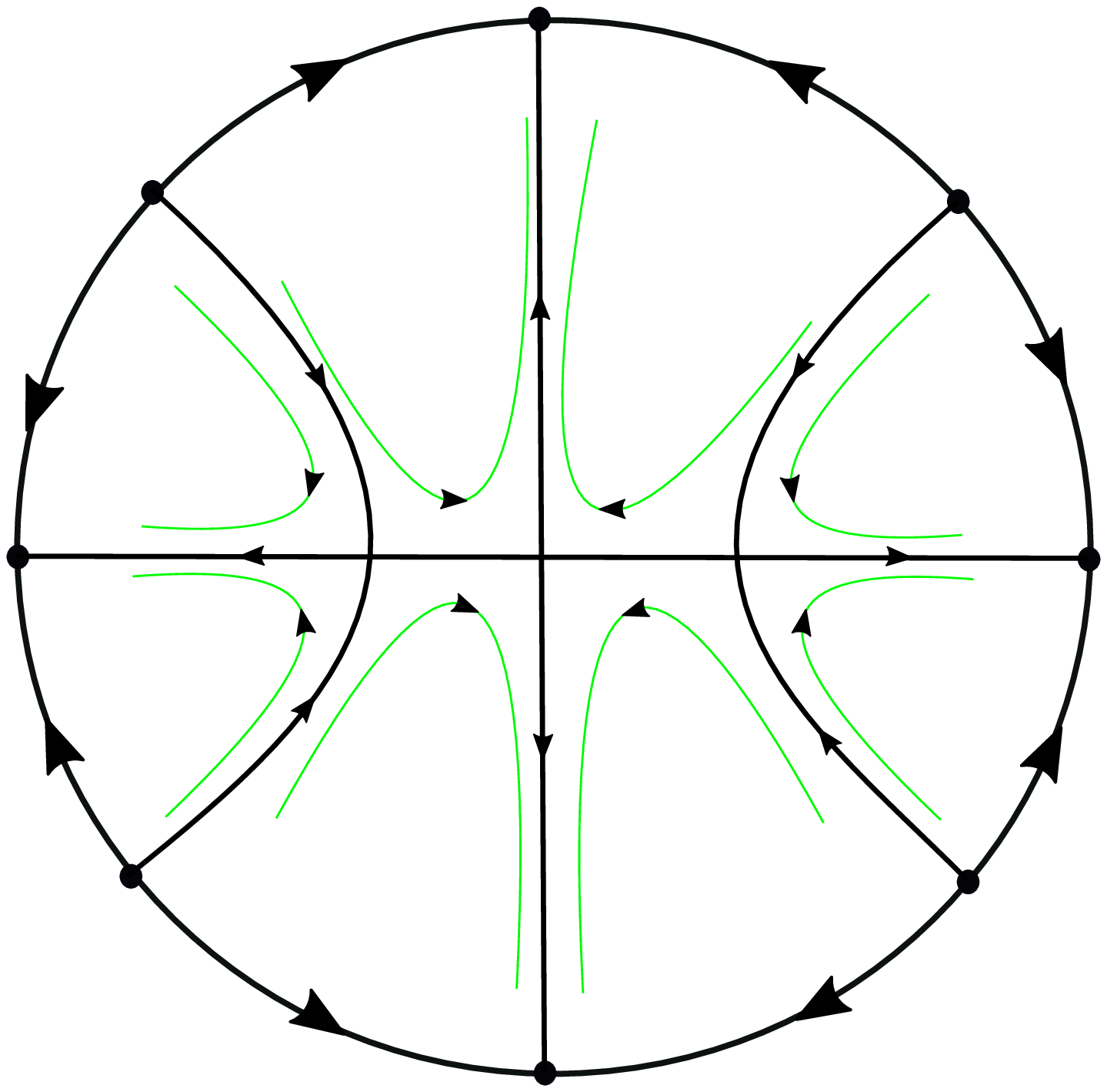} 
	\put(50,-20){\textbf{Sc3)}}
    \end{overpic}
\hspace{1cm}
    \begin{overpic}[scale=0.25]
	{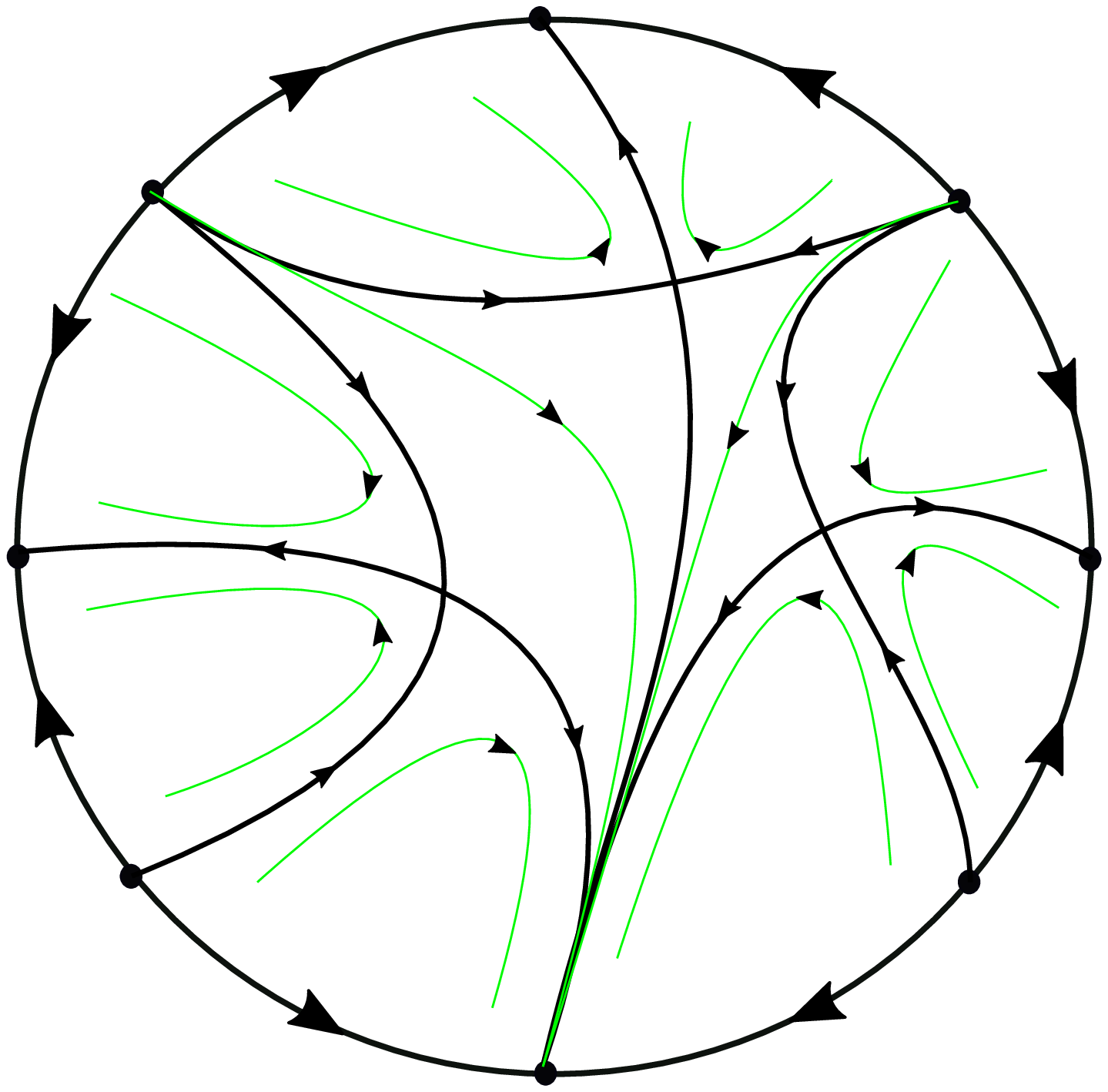} 
	\put(50,-20){\textbf{Sc4)}}
    \end{overpic}	
\end{center}

\vspace{0.5cm}

\begin{proof}
	 If $A_{1}=A_{2}=0$, then the origin is a pole of order 3 and by Proposition \ref{sver1} we have Figure $Sc1$. For the remaining cases, the phase portraits
depend on separatrices connections. If $A_{1}=A_{2}\neq 0$, we have two poles, one of order 1 and other of order 2. Then, we have one possibility to  connect and it is realizable, see figure $Sc2$. For the last case, we have $A_{1}\neq A_{2}$ and both distinct of zero. In this case we have 3 polos of order one and we have 2 possibilities to connect and they are realizable, see figures $Sc3$ and $Sc4$.
	
To obtain the phase portrait in Figures $Sc1$ to $Sc4$, we take the following systems.

\begin{multicols}{2}
	\begin{enumerate}
	\item $\dot{z}=\dfrac{1}{z^{3}}$. \item $\dot{z}=\dfrac{1}{z(z-1)^2}$.
	\vspace{0.2cm}
	\item $\dot{z}=\dfrac{1}{z(z-1)(z-2)}$.
	\vspace{0.2cm}
	\item $\dot{z}=\dfrac{1}{z(z-i)(z-2)}$.
	\end{enumerate}
\end{multicols}
\end{proof}

\begin{theorem}\label{teoquartinv}
	For the system $\dot{z}=\dfrac{1}{z(z-{A}_{1})(z-{A}_{2})(z-{A}_{3})}$, we have, through topological equivalence,  the following phase portraits.\end{theorem}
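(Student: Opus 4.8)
The plan is to reduce everything to the conjugate gradient system and then run a case analysis on the multiplicity pattern of the roots, exactly in the spirit of the proofs of Theorems \ref{teoquadinv} and \ref{teocubinv}. First I would invoke the observation from Section \ref{sec7}: since $\frac{1}{f(z)}=\frac{\overline{f(z)}}{|f(z)|^2}$ with $\frac{1}{|f(z)|^2}>0$, the phase portrait of $\dot z=1/f(z)$ coincides with that of the conjugate system $\dot z=\overline{f(z)}$. By Proposition \ref{intconj} this conjugate system is gradient, with first integral $\psi=\Im F$, $F'=f$; in particular it has no centers, no limit cycles and no elliptic sectors in the finite plane, and its only finite singularities are the zeros of $f=z(z-A_1)(z-A_2)(z-A_3)$, that is, the poles of $1/f$.

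Next I would pin down the local model at each finite singularity and at infinity, so that only the global gluing remains free. At a zero $z_0$ of $f$ of order $m$, class (e) of the conformal conjugacy list shows that $1/f$ is $z_00$--conformally conjugate to $1/z^m$; equivalently, near $z_0$ the harmonic first integral behaves like $\Im\big(c(z-z_0)^{m+1}\big)$, so $z_0$ is a generalized saddle with $2m+2$ hyperbolic sectors (index $-m$). Simple roots give ordinary saddles, double roots give $6$ hyperbolic sectors, and so on. For the behavior at infinity I would use Proposition \ref{phaseinfinity}: in a neighborhood of infinity $\dot z=1/f$ is conformally conjugate to $\dot z=(1/z)^4$, which in the chart $w=1/z$ reads $\dot w=-w^6$. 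Hence the configuration at infinity is determined solely by the degree and is \emph{the same in every case}, obtained by the blow--up procedure illustrated around Figure \ref{figblowup}. The decisive point is that infinity contributes nothing to the distinction between portraits.

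With the local pieces fixed, I would split into the five multiplicity partitions of the four roots $0,A_1,A_2,A_3$: $(4)$ for $f=z^4$; $(3,1)$; $(2,2)$; $(2,1,1)$; and $(1,1,1,1)$. The partition $(4)$ is rigid and follows directly from Proposition \ref{sver1}. For the remaining partitions the local data are determined, but the separatrices emanating from the (generalized) saddles may be glued in several ways, and the main obstacle will be to decide, for each partition, which gluings are topologically realizable and which are forbidden. Here I would argue as in the cubic case: exploit the gradient/first--integral structure, where separatrices are level curves of $\psi$ so that connections are rigidly constrained by the harmonic conjugate, together with a Poincar\'e--Hopf/index bookkeeping matching the total finite index against the fixed contribution of infinity, in order to eliminate the impossible connection patterns. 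This should leave exactly the eight portraits claimed.

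Finally, for realizability I would exhibit an explicit representative system for each surviving portrait---for instance $\dot z=1/z^4$ for $(4)$, an example of type $\dot z=1/\big(z(z-1)^3\big)$ for $(3,1)$, $\dot z=1/\big(z^2(z-1)^2\big)$ for $(2,2)$, and suitably chosen roots for $(2,1,1)$ and $(1,1,1,1)$---and verify, via the level curves of $\psi=\Im F$, that each realizes the asserted configuration, completing the classification into the eight phase portraits shown in the figures.
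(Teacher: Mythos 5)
Your proposal is correct and follows essentially the same route as the paper: a case split on the multiplicity partition of the four roots, the local model $1/z^{m}$ at each pole together with the fixed behavior at infinity from Proposition \ref{phaseinfinity}, an enumeration of the admissible separatrix connections, and explicit representatives for realizability. The only difference is that you make explicit the gradient/first-integral and index bookkeeping used to rule out impossible gluings, which the paper leaves implicit when it simply asserts the possible connection patterns in each case.
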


\begin{overpic}[scale=0.25]
	{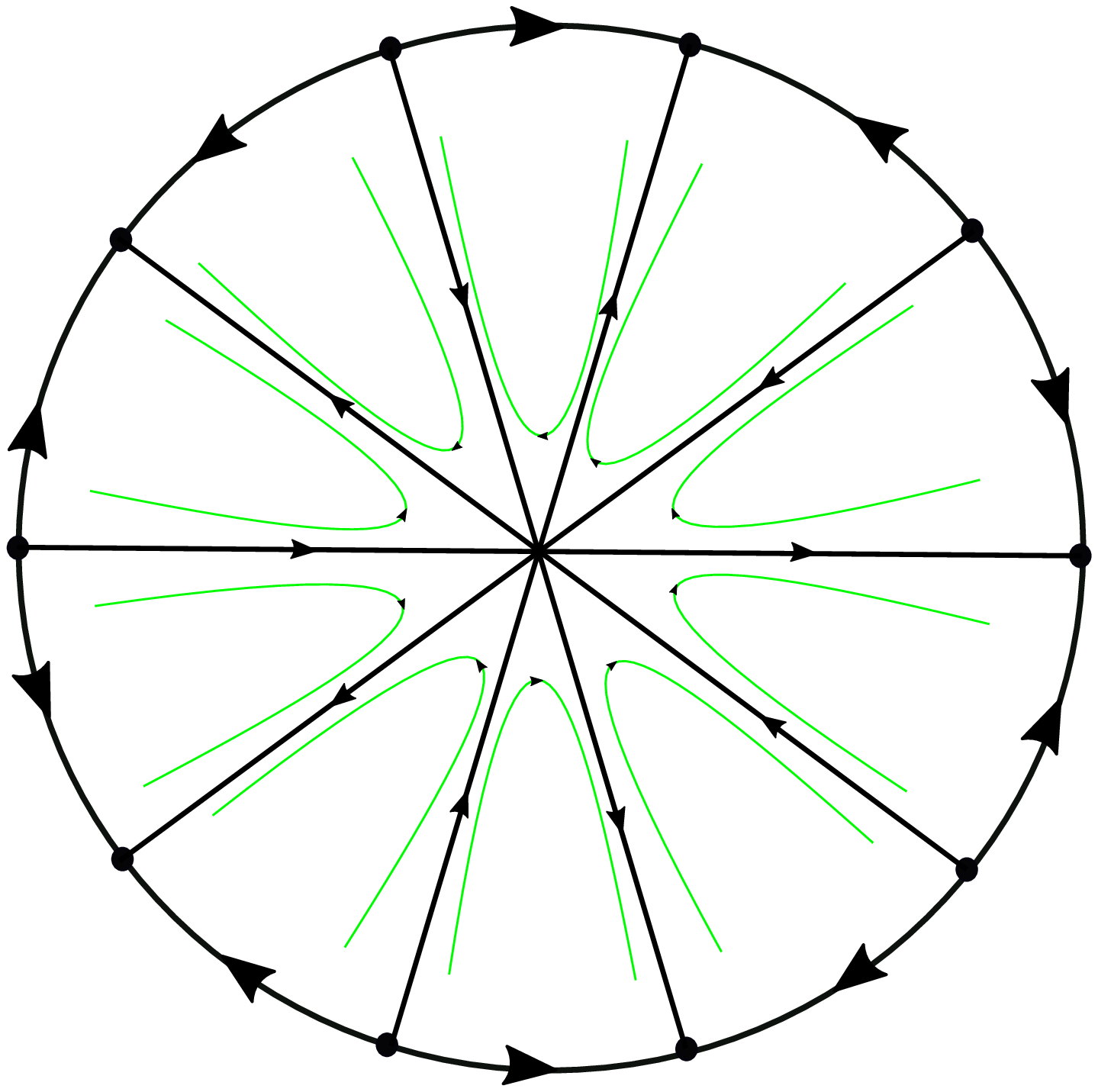} 
	\put(50,-20){\textbf{Sq1)}}
\end{overpic}
\begin{overpic}[scale=0.25]
	{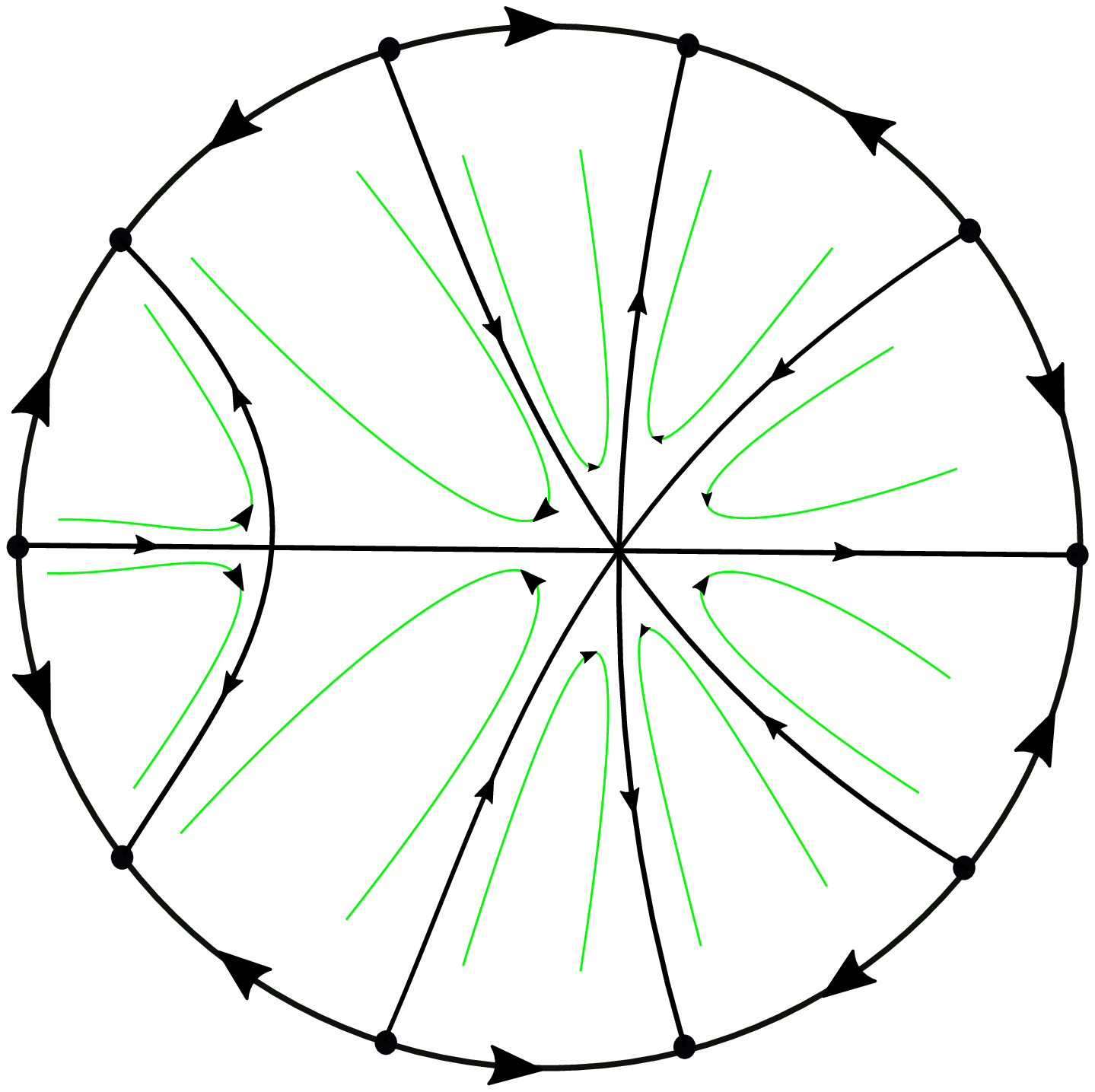} 
	\put(50,-20){\textbf{Sq2)}}
\end{overpic}
\begin{overpic}[scale=0.25]
	{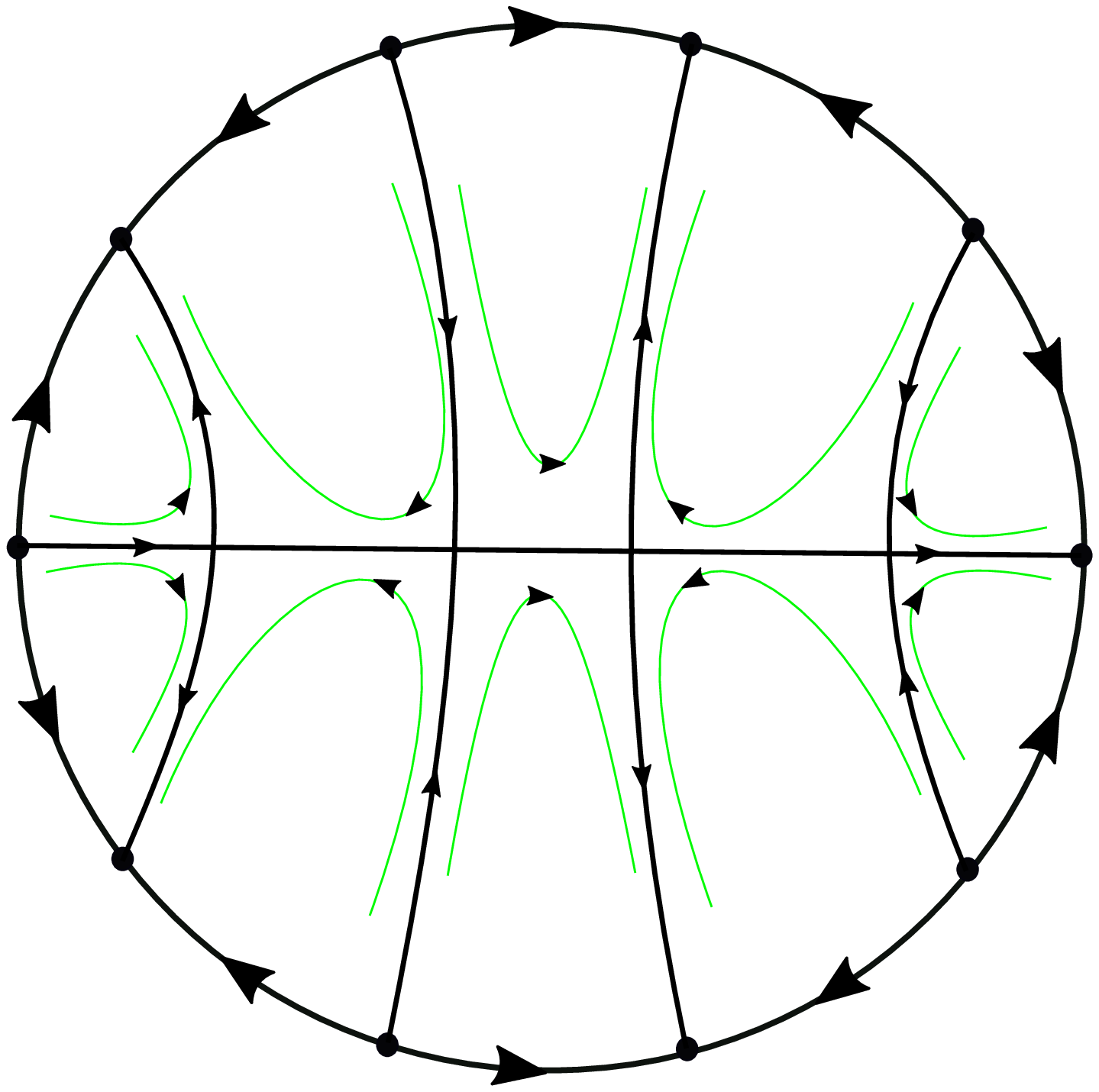} 
	\put(50,-20){\textbf{Sq3)}}
\end{overpic}

\vspace{1cm}

\begin{overpic}[scale=0.25]
	{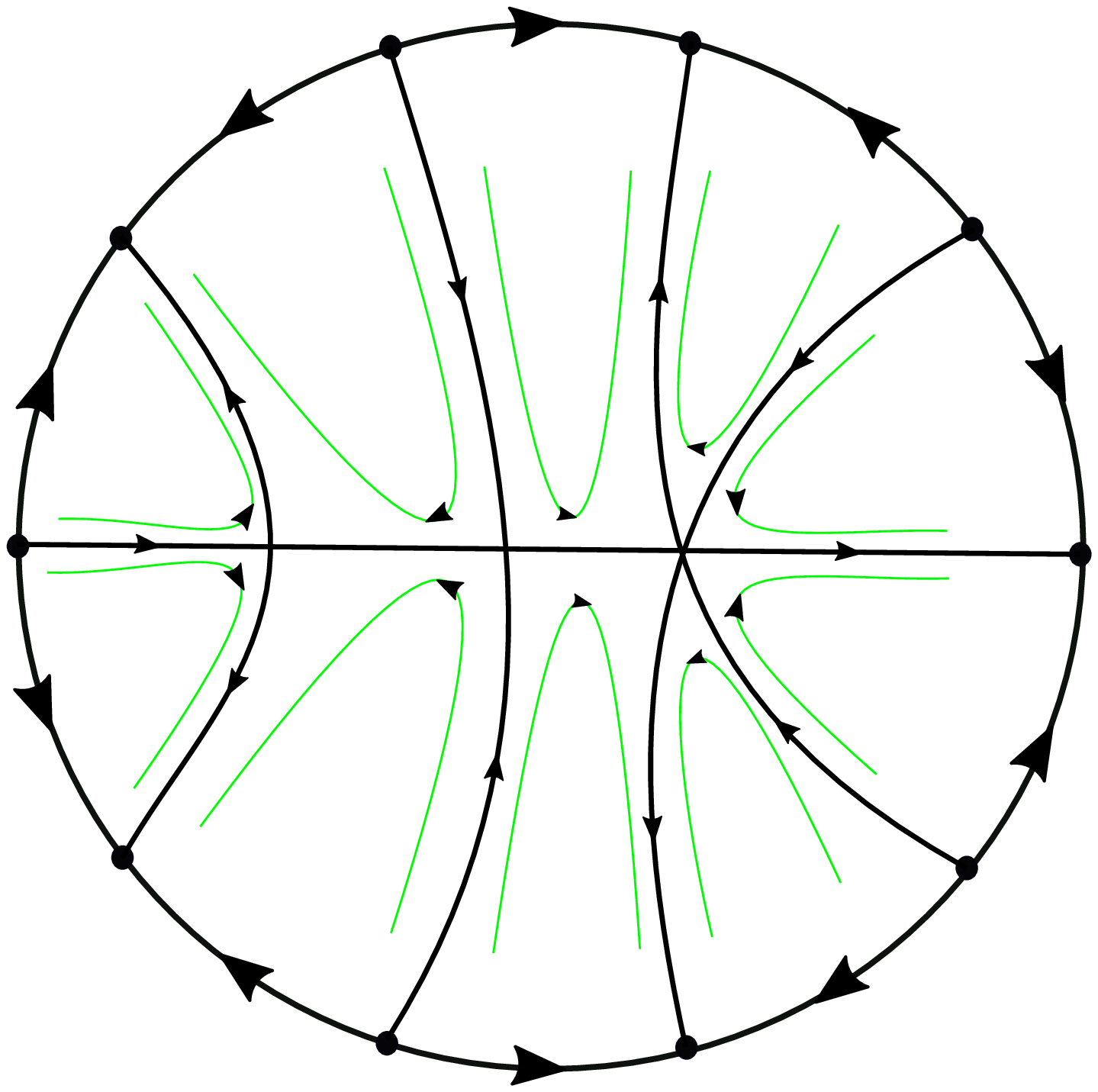} 
	\put(50,-20){\textbf{Sq4)}}
\end{overpic}
\begin{overpic}[scale=0.25]
	{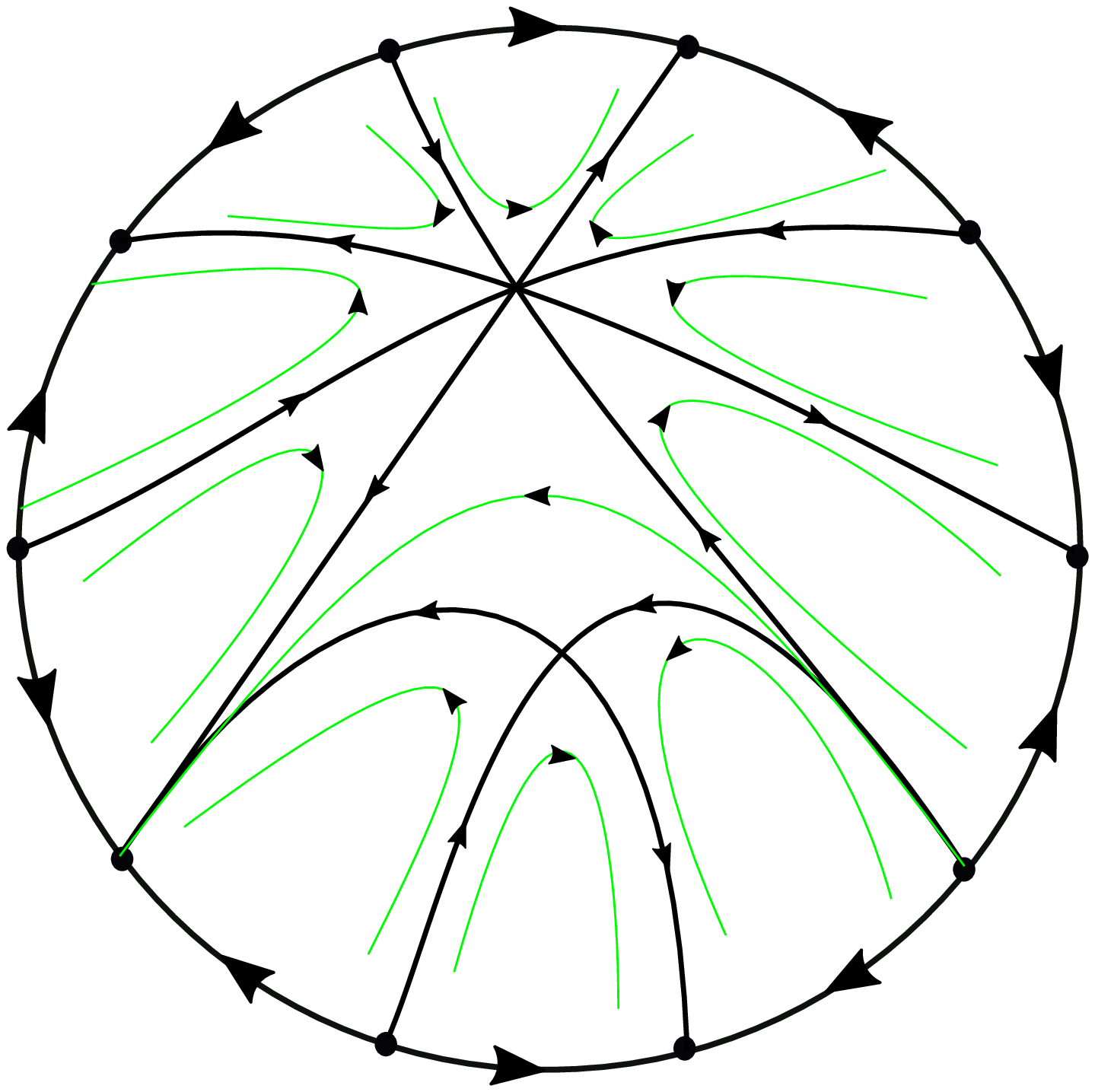} 
	\put(50,-20){\textbf{Sq5)}}
\end{overpic}
\begin{overpic}[scale=0.25]
	{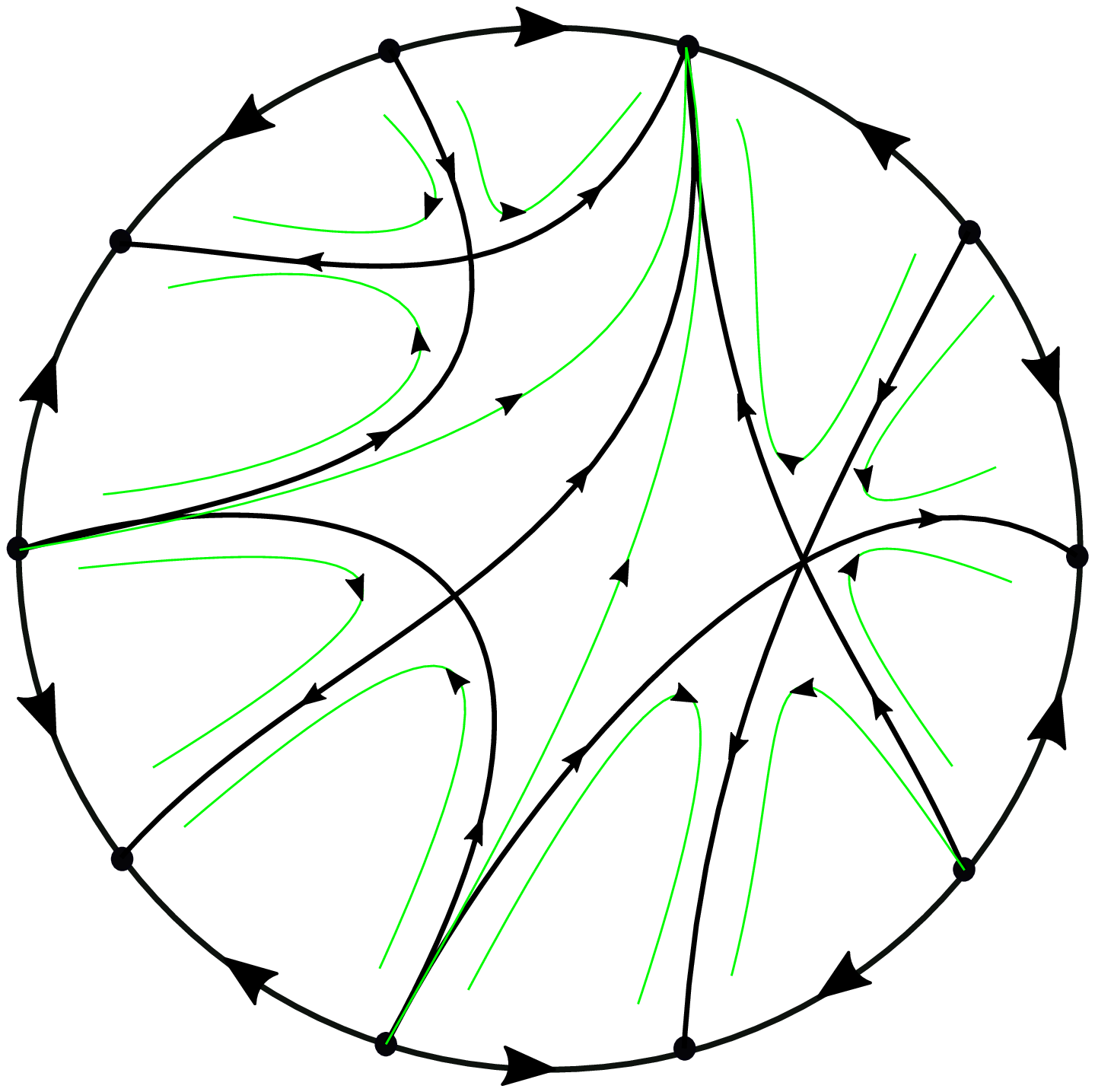} 
	\put(50,-20){\textbf{Sq6)}}
\end{overpic}

\vspace{1cm}

\begin{overpic}[scale=0.25]
	{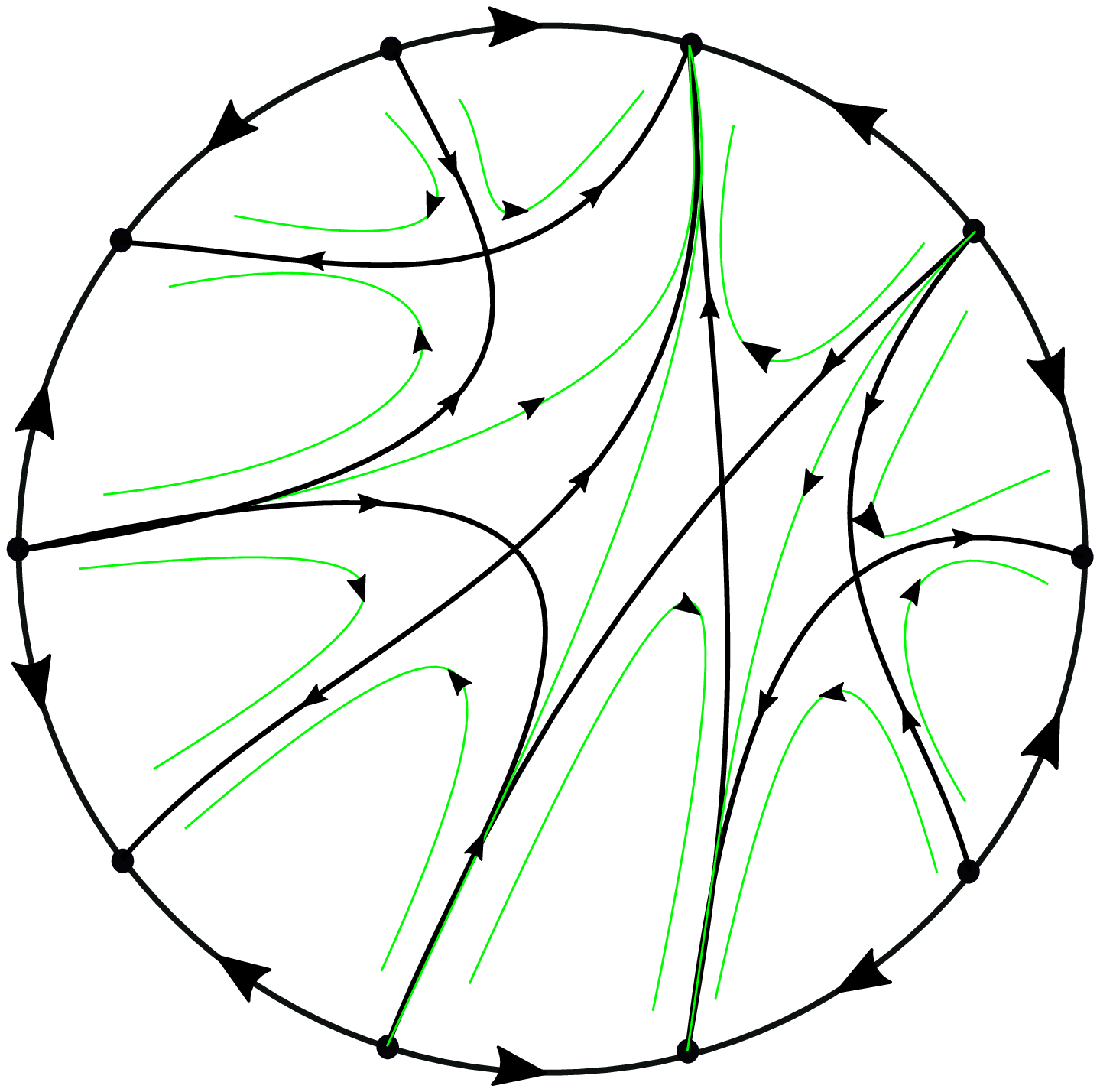} 
	\put(50,-20){\textbf{Sq7)}}
\end{overpic}
\begin{overpic}[scale=0.25]
	{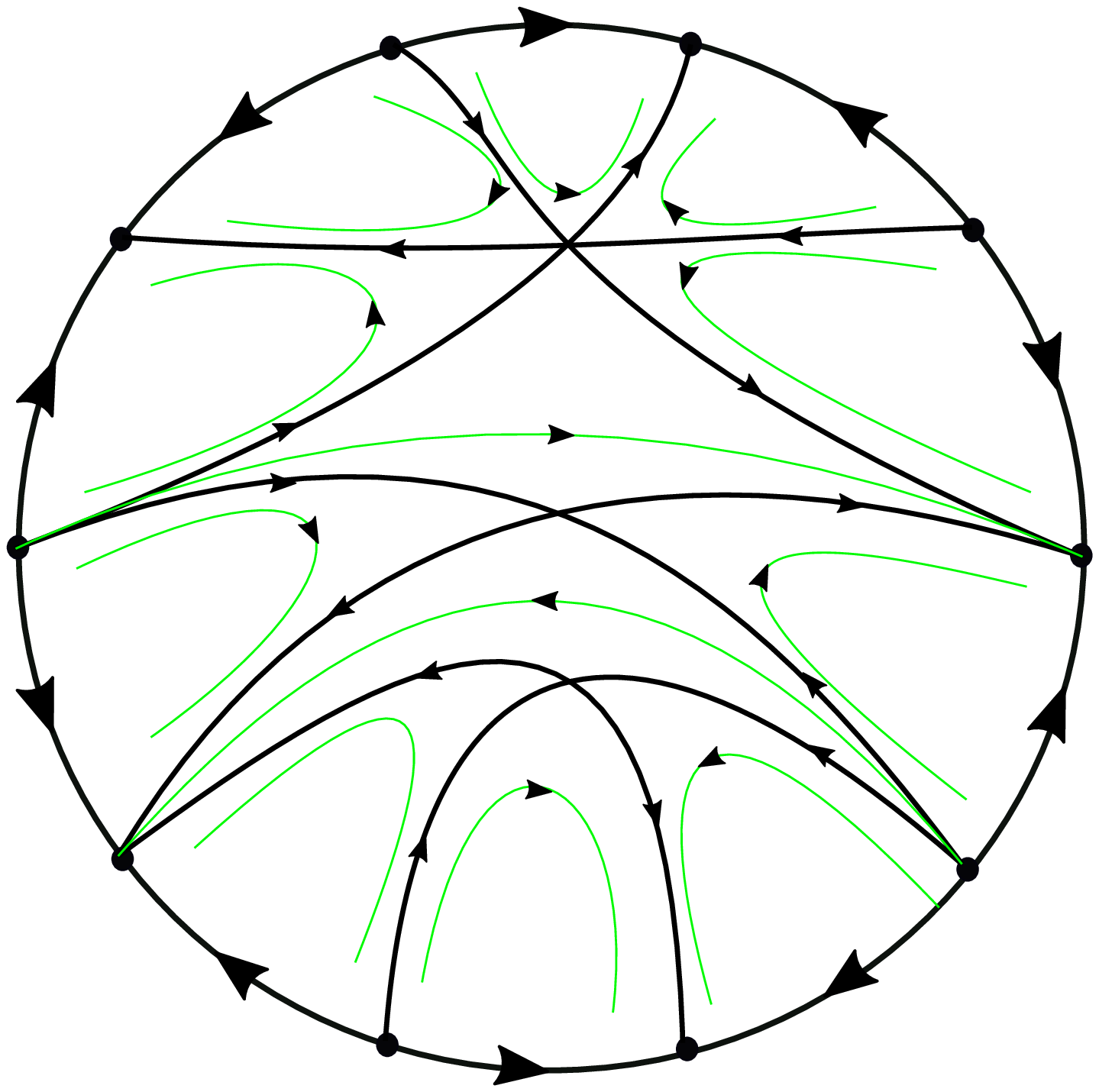} 
	\put(50,-20){\textbf{Sq8)}}
\end{overpic}
\begin{overpic}[scale=0.25]
	{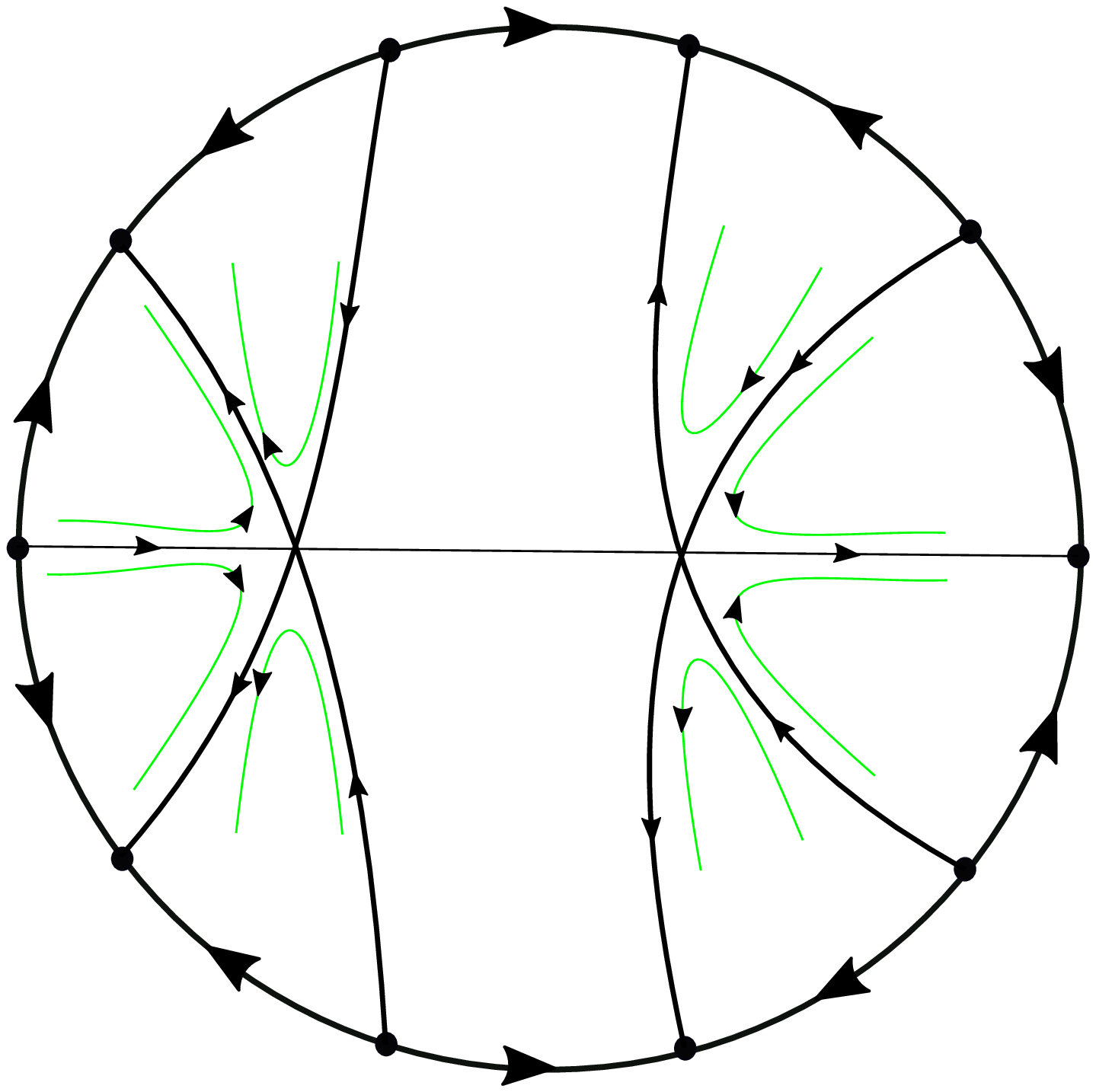} 
	\put(50,-20){\textbf{Sq9)}}
\end{overpic}

\vspace{1cm}

\begin{overpic}[scale=0.25]
	{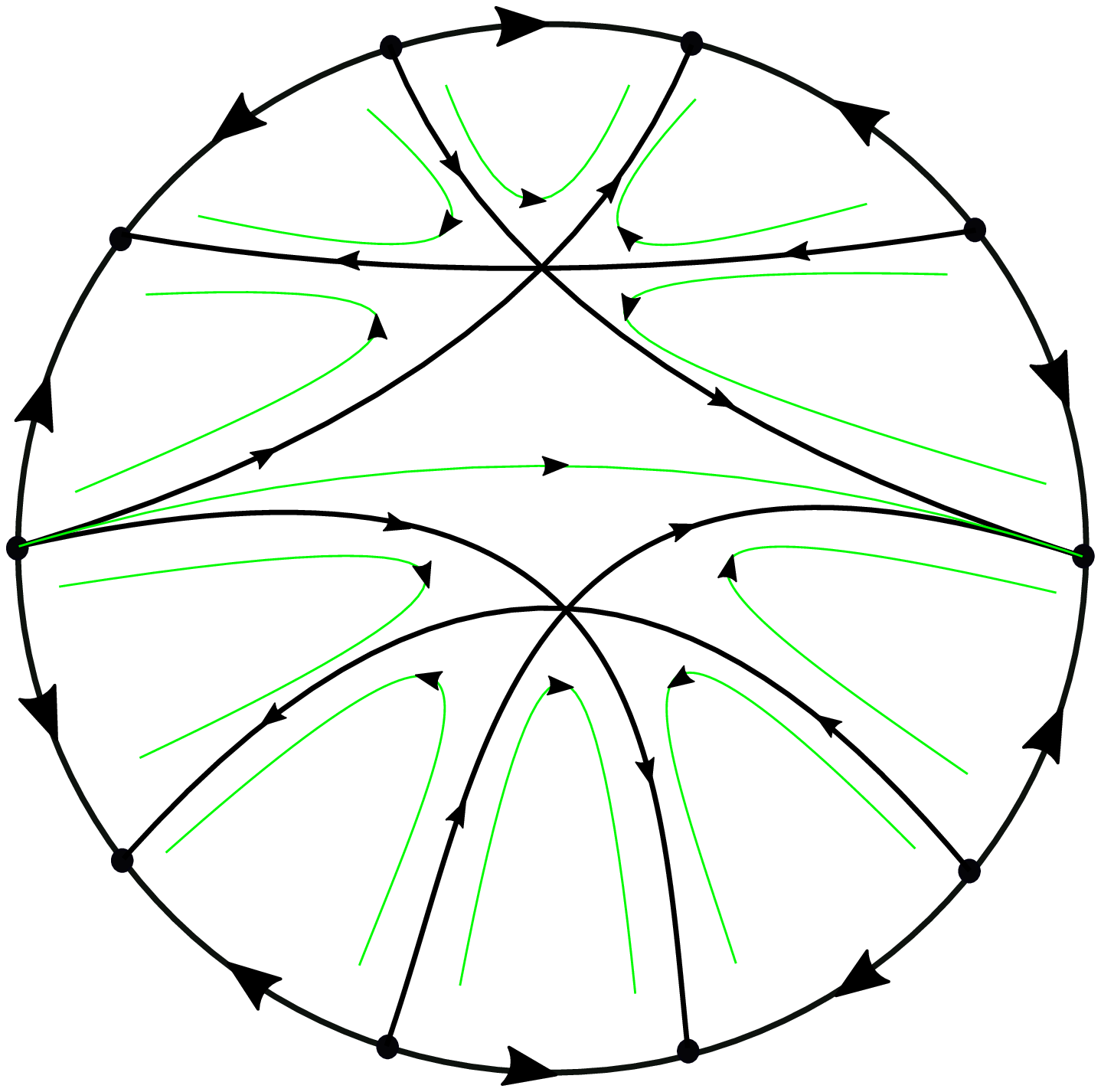} 
	\put(50,-20){\textbf{Sq10)}}
\end{overpic}
\begin{overpic}[scale=0.25]
	{fig11sq.eps} 
	\put(50,-20){\textbf{Sq11)}}
\end{overpic}

\vspace{1cm}

\begin{proof}
		For this system, we have to consider four cases. 
\begin{multicols}{2}
	\begin{enumerate}
		\item $A_{1}=A_{2}=A_{3}=0$.
		\item $A_{2}=A_{3}=0$.
		\item $A_{3}=0$.
		\item $A_{i}\neq 0, i=1\cdots3$.
	\end{enumerate}
\end{multicols}

In case $(1)$ the origin is a pole of order 4 and by Proposition \ref{sver1} we have Figure $Sq1$. For $(2)$, we have two poles of order two. Then the possibilities to the phase portrait is given by Figures $Sq2$, $Sq5$, $Sq9$ and $Sq10$ . Analyzing the case  
$(3)$, we have three polos, being one of order two and two poles of order one. This case corresponds to the phase portraits $Sq4$, $Sq6$, $Sq8$ and $Sq11$. For the last case $(4)$, we have four poles of order one and it corresponds to Figures $3)$ and $Sq7$.

To obtain the phase portrait in Figures $Sq1$ to $Sq11$, we take the following systems:
	
\begin{multicols}{2}
	\begin{enumerate}
		\item $\dot{z}=\dfrac{1}{z^{4}}$.
		\item $\dot{z}=\dfrac{1}{z(z-3)^3}$.
		\item $\dot{z}=\dfrac{1}{z(z-1)(z-2)(z-3)}$.
		\item $\dot{z}=\dfrac{1}{z(z-1)(z-2)^2}$.
		\item $\dot{z}=\dfrac{1}{z(z-i)^3}$.
		\item $\dot{z}=\dfrac{1}{z(z-i)(z-2)^2}$.
		\item $\dot{z}=\dfrac{1}{z(z-1)(z-2)(z-i)}$.
		\item $\dot{z}=\dfrac{1}{z(z-i)(z-2i)^2}$.
		\item $\dot{z}=\dfrac{1}{z^2(z-2)^2}$.
		\item $\dot{z}=\dfrac{1}{z^2(z-i)^2}$.
		\item $\dot{z}=\dfrac{1}{z(z-3)(z-2)^2}$.
	\end{enumerate}
\end{multicols}

\end{proof}

\section{Quadratic, Cubic, and Quartic Conjugate Systems}\label{sec9}

\noindent
\vspace{0.25cm}

In this section we will study the local dynamics of  polynomial conjugate systems $\overline{f_{j}(z)}$, $j=2,3,$ and $4$ where 

\begin{equation*}
\left.\begin{array}{lll}
f_2(z) &= & A_0+A_1z+A_2z^2, \\
f_3(z) &= & B_0+B_1z+B_2z^2+B_3z^3,\\
f_4(z) &= & C_0+C_1z+C_2z^2+C_3z^3+C_4z^4.
\end{array}\right.
\end{equation*}
As seen above, to know local dynamics of $\overline{f_{j}(z)}, j=2,\cdots,4$ it is enough calculate their respective primitives. In this case, the primitives are given by

\begin{equation*}
\left.\begin{array}{lll}
F_2(z) &= & A_0z+A_1\dfrac{z^2}{2}+A_2\dfrac{z^3}{3}, \\
F_3(z) &= & B_0z+B_1\dfrac{z^2}{2}+B_2\dfrac{z^3}{3}+B_3\dfrac{z^4}{4}, \\
F_4(z) &= & C_0z+C_1\dfrac{z^2}{2}+C_2\dfrac{z^3}{3}+C_3\dfrac{z^4}{4}+C_4\dfrac{z^5}{5}.
\end{array}\right.
\end{equation*}
In order to simplify writing, let us denote $A_k=B_k=C_k=a_k+b_ki$, $k=0,...,4$. As we have,


\begin{equation*}
F_{j}'(z)=\phi^j_x+i\psi^j_x=u_j+iv_j=f_{j}(z),
\end{equation*}
$j=2\cdots4$, in cartesian coordinates it follows that
\begin{equation*}
\left\{\begin{array}{lll}
\phi^2(x,y) & = & 1/3a_2x^3 - a_2y^2x - b_2x^2y + 1/2a_1x^2 - b_1yx + a_0x + 1/3b_2y^3  \\
& & -1/2a_1y^2 - b_0y,\\
\psi^2(x,y) & = & a_2x^2y - 1/3a_2y^3 + 1/3b_2x^3 - b_2xy^2 + a_1xy + 1/2b_1x^2 \\
& & - 1/2b_1y^2 + a_0y + b_0x.
\end{array}\right.
\end{equation*}

\begin{equation*}
\left\{\begin{array}{lll}
\phi^3(x,y) & = &a_3\dfrac{x^4}{4}+a_3\dfrac{y^4}{4}-3a_3\dfrac{x^2y^2}{2}-b_3x^3y+b_ 3xy^3+ a_2\dfrac{x^3}{3}+b_2\dfrac{y^3}{3}\\ & &-a_2y^2x-b_2x^2y
+a_1\dfrac{x^2}{2} -a_1\dfrac{y^2}{2} -b_1xy+a_0x-b_0y,\\
\psi^3(x,y) & = &b_3\dfrac{x^4}{4}+b_3\dfrac{y^4}{4}+a_3x^3y-a_3xy^3-3b_3\dfrac{x^2y^2}{2}+
+a_2x^2y-a_2\dfrac{y^3}{3}\\
& & +b_2\dfrac{x^3}{3}-b_2xy^2 +b_1\dfrac{x^2}{2}-b_1\dfrac{y^2}{2}+a_1xy+a_0y+b_0x.
\end{array}\right.
\end{equation*}

\begin{equation*}
\left\{ \begin{array}{lll} 
\phi^4(x,y)&=&1/5a_4x^5 - 2a_4x^3y^2 + a_4y^4x - b_4x^4y + 2b_4x^2y^3 + 1/4a_3x^4\\ 
& & - 3/2a_3x^2y^2 
- b_3x^3y + b_3y^3x + 1/3a_2x^3 - a_2y^2x - b_2x^2y\\ 
& & + 1/2a_1x^2 - b_1yx + a_0x  - 1/5b_4y^5 + 1/4a_3y^4+ 1/3b_2y^3\\ 
& &  - 1/2a_1y^2 - b_0y,\\ 
\psi^4(x,y)&=&a_4x^4y - 2a_4x^2y^3 + 1/5a_4y^5 + 1/5b_4x^5 - 2b_4x^3y^2 + b_4xy^4 \\ 
& &+ a_3x^3y-a_3xy^3 + 1/4b_3x^4 - 3/2b_3x^2y^2 + 1/4b_3y^4 + a_2x^2y\\ 
& & - 1/3a_2y^3 + 1/3b_2x^3- b_2xy^2 + a_1xy + 1/2b_1x^2 - 1/2b_1y^2\\ 
& & + a_0y + b_0x.
\end{array}\right.
\end{equation*}

Let us consider now the conjugate of $A_{1}z,A_{2}z^2, A_{3}z^3$ and $A_{4}z^4$. As we get the explicit equations $\psi^j,j=2\cdots4$, it is enough to show the level curves of $\psi$ to obtain the local dynamic. Moreover, following the reasoning of  Poincaré Compactification, section \ref{sec3}, it is easy to show that the infinity of $A_{1}z,A_{2}z^2, A_{3}z^3$ and $A_{4}z^4$ are node points ($4,6,8$ and $10$  nodes points respectively) with alternating stability and considering the Poincaré Disk, diametrically opposite points are connected by a separatrix. As seen above, the finite singular points are the saddle type. Then, these facts allow us to make the local study and the global phase portraits of conjugates of  $A_{1}z,A_{2}z^2, A_{3}z^3$ and $A_{4}z^4$. Note that there is a relation between the degree of $z$ and the number of sectors. That is, the number of sectors around the origin is given by $2n+2$, where $n$ is degree of $z^{n}$. 

\begin{figure}[h]
\hspace{0.7cm}
\begin{overpic}[scale=0.30]
	{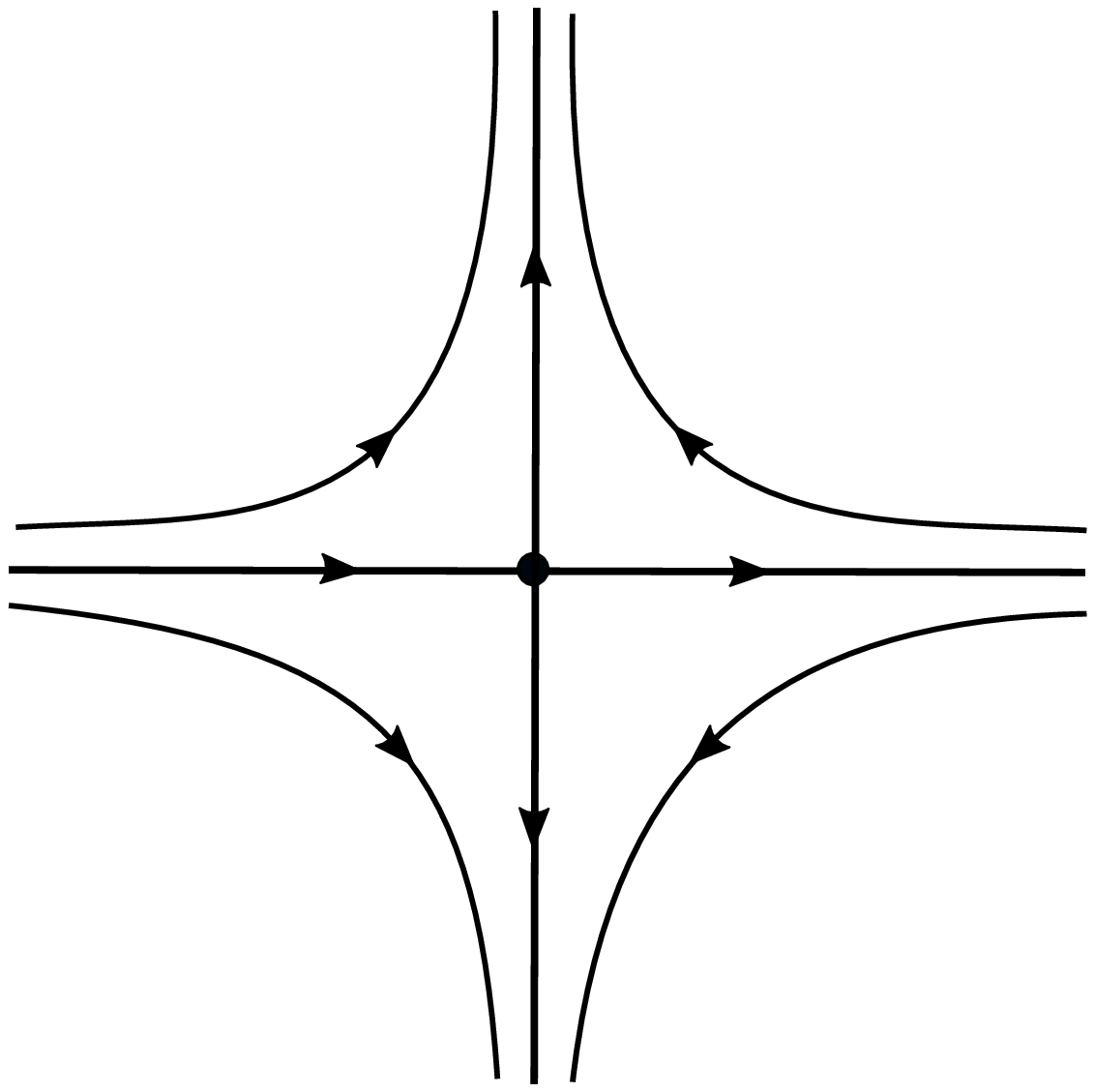} 
\end{overpic}
\hspace{1cm}
\begin{overpic}[scale=0.30]
	{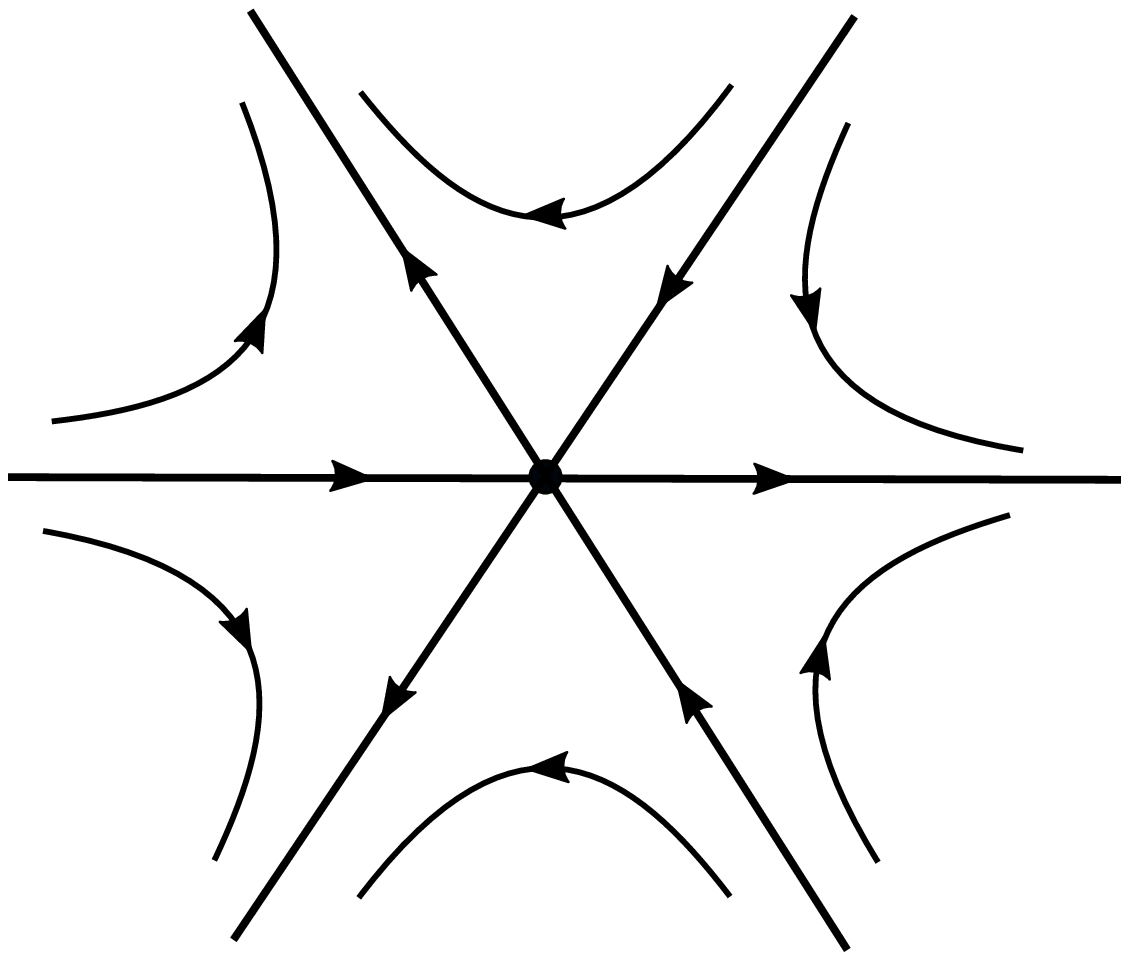} 
\end{overpic}
\\ \vspace{1cm}
\begin{overpic}[scale=0.35]
	{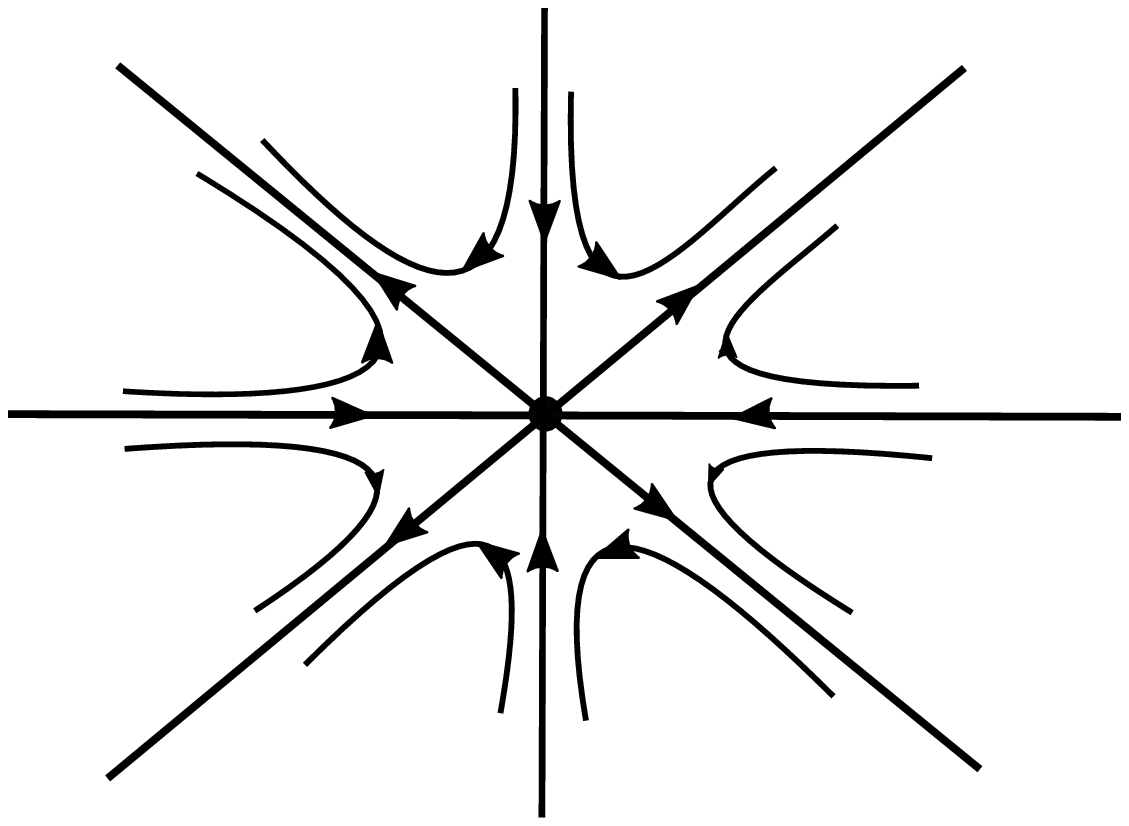} 
\end{overpic}
\hspace{0.8cm}
\begin{overpic}[scale=0.35]
	{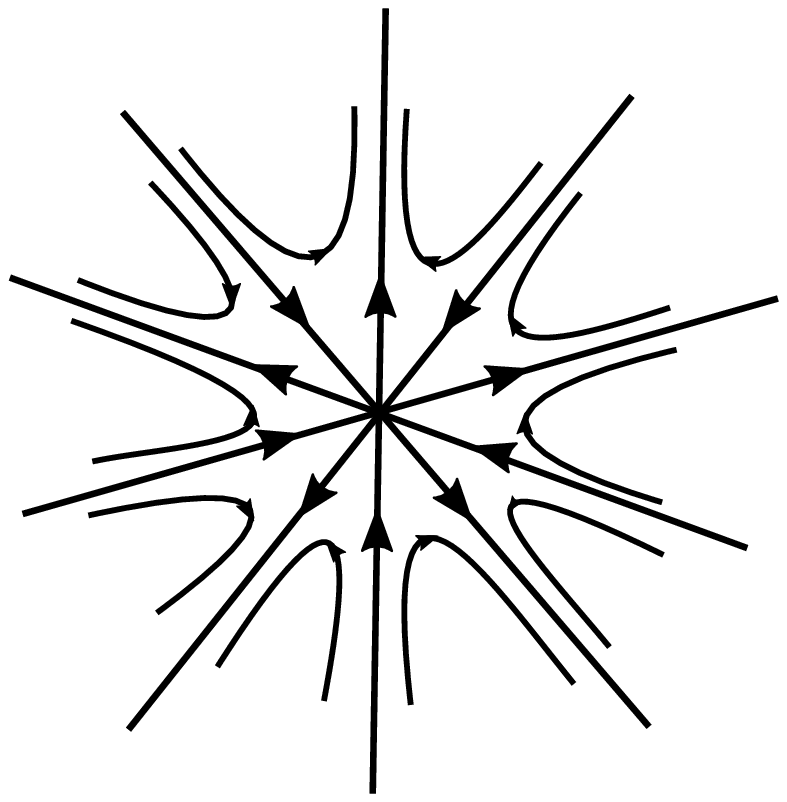} 
\end{overpic}
\caption{Phase Portraits $\dot{z}=\overline{A_{1}z}$, $\dot{z}=\overline{A_{2}z^2}$,  $\dot{z}=\overline{A_{3}z^3}$ and $\dot{z}=\overline{A_{4}z^4}$.}
\end{figure}

\begin{center}
\begin{figure}[h]
\begin{overpic}[scale=0.25]
	{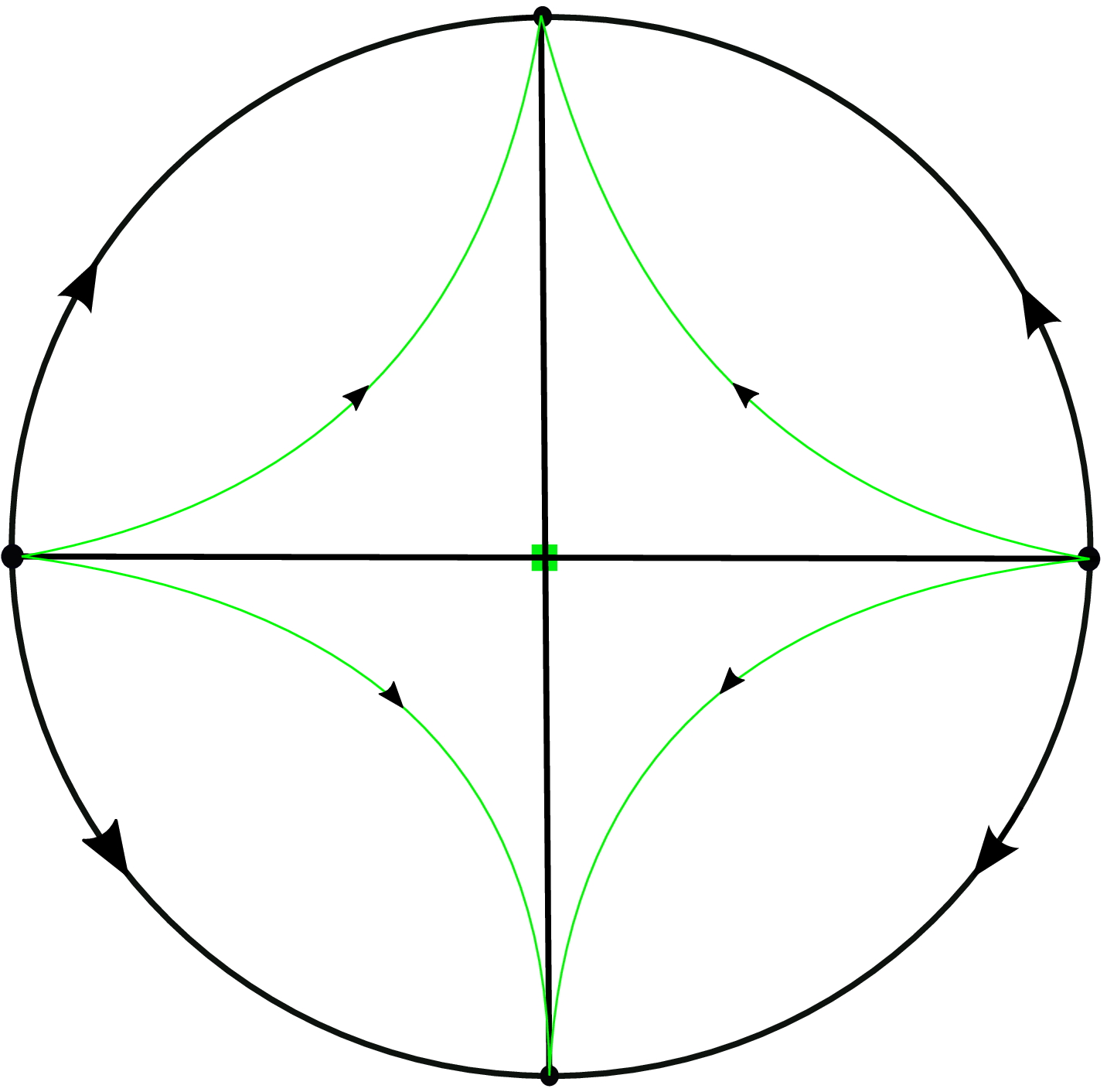} 
\end{overpic}
\hspace{1cm}
\begin{overpic}[scale=0.25]
	{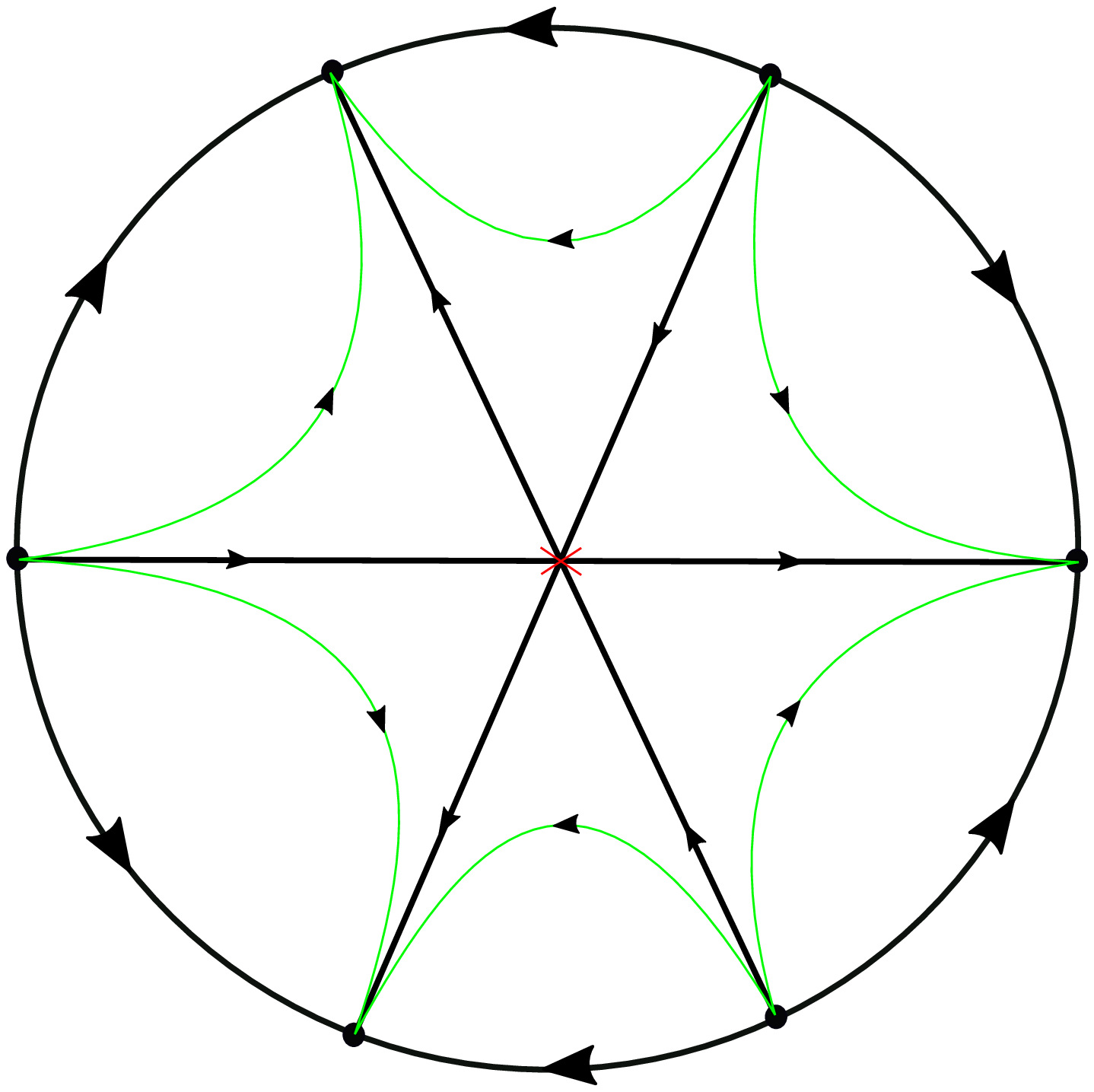} 
\end{overpic}
\caption{Phase Portraits of $\dot{z}=\overline{(1+2i)z}$ and $\dot{z}=\overline{(1+4i)z^2}$.}
\end{figure}
\end{center}

\begin{center}
\begin{figure}[h]
\begin{overpic}[scale=0.25]
	{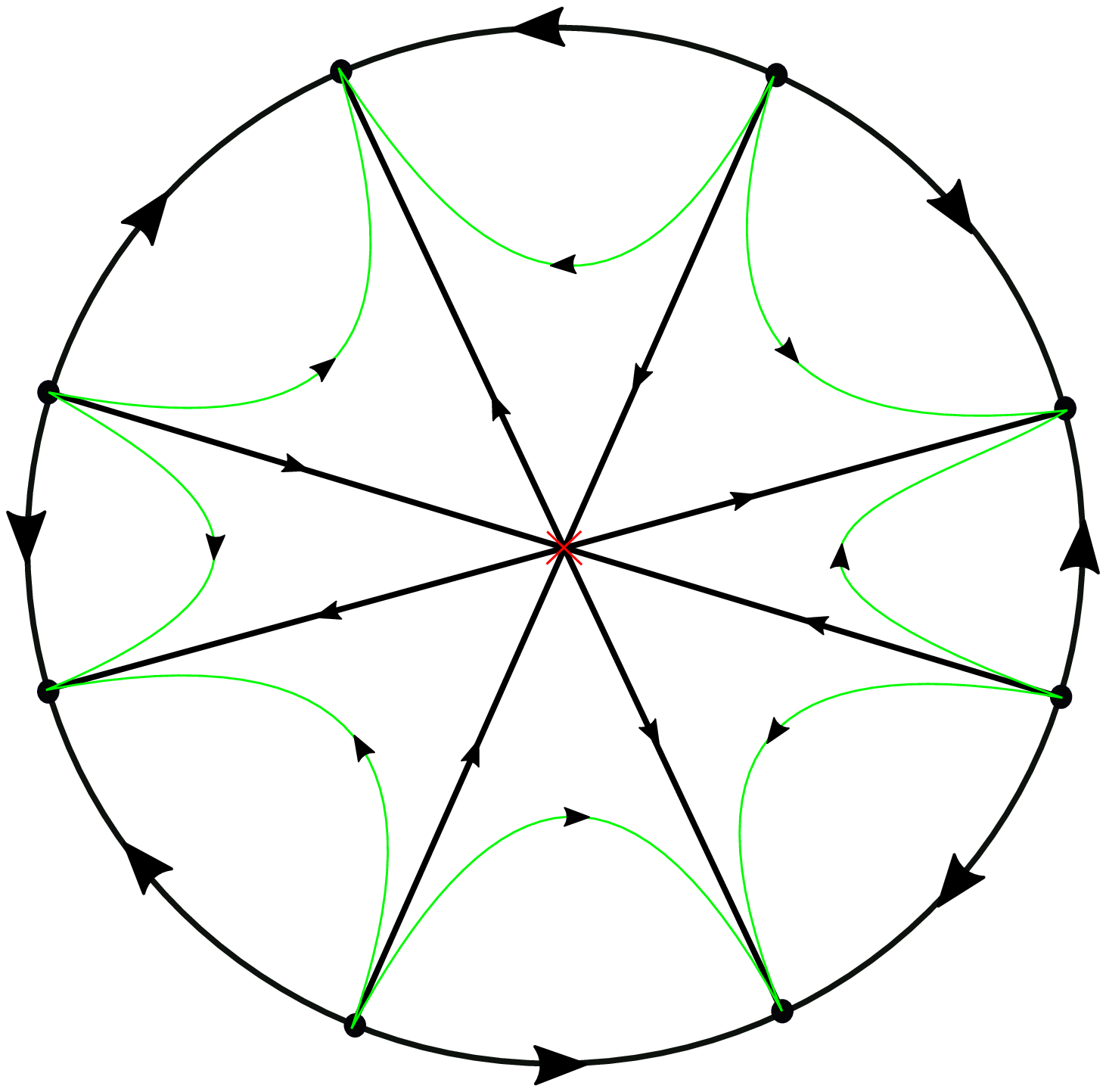} 
\end{overpic}
\hspace{1cm}
\begin{overpic}[scale=0.25]
	{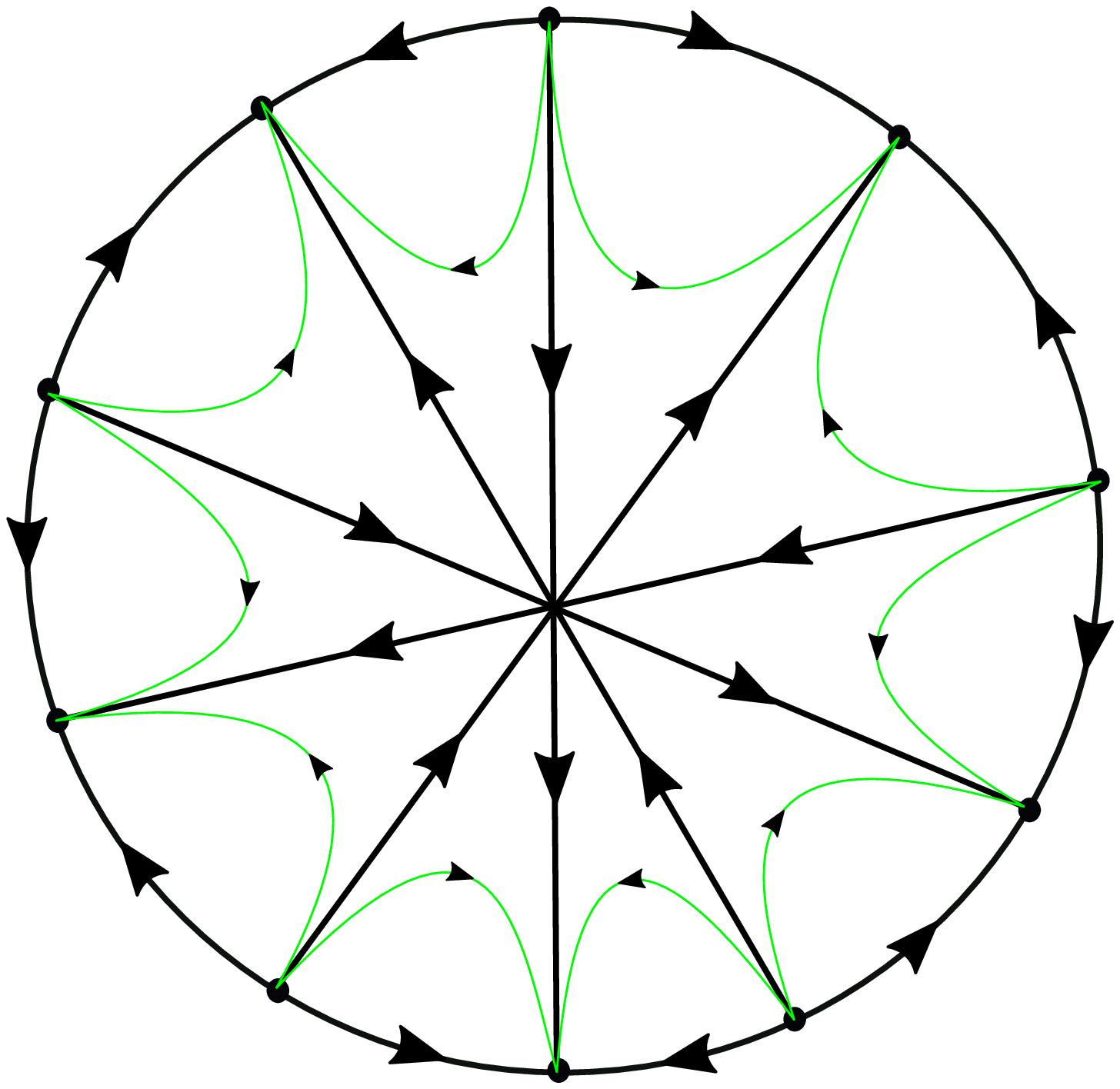} 
\end{overpic}
\caption{Phase Portraits of $\dot{z}=\overline{(1+i)z^3}$ and  $\dot{z}=\overline{z^4}$.}
\end{figure}
\end{center}

\begin{remark}
	We notice that the local dynamics of  $\overline{z}^{2}$ and  of $\overline{z^{2}/(1+z)}$ are topologically equivalent. In fact, this also occur to the conjugate of $z^{3}$ and the conjugate of $z^{3}/(1+z^2)$, and the conjugate of $z^4$ and conjugate of $z^{4}/(1+z^3)$.  In all cases, the local dynamics is formed by $2n+2$ sectors, where $n$ is the degree of $z$, and the origin is of saddle type. This phenomenon does not occur for $z^{n}$ and $z^{n}/(1+z^{n-1})$, that is, it is a peculiarity of conjugate systems. Naturally, the local dynamics being topological equivalent does not implies the same phase portrait in all Poincaré Disk. This fact can be checked if we take the systems $(1+4i)z^{2}$ and $z^2/(1+z)$. In both cases, the local dynamics around the origin is formed by 6 sectors and the origin is saddle type. However, as we can seen in the Figure 12, the phase portrait in all Poincaré disk is different.
\end{remark}

\begin{center}
\begin{figure}[h]	
\begin{overpic}[scale=0.25]
	{fluidos2.eps} 
\end{overpic}
\hspace{1cm}
\begin{overpic}[scale=0.25]
	{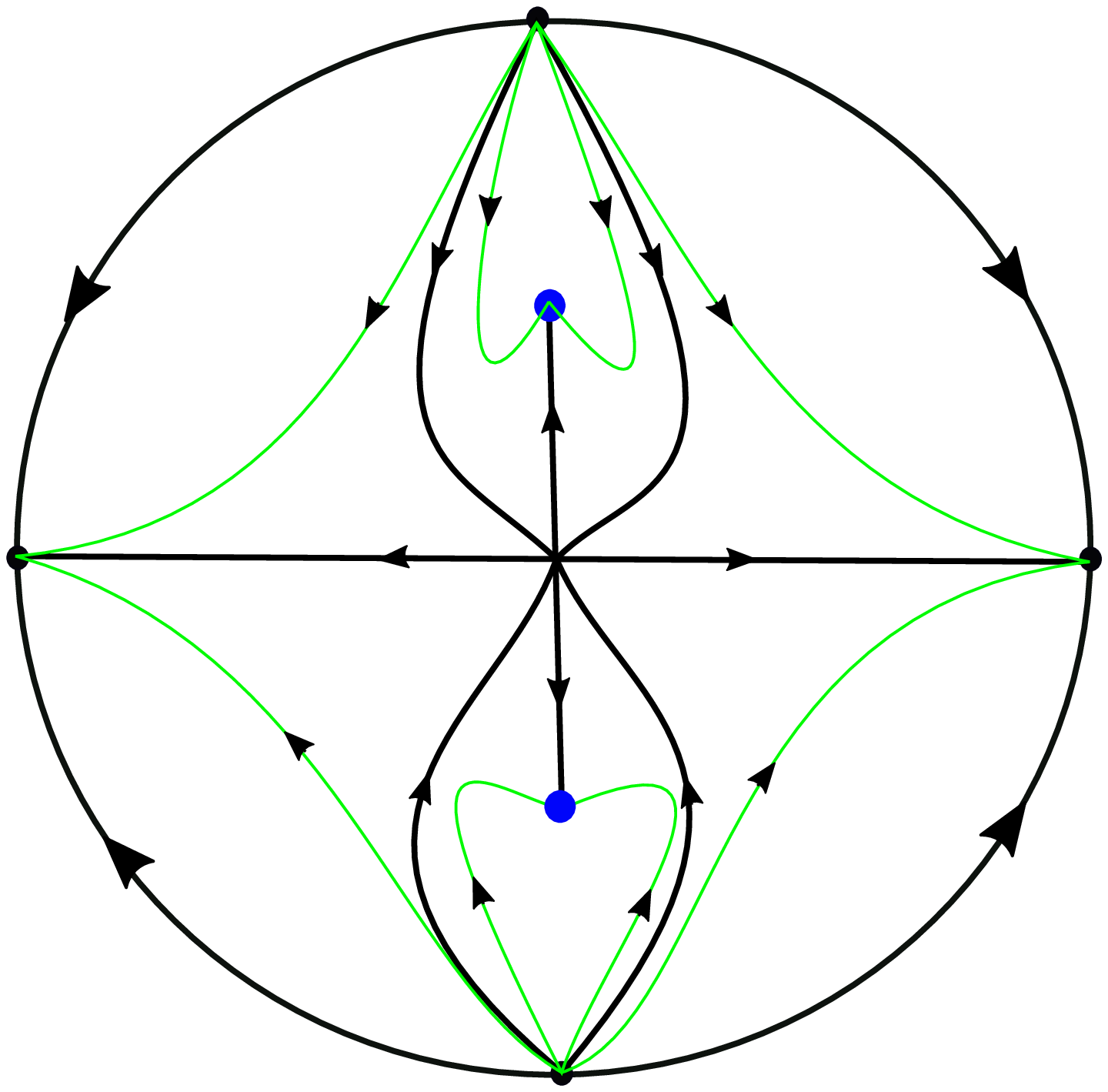} 
\end{overpic}
\caption{Phase Portraits $\dot{z}=\overline{z^2}$ and $\dot{z}=\overline{z^2/(1+z)}$.}
\end{figure}
\end{center}

Now, let us consider equations

\begin{equation*}
\left.\begin{array}{lll}
p_2(z) &= & A_0+A_1z+A_2z^2, \\
p_3(z) &= & B_0+B_1z+B_2z^2+B_3z^3,\\
p_4(z) &= & C_0+C_1z+C_2z^2+C_3z^3+C_4z^4.
\end{array}\right.
\end{equation*}
with $A_0=B_0=C_0=0$, that is, let us consider equations with the form

\begin{equation*}
\left.\begin{array}{lll}
f_2(z) &= & A_1z+A_2z^2, \\
f_3(z) &= & B_1z+B_2z^2+B_3z^3,\\
f_4(z) &= & C_1z+C_2z^2+C_3z^3+C_4z^4.
\end{array}\right.
\end{equation*}

Compactifying on $f_{2}$,$f_{3}$ and $f_{4}$, we obtain that the infinity are nodes  ($4,6,8$ and $10$ node points respectively) with alternating stability. Moreover,  $f_{2}$,$f_{3}$ and $f_{4}$ have $2,3$ and $4$ saddle points. Below, we show one example of phase portrait for each $f_{j}(z)$.

\begin{center}
\begin{figure}[h]	
\begin{overpic}[scale=0.2]
	{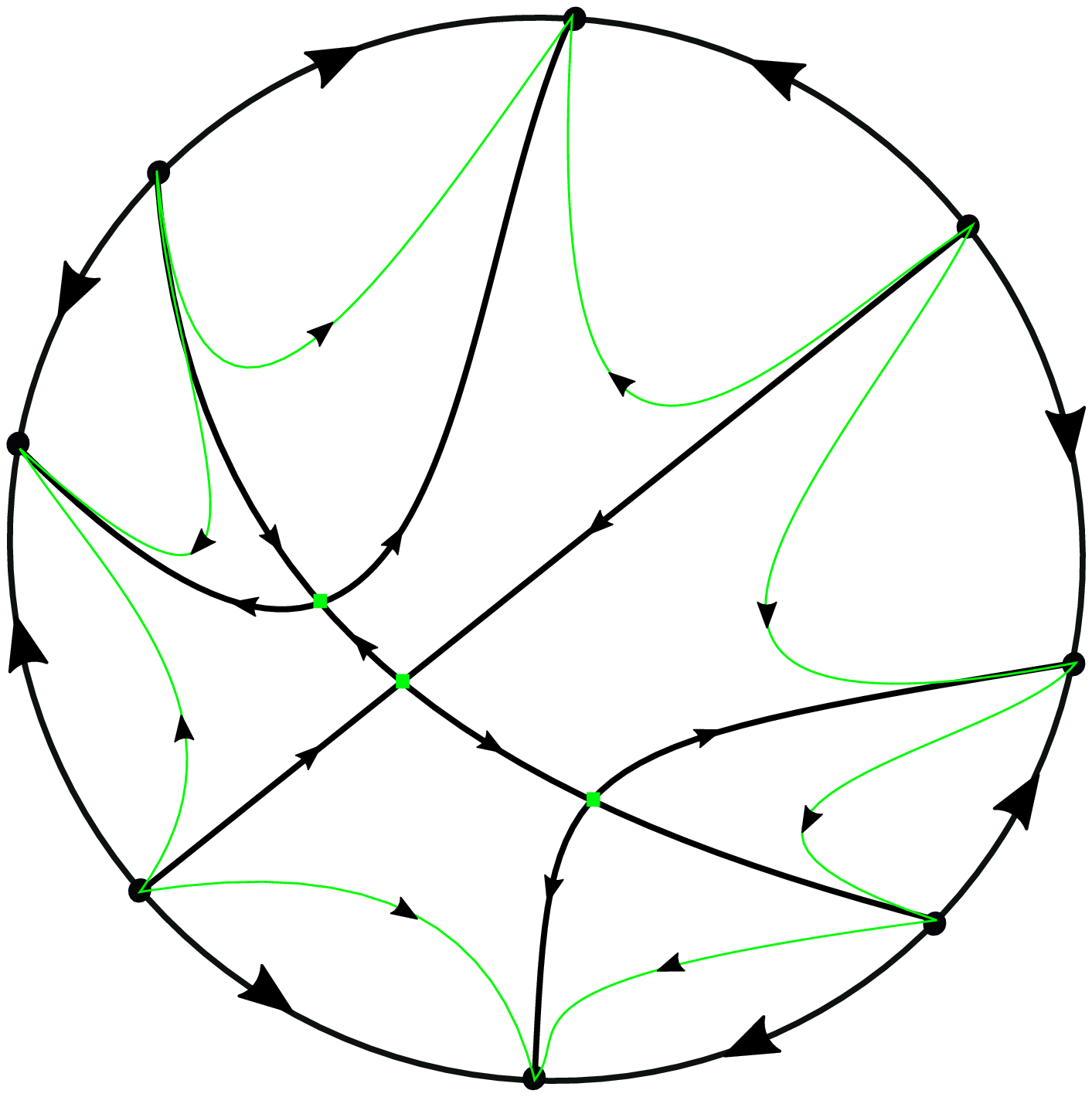} 
\end{overpic}
\hspace{1cm}
\begin{overpic}[scale=0.2]
	{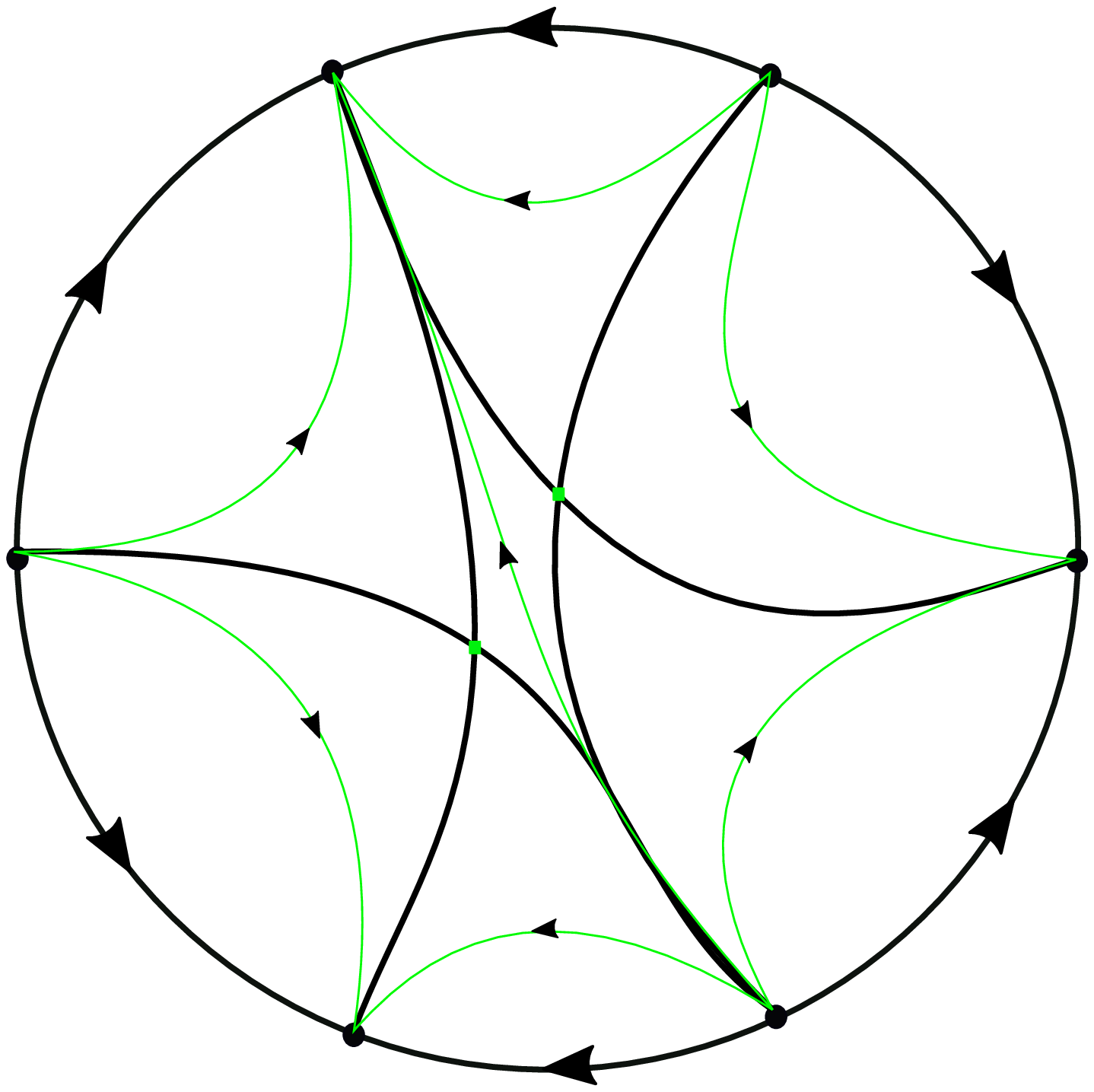} 
\end{overpic}
\hspace{1cm}
\begin{overpic}[scale=0.2]
{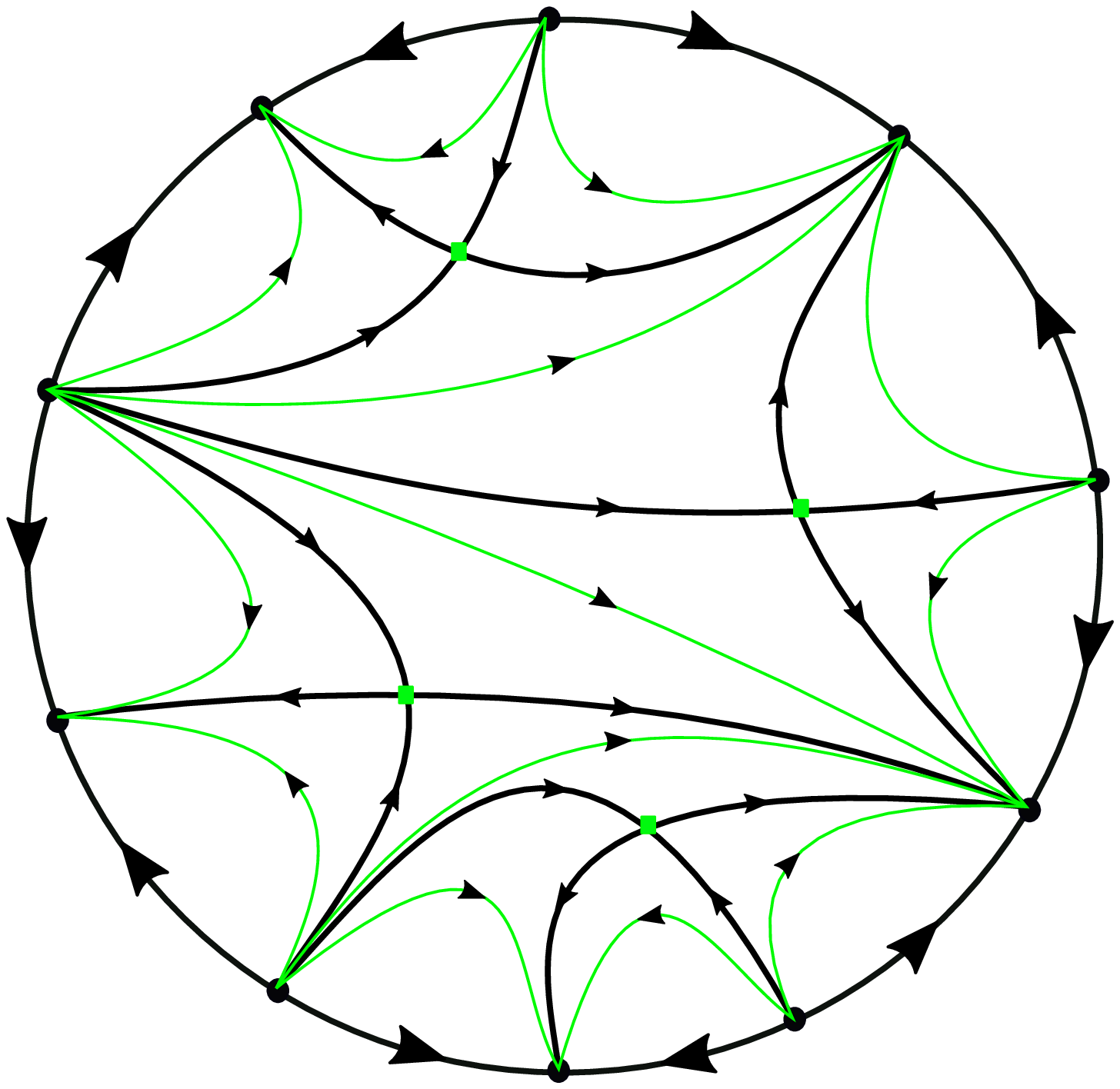} 
\end{overpic}
\caption{Phase Portraits of $\dot{z}=\overline{(1+i)z+(3+4i)z^2}$, $\dot{z}=\overline{(1+3i+z)(2+2i+z)(3+i+z)}$, and $\dot{z}=\overline{(1+i)z+(2-i)z^2+iz^3+2z^4}$, resp.}
\end{figure}
\end{center}

\section{Moebius Systems}\label{sec10}
Moebius transformations form a class of conformal maps that have a very strong geometric appeal and that have surprising properties.
A Moebius transformation is a map of the form $T(z)=\frac{Az+B}{Cz+D}$ satisfying that $AD-BC\neq 0$.

The derivative of a Moebius transformation is given by $T'(z)=\frac{AD-BC}{(Cz+D)^2}$. They are invertible and their inverse is still a Moebius transformation. 
Moreover they can be obtained by composing translations $z\mapsto z+K$, rotations $z\mapsto \alpha z, |\alpha|=1$, inversion $z\mapsto \frac{1}{z}$ and homotheties $z\mapsto \alpha z, |\alpha|\neq1,0 $.

The most natural way to consider $T$ is as a function of the plane extending into itself $T:\overline{\C}\rightarrow \overline{\C}$ defining $T\left(-\frac{D}{C}\right) =\infty$ and $T(\infty)=\frac{A}{C}.$

Two important properties of Moebius transformations are as follows:
\begin{itemize}
	\item Given three distinct points $z_1,z_2,z_3$ in $\overline{\C}$ and another three distinct points in $w_1,w_2,w_3$ in $\overline{\C}$ there is only one Moebius transformation satisfying that $T(z_j)=w_j$, $j=1,2,3$.
	\item The $\mathcal{F}$ family, formed by the circles of $\overline{\C}$ is preserved by Moebius $T$ transformations. From the point of view of the plane, this means that the image of circles and straight lines by Moebius transformations is still circles and straight lines.
\end{itemize}

In this section we describe the phase portrait of holomorphic systems of the type 
\begin{equation}\label{eqMobius} \dot{z}=\frac{Az+B}{Cz+D},\quad AD-BC\neq0.\end{equation}

Equivalently, writing $z=x+iy$, $T=u+iv$, $A=a_1+a_2i$, $B=b_1+b_2i$, $C=c_1+c_2i$ and $D=d_1+d_2i$ we get the planar system
\[\dot{u}=\frac{u_{20}x^2+u_{10}x+u_{02}y^2+u_{01}y+u_{00}}{q(x,y)},\quad \dot{v}=\frac{v_{20}x^2+v_{10}x+v_{02}y^2+v_{01}y+v_{00}}{q(x,y)},\]
with
\[q(x,y)=(c_1^2+c_2^2)x^2+(2c_1d_1+2c_2d_2)x+(c_1^2+c_2^2)y^2+(2c_1d_2-2c_2d_1)y+d_2^2+d_1^2\]
and 
\begin{equation}
\begin{array}{lll}
	&u_{20}=a_1c_1+a_2c_2       &v_{20}=-a_1c_2+a_2c_1\\
	&u_{10}=a_1d_1+a_2d_2+b_1c_1+b_2c_2 &v_{10}=-a_1d_2+a_2d_1-b_1c_2+b_2c_1,\\
	&u_{02}=a_1c_1+a_2c_2& v_{02}=-a_1c_2+a_2c_1,\\
	&u_{01}=a_1d_2-a_2d_1-b_1c_2+b_2c_1 & v_{01}=a_1d_1+a_2d_2-b_1c_1+b_2c_2\\
	&u_{00}=b_2d_2+b_1d_1 & _{00}=-b_1d_2+b_2d_1\end{array}
\end{equation}

\begin{proposition}The equilibrium points and poles of the Moebius systems \eqref{eqMobius} are described in the following table. \\
\begin{center}
		\begin{tabular}{|r|r|r|r|c|l|}
		\hline
		A&B&C&D&Equilibrium&Pole\\ \hline
		$0$ & $\neq0$ &$\neq0$ & $0$ &-&$z_0=0$\\ \hline
		$0$ & $\neq0$ &$\neq0$ & $\neq0$ &-&$z_0=-\frac{D}{C}$\\ \hline
		$\neq0$ & $0$ &$0$ & $\neq0$ &$z_0=0$&-\\ \hline
		$\neq0$ & $0$ &$\neq0$ & $\neq0$ &$z_0=0$&$z_0=-\frac{D}{C}$\\ \hline
		$\neq0$ & $\neq0$ &$\neq0$ & $0$ &$z_0=-\frac{B}{A}$&$z_0=0$\\ \hline
		$\neq0$ & $\neq0$ &$0$ & $\neq0$ &$z_0=-\frac{B}{A}$&-\\ \hline
		$\neq0$ & $\neq0$ &$\neq0$ & $\neq0$ &$z_0=-\frac{B}{A}$&$z_0=-\frac{C}{D}$\\ \hline\end{tabular}
\end{center}
\end{proposition}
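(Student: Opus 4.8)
The plan is to reduce the whole statement to two elementary observations about the rational map $T(z)=\frac{Az+B}{Cz+D}$ and then run a short finite case analysis. First I would record that, since the system is $\dot{z}=T(z)$, a finite point $z_0$ is an equilibrium precisely when $T(z_0)=0$, and is a pole precisely when the denominator vanishes at $z_0$. As $T$ is a quotient of linear polynomials, the numerator $Az+B$ vanishes only at $z_0=-B/A$ when $A\neq 0$, and is the nonzero constant $B$ when $A=0$; symmetrically the denominator $Cz+D$ vanishes only at $z_0=-D/C$ when $C\neq 0$, and is the nonzero constant $D$ when $C=0$.

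The one structural fact I need is that a zero of the numerator and a zero of the denominator can never coincide. This is exactly what the standing hypothesis $AD-BC\neq0$ guarantees: if some $z_0$ satisfied both $Az_0+B=0$ and $Cz_0+D=0$, then eliminating $z_0$ (multiply the first relation by $C$, the second by $A$, and subtract) yields $AD-BC=0$, a contradiction. Consequently, at a zero of $Az+B$ the denominator is nonzero, so that point is a genuine equilibrium, and at a zero of $Cz+D$ the numerator is nonzero, so that point is a genuine pole rather than a removable singularity.

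With these in hand I would carry out the case analysis on which of $A,B,C,D$ vanish. For each admissible pattern one simply reads off the equilibrium from $Az+B=0$ and the pole from $Cz+D=0$: the equilibrium is $z_0=-B/A$ when $A\neq0$ and is absent when $A=0$ (which forces $B\neq0$, leaving a nonvanishing numerator), while the pole is the zero $z_0=-D/C$ of $Cz+D$ when $C\neq0$ and is absent when $C=0$ (which forces $D\neq0$). Substituting the zero/nonzero pattern of each row then reproduces precisely the listed entries of the table.

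The only point requiring care, rather than a genuine obstacle, is verifying that the seven rows are exhaustive. Of the sixteen a priori sign patterns for $(A,B,C,D)$, the condition $AD-BC\neq0$ discards exactly those in which both $AD=0$ and $BC=0$; I would check that this excludes nine patterns and retains precisely the seven appearing in the table, and conversely that each retained pattern is compatible with $AD-BC\neq0$. This is a brief combinatorial check, and once it is completed the proposition follows at once from the two observations above.
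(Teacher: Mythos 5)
Your proposal is correct and complete; the paper itself states this proposition without any proof, and your argument (equilibria are zeros of the numerator, poles are zeros of the denominator, the two cannot coincide because $AD-BC\neq 0$, followed by the enumeration of the seven admissible zero/nonzero patterns) is exactly the routine verification the authors evidently had in mind. One small remark: your derivation yields the pole $z_0=-D/C$ in the last row, whereas the table there prints $z_0=-\frac{C}{D}$; this is a typo in the paper (compare rows two and four), and your version is the correct one.
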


\begin{proposition}
	The Moebius system \eqref{eqMobius} is conformally conjugated to $f(z)=\frac{A^2}{AD-BC}z$ in a neighborhood
	of the equilibrium $z_0=-\frac{B}{A}$ and conformally conjugated to $f(z)=\frac{1}{z}$ in a neighborhood
	of the pole $z_0=-\frac{C}{D}$.
\end{proposition}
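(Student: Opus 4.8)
The plan is to treat the two assertions separately, each being a purely local statement that follows from the conformal conjugacy classification (items (a)--(e)) recalled in Section \ref{sec2}, once the relevant derivative or order of vanishing has been computed for the map defining \eqref{eqMobius}. Throughout I assume we are in a case of the preceding table in which the equilibrium $z_0=-B/A$ actually occurs (so $A\neq0$) and, for the second claim, in which the pole actually occurs (so $C\neq0$); in the degenerate rows one of the two statements is simply vacuous.

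For the equilibrium, note first that $T(z_0)=0$ at $z_0=-B/A$, since the numerator $Az+B$ vanishes there. I would then use the formula $T'(z)=\dfrac{AD-BC}{(Cz+D)^2}$ already recorded in this section. Evaluating the denominator at $z_0$ gives $Cz_0+D=\dfrac{AD-BC}{A}$, hence $(Cz_0+D)^2=\dfrac{(AD-BC)^2}{A^2}$ and therefore $T'(z_0)=\dfrac{A^2}{AD-BC}$. Because $AD-BC\neq0$ and $A\neq0$, this value is nonzero, so $z_0$ is a simple zero of $T$; item (b) of the classification then yields that $T$ is $z_00$--conformally conjugated to $g(z)=T'(z_0)z=\dfrac{A^2}{AD-BC}\,z$, which is the first claim.

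For the pole, I would show that the point $z_0$ determined by $Cz_0+D=0$ is a pole of order exactly one. The denominator $Cz+D$ has a simple zero there, and the numerator does not vanish at $z_0$: if $Az_0+B=0$ and $Cz_0+D=0$ held simultaneously, then the resulting linear relations would force $AD-BC=0$, contradicting the standing hypothesis. Thus $T$ has a simple pole at $z_0$, and item (e) of the classification with $n=1$ gives that $T$ is $z_00$--conformally conjugated to $g(z)=1/z$, which is the second claim.

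I expect no genuine obstacle beyond bookkeeping. The only points requiring care are verifying that the numerator and denominator of $T$ share no zero (guaranteed precisely by $AD-BC\neq0$, which is what makes the pole simple rather than removable), and matching each row of the table to the nondegeneracy hypotheses ($A\neq0$ for the equilibrium, $C\neq0$ for the pole) so that the cited conjugacy classes genuinely apply.
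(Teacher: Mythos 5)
Your proof is correct and follows essentially the same route as the paper: compute $T'(-B/A)=\frac{A^2}{AD-BC}$ and invoke the simple-zero conjugacy class, then verify the pole at $Cz+D=0$ is simple (the paper does this by checking $\lim_{z\to -D/C}(z+\frac{D}{C})T(z)=\frac{BC-AD}{C}\neq0$, which is equivalent to your observation that the numerator cannot vanish there when $AD-BC\neq0$) and invoke the order-one pole class. Your added care about which rows of the table make each claim non-vacuous, and your identification of the pole as $-D/C$ (the statement's ``$-\frac{C}{D}$'' is a typo), are both consistent with the paper's intent.
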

\begin{proof}
	In fact, it follows of the derivative of the Moebius map $T$ at $-\frac{B}{A}$ which is given by 
\[T'\left(-\frac{B}{A}\right)=\frac{A^2}{AD-BC}\]
and from the limit 
\[\lim_{z\rightarrow-\frac{D}{C}}\left(z+\frac{D}{C}\right)\frac{Az+B}{Cz+D}=\frac{BC-AD}{C}\neq0.\]
\end{proof} 
\begin{proposition}\label{proptransmoebius}
	Let $\dot{z} = T(z)$ be the Moebius system  \eqref{eqMobius} with $A\neq0$.
\begin{enumerate}[a)]
	\item 	Its trajectories are contained in the level curves of 
	\[H(x,y)=\Im\left({\frac{ACz+(-BC+AD)\log(B+Az)}{A^2}}\right).\]
	\item For the  Moebius system, we have, through topological equivalence,  the following phase portraits.
\begin{center}
	\begin{figure}[h]	
\begin{overpic}[scale=0.35]
	{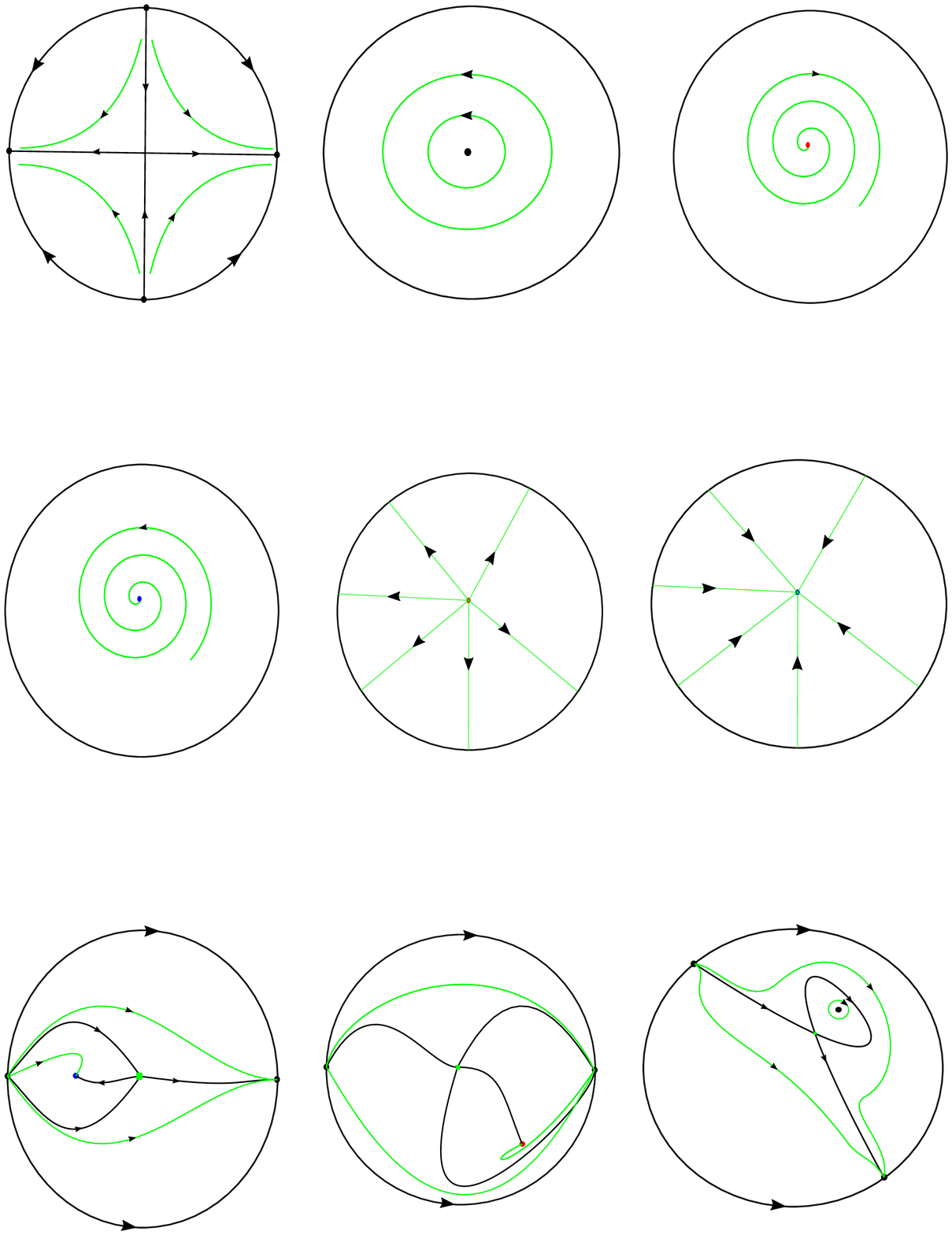} 
	\put(20,170){M1)}
	\put(85,170){M2)}
	\put(155,170){M3)}
	\put(20,80){M4)}
	\put(85,80){M5)}
	\put(155,80){M6)}
	\put(20,-10){M7)}
	\put(85,-10){M8)}
	\put(155,-10){M9)}
\end{overpic}
\vspace{0.2cm}
\caption{Topological phase portrait of Moebius System}
	\end{figure}
\end{center}
\end{enumerate}
\end{proposition}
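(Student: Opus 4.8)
\textit{Proof idea.} Part (a) is a direct application of the integrability theorem of Section~\ref{sec7}: the trajectories of $\dot z = T(z)$ lie on the level curves of $H=\Im G$, where $G$ is a primitive of $1/T$. Since $A\neq0$, polynomial division gives
\[
\frac{1}{T(z)}=\frac{Cz+D}{Az+B}=\frac{C}{A}+\frac{AD-BC}{A}\cdot\frac{1}{Az+B},
\]
whence
\[
G(z)=\frac{Cz}{A}+\frac{AD-BC}{A^2}\log(Az+B)=\frac{ACz+(AD-BC)\log(B+Az)}{A^2},
\]
and taking imaginary parts yields the stated $H$.

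For part (b), the plan is to reduce everything to local models via the conjugacy results established above and then assemble them globally. Since $A\neq0$, the system has a single finite equilibrium at $z_0=-\frac{B}{A}$, which is simple with eigenvalue $\lambda=T'(z_0)=\frac{A^2}{AD-BC}$; it is therefore a center when $\operatorname{Re}\lambda=0$, a node when $\operatorname{Im}\lambda=0$ and $\operatorname{Re}\lambda\neq0$, and a focus otherwise. If $C\neq0$ there is in addition a simple pole at $z_0=-\frac{D}{C}$, conformally conjugate to $\dot z=1/z$; this singularity has index $-1$, so it is topologically a saddle with four hyperbolic sectors. If $C=0$ the map is affine and there is no finite pole.

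Next I would describe the behavior at infinity through Proposition~\ref{teo1pq}. When $C=0$ we have $n=m+1$ and near infinity $\dot z$ is conjugate to $(A/D)z$, so the circle at infinity is an equator of equilibria (node case) or a periodic orbit (focus and center cases), producing the three linear portraits. When $C\neq0$ we have $n=m$ and near infinity $\dot z$ is conjugate to a field tending to the nonzero constant $A/C$; a direct Poincar\'e compactification then exhibits exactly two equilibria on $\s^1$, lying in the directions $\pm\arg(A/C)$, and the index balance $(+1)+(-1)$ of the finite singularities is compatible with this.

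Finally, the global portraits are obtained by joining these local pieces. The explicit first integral from part (a), together with the absence of limit cycles and the index constraints, leaves only finitely many ways of connecting the separatrices of the pole and of the two points at infinity to the finite equilibrium; discarding the combinations excluded by the level curves of $H$ and exhibiting an explicit Moebius map realizing each survivor yields precisely the nine portraits $\textbf{M1)}$--$\textbf{M9)}$. The main obstacle is this last bookkeeping: controlling the separatrix connections at infinity for the asymptotically constant field when $C\neq0$, and proving that the resulting list is simultaneously exhaustive and realizable.
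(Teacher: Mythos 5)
Your part (a) is exactly the paper's argument: integrate $1/T$ by polynomial division and take the imaginary part of the primitive $\int (Cz+D)/(Az+B)\,dz$. For part (b) your plan also coincides in substance with what the paper does: split on $C=0$ versus $C\neq0$; classify the unique finite equilibrium $z_0=-B/A$ through $T'(z_0)=A^2/(AD-BC)$ (center, focus, or node); treat the pole $-D/C$ as conformally conjugate to $\dot z=1/z$, hence a topological saddle of index $-1$; use Proposition~\ref{teo1pq} at infinity (degenerate equator or periodic orbit at infinity when $C=0$; two hyperbolic points on $\s^1$ when $C\neq0$); and then realize each surviving connection scheme by an explicit Moebius map. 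The ``bookkeeping'' you flag as the main obstacle is handled in the paper exactly as you propose to handle it: for $C\neq0$ the local data (one index $+1$ equilibrium, one index $-1$ pole, two points of alternating stability at infinity) are asserted to admit only the configurations M7)--M9), and each is then exhibited by an explicit system. So your outline is faithful to the paper's proof and would close in the same way.

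There is, however, one genuine discrepancy you should be aware of. Under the hypothesis $A\neq0$ your argument can never produce portrait M1), which is the pure-pole picture of $\dot z=B/(Cz+D)$ and arises only when $A=0$: with $A\neq0$ there is always the finite equilibrium $-B/A$, so at most eight of the nine listed portraits, namely M2)--M9), are attainable. The paper's own proof of part (b) in fact opens with the case $A=0$, $B\neq0$ to obtain M1), in tension with the stated hypothesis (and with part (a), whose formula divides by $A^2$). To prove the statement as its conclusion is written --- nine portraits --- you must also admit the case $A=0$ in part (b); as you have set things up, that case is silently excluded.
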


\begin{proof}
	\begin{enumerate}[a)]
		\item It follows from the primitive
\[\int\dfrac{1}{T(z)}dz=\int\dfrac{Cz+D}{Az+B}dz={\dfrac{ACz+(-BC+AD)\log(B+Az)}{A^2}}.\]
		\item  We have the following cases:
	\begin{itemize}
		\item $A=0$ and $B\neq 0$. In this case, the system $\dot{z}=\dfrac{B}{Cz}$ is conformally conjugated with $\dot{z}=\dfrac{1}{z}$. Then, we have a simple pole singularity at $z=0$. Using Poincar\'{e} Compactification, we obtain 4 noddle points at the infinity with alternated stability. This case corresponds the Figure 18-M1).
		\item $C=0$ and $D\neq 0$. In this case, the system $\dot{z}=\dfrac{Az+B}{D}$ is conformally conjugated with $\dot{z}=\eta z$, $\eta\in\mathbb{C}$. If $\Re(\eta)=0$ and $\Im(\eta)\neq0$, we obtain the Figure 18-M2). If $\Re(\eta)\neq0$ and $\Im(\eta)\neq0$, then we have the Figure 18-M3) and the Figure 18-M4). If $\Re(\eta)\neq0$ and $\Im(\eta)=0$, we obtain the Figure 18-M5) and the Figure 18-M6).
		\item $A\neq 0$, $B\neq 0$, and $C\neq 0$. We have a simple pole at $z=-\frac{D}{C}$ and  \textbf{i)} one center or \textbf{ii)} one focus/node. According \cite{GXG}, we have two nodes at infinity with alternated stability. Then, the possibilities for the phase portrait are given by  Figures 18-M7), 18-M8) and Figure 18-M9). All this cases are realizable and we can obtain with the following systems: 
	\begin{itemize}
		\item $\dot{z}=\frac{(3/13 - 2i/13)((1 + i)z - 3/2 + i)}{z}$ - case M7),
		\item $\dot{z}= \frac{(3/13 - 2i/13)((1 + i)z - 13/3 + i)}{z}$ - case M8),
		\item $\dot{z}=\frac{(12/37 + 2i/37)((1 - 2i)z - 27/14 + i)}{z}$ - case M9).
	\end{itemize}	
\end{itemize}
\end{enumerate}
\end{proof}

\section {Acknowledgments}

This article was possible thanks to the scholarship granted from the Brazilian Federal Agency for Support and Evaluation of Graduate Education (CAPES), in the scope of the Program CAPES-Print, process number 88887.310463/2018-00, International Cooperation Project number 88881.310741/2018-01. The second author is also partially supported by the grant nº 2019/10269-3, S\~ao Paulo Research Foundation (FAPESP).

The authors Luiz Fernando Gouveia  and Gabriel Rond\'on are supported by the grant 2020/04717-0 S\~ao Paulo Research Foundation (FAPESP) and   2020/06708-9 S\~ao Paulo Research Foundation  (FAPESP) respectively.

\section{Appendix}

\subsection{Lyapunov Constants}
	Consider a differential system of the kind
	\begin{equation}\label{a1}
	\dot{x}=\beta y+\varphi(x,y), \quad \dot{y}=-\beta x+\psi(x,y)
	\end{equation}
	with $\beta>0$ and $\varphi,\psi$ analytical functions which vanish together with their first derivatives at the point $(0,0).$
	
	Consider the polar change $x=r\cos\theta$, $y=r\sin\theta$. Thus system \eqref{a1} becomes
	\begin{equation}\label{a2}
	\dot{r}=P(r,\theta), \quad \dot{\theta}=Q(r,\theta).
	\end{equation}
	The phase portarit of \eqref{a2} is composed by the graphics of 
	\[r=f_{\rho}(\theta), \quad f_{\rho}(0)=\rho, \] 
	solutions of 
	\begin{equation}\label{a3}
	\frac{dr}{d\theta}=R(r,\theta)=\frac{P(r,\theta)}{Q(r,\theta)}=R_1(\theta)r+R_2(\theta)r^2+...
	\end{equation}
	with 
	\[R_k(\theta)=\frac{1}{k!}\frac{\partial^kR(r,\theta)}{\partial r^k}|_{r=0}.\]
	The series expansion of $f_{\rho}(\theta)$ is denoted by
	\[f_{\rho}(\theta)=u_1(\theta)\rho+u_2(\theta)\rho^2+...\]
	Put $r=f_{\rho}(\theta)$ in \eqref{a3} to get
	\[u_1'(\theta)\rho+u_2'(\theta)\rho^2+...=R_1(\theta)(u_1(\theta)\rho+...)+R_2(\theta)(u_1(\theta)\rho+...)^2+...\]
	The returning map $\pi:[0,+\infty)\rightarrow\R$ is $\pi(\rho)=f_{\rho}(2\pi)-\rho$. The Lyapunov values are given by
	\[V_k=\frac{\pi^{(k)}(0)}{k!}\]
	
	If there exists $n\in\N$ such that 
	\[\pi'(0)=...=\pi^{(n-1)}(0)=0,\pi^n(0)\neq0 \]
	then $n$ is odd. In this case we say that $\frac{n-1}{2}$ is the multiplicity of the focus.\\
	
		If $(0,0)$ is a multiple focus of multiplicity $k>1$ of \eqref{a1} then 
		\begin{itemize}
			\item [(a)] There exist $\e_0>0$ and $\delta_0>0$ such that for any system $\delta_0$--close to system
			\eqref{a1} has at most $k$ limit cycles in a $\e_0$--neighborhood of $(0,0)$.
			\item [(b)] For any $\e<\e_0, \delta<\delta_0$ and $1\leq s\leq k$ there exists a system which is $\delta$--close
			to \eqref{a1} and has precisely $s$ limit cycles in a $\e$--neighborhood of $(0,0)$.
		\end{itemize}

	We can compute the Lyapunov values using the following algorithm.\\
	1) $R$\\
	2) $r1=diff(R,r)$\\
	3) $R1=subs(r=0,R1)$\\
	4) $r2=diff(r1,r)$\\
	5) $R2=\frac{1}{2}subs(r=0,r2)$\\
	6) $r3=diff(r2,r)$\\
	7) $R3=\frac{1}{6}subs(r=0,r3)$\\
8)	$r4= diff(r3, r)$\\
9) $	R4= (1/24)subs(r = 0, r4)$\\
10) $	r5 = diff(r4, r)$\\
11)	$R5 = (1/120)subs(r = 0, r5)$\\
12)	$r6 = diff(r5, r)$\\
13) $	R6 = (1/720)subs(r = 0, r6)$\\
14) $	r7 = diff(r6, r)$\\
15) $	R7 = (1/5040)subs(r = 0, r7)$\\
	16) $U2=int(R2,\theta=0..k)$\\
	17) $V2=subs(k=2\pi,U2)$\\
	18) $u2=subs(k=\theta,U2)$\\
	19) $U3=int(2U2R2+R3,\theta=0..k)$\\
	20) $V3=subs(k=2\pi,U3)$\\
21) $	u3 = subs(k = u, U3)$\\
22) $	V4 = simplify(subs(k = 2\pi, U4))$\\
23) $	u4 = subs(k = u, U4)$\\
24) $	U5 = int(R2(2u2u3+2u4)+R3(3u2^2+3u3)+4R4u2+R5, u = 0 .. k)$\\
25) $	V5 = simplify(subs(k = 2\pi, U5))$\\
26) $	u5 = subs(k = u, U5)$\\
27) $	U6 = int(R2(2u2u4+u3^2+2u5)+R3(u2^3+6u2u3+3u4)+R4(6u2^2+4u3)+5R5u2+R6, u = 0 .. k)$\\
28) $	V6 = simplify(subs(k = 2\pi, U6))$\\
29) $  V7 := simplify(subs(k = 2\pi, U7))$\\

\subsection{Poincar\'{e} Compactification} Now we present the formulas concerning the Poincar\'{e}
	compactification for a  polynomial differential system with degree $n$ in
	$\mathbb{R}^2$.  More precisely we consider the
	 polynomial differential system
	\begin{equation*}
	\dot{x}= u(x,y), \quad \dot{y}= v(x,y). 
	\end{equation*}
	This polynomial system is extended to an analytic system on a closed
	disk of radius one, whose interior is diffeomorphic to $\R^2$ and
	its boundary, the 1--dimensional circle $\s^1$; plays the role of
	the infinity. This closed ball is denoted by $\D^1$. We consider 4 open charts on $\s^1$
	\begin{itemize}
		\item[(a)]$U_1=\{(x,y):x>0\}$ and $V_1=\{(x,y):x<0\}$,
		\item[(b)]$U_2=\{(x,y):y>0\}$ and $V_2=\{(x,y)):y<0\}$,
	\end{itemize}
	The phase portrait on $U_1$ is the central projection of the phase
	portrait of the system
	\begin{equation}\label{u1}
	\dot{s}=w^n(-su+v),\quad  \dot{w}=w^n(-wu)
	\end{equation}
	where $s$ is the coordinate of the tangent line
	$T\s^1_{(1,0)}$ at $(1,0)\in\s^1$, and  $u,v$  are
	evaluated at $(1/w,s/w)$. Moreover we consider $w=0$.
	
	The flow on $U_2$ is determined by the system
	\begin{equation}\label{u2}
	\dot{s}=w^n(-sv+u), \quad \dot{w}=w^n(-wv)
	\end{equation} where $s$ is the coordinate of the tangent plane
	$T\s^1_{(0,1)}$ at $(0,1)\in\s^1$, and  $u,v$  are
	evaluated at $(s/w,1/w)$. Moreover we consider $w=0$.

	The expression for the extend differential system  in the local
	chart $V_{i},$ $i=1,2$ is the same as in $ U_{i}$ multiplied by
	$(-1).$

\bibliographystyle{acm}
\bibliography{bibliotese}

\begin{thebibliography}{10}

\bibitem{AlvGasPro}
{\sc \'{A}lvarez, M.~J., Gasull, A., and Prohens, R.}
\newblock Configurations of critical points in complex polynomial differential
  equations.
\newblock {\em Nonlinear Anal. 71}, 3-4 (2009), 923--934.

\bibitem{GAP}
{\sc \'{A}lvarez, M.~J., Gasull, A., and Prohens, R.}
\newblock Topological classification of polynomial complex differential
  equations with all the critical points of centre type.
\newblock {\em J. Difference Equ. Appl. 16}, 5-6 (2010), 411--423.

\bibitem{avila}
{\sc Avila, G.}
\newblock {\em Variáveis Complexas e Aplicações}.
\newblock 3. Rio de Janeiro, 208.

\bibitem{bat}
{\sc Batchelor, G.}
\newblock {\em An Introduction to Fluid Dynamics (Cambridge Mathematical
  Library)}.
\newblock Cambridge: Cambridge University Press., 2000.

\bibitem{Ben}
{\sc Benzinger, H.~E.}
\newblock Plane autonomous systems with rational vector fields.
\newblock {\em Trans. Amer. Math. Soc. 326}, 2 (1991), 465--483.

\bibitem{Benzinger}
{\sc Benzinger, H.~E.}
\newblock Plane autonomous systems with rational vector fields.
\newblock {\em Trans. Amer. Math. Soc. 326}, 2 (1991), 465--483.

\bibitem{BraDias}
{\sc Branner, B., and Dias, K.}
\newblock Classification of complex polynomial vector fields in one complex
  variable.
\newblock {\em J. Difference Equ. Appl. 16}, 5-6 (2010), 463--517.

\bibitem{BLT}
{\sc Brickman, L., and Thomas, E.~S.}
\newblock Conformal equivalence of analytic flows.
\newblock {\em J. Differential Equations 25}, 3 (1977), 310--324.

\bibitem{Bro}
{\sc Broughan, K.~A.}
\newblock Holomorphic flows on simply connected regions have no limit cycles.
\newblock vol.~38. 2003, pp.~699--709.
\newblock Dynamical systems: theory and applications (\L \'{o}d\'{z}, 2001).

\bibitem{BufTan}
{\sc Buff, X., and Tan, L.}
\newblock Dynamical convergence and polynomial vector fields.
\newblock {\em J. Differential Geom. 77}, 1 (2007), 1--41.

\bibitem{DiasEnu}
{\sc Dias, K.}
\newblock Enumerating combinatorial classes of the complex polynomial vector
  fields in {$\Bbb C$}.
\newblock {\em Ergodic Theory Dynam. Systems 33}, 2 (2013), 416--440.

\bibitem{Dias}
{\sc Dias, K.}
\newblock A characterization of multiplicity-preserving global bifurcations of
  complex polynomial vector fields.
\newblock {\em Qual. Theory Dyn. Syst. 19}, 3 (2020), Paper No. 90, 32.

\bibitem{DLA}
{\sc Dumortier, F., Llibre, J., and Art\'{e}s, J.~C.}
\newblock {\em Qualitative theory of planar differential systems}.
\newblock Universitext. Springer-Verlag, Berlin, 2006.

\bibitem{mars}
{\sc e~J.E.~Marsden, A.~C.}
\newblock {\em A Mathematical Introduction to Fluid Dynamics}.
\newblock Spring-Verlag.

\bibitem{Alba}
{\sc Garcia, A.~S., G.~A.}
\newblock Holomorphic differential equations.
\newblock {\em Final Degree Project\/} (2014).

\bibitem{gx}
{\sc Garijo, A., Gasull, A., and Jarque, X.}
\newblock Local and global phase portrait of equation {$\dot z=f(z)$}.
\newblock {\em Discrete Contin. Dyn. Syst. 17}, 2 (2007), 309--329.

\bibitem{GAG1}
{\sc Garijo, A., Gasull, A., and Jarque, X.}
\newblock Simultaneous bifurcation of limit cycles from two nests of periodic
  orbits.
\newblock {\em J. Math. Anal. Appl. 341}, 2 (2008), 813--824.

\bibitem{GinGouTor2020}
{\sc Gin\'{e}, J., Gouveia, L. F.~S., and Torregrosa, J.}
\newblock Lower bounds for the local cyclicity for families of centers.
\newblock {\em J. Differential Equations 275\/} (2021), 309--331.

\bibitem{GouTor2020}
{\sc Gouveia, L. F.~S., and Torregrosa, J.}
\newblock Lower bounds for the local cyclicity of centers using high order
  developments and parallelization.
\newblock {\em J. Differential Equations 271\/} (2021), 447--479.

\bibitem{Morris}
{\sc Gray, J.~D., and Morris, S.~A.}
\newblock When is a function that satisfies the {C}auchy-{R}iemann equations
  analytic?
\newblock {\em Amer. Math. Monthly 85}, 4 (1978), 246--256.

\bibitem{Oto1}
{\sc H\'{a}jek, O.}
\newblock Notes on meromorphic dynamical systems. {I}.
\newblock {\em Czechoslovak Math. J. 16(91)\/} (1966), 14--27.

\bibitem{Oto2}
{\sc H\'{a}jek, O.}
\newblock Notes on meromorphic dynamical systems. {II}.
\newblock {\em Czechoslovak MAth. J. 16 (91)\/} (1966), 28--35.

\bibitem{GXG}
{\sc Jarque, X., Gasull, A., and Garijo, A.}
\newblock Local and global phase portrait of equation $\dot z=f(z)$.
\newblock {\em Discrete and Continuous Dynamical Systems - DISCRETE CONTIN DYN
  SYST 17\/} (11 2006), 309--329.

\bibitem{LliValls}
{\sc LLibre, J., and Valls, A.}
\newblock Phase portrait of the complex abel polynomial differential systems.
\newblock {\em Preprint\/} (2021).

\bibitem{mey}
{\sc Meyer, R.}
\newblock {\em An Introduction to Mathematical Fluid Dynamics}.
\newblock Wiley Interscience.

\bibitem{NeeKing}
{\sc Needham, D.~J., and King, A.~C.}
\newblock On meromorphic complex differential equations.
\newblock {\em Dynam. Stability Systems 9}, 2 (1994), 99--122.

\bibitem{Shi}
{\sc Shi, S.~L.}
\newblock A concrete example of the existence of four limit cycles for plane
  quadratic systems.
\newblock {\em Sci. Sinica 23}, 2 (1980), 153--158.

\bibitem{Sverdlove}
{\sc Sverdlove, R.}
\newblock Vector fields defined by complex functions.
\newblock {\em J. Differential Equations 34}, 3 (1979), 427--439.

\end{thebibliography}
\end{document}